\newskip\stdskip
\newcommand{\C}{\mathbb C}
\newcommand{\R}{\mathbb R}
\newcommand{\Z}{\mathbb Z}
\newcommand{\N}{\mathbb N}
\newcommand{\F}{\mathbb F}
\newcommand{\HH}{\mathbb H}
\newcommand{\kk}{\mathbf k}
\newcommand{\id}{\operatorname{Id}}
\mathchardef\mhyphen="2D
\newcommand{\op}{\operatorname}
\newcommand{\bs}{\boldsymbol}
\newtheorem{thm}{Theorem}[section]
\newtheorem{lemma}[thm]{Lemma}
\newtheorem{cor}[thm]{Corollary}
\newtheorem{prop}[thm]{Proposition}
\theoremstyle{definition}
\newtheorem{dfn}[thm]{Definition}
\newtheorem{rem}[thm]{Remark}
\mathchardef\mhyphen="2D
\begin{document}

\author[B. Chantraine]{Baptiste Chantraine}
\author[G. Dimitroglou Rizell]{Georgios Dimitroglou Rizell}
\author[P. Ghiggini]{Paolo Ghiggini}

\address{Nantes Universit\'e, CNRS, Laboratoire de Math\'ematiques Jean Leray, LMJL,
F-44000 Nantes, France.}
\email{baptiste.chantraine@univ-nantes.fr}

\address{Uppsala University, Sweden}
\email{georgios.dimitroglou@math.uu.se}

\address{Universit\'e Grenoble Alpes, France.}
\email{paolo.ghiggini@univ-grenoble-alpes.fr}

\subjclass[2010]{Primary 53D37; Secondary 53D40, 57R17.}

\title[Representations from closed exact Lagrangians I]{Representations of the Chekanov-Eliashberg algebra from closed exact Lagrangians I}
\maketitle

\begin{abstract}
  This is the first of a series of two articles aiming at relating the compact Fukaya category of a Weinstein manifold to the derived category of finite dimensional representations of the Chekanov-Eliashberg differential graded algebra of the attaching spheres of the critical handles. In this first article we associate a finite dimensional representation $V_L$ to any compact exact Lagrangian submanifold $L$ and prove that for two any such Lagrangian submanifolds $L_0$ and $L_1$ the isomorphism
  $$HF(L_0, L_1) \cong H^*R\hom_{\mathcal A}(V_{L_0}, V_{L_1})$$
  holds. This generalises a previous result of Ekholm and Lekili, but out techniques are different since we use an extension of the Floer theory for Lagrangian cobordisms with negative ends that we developed in collaboration with Roman Golovko.
\end{abstract}
\tableofcontents
\section{Introduction}
We fix the following notation: $W$ denotes a Weinstein domain obtained by attaching critical Weinstein handles to a subcritical Weinstein domain $W^{sc}$ along a link of Legendrian spheres $\mathbf{S} \subset \partial W^{sc}$,  $\F$ is a field of characteristic two, and ${\mathcal A}_{\mathbf{S}}$ denotes the Chekanov-Eliashberg differential graded algebra of $\mathbf{S}$ over the ring $\mathbf{k}_{\mathbf{S}}=H^0(\mathbf{S}; \F)$ with idempotents corresponding to the connected components of $\mathbf{S}$. One can also use fields with arbitrary characteristics, but in that case additional data is needed, such as spin structures, and we do not give any details here.
 This article is the first in a series of two whose final goal it to prove the following statement.

\begin{thm}\label{thm: main of part 2}
  There is a cohomologically full and faithful $A_\infty$-functor from the compact Fukaya category ${\mathcal F}_c(W)$ of $W$, whose objects are closed exact Lagrangian submanifolds, to the dg-category ${\mathcal D}_{\op{dg}}({\mathcal A}_{\mathbf{S}})$ whose objects are left $dg$-modules over ${\mathcal A}_{\mathbf{S}}$ and whose morphisms are
  $$\hom_{{\mathcal D}_{\op{dg}}({\mathcal A_{\mathbf{S}}})}(V_0, V_1)= \op{Rhom}_{{\mathcal A}_{\mathbf{S}}}(V_0, V_1).$$
  Moreover the objects of ${\mathcal D}_{\op{dg}}({\mathcal A}_{\mathbf{S}})$ in the image of this functor have finite rank over $\mathbf{k}_{\mathbf{S}}$.
\end{thm}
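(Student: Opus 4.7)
The plan is to promote the cohomological isomorphism established in Part~1 to an $A_\infty$-functor
\[
\mathcal{F}: {\mathcal F}_c(W) \longrightarrow {\mathcal D}_{\op{dg}}({\mathcal A}_{\mathbf{S}})
\]
whose object part is $L \mapsto V_L$ and whose first-order component $\mathcal{F}^1$ realises at the chain level the quasi-isomorphism $CF(L_0, L_1) \to \op{Rhom}_{{\mathcal A}_{\mathbf{S}}}(V_{L_0}, V_{L_1})$ announced in the abstract. The higher components $\mathcal{F}^k$ for $k \geq 2$ should be defined by counting pseudoholomorphic curves with $k$ positive boundary punctures on an ordered configuration $L_0, \ldots, L_k$, together with additional negative punctures asymptotic to Reeb chords of $\mathbf{S}$. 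The Lagrangians are first lifted via the Floer theory for cobordisms with negative ends developed in collaboration with Golovko, so that after a neck-stretching the SFT-type compactification naturally produces outputs in the bar resolution underlying $\op{Rhom}$.

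First I would set up, for each ordered tuple $(L_0, \ldots, L_k)$, a family of Lagrangian cobordisms that concatenates the individual cobordisms used to define each $V_{L_i}$ in Part~1, together with a compatible choice of perturbation data. Next I would define $\mathcal{F}^k$ as the unsigned count (exploiting that $\op{char}(\F)=2$) of rigid holomorphic disks with the prescribed asymptotics, where the sequence of negative punctures records a word in the bar construction of ${\mathcal A}_{\mathbf{S}}$. Finally I would prove the $A_\infty$-functor equations
\[
\sum \mathcal{F}^{k-j+1} \circ (\id^{\otimes a} \otimes \mu_{{\mathcal F}_c}^j \otimes \id^{\otimes b}) = \sum \mu_{{\mathcal D}_{\op{dg}}}^r(\mathcal{F}^{i_1} \otimes \cdots \otimes \mathcal{F}^{i_r})
\]
by identifying the codimension-one boundary strata of one-parameter families of these moduli spaces: breakings at positive intersection punctures recover Fukaya products, while breakings at the negative end produce $dg$-module composition mediated by the bar differential on ${\mathcal A}_{\mathbf{S}}$.

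Cohomological full faithfulness is then automatic: on $H^\ast$, $\mathcal{F}^1$ reduces to the isomorphism $HF(L_0, L_1) \cong H^\ast \op{Rhom}_{{\mathcal A}_{\mathbf{S}}}(V_{L_0}, V_{L_1})$ of Part~1, and the finite-rank statement over $\kk_{\mathbf{S}}$ is inherited directly from the construction of each $V_L$, since no operation in the construction of $\mathcal{F}^k$ enlarges the underlying module.

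The hard part will not be the formal $A_\infty$-bookkeeping but the analytic foundations: establishing transversality, compactness, and consistent gluing for moduli spaces of curves with boundary on concatenated cobordisms and with an \emph{a priori} unbounded number of Reeb chord asymptotics at the negative end, uniformly in $k$. Sphere and disk bubbling is ruled out by exactness, but matching the resulting boundary combinatorics with the precise shape of the $A_\infty$-functor relations requires a coherent system of domain-dependent perturbations that is simultaneously compatible with neck-stretching and with the bar construction on ${\mathcal A}_{\mathbf{S}}$, extending the single-cobordism analysis of Part~1 to the multi-cobordism setting required for all higher $k$.
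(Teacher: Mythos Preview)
The paper does \emph{not} contain a proof of this theorem. Theorem~\ref{thm: main of part 2} is stated as the ultimate goal of a two-part series, and the present article is Part~1; the authors explicitly write that they ``will content ourselves with the more modest goal'' of proving the cohomological-level statement (Theorem~\ref{thm: main}), ``leaving the categorical construction at the chain level for the sequel.'' So there is no proof here against which to benchmark your proposal.

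Your outline is a reasonable sketch of what Part~2 is expected to contain, and it is broadly consonant with the spirit of the paper's approach: upgrade the object-level assignment $L\mapsto V_L$ and the cohomological isomorphism of Theorem~\ref{thm: main} to a genuine $A_\infty$-functor by defining higher components via counts of pseudoholomorphic polygons with additional Reeb-chord asymptotics, and verify the $A_\infty$-functor relations by analysing codimension-one boundary strata. That said, your description is schematic in places where the paper's machinery is quite specific (the Cthulhu complex, the cap algebra, the short resolution, the immersed $\overline{L}=\mathbf{C}\cup\Sigma$ decomposition), and the phrase ``curves with $k$ positive boundary punctures'' is not quite right for an $A_\infty$-functor out of a Fukaya category: the relevant polygons have $k$ inputs at intersection points and one output, together with auxiliary negative Reeb-chord punctures.

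It is also worth noting that the introduction sketches an \emph{alternative} proof of Theorem~\ref{thm: main of part 2} via existing technology: the Yoneda-type functor $L\mapsto CW^*(L,\mathbf{D})$ into modules over $CW^*(\mathbf{D},\mathbf{D})$, which is cohomologically full and faithful by generation of the wrapped Fukaya category by cocores \cite{generation,GanatraPardonShende2}, composed with the equivalence of module categories coming from the surgery quasi-isomorphism $CW^*(\mathbf{D},\mathbf{D})\simeq\mathcal{A}_{\mathbf{S}}$ of \cite{EffectLegendrian}. That route is logically complete given the cited literature and is different from both your proposal and the authors' intended Part~2 argument.
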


In this first article, however, we will content ourselves with the more modest goal of proving the following statement concerning objects and morphism spaces at the cohomological level, leaving the categorical construction at the chain level for the sequel.  We introduce some more notation: we denote by $S_\sigma$ a connected component of $\mathbf{S}$, by $\sigma$ the corresponding idempotent in $\kk_{\mathbf{S}}$, by $D_\sigma$ the cocore of the critical Weinstein handle attached along $S_\sigma$, by $\bullet$ the intersection pairing between homology and relative homology and by $\chi$ the Euler characteristic.

  \begin{thm}\label{thm: main}
  To any closed exact Lagrangian submanifold $L \subset W$ which intersects all cocores of the critical Weinstein handles transversely we associate a differential graded ${\mathcal A}_{\mathbf{S}}$-module $V_L$ such that
\begin{itemize}
   \item $\dim_{\F} (\sigma \cdot V_{L})=|L \cap D_\sigma|$, and
  \item $\chi(\sigma \cdot V_{L})=L \bullet D_\sigma$ when $L$ is oriented,
    
  \end{itemize}
  for all $\sigma \in \pi_0(\mathbf{S})$.
Moreover, given two closed exact Lagrangian submanifolds $L_0$ and $L_1$ as above, the isomorphism
  $$HF^*(L_0, L_1) \cong H^*\op{Rhom}_{{\mathcal A}_{\mathbf{S}}}(V_{L_0}, V_{L_1})$$
  holds. 
\end{thm}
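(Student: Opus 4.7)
\emph{Overall strategy.} The plan is to build the $\mathcal A_{\mathbf S}$-module $V_L$ out of the Floer-theoretic intersection data between $L$ and the cocores $\bigsqcup_\sigma D_\sigma$, and then to derive the $\op{Rhom}$ isomorphism from a Cthulhu-type complex for the pair $(L_0, L_1)$, in which both Lagrangians are reinterpreted as exact fillings with negative ends on $\mathbf{S}$. Heuristically this is a Yoneda-style statement witnessing that the cocores ``generate'' closed exact Lagrangians in $W$; concretely, it will rely on a neck-stretching argument along a hypersurface separating the critical Weinstein handles from $W^{sc}$.

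\emph{Construction of $V_L$.} Since $L$ is closed and intersects each cocore $D_\sigma$ transversely, the underlying $\kk_{\mathbf S}$-module of $V_L$ is taken to be free on the set $L \cap \bigsqcup_\sigma D_\sigma$, with the idempotent $\sigma$ acting as the projector onto intersections with $D_\sigma$. This gives the first bullet point immediately, and, when $L$ is oriented, the signed intersection count gives the second. The DG $\mathcal A_{\mathbf S}$-module structure is defined by counts of rigid pseudo-holomorphic curves in the completion of $W$ with boundary on $L$ and on a cylindrical pushoff of $\bigsqcup_\sigma D_\sigma$, with a single mixed asymptote carrying the input/output of the module map and additional negative asymptotics along pure Reeb chords of $\mathbf S$ accounting for the $\mathcal A_{\mathbf S}$-action. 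The DG-module identities are then obtained from the boundaries of the corresponding $1$-dimensional moduli spaces, using the analytical package from our previous work with Golovko, specialised to this geometric configuration.

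\emph{The Floer isomorphism.} For the second half, view each $L_i$ as a Lagrangian cobordism with empty positive end and with negative end a cylindrical piece along $\mathbf S$ recording the intersections with the cocores. Set up the associated Cthulhu complex $\op{Cth}(L_0, L_1)$, whose underlying vector space decomposes in a two-step filtration as $CF^*(L_0, L_1) \oplus C_-(L_0, L_1)$, where $C_-$ is built out of words of Reeb chords of $\mathbf{S}$ connecting the negative ends together with intersections on either filling. A purely algebraic manipulation, using the bar resolutions of $V_{L_0}$ as a left and $V_{L_1}$ as a right $\mathcal A_{\mathbf S}$-module, identifies $C_-$ (up to a shift) with the standard bar model for $\op{Rhom}_{\mathcal A_{\mathbf S}}(V_{L_0}, V_{L_1})$. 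Acyclicity of the Cthulhu complex for exact fillings then forces the connecting map in the two-step filtration to be a quasi-isomorphism, delivering the desired identification $HF^*(L_0, L_1) \cong H^* \op{Rhom}_{\mathcal A_{\mathbf S}}(V_{L_0}, V_{L_1})$.

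\emph{Main obstacle.} The most delicate step will be proving that the mixed differential $C_- \to CF^*(L_0, L_1)$ in $\op{Cth}(L_0, L_1)$ coincides with the bar-model differential on $\op{Rhom}_{\mathcal A_{\mathbf S}}(V_{L_0}, V_{L_1})$. This will follow from an SFT-style neck-stretching analysis along a hypersurface separating the critical handles from $W^{sc}$: in the limit, each relevant holomorphic strip with mixed asymptotics must break into a strip with boundary on the extensions of $L_0, L_1$ outside the handles, a collection of disks inside the handles on each $L_i$ with punctures on $\mathbf S$ (contributing the module structures of $V_{L_0}$ and $V_{L_1}$), and possibly components in the symplectisation of $\partial W^{sc}$ (contributing elements of $\mathcal A_{\mathbf S}$). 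Making this rigorous will require coherent domain-dependent almost complex structures for transversality, precise index and energy estimates ruling out pathological bubbling in the handles, and a careful combinatorial matching of SFT breakings with the bar differential. This is the principal place where the cobordism Floer theory of our joint work with Golovko has to be substantially extended.
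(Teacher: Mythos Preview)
Your proposal contains a genuine geometric gap that undercuts the whole argument. You write that each closed exact Lagrangian $L_i$ should be ``viewed as a Lagrangian cobordism with empty positive end and with negative end a cylindrical piece along $\mathbf{S}$ recording the intersections with the cocores.'' But $L_i$ is \emph{closed}; it has no Legendrian end at all, and there is no direct way to regard it as a filling of copies of $\mathbf{S}$. The intersection points $L \cap D_\sigma$ lie in the interior of the critical handles, not on $\partial W^{sc}$. Without producing such a cylindrical end, the Cthulhu set-up you invoke does not apply, and the two-step filtration with $C_-$ built from Reeb chords of $\mathbf{S}$ simply does not exist for the pair $(L_0,L_1)$: for closed Lagrangians the Cthulhu complex reduces to ordinary $CF^*(L_0,L_1)$, which is certainly not acyclic.

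The paper fixes exactly this by a nontrivial geometric construction (its Theorem~\ref{thm:maingeometric}): one deforms $L$ through an exact regular Lagrangian homotopy, lifting to a Legendrian isotopy, to an \emph{immersed} $\overline{L}$ that is nicely split by $\partial W^{sc}$ into an immersed filling $\Sigma \subset W^{sc}$ and a ``standard cap'' $\mathbf{C} \subset W^{crit}$ consisting of $|L\cap D_\sigma|$ parallel copies of each core $C_\sigma$. The module $V_L$ is then defined not by curves on $L$ and the cocores, but via an augmentation $\varepsilon_\Sigma$ of the cap algebra $\mathcal{D}_{\mathbf{C}}$ induced by $\Sigma$, translated into an $\mathcal A_{\mathbf{S}}$-module through the algebraic operations of Section~\ref{sec: idempotents and dga} (minimal morsification, expansion and omission of idempotents). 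For the isomorphism, the paper does not use acyclicity of a single Cthulhu complex; instead it applies a relative exact triangle for the concatenation $\overline{L}=\Sigma\cup\mathbf{C}$, uses vanishing of wrapped Floer homology in the subcritical piece to conclude $HF^*(L_0,L_1)\cong H\op{Cth}^*_{\varepsilon_{\Sigma_0},\varepsilon_{\Sigma_1}}(\widehat{\mathbf{C}}_0,\widehat{\mathbf{C}}_1)$, and then identifies the latter with $H^*\op{Rhom}_{\mathcal A_{\mathbf S}}(V_{L_0},V_{L_1})$ via a Morse flow-tree computation of the Cthulhu complex of multiple cores and the \emph{short} resolution of the diagonal bimodule (not the bar resolution).

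Your first paragraph, by contrast, is sketching the $L\mapsto CW^*(L,\mathbf{D})$ route through the cocores. That is a legitimate alternative, but it is the approach the paper explicitly sets aside in its introduction: it requires the generation theorem for cocores and the surgery quasi-isomorphism $CW^*(\mathbf{D},\mathbf{D})\simeq\mathcal A_{\mathbf S}$, neither of which you supply, and it does not mesh with the second half of your argument.
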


These results are of course not new, since Theorem \ref{thm: main of part 2} could be derived from available technology as follows: we denote by $\mathbf{D}$ the union of the Liouville completions of the Lagrangian cocores of the critical handles in the Liouville completion of $W$, and we consider the functor from ${\mathcal F}_c(W)$ to the category of $A_\infty$ modules over the $A_\infty$ algebra $CW^*(\mathbf{D}, \mathbf{D})$ defined by $L \mapsto CW^*(L, \mathbf{D})$. This functor is cohomologically full and faithful because the completed cocores of the critical handles generate the wrapped Fukaya category of the completion of $W$ by \cite{generation} and \cite{GanatraPardonShende2}. Then the surgery formula from \cite{EffectLegendrian} gives a quasi-isomorphism between $CW^*(\mathbf{D}, \mathbf{D})$ and ${\mathcal A}_{\mathbf{S}}$, and therefore an equivalence between the category of modules over $CW^*(\mathbf{D}, \mathbf{D})$ and that of modules over ${\mathcal A}_{\mathbf{S}}$. Hence this paper is one of those cases when one embarks on a journey not for the sake of reaching the final destination, but to admire the landscape one could meet during the trip. Indeed, in order to prove Theorem \ref{thm: main}, we proved several intermediate results which might have an independent interest: we extend the Floer theory for Lagrangian cobordisms from \cite{Floer_cob} (also known as Cthulhu homology) to immersed exact Lagrangian cobordisms, we prove a relative exact sequence for the Cthulhu homology of a concatenation of cobordisms, we relate the Chekanov-Eliashberg differential graded algebra of a Legendrian sphere with the one of multiple copies of that sphere, and show that in certain cases an augmentation of the parallel copies induces a higher dimensional representation of the original sphere. 

When $L$ intersects each Lagrangian cocore transversely in at most a single point, a quasi-isomorphism between the Floer complex and the linearized Legendrian cohomology complex induced by the representation of the Chekanov-Eliashberg differential graded algebra was shown by \cite{ekholm-lekili}. This hypothesis is satisfied in particular when $L$ is a subset of the skeleton $W^{sk} \subset W$ of the Weinstein domain $W$. It was conjectured in \cite{Flexible} that any exact closed Lagrangian $L \subset (W,d\theta)$ of a Weinstein manifold can be realised as a subset of the skeleton for an appropriate choice of Weinstein structure; this is the so-called regular Lagrangian conjecture.

We currently lack the technology for proving the regularity conjecture, but when some of the geometric  intersection numbers of $L$ with the cocores are greater than one, we can at least make the $L$ coincide with perturbations of core discs inside $W^{crit}$, if we allow regular exact Lagrangian homotopies that introduce double points.  We will use contact topological techniques to define these regular homotopies and to show that they do not change the Floer theoretical properties of the objects involved.

More precisely, the strategy of the proof of Theorem \ref{thm: main} is the following. Given a closed exact Lagrangian $L \subset W$, we produced an immersed exact Lagrangian $\overline{L}$ whose Legendrian lift to the contactisation $W \times \R$ of $W$  is Legendrian isotopic to the  Legendrian lift of $L$, and which decomposes as $\overline{L}=\mathbf{C} \cup \Sigma$, where $\mathbf{C}$ is union of  Lagrangian cores perturbed by a small Hamiltonian diffeomorphism, and $\Sigma$ is an immersed exact Lagrangian filling of the  Legendrian link $\bs{\Lambda}= \partial \mathbf{C}$ consisting of several parallel copies of the attaching spheres of the critical handles. Then we prove that $\Sigma$ induces an augmentation $\varepsilon_\Sigma$ of a differential graded algebra ${\mathcal A}_{\mathbf{C}}$ associated to $\mathbf{C}$ extending the Chekanov-Eliashberg algebra of $\bs{\Lambda}$, and that $\varepsilon_\Sigma$ induces a finite dimensional dg module $V_L$ over the Chekanov-Eliashberg algebra ${\mathcal A}_{\mathbf{S}}$ of the attaching link of the critical handles. The latter result in some sense generalises to higher dimension a relationship between augmentations and higher dimensional representations that was first established by Ng and Rutherford \cite{NgSatellites} in the case of Legendrian knots and their satellites. 

If $L_0$ and $L_1$ are two closed exact Lagrangians and $\overline{L}_i= \mathbf{C}_i \cup \Sigma_i$ as above, then $HF^*(L_0, L_1)$ is isomorphic to $HF^*(\overline{L}_0, \overline{L}_1)$, for an appropriated bounding cochain, and we use a neck-stretching argument to show  that $HF^*(\overline{L}_0, \overline{L}_1)$ is isomorphic to the homology of the Cthulhu complex $\op{Cth}^*_{\varepsilon_{\Sigma_0}, \varepsilon_{\Sigma_1}}(\mathbf{C}_0, \mathbf{C}_1)$. Finally we use Morse-flow trees techniques to show that $\op{Cth}^*_{\varepsilon_{\Sigma_0}, \varepsilon_{\Sigma_1}}(\mathbf{C}_0, \mathbf{C}_1)$ is quasi-isomorphic to a chain complex computing $\op{Rhom}_{\mathcal A}(V_{L_0}, V_{L_1})$.

Finally, under some rather strong assumptions on the quasi-isomorphism class of $\mathcal{A}_{\mathbf{S}}$, we can deduce that the homology class of all exact Lagrangians are primitive and intersect each co-core with intersection number at most one for the given handle-decomposition.
\begin{cor}
  \label{cor:main}
  Assume that the dg-algebra $\mathcal{A}_{\mathbf{S}}$ for some choice of handle-decomposition of $W$ is $\Z$-graded and satisfies the property that the inclusion $\kk_{\mathbf{S}} \subset \mathcal{A}_{\mathbf{S}}$ induces an isomorphism on homology in non-positive degrees, i.e.~$H_{\le 0}(\mathcal{A}_{\mathbf{S}})=\kk_{\mathbf{S}}$. If $L$ is a closed connected exact Lagrangian with vanishing Maslov class, which thus gives rise to a finite-dimensional $\Z$-graded $\mathcal{A}_{\mathbf{S}}$-module $V_L$, then $H(V_L) \neq 0$ is supported in a single degree, and $H(\sigma \cdot V_L)$ is at most one-dimensional for any idempotent $\sigma \in \kk_{\mathbf{S}}$. In particular, the intersection numbers of $L$ and the co-cores satisfy $D_\sigma \bullet L \in \{ \pm 1,0\}$.
  \end{cor}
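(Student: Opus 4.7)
The argument combines Theorem~\ref{thm: main}, which furnishes both the isomorphism $H^*\op{Rhom}_{\mathcal{A}_{\mathbf{S}}}(V_L, V_L) \cong HF^*(L, L)$ and the Euler-characteristic identity $\chi(\sigma \cdot V_L) = D_\sigma \bullet L$, with the classical fact $HF^*(L, L) \cong H^*(L; \F)$ for a closed exact Lagrangian---supported in $[0, \dim L]$ and equal to $\F$ in degree zero because $L$ is connected---and with the hypothesis $H_{\le 0}(\mathcal{A}_{\mathbf{S}}) = \kk_{\mathbf{S}}$, which cohomologically reads as $H^*(\mathcal{A}_{\mathbf{S}})$ being supported in non-positive degrees with $H^0 = \kk_{\mathbf{S}}$. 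The vanishing Maslov class together with the $\Z$-grading on $\mathcal{A}_{\mathbf{S}}$ equip $V_L$ with a $\Z$-grading, and I pass to the minimal model $M := H(V_L)$, a finite-dimensional graded $\kk_{\mathbf{S}}$-module endowed with a minimal $A_\infty$-structure over $H^*(\mathcal{A}_{\mathbf{S}})$.

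The key structural consequence of the coconnectivity hypothesis is that every operation $\mu_n$ on $M$ with $n \ge 2$ is degree non-increasing on the module factor, since the degree shift $\sum |a_i| + (2 - n)$ is non-positive. It follows that each subspace $M^{\le d}$ is an $A_\infty$-submodule of $M$; each single-degree stratum $M^d$ supports only the $\kk_{\mathbf{S}}$-action and has $A_\infty$-endomorphism complex that vanishes in negative degrees and reduces in degree zero to $\bigoplus_\sigma \op{End}_{\F}(\sigma M^d)$; and the quotient $M \twoheadrightarrow M/M^{\le d-1}$ and inclusion $M^{\le d} \hookrightarrow M$ are closed $A_\infty$-morphisms.

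To prove that $M$ lives in a single degree I argue by contradiction: assume non-zero strata in two distinct degrees $d_{\min} < d_{\max}$, choose a non-zero $\kk_{\mathbf{S}}$-linear map $f : M/M^{\le d_{\max}-1} \to M^{\le d_{\min}}$, and extend it trivially to a closed $A_\infty$-morphism between these single-degree trivial modules. Composing with the projection and inclusion yields a closed $A_\infty$-endomorphism
\[
\phi : M \;\twoheadrightarrow\; M/M^{\le d_{\max}-1} \;\xrightarrow{f}\; M^{\le d_{\min}} \;\hookrightarrow\; M
\]
of degree $d_{\min} - d_{\max} < 0$, whose linear part restricts to $f$ on $M^{d_{\max}}$ and vanishes elsewhere. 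Minimality of $M$ then ensures $\phi$ induces $f$ on cohomology and is not null-homotopic, producing a non-zero class in $H^{d_{\min} - d_{\max}}\op{Rhom}_{\mathcal{A}_{\mathbf{S}}}(V_L, V_L) \cong H^{d_{\min} - d_{\max}}(L; \F) = 0$, the desired contradiction. The main obstacle is that a non-zero $\kk_{\mathbf{S}}$-linear $f$ exists only when the idempotent supports of $M^{d_{\max}}$ and $M^{d_{\min}}$ overlap; in the disjoint-support case I would iterate the same construction on a consecutive pair of non-vanishing strata of $M$ with overlapping supports, whose existence I plan to extract from indecomposability of $V_L$ (forced by $\dim HF^0(L, L) = 1$ since $L$ is connected) together with the existence of the fundamental class in $HF^{\dim L}(L, L)$, whose non-vanishing linear part in the minimal model must connect two strata at degree distance $\dim L$ on overlapping idempotent supports.

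Once the single-degree property is established, $\bigoplus_\sigma \op{End}_{\F}(\sigma M) = H^0 \op{Rhom}_{\mathcal{A}_{\mathbf{S}}}(V_L, V_L) = \F$ gives $\sum_\sigma (\dim_{\F} \sigma M)^2 = 1$, hence $\dim H(\sigma \cdot V_L) \le 1$ for every $\sigma$ (with equality for exactly one). The intersection-number statement $D_\sigma \bullet L \in \{\pm 1, 0\}$ then follows immediately from the formula $\chi(\sigma \cdot V_L) = D_\sigma \bullet L$ of Theorem~\ref{thm: main}, since $H(\sigma \cdot V_L)$ is at most one-dimensional and concentrated in a single degree.
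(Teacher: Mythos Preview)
Your approach is genuinely different from the paper's and has a real gap that your proposed patch does not close.

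The paper does not pass to a minimal $A_\infty$-model. Instead it first replaces $\mathcal{A}_{\mathbf{S}}$ by a quasi-isomorphic semi-projective $\kk_{\mathbf{S}}$-dga $\mathcal{B}$ that is \emph{strictly} supported in non-negative homological degrees, with $\mathcal{B}_0=\kk_{\mathbf{S}}$ and $\partial|_{\mathcal{B}_1}=0$, and pulls $V_L$ back to a strict $\mathcal{B}$-module. After a further reduction (the paper's ``cycle-splitting'' lemma) one obtains a representative $V$ with $V_{<i_0}=0$, $V_{i_0}\neq 0$ consisting of non-bounding cycles. The paper then produces a degree-\emph{zero} endomorphism that is the identity on $V_{i_0}$ and kills $V_{>i_0}$; since $H^0(\op{Rhom}_{\mathcal{B}}(V,V))=HF^0(L,L)=\F$, this must act as a non-zero scalar on $H(V)$, forcing $H(V)=V_{i_0}$. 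Crucially, this argument uses only the single input $\dim HF^0=1$, and the relevant $\kk_{\mathbf{S}}$-linear map is the \emph{identity on $V_{i_0}$}, so no question of idempotent supports ever arises.

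Your negative-degree strategy, by contrast, hinges on finding a non-zero $\kk_{\mathbf{S}}$-linear map $f\colon M^{d_{\max}}\to M^{d_{\min}}$, and you correctly identify that this fails when the idempotent supports are disjoint. Your fix via the fundamental class does not work: minimality only gives the implication ``null-homotopic $\Rightarrow \Phi_1=0$'', not its converse. The assignment $[\Phi]\mapsto\Phi_1$ from $H^*(\op{Rhom})$ to $\hom_{\kk_{\mathbf{S}}}^*(M,M)$ is generally not injective, so a non-trivial class can be represented by a closed morphism whose linear part vanishes and whose content lies entirely in the higher components $\Phi_{\ge 2}$. Worse, in precisely the situation you are trying to exclude---where every pair of distinct non-zero strata has disjoint idempotent supports---one has $\hom_{\kk_{\mathbf{S}}}^k(M,M)=0$ for all $k\neq 0$, so \emph{every} positive-degree closed morphism automatically has $\Phi_1=0$; the fundamental class therefore cannot locate overlapping supports for you. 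Bare indecomposability does not help either: higher operations $\mu_n$ can mix strata over disjoint idempotents (the left idempotent of the output is governed by the leftmost algebra input, not by the module input), so $M$ can be indecomposable while all strata sit over pairwise disjoint idempotents. The clean way out is to abandon the negative-degree construction and imitate the paper's degree-zero projection onto the bottom stratum, where the linear map in play is an identity and the idempotent obstacle disappears.
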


The article is organised as follows. In Section \ref{sec: idempotents and dga} we describe some algebraic operations on semi-projective differential graded algebras over rings of idempotents that will be used. In Section \ref{sec:short-resolution} we recall a particularly simple resolution of the diagonal biomodule over a semi-projective differential graded algebra and describe how derived morphisms of dg-modules are affected by the algebraic operations introduced in Section \ref{sec: idempotents and dga}. In Section \ref{ChekAlg} we recall the definition of the Chekanov-Eliashberg algebra and justify a frequently used limit procedure for computing them. In Section \ref{sec: immersed cobordisms} we associate a differential graded algebra to an immersed exacty Lagrangian cobordism, and study its functoriality properties. In Section \ref{sec:cthulhu-complex-with} we recall, and slightly generalise, the Cthulhu complex for Lagrangian cobordisms. In Section \ref{sec: relative exact sequence} we prove an exact triangle for the Cthulhu homology of a concatenation of cobordisms which is similar in spirit to the exact sequence for relative homology. In Section \ref{sec:geom-constr} we prove that every closed exact Lagrangian submanifold in a Weinstein manifold $W$ is regular homotopic through immersed exact Lagrangians to the concatenation of an immersed Lagrangian filling in the subcritical part of the Weinstein manifold with an immersed exact Lagrangian cap consisting of parallel copies of cores of the critical handles. In Section \ref{sec: cap algebra} we prove that the differential graded algebra of the cap is quasi-isomorphic to a differential graded algebra obtained from the Chekanov-Eliashberg algebra of the attaching link of the critical handles of $W$ by applying the algebraic operations described in Section \ref{sec: idempotents and dga}. In Section \ref{Cth(C,C)} we show that the Cthulhu complex of a cap is quasi-isomorphic to a resolution described in Section \ref{sec:short-resolution}. Section \ref{sec: morphisms of rep} puts everything together to prove Theorem \ref{thm: main}. Appendix \ref{sec:reduc-cont-stopp} explains how many technical difficulties can be circumvented by stopping the Reeb vector field in the boundary the subcritical part of $W$, at the cost, however, of losing the invariance of the Chekanov-Eliashberg algebra. Appendix \ref{appendix:morse} justifies the use of Morse-flow trees to compute some holomorphic curves in exact Lagrangian cobordisms. Finally, Appendix \ref{appendix: uffa} relates the holomorphic curves with boundary on a Lagrangian cylinder in a symplectisation to some holomorphic curves with boundary on multiple parallel copies of that Lagrangian cylinder with only a minimal use of Morse-Bott techniques (and in particular without Morse-Bott gluing). This appendix provides the counts of holomorphic curves we need in Section \ref{sec: cap algebra}, but we believe that it can be of independent interest.  

\subsection*{Acknowledgments} Chantraine was supported by the ANR COSY grant (ANR-21-CE40-0002), the ANR COSYDY grant (ANR-CE40-0014) and the Labex Centre Henri Lebesgue, ANR-11-LABX-0020-01. Dimitroglou Rizell was supported by the Knut and Alice Wallenberg grants nr KAW 2021.0191, KAW 2021.0300, KAW 2023.0294, and Swedish Research Council through the project grant nr 2020-04426, as well as grant nr 2022-06593: the Centre of Excellence in Geometry and Physics at Uppsala University. Ghiggini was supported by the ANR COSY grant nr ANR-21-CE40-0002, the Knut and Alice Wallenberg guest professorship grant nr KAW 2019.0531, and CNRS through the International Emerging Action “Catégories $A_\infty$ pour les cobordismes symplectiques”. Ghiggini would also like to thank the Mittag-Leffler institute for its hospitality in Autumn 2020, and Uppsala University for his stay during the Winter and Spring of 2021. Finally, all three authours would like to thank Tobias Ekholm for useful conversations.

\section{Differential graded algebras over idempotent rings}\label{sec: idempotents and dga}
In this section and in the next one we present some of the purely algebraic constructions and results that we will need in the remaining sections.  For these two sections we will fix the convention that vector spaces, tensor products, morphism and endomorphism spaces are to be understood over the ground field $\F$ if  they are undecorated. 

Given a finite set $\mathbf{S}$, we define a commutative algebra $\kk_{\mathbf{S}}$ with underlying vector space
$$\kk_{\mathbf{S}}=\bigoplus_{s \in \mathbf{S}} \F s$$
and multiplication induced by $s^2=s$ for every $s \in \mathbf{S}$ and $s \cdot t =0$ if $s, t \in \mathbf{S}$ and $s \ne t$. The unit of $\kk_{\mathbf{S}}$, denoted by $\mathbb{I}_S$, is the sum of all elements of $\mathbf{S}$. We will call $\kk_{\mathbf{S}}$ a {\em ring of idempotents}, or {\em idempotent ring}, over $\mathbf{S}$.

\begin{lemma}\label{tutti proiettivi}
  Every $\kk_{\mathbf S}$-bimodule is projective.
\end{lemma}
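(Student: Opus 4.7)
The plan is to decompose an arbitrary $\kk_{\mathbf S}$-bimodule into one-dimensional atoms, each of which is visibly a direct summand of the free rank-one bimodule.

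First, since $\kk_{\mathbf S}$ is commutative, a $\kk_{\mathbf S}$-bimodule $M$ is equivalently a left module over $\kk_{\mathbf S}\otimes_\F \kk_{\mathbf S}$. The elements $\{s\otimes t : s,t\in \mathbf S\}$ form a complete family of pairwise orthogonal idempotents in this ring, so $M$ splits canonically as
$$ M \;=\; \bigoplus_{s,t\in \mathbf S} sMt, $$
where each summand $sMt$ is an $\F$-vector space on which $s$ acts as the identity from the left, $t$ as the identity from the right, and every other element of $\mathbf S$ acts as zero from either side.

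Next, I would apply the same idempotent decomposition to the free rank-one bimodule itself to obtain
$$ \kk_{\mathbf S}\otimes_\F \kk_{\mathbf S} \;=\; \bigoplus_{s,t\in \mathbf S} \F\,(s\otimes t), $$
as bimodules. Each summand $P_{s,t}:=\F\,(s\otimes t)$ is a sub-bimodule of a free bimodule and is cut out by multiplication by the central idempotent $s\otimes t$, hence is a direct summand of a free bimodule and therefore projective. A direct comparison of actions identifies $P_{s,t}$ with the one-dimensional atom described in the previous paragraph.

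Finally, choosing an $\F$-basis of each $sMt$ gives a bimodule isomorphism $sMt\cong P_{s,t}^{\oplus I_{s,t}}$, and assembling these over $(s,t)$ exhibits $M$ as a direct sum of projective bimodules, hence as a direct summand of a free bimodule. I do not foresee any real obstacle: the only point that needs care is that the decomposition by the idempotents $s\otimes t$ really exists at the level of bimodules, which is immediate from the commutativity of $\kk_{\mathbf S}$ and the fact that $\kk_{\mathbf S}\otimes_\F\kk_{\mathbf S}\cong \kk_{\mathbf S\times \mathbf S}$ is a semisimple ring.
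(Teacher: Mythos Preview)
Your argument is correct. Both your proof and the paper's rest on the same idempotent decomposition $x=\sum_{s,t\in\mathbf{S}} sxt$, but the packaging differs. The paper bypasses the atom-by-atom analysis and instead writes down a single explicit splitting: the bimodule maps
\[
\iota\colon \mathcal{C}\to \kk_{\mathbf{S}}\otimes \mathcal{C}\otimes \kk_{\mathbf{S}},\qquad \iota(x)=\sum_{s,t\in\mathbf{S}} s\otimes sxt\otimes t,
\]
and $\pi(s\otimes y\otimes t)=syt$ satisfy $\pi\circ\iota=\mathrm{id}$, exhibiting $\mathcal{C}$ directly as a summand of the free bimodule $\kk_{\mathbf{S}}\otimes \mathcal{C}\otimes \kk_{\mathbf{S}}$. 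Your route is slightly more structural---you are effectively proving that $\kk_{\mathbf{S}}\otimes_\F\kk_{\mathbf{S}}$ is semisimple and then invoking that every module over a semisimple ring is projective---whereas the paper's is a one-line retraction. Both are equally valid; the paper's version avoids choosing bases, while yours makes the classification of indecomposable bimodules explicit, which is in fact what the paper proves separately in the next lemma on pure bases.
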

\begin{proof}
  We recall that a bimodule is projective if and only if it is a direct summand of a free bimodule. 
  We define linear maps $\iota \colon {\mathcal C} \to \kk_{\mathbf{S}} \otimes {\mathcal C} \otimes \kk_{\mathbf{S}}$ by $\iota(x) = \sum \limits_{s,t \in \mathbf{S}} s\otimes sxt \otimes t$ and $\pi \colon \kk_{\mathbf{S}} \otimes {\mathcal C} \otimes \kk_{\mathbf{S}} \to {\mathcal C}$ by $\pi(s \otimes y \otimes t)= syt$. It is easy to verify that $\iota$ and $\pi$ are bimodule maps; moreover $\pi \circ \iota$ is the identity on ${\mathcal C}$ because for every element $x \in {\mathcal C}$ we have $x = \sum \limits_{s,t \in \mathbf{S}} sxt$ (recall that the sum of all elements of $\mathbf{S}$ is the identity element of $\kk_{\mathbf{S}}$). Then the image of $\iota$ is a summand of the free $\kk_{\mathbf{S}}$-bimodule  $\kk_{\mathbf{S}} \otimes {\mathcal C} \otimes \kk_{\mathbf{S}}$ and is isomorphic to ${\mathcal C}$.
\end{proof}

A {\em pure basis} of a $\kk_{\mathbf{S}}$-bimodule ${\mathcal C}$ is a basis as vector space which has the property that, for every basis element $x$, there are idempotents $s_\pm(x) \in \mathbf{S}$ 
such that $s_-(x) x s_+(x)=x$ and $sxt=0$ if $(s,t) \ne (s_-(x), s_+(x)) \in \mathbf{S}^2$.
\begin{lemma}\label{pure bases exist}
  Every $\kk_{\mathbf{S}}$-bimodule admits a pure basis.\footnote{We prove this for completeness, but in the application the geometry will always provide us with a pure basis} 
\end{lemma}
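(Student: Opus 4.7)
The plan is to exploit the decomposition of $\mathbb{I}_{\mathbf{S}}$ as a sum of orthogonal idempotents to split $\mathcal{C}$ as a vector space into pieces indexed by ordered pairs in $\mathbf{S}^2$, each piece automatically consisting of ``pure'' elements. Then any vector space basis of each piece, taken in union, is a pure basis.

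More concretely, first I would note that since $\mathbb{I}_{\mathbf{S}} = \sum_{s \in \mathbf{S}} s$ acts as the identity on $\mathcal{C}$ from both sides, every $x \in \mathcal{C}$ satisfies $x = \sum_{s,t \in \mathbf{S}} sxt$. For each ordered pair $(s,t) \in \mathbf{S}^2$ set $\mathcal{C}_{s,t} := s\mathcal{C}t$. A direct check using $s^2 = s$, $t^2 = t$ and $ss' = 0 = tt'$ for distinct elements shows that if $y \in \mathcal{C}_{s,t}$, then $sy = y = yt$ while $s'y = 0$ and $yt' = 0$ whenever $s' \neq s$ or $t' \neq t$. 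This has two consequences: first, every element of $\mathcal{C}_{s,t}$ is ``pure'' with $s_-(y) = s$ and $s_+(y) = t$; second, the sum $\sum_{s,t} \mathcal{C}_{s,t}$ is direct, because applying the projector $u \cdot (-) \cdot v$ to a relation $\sum y_{s,t} = 0$ isolates $y_{u,v}$.

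Combining these two observations with the identity $x = \sum_{s,t} sxt$ yields the vector space decomposition $\mathcal{C} = \bigoplus_{s,t \in \mathbf{S}^2} \mathcal{C}_{s,t}$. To finish, I would choose an arbitrary $\F$-basis $B_{s,t}$ of each $\mathcal{C}_{s,t}$ and set $B := \bigsqcup_{s,t} B_{s,t}$. By the direct sum decomposition $B$ is a basis of $\mathcal{C}$ as a vector space, and by construction every element of $B$ satisfies the purity condition with the appropriate idempotents $s_-$ and $s_+$ read off from which summand it belongs to.

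There is no real obstacle here: the only thing to be careful about is verifying that the subspaces $\mathcal{C}_{s,t}$ really are disjoint (i.e.\ that the sum is direct), which is the one spot where the orthogonality relations $ss' = 0$ for $s \neq s'$ are used in an essential way rather than the completeness relation $\sum_s s = \mathbb{I}_{\mathbf{S}}$.
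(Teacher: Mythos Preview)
Your proposal is correct and follows exactly the same strategy as the paper: decompose $\mathcal{C} = \bigoplus_{s,t \in \mathbf{S}} s\mathcal{C}t$ using the orthogonal idempotent decomposition of $\mathbb{I}_{\mathbf{S}}$, then take a basis of each summand. The paper's proof is terser (it cites the decomposition from the proof of the preceding lemma), but your more explicit verification of directness and purity is entirely in line with it.
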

\begin{proof}
  Let ${\mathcal C}$ be a $\kk_{\mathbf{S}}$-module. In the proof of the previous lemma we showed that
  $${\mathcal C} = \bigoplus_{s,t \in \mathbf{S}} s {\mathcal C} t,$$
  so it is enough to choose a basis of every summand.
\end{proof}
One of the main objects of study in this article will be a differential graded\footnote{The grading is allowed to take values in a cyclic group and will be largely ignored throughout the article.} algebra ${\mathcal A}_{\mathbf{S}}$ over $\kk_{\mathbf{S}}$ (with differential $\partial_{\mathbf{S}}$) whose underlying graded algebra is a tensor algebra of the form
 $${\mathcal A}_{\mathbf{S}}= \bigoplus_{k=0}^\infty \underbrace{{\mathcal C} \otimes_{\kk_{\mathbf{S}}} \cdots \otimes_{\kk_{\mathbf{S}}} {\mathcal C}}_k$$
 where $\mathcal{C}$ is a graded $\kk_{\mathbf{S}}$-bimodule. Moreover, we require that there is an increasing filtration of $\kk_{\mathbf{S}}$-bimodules
 $$\emptyset = {\mathcal F}^0 \mathcal{C} \subset {\mathcal F}^1 \mathcal{C} \subset \cdots \subset
 {\mathcal F}^i\mathcal{C} \subset \ldots \subset \mathcal{C}$$
 which satisfies
 $$\bigcup_{i=0}^{\infty} {\mathcal F}^i \mathcal{C}= \mathcal{C},$$
and such the differential is strictly decreasing, i.e.
$$\partial_{\mathbf{S}}({\mathcal F}^i\mathcal{C}) \subset {\mathcal F}^{i-1}{\mathcal A}_{\mathbf{S}},$$
where
$$ \kk_{\mathbf{S}} = {\mathcal F}^0{\mathcal A}_{\mathbf{S}} \subset {\mathcal F}^1 {\mathcal A}_{\mathbf{S}} \subset \cdots \subset
{\mathcal F}^i {\mathcal A}_{\mathbf{S}} \subset \ldots \subset  {\mathcal A}_{\mathbf{S}}$$
is the induced filtration of tensor algebras. We call differential algebras of this form {\em semi-projective} over $\kk_{\mathbf{S}}$. In the applications the filtration will be induced by a geometrically defined action. 

Let $\lambda \colon \kk_{\mathbf{S}} \to \F$ be the linear function such that $\lambda(s)=1$ for every $s \in \mathbf{S}$. We define an algebra structure on the rank one free $\kk_{\mathbf{S}}$-bimodule $\kk_{\mathbf{S}}^e = \kk_{\mathbf{S}} \otimes \kk_{\mathbf{S}}$ by
$$(s \otimes s') \star (t \otimes t') = \lambda(s't) s \otimes t'$$
for all $s,s',t,t' \in \kk_{\mathbf{S}}$. There is nothing new going on here: with this algebra structure $\kk_{\mathbf{S}}^e$ is isomorphic to $\op{End}(\kk_{\mathbf{S}})$. The operation $\star$ is compatible with the $\kk_{\mathbf{S}}$-bimodule structure on $\kk_{\mathbf{S}}^e$ in the following sense.
\begin{lemma}\label{silentium}
  For every $\mathbf{f}, \mathbf{g} \in \kk_{\mathbf{S}}^e$ and $s \in \mathbf{S}$ we have $(s \mathbf{f}) \star \mathbf{g}= s(\mathbf{f} \star \mathbf{g})$,
  $\mathbf{f} \star (\mathbf{g} s)= (\mathbf{f} \star \mathbf{g})s$, and $(\mathbf{f}s) \star \mathbf{g}= \mathbf{f} \star (s \mathbf{g})$.
\end{lemma}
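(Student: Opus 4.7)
The plan is to reduce each of the three identities to a direct calculation on pure tensors. Since $\star$ is $\F$-bilinear and the bimodule actions of $\kk_{\mathbf{S}}$ are also $\F$-bilinear, it suffices to verify all three identities when $\mathbf{f} = a \otimes b$ and $\mathbf{g} = c \otimes d$ for $a,b,c,d \in \mathbf{S}$. The key computational fact to use is that for $s,t \in \mathbf{S}$ we have $st = s$ if $s=t$ and $st = 0$ otherwise, and equivalently $\lambda(st) = 1$ if $s = t$ and $0$ otherwise.

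For the first identity, I would compute $s\mathbf{f} = (sa) \otimes b$, which equals $s \otimes b$ when $s=a$ and vanishes otherwise. Expanding $(s\mathbf{f}) \star \mathbf{g}$ in the nontrivial case $s=a$ gives $\lambda(bc)\, s \otimes d$, and $s(\mathbf{f} \star \mathbf{g}) = s \bigl( \lambda(bc)\, a \otimes d \bigr) = \lambda(bc)(sa) \otimes d$ produces the same answer (and both vanish when $s \neq a$). The second identity is proved the same way by symmetry, using that the right action of $s$ only sees the second tensor factor while $\star$ outputs that factor on the right.

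The third identity is the only one with real content, because the element $s$ plays simultaneously the role of a right idempotent for $\mathbf{f}$ and a left idempotent for $\mathbf{g}$. Computing directly, $(\mathbf{f}s) \star \mathbf{g} = (a \otimes bs) \star (c \otimes d) = \lambda(bs\cdot c)\, a \otimes d = \lambda(bs)\lambda(sc)\, a \otimes d$, where in the last step I use that $bs = s$ when $s=b$ (forcing the factor $\lambda(bs)$) and then $sc = \lambda(sc)\, s$. Symmetrically, $\mathbf{f} \star (s\mathbf{g}) = (a \otimes b) \star (sc \otimes d) = \lambda(b \cdot sc)\, a \otimes d = \lambda(bs)\lambda(sc)\, a \otimes d$. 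The two expressions agree, and both are nonzero precisely when $s = b = c$, in which case they equal $a \otimes d$.

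The main obstacle is nothing more than careful bookkeeping with the idempotent relations; there is no conceptual difficulty. An alternative, slicker approach would be to identify $\kk_{\mathbf{S}}^e$ with $\operatorname{End}(\kk_{\mathbf{S}})$ as noted in the text, in which case the three identities become the statement that composition of endomorphisms is compatible with the natural $\kk_{\mathbf{S}}$-bimodule structure on $\operatorname{End}(\kk_{\mathbf{S}})$ coming from the action on $\kk_{\mathbf{S}}$. I would probably include both viewpoints, but rely on the pure-tensor computation as the actual verification.
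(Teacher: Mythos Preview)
Your proposal is correct and follows essentially the same approach as the paper: reduce to pure tensors and compute directly using the definition of $\star$. The paper is even terser --- it declares the first two identities trivial and for the third simply writes $(u \otimes u's) \star (v \otimes v') = \lambda(u'sv)\, u \otimes v' = (u \otimes u') \star (sv \otimes v')$ without expanding $\lambda(u'sv)$ further --- but your more detailed bookkeeping and the factorisation $\lambda(bsc) = \lambda(bs)\lambda(sc)$ are perfectly fine elaborations of the same idea.
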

\begin{proof}
  The first two equalities are trivial. The third one too, but here is the proof. It is enough to verify it on elements of the form $\mathbf{f}=u \otimes u'$ and $v \otimes v'$, where it becomes:
  $$(u \otimes u's) \star (v \otimes v')= \lambda(u'sv) u \otimes v' = (u \otimes u') \star (sv \otimes v').$$ 
\end{proof}
This observation has the following corollary.
\begin{cor} \label{from bimodule morphisms to algebra morphisms}
  Every (graded) bimodule map $\varepsilon \colon {\mathcal C} \to \kk_{\mathbf{S}}^e$ extends to a (graded) algebra morphism $\epsilon \colon {\mathcal A}_{\mathbf{S}} \to
  \kk_{\mathbf{S}}^e$. 
\end{cor}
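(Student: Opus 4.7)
The strategy is to apply the universal property of the tensor algebra $\mathcal{A}_{\mathbf{S}} = T_{\kk_{\mathbf{S}}}(\mathcal{C})$ after endowing $(\kk_{\mathbf{S}}^e, \star)$ with the structure of a unital $\kk_{\mathbf{S}}$-algebra. To this end I would first define $\iota_0 \colon \kk_{\mathbf{S}} \to \kk_{\mathbf{S}}^e$ by $\iota_0(s)=s\otimes s$ for $s \in \mathbf{S}$ and extend $\F$-linearly. A direct calculation from the definition of $\star$ gives $\iota_0(s) \star \iota_0(t) = \lambda(st)\,(s\otimes t)=\delta_{s,t}\iota_0(s)=\iota_0(st)$, and $\iota_0(\mathbb{I}_{\mathbf{S}})=\sum_{s}s\otimes s$ is the unit for $\star$; thus $\iota_0$ is a unital ring morphism. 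Moreover the $\kk_{\mathbf{S}}$-bimodule structure on $\kk_{\mathbf{S}}^e$ induced by $\iota_0$ via $\star$ coincides with the native one, since $(s\otimes s)\star(u\otimes v)=\lambda(su)(s\otimes v)=\delta_{s,u}(s\otimes v)=s\cdot(u\otimes v)$, and analogously for the right action.

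Next, I would define $\epsilon$ on pure tensors of $\mathcal{A}_{\mathbf{S}}$ by $\epsilon|_{\kk_{\mathbf{S}}}=\iota_0$ and
$$\epsilon(c_1 \otimes_{\kk_{\mathbf{S}}} \cdots \otimes_{\kk_{\mathbf{S}}} c_k) = \varepsilon(c_1) \star \cdots \star \varepsilon(c_k).$$
The only non-trivial point is that this formula descends from $\mathcal{C}^{\otimes_{\F} k}$ to $\mathcal{C}^{\otimes_{\kk_{\mathbf{S}}} k}$, i.e.\ that for every $s\in\kk_{\mathbf{S}}$ one has
$$\varepsilon(c_i s) \star \varepsilon(c_{i+1}) = \varepsilon(c_i) \star \varepsilon(s c_{i+1}).$$
Since $\varepsilon$ is a bimodule map, the left-hand side equals $(\varepsilon(c_i)s)\star\varepsilon(c_{i+1})$ and the right-hand side equals $\varepsilon(c_i)\star(s\,\varepsilon(c_{i+1}))$, so the required equality is precisely the third identity of Lemma~\ref{silentium}.

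Multiplicativity $\epsilon(a\cdot b)=\epsilon(a)\star\epsilon(b)$ is then immediate from the definition on pure tensors together with associativity of $\star$; unitality is built into the choice $\epsilon|_{\kk_{\mathbf{S}}}=\iota_0$; and if $\varepsilon$ is graded then $\epsilon$ is automatically graded because $\star$ is degree preserving. I do not foresee any serious obstacle in this argument: the entire content of the corollary is concentrated in Lemma~\ref{silentium}, which provides exactly the compatibility between $\star$ and the $\kk_{\mathbf{S}}$-bimodule structure that is needed to promote the universal property of the tensor algebra to the (non-standard) $\kk_{\mathbf{S}}$-algebra structure on $\kk_{\mathbf{S}}^e$.
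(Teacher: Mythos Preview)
Your proof is correct and follows the same approach as the paper: define $\epsilon$ on pure tensors by $\epsilon(x_1\otimes_{\kk_{\mathbf{S}}}\cdots\otimes_{\kk_{\mathbf{S}}}x_n)=\varepsilon(x_1)\star\cdots\star\varepsilon(x_n)$ and invoke Lemma~\ref{silentium} for well-definedness. You simply supply extra detail that the paper leaves implicit, namely the explicit unital map $\iota_0\colon \kk_{\mathbf{S}}\to\kk_{\mathbf{S}}^e$, $s\mapsto s\otimes s$, and the verification that the $\kk_{\mathbf{S}}$-bimodule structure it induces via $\star$ agrees with the native one.
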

\begin{proof}
  We define $\varepsilon(x_1 \otimes_{\kk_{\mathbf{S}}} \cdots \otimes_{\kk_{\mathbf{S}}} x_n)= \varepsilon(x_1) \star \cdots \star \varepsilon(x_n)$. This is well defined by Lemma \ref{silentium}.
\end{proof}

 We introduce three algebraic operations on semi-projective differential graded algebras that we call {\em minimal morsification}, {\em omission of the idempotents} and {\em expansion of idempotents}. 

 We start by explaining \emph{minimal morsification}, which is so-called because it loosely corresponds to adding a minimum of a Morse function on a Legendrian to the Chekanov-Eliashberg algebra.  We extend ${\mathcal A}_{\mathbf{S}}$ to a differential graded algebra ${\mathcal A}^+_{\mathbf{S}}$ as follows. We define the $\kk_{\mathbf{S}}$-bimodule ${\mathcal C}^+= \kk_{\mathbf{S}} \oplus {\mathcal C}$ and the algebra
   $${\mathcal A}_{\mathbf{S}}^+= \bigoplus_{k=0}^\infty \underbrace{{\mathcal C}^+ \otimes_{\kk_{\mathbf{S}}} \cdots \otimes_{\kk_{\mathbf{S}}} {\mathcal C}^+}_k.$$
   Elements $s \in \mathbf{S}$, when viewed as generators of the $\kk_{\mathbf{S}}$ summand of ${\mathcal C}^+$, will be denoted by $e_s$. We will also denote $e = \sum \limits _{s \in \mathbf{S}} e_s$, and with this notation in hand, we  define a differential $\partial_{\mathbf{S}}^+$ on ${\mathcal A}_{\mathbf{S}}^+$  by
   \begin{equation}\label{differential plus}
     \begin{cases}
        \partial_{\mathbf{S}}^+e_s=e_s^2 & \text{if } s \in {\mathbf{S},} \text{ and} \\
       \partial_{\mathbf{S}}^+ x = \partial_{\mathbf{S}} x + ex + xe & \text{ if } x \in {\mathcal C}. \\
      \end{cases}
   \end{equation}
    The dga $\mathcal{A}^+_{\mathbf{S}}$ is never semi-projective. In addition, note that ${\mathcal A}_{\mathbf{S}}$ sits inside ${\mathcal A}_{\mathbf{S}}^+$ as a sub-algebra, but not as a sub-complex, and therefore is not a differential graded sub-algebra. There is however a surgective dga morphism ${\mathcal A}_{\mathbf{S}}^+ \to {\mathcal A}_{\mathbf{S}}$ mapping $e_s$ to zero for every $s \in S$. 
  
   \begin{lemma}\label{the meaning of e}
   
     Giving a graded vector space $V$ the structure of a dg-module over ${\mathcal A}_{\mathbf{S}}$ is equivalent to producing an augmentation  $\rho \colon {\mathcal A}_{\mathbf{S}}^+ \to \op{End}(V)$ (i.e.\ a morphism of differential graded algebras 
     where $\op{End}(V)$ has trivial differential).
 \end{lemma}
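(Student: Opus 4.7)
The plan is to read the dga-morphism datum $\rho$ as encoding (i) the underlying $\kk_{\mathbf{S}}$-module structure, (ii) the action of $\mathcal{A}_{\mathbf{S}}$, and (iii) the differential on $V$, and conversely to reconstruct $\rho$ from these ingredients. Since the target $\op{End}(V)$ is given the zero differential, the dga condition on $\rho \colon \mathcal{A}^+_{\mathbf{S}} \to \op{End}(V)$ reduces to the requirement that $\rho$ is a unital graded algebra morphism vanishing on the image of $\partial^+_{\mathbf{S}}$.

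For the forward direction, given $\rho$, I would set $P_s := \rho(s)$, which are orthogonal idempotents in $\op{End}(V)$ summing to $\op{Id}_V$ by unitality, so that $V=\bigoplus_s P_s V$ is the decomposition coming from the $\kk_{\mathbf{S}}$-structure. The restriction $\rho|_{\mathcal{A}_{\mathbf{S}}}$ supplies a $\kk_{\mathbf{S}}$-linear graded algebra morphism. I then define $d_V:= \sum_{s\in \mathbf{S}}\rho(e_s)$. The bimodule identity $e_s= s\cdot e_s\cdot s$ in $\mathcal{A}^+_{\mathbf{S}}$ implies $\rho(e_s)=P_s\rho(e_s)P_s$, so $d_V$ commutes with every $P_s$ and preserves the decomposition. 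Applying $\rho$ to $\partial^+_{\mathbf{S}}e_s=e_s^2$ gives $\rho(e_s)^2=0$ for each $s$, whence $d_V^2=\sum_s \rho(e_s)^2=0$. Applying $\rho$ to $\partial^+_{\mathbf{S}} x=\partial_{\mathbf{S}} x+ex+xe$ for $x\in \mathcal{C}$, and using $\rho(e)=d_V$, yields the identity $\rho(\partial_{\mathbf{S}} x)=d_V\rho(x)+\rho(x)d_V$; in characteristic $2$ this is exactly the Leibniz rule that upgrades $\rho|_{\mathcal{A}_{\mathbf{S}}}$ to a dg-module structure.

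For the reverse direction, starting from a dg-module $(V,d_V,\rho_0)$ with $\rho_0\colon \mathcal{A}_{\mathbf{S}}\to \op{End}(V)$, I would define a $\kk_{\mathbf{S}}$-bimodule map $\mathcal{C}^+=\kk_{\mathbf{S}}\oplus \mathcal{C}\to \op{End}(V)$ by sending $e_s\mapsto P_sd_V$ (well-defined as a bimodule map because $d_V$ is $\kk_{\mathbf{S}}$-linear, so $P_s d_V = d_V P_s$) and sending $\mathcal{C}$ via $\rho_0$. By the universal property of the tensor algebra over $\kk_{\mathbf{S}}$ this extends uniquely to a graded algebra morphism $\rho^+\colon \mathcal{A}^+_{\mathbf{S}}\to \op{End}(V)$, exactly as in Corollary~\ref{from bimodule morphisms to algebra morphisms} (with $\op{End}(V)$ in the role of $\kk_{\mathbf{S}}^e$). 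Checking that $\rho^+$ is a chain map reduces to evaluating it on the generators: on $e_s$ one gets $\rho^+(e_s^2)=(P_sd_V)^2=P_sd_V^2=0$ because $d_V^2=0$, and on $x\in \mathcal{C}$ one gets $\rho^+(\partial^+_{\mathbf{S}} x)=\rho_0(\partial_{\mathbf{S}} x)+d_V\rho_0(x)+\rho_0(x)d_V$, which vanishes in characteristic $2$ by the Leibniz rule for the given dg-module.

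Finally I would verify mutual inversion: if one starts from $\rho$, extracts $d_V=\sum_s\rho(e_s)$ and $\rho_0=\rho|_{\mathcal{A}_{\mathbf{S}}}$, and then reconstructs $\rho^+$, the value on $e_s$ is $P_s d_V = P_s\sum_t \rho(e_t)=\rho(e_s)$ using $P_s\rho(e_t)=\delta_{st}\rho(e_t)$; conversely, starting from $(d_V,\rho_0)$ and unwinding the reconstructed $\rho^+$ returns $\sum_sP_sd_V=d_V$ and $\rho_0$. I do not anticipate a real obstacle; the only points requiring care are the correct use of the $\kk_{\mathbf{S}}$-bimodule universal property for the tensor algebra, and tracking that the characteristic-two hypothesis (fixed in the introduction) absorbs the signs in the Leibniz rule so that $d_V\rho(x)+\rho(x)d_V$ cancels against itself precisely when the module axiom holds.
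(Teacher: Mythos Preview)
Your proposal is correct and follows essentially the same approach as the paper: the paper's proof is a terse one-direction sketch defining $d(v)=\rho(e)v$ and $x\cdot v=\rho(x)v$ and citing Equation~\eqref{differential plus} for the relations, while you spell out both directions and the mutual inversion in detail. Your use of the orthogonality $\rho(e_s)\rho(e_t)=0$ for $s\ne t$ to get $d_V^2=\sum_s\rho(e_s)^2$ is exactly what underlies the paper's $d=\rho(e)$.
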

 \begin{proof}
   We need to produce a differential $d$ on $V$ and an action of ${\mathcal A}_{\mathbf{S}}$ satisfying the Leibniz rule. For the differential we define $d(v)= \rho(e)v$ for every $v \in V$ and for the action we define $x \cdot v = \rho(x)v$ for every $x \in {\mathcal A}_{\mathbf S}$. The required relations are a consequence of Equation \eqref{differential plus} and $\rho(\partial_{\mathbf{S}}^+ x)=0$ for every $x \in {\mathcal A}_{\mathbf{S}}$.
 \end{proof}

 We proceed by describing the \emph{omission of the idempotents}.
 Let ${\mathcal A}_{\slashed{\mathbf{S}}}$ the differential graded algebra which, as an algebra is
 $${\mathcal A}_{\slashed{\mathbf{S}}} = \bigoplus_{k = 0}^\infty \underbrace{{\mathcal C} \otimes \cdots \otimes {\mathcal C}}_k$$
 and whose differential is induced by the differential of ${\mathcal A}_{\mathbf{S}}$. More precisely, ${\mathcal A}_{\mathbf{S}}$ is generated, as a vector space, by $s \in \mathbf{S}$ and by composable words of length at least one in the elements of a pure basis of ${\mathcal C}$, i.e. words $x_1 \cdots x_n$ where each $x_i$ is a basis element and $s_+(x_i) = s_-(x_{i+1})$.
 On the other hand, ${\mathcal A}_{\slashed{\mathbf{S}}}$ is generated as a vector space by $1$ and by words of length at least one in the basis element, without further restrictions. So we can define a linear map $i \colon {\mathcal A}_{\mathbf{S}} \to {\mathcal A}_{\slashed{\mathbf{S}}}$ by $i(s) =1$ for all $s \in {\mathbf{S}}$ and $i(x_1 \cdots x_n)= x_1 \cdots x_n$ for every composable word. (Note that this is not a map of algebras!) With this notation at hand, we define $\partial_{\slashed{\mathbf{S}}}x = i(\partial_{\mathbf{S}}x)$ for every $x \in {\mathcal C}$ and extend it to a derivation of ${\mathcal A}_{\slashed{\mathbf{S}}}$ via the Leibniz rule.
 \begin{lemma}
   The map $\partial_{\slashed{\mathbf{S}}}$ satisfies $\partial_{\slashed{\mathbf{S}}}^2=0$.
 \end{lemma}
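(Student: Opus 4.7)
The plan is to verify $\partial_{\slashed{\mathbf{S}}}^2 = 0$ by reducing to the generators. Since $\partial_{\slashed{\mathbf{S}}}$ is by construction a derivation of the free $\F$-tensor algebra ${\mathcal A}_{\slashed{\mathbf{S}}}$, its square $\partial_{\slashed{\mathbf{S}}}^2$ is itself a derivation (the usual graded commutator calculation; signs are irrelevant in characteristic two). It therefore suffices to verify that $\partial_{\slashed{\mathbf{S}}}^2 x = 0$ for every pure basis element $x \in \mathcal{C}$.

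Fix such an $x$ and expand $\partial_{\mathbf{S}} x$ in the pure basis of ${\mathcal A}_{\mathbf{S}}$. The bimodule property of $\partial_{\mathbf{S}}$ forces any idempotent term to be a scalar multiple of the single common idempotent $s_-(x) = s_+(x)$ (which can only occur when these agree); the remaining terms are composable words $w_\beta = y_1^{\beta}\cdots y_{m_\beta}^{\beta}$ of length $\geq 1$. Because $i$ sends every idempotent to $1$ and every composable word to itself, $\partial_{\slashed{\mathbf{S}}} x = \mu \cdot 1 + \sum_\beta \nu_\beta w_\beta$ for some $\mu \in \F$. Applying $\partial_{\slashed{\mathbf{S}}}$ again, and using $\partial_{\slashed{\mathbf{S}}}(1)=0$, one gets $\partial_{\slashed{\mathbf{S}}}^2 x = \sum_\beta \nu_\beta\, \partial_{\slashed{\mathbf{S}}} w_\beta$.

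The crux is to show $\partial_{\slashed{\mathbf{S}}} w_\beta = \partial_{\mathbf{S}} w_\beta$ as equal sums of composable words, under the natural identification of the set of composable words with a common subset of pure bases of both ${\mathcal A}_{\slashed{\mathbf{S}}}$ and ${\mathcal A}_{\mathbf{S}}$. Expanding by the Leibniz rule letter by letter, composable-word contributions of $\partial_{\mathbf{S}} y_j^{\beta}$ concatenate to the same composable word in both algebras; an idempotent contribution $s$ must, by the bimodule constraint, satisfy $s = s_\pm(y_j^{\beta}) = s_+(y_{j-1}^{\beta}) = s_-(y_{j+1}^{\beta})$, so the relation $s \cdot y_{j+1}^{\beta} = y_{j+1}^{\beta}$ in ${\mathcal A}_{\mathbf{S}}$ and the substitution $i(s)=1$ in ${\mathcal A}_{\slashed{\mathbf{S}}}$ both realize the letter-deletion $y_1^{\beta}\cdots y_{j-1}^{\beta} y_{j+1}^{\beta}\cdots y_{m_\beta}^{\beta}$.

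Consequently $\sum_\beta \nu_\beta\, \partial_{\slashed{\mathbf{S}}} w_\beta$ equals $\sum_\beta \nu_\beta\, \partial_{\mathbf{S}} w_\beta$ term-by-term in the composable-word basis. Since $\partial_{\mathbf{S}}$ strictly lowers the tensor-length filtration and $\kk_{\mathbf{S}}={\mathcal F}^0{\mathcal A}_{\mathbf{S}}$, it annihilates every idempotent $s$, so $\partial_{\mathbf{S}}^2 x = \sum_\beta \nu_\beta\, \partial_{\mathbf{S}} w_\beta = 0$ in ${\mathcal A}_{\mathbf{S}}$, and therefore $\partial_{\slashed{\mathbf{S}}}^2 x = 0$. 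The main obstacle is precisely the bookkeeping behind the identity $\partial_{\slashed{\mathbf{S}}} w_\beta = \partial_{\mathbf{S}} w_\beta$ (because $i$ is not an algebra map): once one verifies that $\kk_{\mathbf{S}}$-contraction and the substitution $i(s)=1$ implement the same letter-deletion, the relation $\partial^2=0$ is transported verbatim from ${\mathcal A}_{\mathbf{S}}$ to ${\mathcal A}_{\slashed{\mathbf{S}}}$.
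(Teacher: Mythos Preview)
Your proof is correct and follows essentially the same approach as the paper's: both reduce to generators and exploit that $\partial_{\slashed{\mathbf{S}}}^2 x = i(\partial_{\mathbf{S}}^2 x)$ for pure $x$, where the paper phrases this via the injectivity of $i$ on each summand $s\,{\mathcal A}_{\mathbf{S}}\,t$ while you verify the identity by an explicit letter-by-letter bookkeeping. Your argument is in fact more detailed than the paper's terse sketch; the one minor slip is the claim that ``$\partial_{\mathbf{S}}$ strictly lowers the tensor-length filtration'' --- it need not (a generator can map to a longer word), and what you actually use, $\partial_{\mathbf{S}} s = 0$ for $s \in \kk_{\mathbf{S}}$, holds simply because the differential is $\kk_{\mathbf{S}}$-bilinear.
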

 \begin{proof}
    We observe that ${\mathcal A}_{\mathbf{S}}$ decomposes as chain complex as ${\mathcal A}_{\mathbf{S}} = \bigoplus \limits_{s,t \in \mathbf{S}} s {\mathcal A}_{\mathbf{S}} t$, and moreover $i$ is injective when restricted to each summand $s {\mathcal A}_{\mathbf{S}} t$. Thus $\partial_{\mathbf{S}}^2 x =0$ for every $x$ in a pure basis of ${\mathcal C}$ implies that $\partial_{\slashed{\mathbf{S}}}^2 x =0$.
  \end{proof}

  We consider the differential graded algebra ${\mathcal A}_{\slashed{\mathbf{S}}}^{\mathit{ext}} =  {\mathcal A}_{\slashed{\mathbf{S}}} \otimes \kk_{\mathbf{S}}^e$ where the multiplication is component by component and the differential on $\kk_{\mathbf{S}}^e$ is trivial. Equivalently, we can regard ${\mathcal A}_{\slashed{\mathbf{S}}}^{\mathit{ext}}$ as the vector space generated by words $s \otimes x_1 \cdots x_n \otimes t$ where $s,t \in \mathbf{S}$ and $x_i$ are element of a pure basis of ${\mathcal C}$; then the multiplication is defined by
  $$(s \otimes x_1 \cdots x_n \otimes t)(s' \otimes y_1 \cdots y_m \otimes t')= \lambda(ts') s\otimes x_1 \cdots x_n y_1 \cdots y_m \otimes t'$$
  and the differential by $\partial_{\slashed{\mathbf{S}}}^{\textit{ext}}(s\otimes x \otimes t)= s \otimes \partial_{\slashed{\mathbf{S}}} x \otimes t$. The following lemma is immediate from the definitions. 

  \begin{lemma}
    The map $j \colon {\mathcal A}_{\mathbf{S}} \to {\mathcal A}_{\slashed{\mathbf{S}}}^{\mathit{ext}}$ which is defined on a pure basis of ${\mathcal C}$ by $j(x)= s_-(x) \otimes x \otimes s_+(x)$ is an inclusion of differential graded algebras.
  \end{lemma}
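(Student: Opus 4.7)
The plan is to verify the three required properties one by one: that $j$ extends uniquely to a unital algebra homomorphism defined on all of ${\mathcal A}_{\mathbf{S}}$, that it intertwines the two differentials, and that it is injective. Each check reduces to an inspection on a pure basis and is routine, as the paper already advertises.

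For the first step, I extend $j$ by setting $j(s) = s \otimes 1 \otimes s$ for every $s \in \mathbf{S}$ and by declaring it multiplicative on composable words. The unit of $\kk_{\mathbf{S}}^e$ under the $\star$-product is $\sum_{s \in \mathbf{S}} s \otimes s$, hence the unit of ${\mathcal A}_{\slashed{\mathbf{S}}}^{\mathit{ext}}$ is $\sum_{s \in \mathbf{S}} s \otimes 1 \otimes s$, so this choice makes $j$ unital. The key consistency check is that if $x,y$ lie in the pure basis with $s_+(x) \ne s_-(y)$ --- so that $x \otimes_{\kk_{\mathbf{S}}} y = 0$ in ${\mathcal A}_{\mathbf{S}}$ --- then the product
$$j(x) \cdot j(y) = \lambda\bigl(s_+(x)\, s_-(y)\bigr)\, s_-(x) \otimes xy \otimes s_+(y)$$
vanishes in ${\mathcal A}_{\slashed{\mathbf{S}}}^{\mathit{ext}}$. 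The prefactor $\lambda(s_+(x) s_-(y))$ is precisely $\delta_{s_+(x),\, s_-(y)}$, so the vanishing is automatic; in the composable case the same formula reproduces $j(x \otimes_{\kk_{\mathbf{S}}} y)$, and iteration handles words of arbitrary length.

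For the second step, it suffices by the Leibniz rule (on both sides) to verify $\partial_{\slashed{\mathbf{S}}}^{\mathit{ext}} \circ j = j \circ \partial_{\mathbf{S}}$ on pure basis elements $x \in {\mathcal C}$. By the bimodule structure, $\partial_{\mathbf{S}} x$ is a linear combination of composable words $y_1 \cdots y_n$ with $s_-(y_1)=s_-(x)$ and $s_+(y_n)=s_+(x)$, so the multiplicativity just established gives
$$j(\partial_{\mathbf{S}} x) = \sum s_-(x) \otimes y_1 \cdots y_n \otimes s_+(x) = s_-(x) \otimes (i \circ \partial_{\mathbf{S}} x) \otimes s_+(x),$$
and this last expression coincides with $\partial_{\slashed{\mathbf{S}}}^{\mathit{ext}}(j(x))$ by the definitions of $\partial_{\slashed{\mathbf{S}}}$ and $\partial_{\slashed{\mathbf{S}}}^{\mathit{ext}}$.

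For the third step, distinct composable pure basis words map to distinct basis elements $s_-(x_1) \otimes x_1 \cdots x_n \otimes s_+(x_n)$ of ${\mathcal A}_{\slashed{\mathbf{S}}}^{\mathit{ext}}$, so $j$ is injective on a basis. There is no substantive obstacle here; the single conceptual observation worth isolating is that the scalar $\lambda(s_+(x) s_-(y))$ in the product of ${\mathcal A}_{\slashed{\mathbf{S}}}^{\mathit{ext}}$ is exactly the bookkeeping device converting the composability-constrained tensor product over $\kk_{\mathbf{S}}$ in ${\mathcal A}_{\mathbf{S}}$ into the unrestricted tensor product over $\F$ in ${\mathcal A}_{\slashed{\mathbf{S}}}$, and it is this matching that makes the embedding work.
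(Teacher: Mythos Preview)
Your proof is correct and simply unpacks what the paper leaves implicit when it declares the lemma ``immediate from the definitions.'' The paper offers no argument beyond that remark, and your verification of multiplicativity, compatibility with the differential, and injectivity on a pure basis is exactly the routine check being alluded to.
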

  
  \begin{lemma} \label{from slashed to normal}
   Every augmentation $\slashed{\varepsilon} \colon {\mathcal A}_{\slashed{\mathbf{S}}} \to \F$ induces an augmentation $\varepsilon \colon {\mathcal A}_{\mathbf{S}} \to \kk_{\mathbf{S}}^e$ which, on ${\mathcal C}$, is defined by
   \begin{equation} \label{lyon}
     \varepsilon(x) = \sum \limits_{s_-, s_+ \in \mathbf{S}} \slashed{\varepsilon}(s_- x s_+) s_- \otimes s_+.
   \end{equation}
 \end{lemma}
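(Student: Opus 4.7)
My plan is to factor the construction through the differential graded algebra $\mathcal{A}_{\slashed{\mathbf{S}}}^{\mathit{ext}}$, using the dga inclusion $j\colon \mathcal{A}_{\mathbf{S}} \to \mathcal{A}_{\slashed{\mathbf{S}}}^{\mathit{ext}}$ already provided by the preceding lemma. First I would define an auxiliary map
\[ \slashed{\varepsilon}^{\mathit{ext}} \colon \mathcal{A}_{\slashed{\mathbf{S}}}^{\mathit{ext}} \to \kk_{\mathbf{S}}^e, \qquad \slashed{\varepsilon}^{\mathit{ext}}(s \otimes w \otimes t) = \slashed{\varepsilon}(w)\, s \otimes t, \]
and then show $\varepsilon = \slashed{\varepsilon}^{\mathit{ext}} \circ j$ on $\mathcal{C}$. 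On a pure basis element $x$ with idempotents $s_-(x), s_+(x)$, one has $\slashed{\varepsilon}(s_- x s_+)=0$ unless $(s_-,s_+)=(s_-(x),s_+(x))$, so the sum in \eqref{lyon} collapses to $\slashed{\varepsilon}(x)\, s_-(x)\otimes s_+(x) = \slashed{\varepsilon}^{\mathit{ext}}(j(x))$, which matches. By linearity this identification extends to all of $\mathcal{C}$.

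Next I would verify that $\slashed{\varepsilon}^{\mathit{ext}}$ is a morphism of differential graded algebras. Multiplicativity is a direct computation: for two generators $s\otimes w \otimes t$ and $s'\otimes w' \otimes t'$, the product on the left side yields $\lambda(ts')\slashed{\varepsilon}(ww')\, s\otimes t'$, while applying $\star$ to the images produces $\slashed{\varepsilon}(w)\slashed{\varepsilon}(w')\lambda(ts')\, s\otimes t'$, and the two agree because $\slashed{\varepsilon}$ is already an algebra morphism. Compatibility with the differentials is immediate, since $\partial_{\slashed{\mathbf{S}}}^{\mathit{ext}}$ acts only on the middle factor while $\kk_{\mathbf{S}}^e$ carries the trivial differential and $\slashed{\varepsilon}\circ \partial_{\slashed{\mathbf{S}}} = 0$ by assumption.

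The bimodule property of $\varepsilon\colon \mathcal{C} \to \kk_{\mathbf{S}}^e$ still needs a line of verification (so that Corollary \ref{from bimodule morphisms to algebra morphisms} applies to provide a unique extension to $\mathcal{A}_{\mathbf{S}}$): given $u\in \mathbf{S}$, the identity $s_-u = \delta_{s_-,u}\,u$ reduces both $\varepsilon(ux)$ and $u\,\varepsilon(x)$ to $\sum_{s_+} \slashed{\varepsilon}(uxs_+)\, u\otimes s_+$, and the right-action check is symmetric. Once this is in place, the extension coincides on words with $\varepsilon(x_1)\star\cdots\star\varepsilon(x_n) = \slashed{\varepsilon}^{\mathit{ext}}(j(x_1)\cdots j(x_n)) = \slashed{\varepsilon}^{\mathit{ext}}\circ j$, so $\varepsilon$ is the composition of two dga morphisms and is therefore itself a dga morphism, i.e. an augmentation in the sense of Lemma \ref{the meaning of e}.

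I do not anticipate any serious obstacle: the only mildly delicate point is checking that the definition \eqref{lyon} given in terms of a pure basis actually agrees with $\slashed{\varepsilon}^{\mathit{ext}}\circ j$ on arbitrary bimodule elements, and this is handled by the orthogonality of the idempotents which forces all but one summand in \eqref{lyon} to vanish on each basis vector. The whole proof is essentially a matter of unpacking the definitions of $j$, of the multiplication $\star$ on $\kk_{\mathbf{S}}^e$, and of the differential $\partial_{\slashed{\mathbf{S}}}^{\mathit{ext}}$.
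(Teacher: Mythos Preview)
Your proposal is correct and follows exactly the same approach as the paper: define $\slashed{\varepsilon}^{\mathit{ext}} = \slashed{\varepsilon} \otimes \op{Id}$ and set $\varepsilon = \slashed{\varepsilon}^{\mathit{ext}} \circ j$. The paper's proof consists of only these two sentences, so your version simply supplies the routine verifications (multiplicativity, compatibility with differentials, and agreement with formula~\eqref{lyon} on pure basis elements) that the paper leaves implicit.
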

 \begin{proof}
We extend $\slashed{\varepsilon}$ to an augmentation  $\slashed{\varepsilon}^{\mathit{ext}}= \slashed{\varepsilon} \otimes \op{Id} \colon {\mathcal A}_{\slashed{\mathbf{S}}}^{\mathit{ext}} \to \kk^e_{\mathbf{S}}$. Then $\varepsilon =  \slashed{\varepsilon}^{\mathit{ext}} \circ j$.
 \end{proof}

 Finally we describe the \emph{expansion of idempotents}.
 If $\mathbf{C}$ is a finite set and $i \colon \mathbf{C} \to \mathbf{S}$ is a map, there is a non-unital algebra map $i^* \colon \kk_{\mathbf{S}} \to \kk_{\mathbf{C}}$ which is defined by
 $$i^*(s)= \sum \limits_{c \in i^{-1}(s)} c$$
 for every $s \in \mathbf{S}$. Here non-unital means ``not necessarily unital''; unitality fails when $i$ is not surjective. 
 We define the $\kk_{\mathbf{C}}$-bimodule
 $${\mathcal C}_{\mathbf{C}}= \kk_{\mathbf{C}} \otimes_{\kk_{\mathbf{S}}} {\mathcal C} \otimes_{\kk_{\mathbf{S}}} \kk_{\mathbf{C}}$$
and the $\kk_{\mathbf{C}}$-algebra 
$${\mathcal A}_{\mathbf{C}} = \bigoplus_{k=0}^\infty \underbrace{{\mathcal C}_{\mathbf{C}} \otimes_{\kk_{\mathbf{C}}} \cdots \otimes_{\kk_{\mathbf{C}}} {\mathcal C}_{\mathbf{C}}}_k.$$

Of course ${\mathcal A}_{\mathbf{C}}$ is {\em a fortiori} a $\kk_{\mathbf{S}}$-algebra and we define a $\kk_{\mathbf{S}}$-algebra morphism $T \colon {\mathcal A}_{\mathbf{S}} \to {\mathcal A}_{\mathbf{C}}$ by extending the $\kk_{\mathbf{S}}$-bimodule map
$${\mathcal C} \to {\mathcal C}_{\mathbf{C}}, \quad x \mapsto \mathbb{I}_{\mathbf{C}} \otimes_{\kk_{\mathbf{S}}} x \otimes_{\kk_{\mathbf{S}}} \mathbb{I}_{\mathbf{C}}.$$

We define a derivation  $\partial_{\mathbf{C}}$ on ${\mathcal A}_{\mathbf{C}}$ by extending
\begin{equation} \label{pizza}
  \partial_{\mathbf{C}}(c^- \otimes_{\kk_{\mathbf{S}}} x \otimes_{\kk_{\mathbf{S}}} c^+) = c^- \cdot T(\partial_{\mathbf{S}} x) \cdot c^+
\end{equation}
on every $x \in {\mathcal C}$.

\begin{lemma}
  The map $\partial_{\mathbf{C}}$ is a differential which makes $T$ a (non-unital) morphism of differential graded algebras.
\end{lemma}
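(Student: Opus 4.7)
The lemma asserts two things: (a) $\partial_{\mathbf{C}}^2 = 0$, and (b) $T$ intertwines the two differentials, i.e.\ $\partial_{\mathbf{C}} \circ T = T \circ \partial_{\mathbf{S}}$. The overall strategy is to exploit that both differentials are derivations on tensor algebras, so identities between derivations can be checked on generators, namely on ${\mathcal C} \subset {\mathcal A}_{\mathbf{S}}$ and on ${\mathcal C}_{\mathbf{C}} \subset {\mathcal A}_{\mathbf{C}}$, after setting $\partial_{\mathbf{C}}$ to vanish on the degree zero part $\kk_{\mathbf{C}}$.

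First I would verify that the formula \eqref{pizza} gives a well-defined $\kk_{\mathbf{C}}$-bimodule map ${\mathcal C}_{\mathbf{C}} \to {\mathcal A}_{\mathbf{C}}$, i.e.\ that the value $c^- \cdot T(\partial_{\mathbf{S}} x) \cdot c^+$ is unchanged when an idempotent $s \in \mathbf{S}$ is slid across either tensor in $\kk_{\mathbf{C}} \otimes_{\kk_{\mathbf{S}}} {\mathcal C} \otimes_{\kk_{\mathbf{S}}} \kk_{\mathbf{C}}$. This reduces to the two identities $T(s \cdot a) = i^*(s) \cdot T(a)$ (which holds because $T$ is a $\kk_{\mathbf{S}}$-algebra map by construction) and $\partial_{\mathbf{S}}(s \cdot x) = s \cdot \partial_{\mathbf{S}}(x)$ (which holds because every element of $\mathbf{S}$ is a cycle in $\kk_{\mathbf{S}} \subset {\mathcal A}_{\mathbf{S}}$, so the Leibniz rule collapses). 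Once well-defined, the Leibniz rule uniquely extends $\partial_{\mathbf{C}}$ to a derivation on the tensor algebra ${\mathcal A}_{\mathbf{C}}$, declaring $\partial_{\mathbf{C}} = 0$ on the scalar part $\kk_{\mathbf{C}}$.

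For part (b), on a pure basis element $x \in {\mathcal C}$ with boundary idempotents $s_\pm = s_\pm(x)$, I would first rewrite $T(x) = \mathbb{I}_{\mathbf{C}} \otimes x \otimes \mathbb{I}_{\mathbf{C}} = i^*(s_-) \otimes x \otimes i^*(s_+)$ by absorbing idempotents across the tensors over $\kk_{\mathbf{S}}$. Applying \eqref{pizza} then yields $\partial_{\mathbf{C}}(T(x)) = i^*(s_-) \cdot T(\partial_{\mathbf{S}} x) \cdot i^*(s_+)$, and since $\partial_{\mathbf{S}}(x) = s_- \partial_{\mathbf{S}}(x) s_+$ already, this simplifies to $T(\partial_{\mathbf{S}} x)$. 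Both $T \circ \partial_{\mathbf{S}}$ and $\partial_{\mathbf{C}} \circ T$ satisfy the same $T$-twisted Leibniz rule into ${\mathcal A}_{\mathbf{C}}$ (because $T$ is an algebra map and both $\partial$'s are derivations), so agreement on the generating subspace ${\mathcal C}$ propagates to agreement on all of ${\mathcal A}_{\mathbf{S}}$ by a short induction on word length.

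Part (a) then follows from (b): on a generator $c^- \otimes x \otimes c^+ \in {\mathcal C}_{\mathbf{C}}$, the Leibniz rule together with $\partial_{\mathbf{C}}(c^\pm) = 0$ gives $\partial_{\mathbf{C}}^2(c^- \otimes x \otimes c^+) = c^- \cdot \partial_{\mathbf{C}}(T(\partial_{\mathbf{S}} x)) \cdot c^+$, which by (b) equals $c^- \cdot T(\partial_{\mathbf{S}}^2 x) \cdot c^+ = 0$. Since the square of a (graded) derivation is again a derivation, vanishing on generators forces $\partial_{\mathbf{C}}^2 = 0$ throughout ${\mathcal A}_{\mathbf{C}}$. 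The only step requiring any care is the well-definedness of \eqref{pizza} over the tensor product, which is slightly delicate because $i^*$ need not be unital when $i$ is not surjective; the remaining verifications are a direct unpacking that leverages the compatibility of $T$ and $\partial_{\mathbf{S}}$ with the idempotent structure of $\kk_{\mathbf{S}}$.
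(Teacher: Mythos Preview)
Your proposal is correct and follows essentially the same approach as the paper: first establish $\partial_{\mathbf{C}} \circ T = T \circ \partial_{\mathbf{S}}$ from the defining formula, then deduce $\partial_{\mathbf{C}}^2 = 0$ on generators via $T(\partial_{\mathbf{S}}^2 x) = 0$ together with $\kk_{\mathbf{C}}$-linearity. Your version is more thorough in that you explicitly verify well-definedness of \eqref{pizza} over the tensor product, which the paper leaves implicit, but the logical skeleton is the same.
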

\begin{proof}
 By Equation \eqref{pizza} one can see that $\partial_{\mathbf{C}} \circ T = T \circ \partial_{\mathbf{S}}$, and therefore $\partial_{\mathbf C}^2(\mathbb{I}_{\mathbf C} \otimes_{\kk_{\mathbf{S}}} x \otimes_{\kk_{\mathbf{S}}} \mathbb{I}_{\mathbf C})=0$. Then $\partial_{\mathbf C}^2(c_- \otimes_{\kk_{\mathbf{S}}} x \otimes_{\kk_{\mathbf{S}}} \otimes c_+)=0$ because $\partial_{\mathbf{C}}$ is $\kk_{\mathbf{C}}$-linear.
\end{proof}
\begin{cor} \label{from aug to rep 2}
 An augmentation $\varepsilon \colon {\mathcal A}_{\mathbf{C}} \to \kk_{\mathbf{C}}^e$ induces a differential algebra morphism $\rho \colon {\mathcal A}_{\mathbf{S}} \to \kk_{\mathbf{C}}^e$ by $\rho = \varepsilon \circ T$. 
\end{cor}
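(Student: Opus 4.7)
The plan is to argue that this corollary is an immediate formal consequence of the previous lemma together with the defining property of an augmentation, so the proof will be essentially a one-line verification presented in two steps.

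First, I would observe that by the previous lemma, $T \colon {\mathcal A}_{\mathbf{S}} \to {\mathcal A}_{\mathbf{C}}$ is a (non-unital) morphism of differential graded algebras, i.e.\ $T$ is multiplicative and satisfies $\partial_{\mathbf{C}} \circ T = T \circ \partial_{\mathbf{S}}$. An augmentation $\varepsilon \colon {\mathcal A}_{\mathbf{C}} \to \kk_{\mathbf{C}}^e$ is, by convention, a morphism of differential graded algebras where $\kk_{\mathbf{C}}^e$ is endowed with the trivial differential; in particular $\varepsilon$ is multiplicative and $\varepsilon \circ \partial_{\mathbf{C}} = 0$.

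Second, I would simply compose: the map $\rho := \varepsilon \circ T$ is multiplicative because both $T$ and $\varepsilon$ are, and it satisfies
\[
\rho \circ \partial_{\mathbf{S}} \;=\; \varepsilon \circ T \circ \partial_{\mathbf{S}} \;=\; \varepsilon \circ \partial_{\mathbf{C}} \circ T \;=\; 0,
\]
which is the statement that $\rho$ intertwines $\partial_{\mathbf{S}}$ with the trivial differential on $\kk_{\mathbf{C}}^e$. Hence $\rho$ is a morphism of differential graded algebras, as claimed. There is no genuine obstacle here; the only point worth flagging is the mild book-keeping about (non-)unitality of $T$, which is harmless since the composition with any $\kk_{\mathbf{C}}$-linear augmentation still yields a well-defined $\kk_{\mathbf{S}}$-algebra morphism via the structure map $i^* \colon \kk_{\mathbf{S}} \to \kk_{\mathbf{C}}$.
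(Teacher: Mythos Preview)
Your proposal is correct and matches the paper's approach: the paper states this corollary immediately after the lemma establishing that $T$ is a dga morphism and gives no separate proof, indicating that it is indeed just the observation that the composition of dga morphisms is a dga morphism. Your two-step verification spells out exactly this.
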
 

Let ${\mathcal A}^+_{\slashed{\mathbf{C}}}$ be the differential graded algebra obtained from ${\mathcal A}_{\mathbf{S}}$ and a map $i \colon \mathbf{C} \to \mathbf{S}$ by the following operations:
\begin{enumerate}
  \item a minimal morsification to ${\mathcal A}_{\mathbf{S}}$ to obtain ${\mathcal A}_{\mathbf{S}}^+$,
  \item an expansion of idempotents to ${\mathcal A}_{\mathbf{S}}^+$ to obtain ${\mathcal A}^+_{{\mathbf{C}}}$, and finally
  \item an omission of idempotents to ${\mathcal A}^+_{{\mathbf{C}}}$ to obtain ${\mathcal A}^+_{\slashed{\mathbf{C}}}$.
  \end{enumerate}
  Note that the operations are not commutative, and should be taken in this order, even if the order is not recorded in the notation.
\begin{cor}\label{the point of all this mess}
  An augmentation $\slashed{\varepsilon} \colon {\mathcal A}^+_{\slashed{\mathbf{C}}} \to \F$ induces a  dg-module structure over ${\mathcal A}_{\mathbf{S}}$ on $\kk_{\mathbf{C}}$.
\end{cor}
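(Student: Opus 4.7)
The plan is simply to invert, one by one, the three algebraic operations used to build ${\mathcal A}^+_{\slashed{\mathbf{C}}}$ out of ${\mathcal A}_{\mathbf{S}}$, by appealing to the dual results established earlier in the section. The natural reading of Lemma \ref{from slashed to normal}, Corollary \ref{from aug to rep 2} and Lemma \ref{the meaning of e} is that each of these converts an augmentation of the ``larger'' algebra into extra structure on a vector space over the ``smaller'' algebra; running them in the order opposite to the construction is therefore the path to take.

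First, I would undo the omission of idempotents. The construction in Lemma \ref{from slashed to normal} is purely formal: it depends on nothing but the existence of a pure basis, the pairing of left and right idempotents $s_{\pm}(x)$, and the inclusion of dgas $j$ into the extension by $\kk^e$. All three ingredients make sense for ${\mathcal A}^+_{\mathbf{C}}$, once one equips ${\mathcal C}^+_{\mathbf{C}}$ with the pure basis obtained by enlarging a pure basis of ${\mathcal C}_{\mathbf{C}}$ by the elements $e_c$ (with $s_-(e_c)=s_+(e_c)=c$). Running the same argument therefore produces from $\slashed{\varepsilon}$ an augmentation $\varepsilon\colon {\mathcal A}^+_{\mathbf{C}} \to \kk_{\mathbf{C}}^e$ of differential graded algebras, given on generators by the formula \eqref{lyon}.

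Next, I would undo the expansion of idempotents. The construction of $\partial_{\mathbf{C}}$ was engineered precisely so that the tautological map $T\colon {\mathcal A}_{\mathbf{S}}\to {\mathcal A}_{\mathbf{C}}$ becomes a (non-unital) dga morphism, and this same recipe, applied after the minimal morsification, yields a dga morphism $T^+\colon {\mathcal A}^+_{\mathbf{S}}\to {\mathcal A}^+_{\mathbf{C}}$ sending $e_s$ to $\sum_{c\in i^{-1}(s)} e_c$ and $x\in {\mathcal C}$ to $\mathbb{I}_{\mathbf{C}}\otimes_{\kk_{\mathbf{S}}} x \otimes_{\kk_{\mathbf{S}}} \mathbb{I}_{\mathbf{C}}$. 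Composing with $\varepsilon$ as in Corollary \ref{from aug to rep 2} gives a morphism of differential graded algebras $\rho = \varepsilon \circ T^+ \colon {\mathcal A}^+_{\mathbf{S}} \to \kk_{\mathbf{C}}^e$, where $\kk_{\mathbf{C}}^e$ carries the trivial differential.

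Finally, I would undo the minimal morsification via Lemma \ref{the meaning of e}. Under the canonical identification $\kk_{\mathbf{C}}^e\cong \op{End}(\kk_{\mathbf{C}})$ (as an $\F$-vector space $\kk_{\mathbf{C}}$ has dimension $|\mathbf{C}|$), the map $\rho$ is exactly an augmentation ${\mathcal A}^+_{\mathbf{S}} \to \op{End}(\kk_{\mathbf{C}})$, which by Lemma \ref{the meaning of e} is equivalent data to a dg-module structure on $\kk_{\mathbf{C}}$ over ${\mathcal A}_{\mathbf{S}}$. The only non-bookkeeping issue is verifying that the analogue of Lemma \ref{from slashed to normal} for ${\mathcal A}^+_{\mathbf{C}}$ really works: since $\mathcal{A}^+_{\mathbf{C}}$ is explicitly not semi-projective, one should check that the inclusion $j^+$ and the identity $\partial \circ j = j \circ \partial_{\slashed{\mathbf{C}}}^+$ (on generators) go through regardless; this is a straightforward verification on the pure-basis elements $e_c$ and $c_-\otimes_{\kk_{\mathbf{S}}} x \otimes_{\kk_{\mathbf{S}}} c_+$, and is the only step that requires any care.
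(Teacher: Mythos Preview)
Your proposal is correct and follows essentially the same three-step route as the paper's proof: undo the omission of idempotents via Lemma~\ref{from slashed to normal} to obtain $\varepsilon\colon {\mathcal A}^+_{\mathbf{C}}\to \kk_{\mathbf{C}}^e\cong\op{End}(\kk_{\mathbf{C}})$, undo the expansion of idempotents via Corollary~\ref{from aug to rep 2} to obtain $\rho\colon {\mathcal A}^+_{\mathbf{S}}\to\op{End}(\kk_{\mathbf{C}})$, and then undo the minimal morsification via Lemma~\ref{the meaning of e}. Your added remarks---that Lemma~\ref{from slashed to normal} and Corollary~\ref{from aug to rep 2} were literally stated for ${\mathcal A}_{\mathbf{S}}$ and ${\mathcal A}_{\mathbf{C}}$ rather than their $+$-versions, and that one must check their proofs transport to the morsified setting using the obvious pure basis and the map $T^+$---are legitimate points of care that the paper leaves implicit, but they do not change the argument.
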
 
\begin{proof}
 The  augmentation $\slashed{\varepsilon}$ induces an augmentation $\varepsilon \colon {\mathcal A}^+_{{\mathbf{C}}} \to \op{End}(\kk_{\mathbf{C}})$ by Lemma 2.6. The augmentation $\varepsilon$ induces an augmentation $\rho \colon {\mathcal A}^+_{\mathbf{S}} \to \op{End}(\kk_{\mathbf{C}})$ by Corollary \ref{from aug to rep 2}. Finally $\rho$ induces a dg-module structure over ${\mathcal A}_{\mathbf{S}}$ on $\kk_{\mathbf{C}}$ by Lemma \ref{the meaning of e}.
\end{proof}

  Now we introduce a total ordering of $i^{-1}(s)$ for every $s \in \mathbf{S}$.  This may look as an unnatrural choice from the algebraic point of view, but it is motivated by the geometry. The elements in $i^{-1}(s)$ will be denoted by $s_1, \ldots, s_{k_s}$ according to their ordering. We denote by ${\mathcal I}_{\mathbf{C}}$ the bilateral ideal of ${\mathcal A}^+_{{\mathbf{C}}}$ generated by elements $e_{s_i} \otimes_{\kk_{\mathbf{S}}} e_{s_j}$ with $j \ge i$, and by ${\mathcal I}_{\slashed{\mathbf{C}}}$
  the similarly defined ideal of ${\mathcal A}^+_{\slashed{\mathbf{C}}}$.
  \begin{lemma} \label{ho finito le labelle}
    ${\mathcal I}_{\mathbf{C}}$ and ${\mathcal I}_{\slashed{\mathbf{C}}}$ are differential ideals.
\end{lemma}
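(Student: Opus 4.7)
The plan is to reduce both claims, via the Leibniz rule, to the single observation that the triangularity condition ``right index $\ge$ left index'' is preserved when one inserts a middle index. Since $\mathcal{I}_{\mathbf{C}}$ and $\mathcal{I}_{\slashed{\mathbf{C}}}$ are bilateral ideals and the differentials are derivations, it suffices to check that the differential of each generator of the ideal lies in the corresponding ideal; once established for generators, $\partial(a \cdot g \cdot b)$ is automatically in the ideal by Leibniz (no signs, since we are in characteristic two), because the outer two terms have the generator $g$ as a factor.

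To compute $\partial_{\mathbf{C}}(e_{s_i} \otimes_{\kk_{\mathbf{S}}} e_{s_j})$ for $j \ge i$, I interpret this generator as the element $s_i \otimes_{\kk_{\mathbf{S}}} e_s \otimes_{\kk_{\mathbf{S}}} s_j$ of the $\kk_{\mathbf{C}} \otimes_{\kk_{\mathbf{S}}} \kk_{\mathbf{C}}$ summand of $\mathcal{C}^+_{\mathbf{C}}$, where $s = i(s_i) = i(s_j)$. Equation~\eqref{pizza} combined with $\partial^+_{\mathbf{S}} e_s = e_s^2$ gives $\partial_{\mathbf{C}}(e_{s_i} \otimes_{\kk_{\mathbf{S}}} e_{s_j}) = s_i \cdot T(e_s) \cdot T(e_s) \cdot s_j$. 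Expanding $T(e_s) = \sum_{s_a, s_b \in i^{-1}(s)} e_{s_a} \otimes_{\kk_{\mathbf{S}}} e_{s_b}$ and collapsing the $\kk_{\mathbf{C}}$-tensor (which identifies the two adjacent $\mathbf{C}$-idempotents) then yields
\[
\partial_{\mathbf{C}}(e_{s_i} \otimes_{\kk_{\mathbf{S}}} e_{s_j}) = \sum_{k=1}^{k_s} (e_{s_i} \otimes_{\kk_{\mathbf{S}}} e_{s_k}) \otimes_{\kk_{\mathbf{C}}} (e_{s_k} \otimes_{\kk_{\mathbf{S}}} e_{s_j}).
\]

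The crux of the argument is then a dichotomy on the middle index $k \in \{1,\dots,k_s\}$: either $k \ge i$, in which case the left factor $e_{s_i} \otimes_{\kk_{\mathbf{S}}} e_{s_k}$ is itself a generator of $\mathcal{I}_{\mathbf{C}}$; or $k < i \le j$, in which case $j \ge k$ and the right factor $e_{s_k} \otimes_{\kk_{\mathbf{S}}} e_{s_j}$ is a generator. In either case the $k$-th summand lies in $\mathcal{I}_{\mathbf{C}}$, hence so does the whole sum. The argument for $\mathcal{I}_{\slashed{\mathbf{C}}}$ follows the identical pattern, because $\partial^+_{\slashed{\mathbf{C}}}$ is obtained from $\partial_{\mathbf{C}}$ by omitting the $\kk_{\mathbf{C}}$-idempotents at the ends of composable words, a substitution that preserves the sum over $k$ and the case analysis verbatim. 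The only real obstacle is the notational bookkeeping required to unpack the generators after three successive algebra operations (morsification, expansion, omission); once this is untangled, preservation of the triangular condition under ``insertion of a middle index'' is manifest.
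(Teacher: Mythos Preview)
Your proof is correct and follows essentially the same approach as the paper's. Both reduce to checking that $\partial^+_{\mathbf{C}}$ of a generator $e_{s_i}\otimes_{\kk_{\mathbf{S}}}e_{s_j}$ lies in the ideal, and both use the dichotomy on the inserted middle index: if a consecutive pair ever fails the condition ``right index $\ge$ left index'' then the sequence is strictly decreasing, contradicting $j\ge i$. The only cosmetic difference is that the paper writes the differential as a product $e_{s_{i_0}}^{s_{i_1}}\otimes_{\kk_{\mathbf{C}}}\cdots\otimes_{\kk_{\mathbf{C}}}e_{s_{i_\ell}}^{s_{i_{\ell+1}}}$ of unspecified length and argues by pigeonhole on the whole index sequence, whereas you explicitly compute from $\partial^+_{\mathbf{S}}e_s=e_s^2$ that only length-two words (one middle index $k$) actually occur; your version is slightly more concrete but the content is the same.
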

\begin{proof}
  Since the differential on ${\mathcal A}^+_{\slashed{\mathbf{C}}}$ is induced by the differential on ${\mathcal A}^+_{{\mathbf{C}}}$ it is enough to prove the lemma for ${\mathcal I}_{\mathbf{C}}$.
  For simplicity of notation we write $e_{s_i} \otimes_{\kk_{\mathbf{S}}} e_{s_j} = e_{s_i}^{s_j}$.
  We need to show that $\partial^+_{\mathbf{C}}(e_{s_i}^{s_j}) \in {\mathcal I}_{\mathbf{C}}$ whenever $e_{s_i}^{s_j}\in {\mathcal I}_{\mathbf{C}}$. By the definition of $\partial^+_{\mathbf{C}}$, we see that $\partial^+_{\mathbf{C}}(e_{s_i}^{s_j})$ is a sum of elements of the form
  $e_{s_{i_0}}^{s_{i_1}} \otimes_{\kk_{\mathbf{C}}}e_{s_{i_1}}^{i_2} \otimes_{\kk_{\mathbf{C}}} \cdots  \otimes_{\kk_{\mathbf{C}}} e_{s_{\ell-1}}^{s_\ell} \otimes_{\kk_{\mathbf{C}}} e_{s_{\ell}}^{s_{\ell+1}}$ with $i_0=i$ and $i_{l+1}=j$. Since $i \ge i$, the sequence $i_0, \ldots, i_{\ell+1}$ cannot be strictly increasing, and therefore there must be some $i$ for which $e_{s_i}^{s_{i+1}} \in {\mathcal I}_{\mathbf{C}}$.
\end{proof}
Note that ${\mathcal A}^+_{\slashed{\mathbf{C}}} / {\mathcal I}_{\slashed{\mathbf{C}}}$ can be obtained also by omitting the idempotents in ${\mathcal A}^+_{\mathbf{C}} / {\mathcal I}_{\mathbf{C}}$; i.e.\ quotient and omission of idempotents commute. An important feature of these quotients is that they are semi-free. Putting all these constructions together, we obtain the following corollary.
\begin{cor}\label{the point of all this mess 2}
  An augmentation $\widetilde{\slashed{\varepsilon}} \colon {\mathcal A}^+_{\slashed{\mathbf{C}}}/ {\mathcal I}_{\slashed{\mathbf{C}}} \to \F$ induces a  dg-module structure over ${\mathcal A}_{\mathbf{S}}$ on $\kk_{\mathbf{C}}$.
\end{cor}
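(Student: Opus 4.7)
The plan is to reduce the statement to Corollary \ref{the point of all this mess} by exhibiting the quotient projection as a differential graded algebra morphism. By Lemma \ref{ho finito le labelle}, ${\mathcal I}_{\slashed{\mathbf{C}}}$ is a differential ideal of ${\mathcal A}^+_{\slashed{\mathbf{C}}}$, so the projection
$$\pi \colon {\mathcal A}^+_{\slashed{\mathbf{C}}} \longrightarrow {\mathcal A}^+_{\slashed{\mathbf{C}}} / {\mathcal I}_{\slashed{\mathbf{C}}}$$
is a surjective morphism of differential graded algebras. Pre-composing with $\pi$ therefore turns the given augmentation $\widetilde{\slashed{\varepsilon}}$ into an honest augmentation
$$\slashed{\varepsilon} \ := \ \widetilde{\slashed{\varepsilon}} \circ \pi \colon {\mathcal A}^+_{\slashed{\mathbf{C}}} \longrightarrow \F,$$
to which Corollary \ref{the point of all this mess} applies verbatim and produces the desired dg-module structure over ${\mathcal A}_{\mathbf{S}}$ on $\kk_{\mathbf{C}}$.

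For the sake of making the resulting module structure transparent, one can unwind the chain of previous constructions. Lemma \ref{from slashed to normal}, suitably transposed to the expansion-of-idempotents setup, upgrades $\slashed{\varepsilon}$ to an augmentation $\varepsilon \colon {\mathcal A}^+_{{\mathbf{C}}} \to \kk_{\mathbf{C}}^e \cong \op{End}(\kk_{\mathbf{C}})$; Corollary \ref{from aug to rep 2} applied to the morphism $T \colon {\mathcal A}^+_{\mathbf{S}} \to {\mathcal A}^+_{{\mathbf{C}}}$ composed with $\varepsilon$ yields a differential graded algebra morphism $\rho \colon {\mathcal A}^+_{\mathbf{S}} \to \op{End}(\kk_{\mathbf{C}})$; and finally Lemma \ref{the meaning of e} translates $\rho$ into the dg-module structure on $\kk_{\mathbf{C}}$ with differential $d v = \rho(e) v$ and action $x \cdot v = \rho(T(x)) v$ for $x \in {\mathcal A}_{\mathbf{S}}$.

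There is no real obstacle: the whole statement is a formal consequence of the fact that a surjection of differential graded algebras pulls augmentations back to augmentations. The only point that needed genuine verification was already supplied by Lemma \ref{ho finito le labelle}, namely that the ideal ${\mathcal I}_{\slashed{\mathbf{C}}}$ is preserved by the differential, so that the quotient inherits a differential graded algebra structure and the composition $\widetilde{\slashed{\varepsilon}} \circ \pi$ is automatically a chain map and an algebra map.
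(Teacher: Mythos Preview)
Your proof is correct and is exactly the paper's argument: compose $\widetilde{\slashed{\varepsilon}}$ with the quotient map (which is a dga morphism since ${\mathcal I}_{\slashed{\mathbf{C}}}$ is a differential ideal by Lemma~\ref{ho finito le labelle}) to obtain an augmentation of ${\mathcal A}^+_{\slashed{\mathbf{C}}}$, then invoke Corollary~\ref{the point of all this mess}. The additional unwinding you provide is accurate but not needed for the corollary itself.
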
 
\begin{proof}
  The augmentation $\widetilde{\slashed{\varepsilon}}$ induces an augmentation $\slashed{\varepsilon} \colon {\mathcal A}^+_{\slashed{\mathbf{C}}} \to \F$ by composition with the quotient map, and then we apply Corollary \ref{the point of all this mess}.
\end{proof}

It is pehaps the time that we explained the reason why the reader had to endure all this abstract nonsense. In the application, ${\mathcal A}_{\kk_{\mathbf{S}}}$ will be the Chekanov-Eliashberg algebra of the attaching link of the critical handles of $W$ and $\mathbf{S}$ will be the set of connected components of the link. Every compact exact Lagrangian in $W$ will produce, by a geometric construction, a Legendrian link $\bs{\Lambda}$ such that every connected component of $\bs{\Lambda}$ is a Reeb pushoff of a connected component of the attaching link, an immersed filling $\Sigma$ and an immersed cap $\mathbf{C}$, both for $\bs{\Lambda}$. Then $\kk_{\mathbf{C}}$ will be the ring with idempotents corresponding to the connected components of $\bs{\Lambda}$ and ${\mathcal A}^+_{\kk_\mathbf{C}}/ {\mathcal I}_{\mathbf{C}}$ will be quasi-isomorphic to a Chekanov-Eliashberg-type dga associated to the cap $\mathbf{C}$ which we call, without much imagination, the cap algebra. The filling $\Sigma$ will produce an augmentation of the cap algebra which, via Corollary \ref{the point of all this mess}, will produce a dg-module over the Chekanov-Eliashberg algebra of the attaching link.

\section{The short resolution}
\label{sec:short-resolution}
Let ${\mathcal A}_{\mathbf{S}}$ be a differential graded algebra over an idempotent ring $\kk_{\mathbf{S}}$. A \emph{semi-pojective} dg-bimodule over ${\mathcal A}_{\mathbf{S}}$ is a dg-bimodule ${\mathcal P}$ over ${\mathcal A}_{\mathbf{S}}$ together with a filtration by dg-submodules so that the associated graded dg-module is made of copies of ${\mathcal A}_{\mathbf{S}} \otimes_{\kk_{\mathbf{S}}} M \otimes_{\kk_{\mathbf{S}}} {\mathcal A}_{\mathbf{S}}$ for a $k_{\mathbf{S}}$ bimodule $M$ with the trivial differential induced by the ${\mathcal A}_{\mathbf{S}}$-factors in the tensor product (in \cite[Appendix C8]{DrinfeldDG} such are called semi-free). If ${\mathcal B}$ is a bimodule over ${\mathcal A}_{\mathbf{S}}$, a semi-projective resolution of ${\mathcal B}$ is a semi-projective bimodule ${\mathcal P}$ together with a bimodule morphism ${\mathcal P} \to {\mathcal B}$ inducing an isomorphism in homology.

In this section we describe a particularly simple semi-projective resolution of the diagonal bimodule which exists when ${\mathcal A}_{\mathbf{S}}$ is a semi-projective algebra, and  that will allows us to compute $R\hom_{{\mathcal A}_{\mathbf{S}}}(V_0, V_1)$ given two dg-modules $V_0$ and $V_1$ over ${\mathcal A}_{\mathbf{S}}$. Such resolution was previously considered by Keller \cite[Proposition 3.7]{Keller:Deformed} and Legout \cite{Legout:Calabi}. We will call it the {\em short resolution} and denote it by ${\mathcal S}_{\mathcal A_{\mathbf{S}}}$.

From now on we assume that ${\mathcal A}_{\mathbf{S}}$ is semi-projective and is generated by a $\kk_{\mathbf{S}}$-bimodule ${\mathcal C}$.
We form the ${\mathcal A}_{\mathbf{S}}$-bimodule ${\mathcal A}_{\mathbf{S}} \otimes_{\kk_{\mathbf{S}}}
  {\mathcal C} \otimes_{\kk_{\mathbf{S}}} {\mathcal A}_{\mathbf{S}}$ and for every element
$c \in {\mathcal C}$ we denote $\hat{c}= \mathbb{I}_{\mathbf{S}} \otimes_{\kk_{\mathbf{S}}} c \otimes_{\kk_{\mathbf{S}}} \mathbb{I}_{\mathbf{S}}$ and, consequently, $\mathbf{a} \hat{c} \mathbf{b}= \mathbf{a} \otimes_{\kk_{\mathbf{S}}} c \otimes_{\kk_{\mathbf{S}}} \mathbf{b}$.
  Let $\Delta \colon {\mathcal A}_{\mathbf{S}} \to {\mathcal A}_{\mathbf{S}} \otimes_{\kk_{\mathbf{S}}} {\mathcal C} \otimes_{\kk_{\mathbf{S}}} {\mathcal A}_{\mathbf{S}}$ be the linear map which is uniquely determined by the following two properties:
\begin{itemize}
\item  $\Delta(c)= \hat{c}$ for every $c \in {\mathcal C}$, and
\item $\Delta(\mathbf{ab})= \mathbf{a} \Delta (\mathbf{b})+ \Delta(\mathbf{a}) \mathbf{b}$ (i.e. $\Delta$ is a derivation). 
\end{itemize}
As a shorthand notation, we will write $\hat{\mathbf{a}}= \Delta(\mathbf{a})$ for all $\mathbf{a} \in {\mathcal A}$. We observe that $\hat{s} = 0$ for all $s \in \mathbf{S}$.
We define a $\kk_{\mathbf{S}}$-linear map $D$ on ${\mathcal A}_{\mathbf{S}} \otimes_{\kk_{\mathbf{S}}} {\mathcal C} \otimes_{\kk_{\mathbf{S}}} {\mathcal A}_{\mathbf{S}}$ by
$$D(\mathbf{a} \hat{c} \mathbf{b}) = (\partial_{\mathbf{S}} \mathbf{a}) \hat{c} 
\mathbf{b} + \mathbf{a} (\widehat{\partial_{\mathbf{S}}c}) \mathbf{b} + \mathbf{a} \hat{c} (\partial_{\mathbf{S}} \mathbf{b}).$$
It is easy to see that $D$ is a differential which makes ${\mathcal A}_{\mathbf{S}} \otimes_{\kk_{\mathbf{S}}} {\mathcal C} \otimes_{\kk_{\mathbf{S}}} {\mathcal A}_{\mathbf{S}}$ a dg-bimodule over ${\mathcal A}_{\mathbf{S}}$.

Next we consider the dg-bimodule ${\mathcal A}_{\mathbf{S}} \otimes_{\kk_{\mathbf{S}}} {\mathcal A}_{\mathbf{S}}$ with the usual K\"unneth differential and define  dg--bimodule  morphisms
$$\mu \colon {\mathcal A}_{\mathbf{S}} \otimes_{\kk_{\mathbf{S}}} {\mathcal A}_{\mathbf{S}} \to {\mathcal A}_{\mathbf{S}}$$
by $\mu(\mathbf{a} \otimes_{\kk_{\mathbf{S}}} \mathbf{b}) = \mathbf{a} \mathbf{b}$ and
$$\iota \colon {\mathcal A}_{\mathbf{S}} \otimes_{\kk_{\mathbf{S}}} {\mathcal C} \otimes_{\kk_{\mathbf{S}}} {\mathcal A}_{\mathbf{S}} \to {\mathcal A}_{\mathbf{S}} \otimes_{\kk_{\mathbf{S}}} {\mathcal A}_{\mathbf{S}}$$
by $\iota(\mathbf{a} \otimes_{\kk_{\mathbf{S}}} x \otimes_{\kk_{\mathbf{S}}} \mathbf{b})=
\mathbf{a} x \otimes_{\kk_{\mathbf{S}}} \mathbf{b} + \mathbf{a} \otimes_{\kk_{\mathbf{S}}} x  \mathbf{b}$ (i.e.\ $\iota(\mathbf{a} \hat{x} \mathbf{b}) = \mathbf{a} x \otimes_{\kk_{\mathbf{S}}} \mathbf{b} + \mathbf{a} \otimes_{\kk_{\mathbf{S}}} x  \mathbf{b}$).
The only nontrivial verification is that $\iota$ is a chain map, which follows from the relation $\iota(\Delta(\mathbf{x}))= \mathbf{x} \otimes_{\kk_{\mathbf{S}}} \mathbb{I}_{\mathbf{S}} + \mathbb{I}_{\mathbf{S}}  \otimes_{\kk_{\mathbf{S}}} \mathbf{x}$ for every $\mathbf{x} \in 
{\mathcal A}_{\mathbf{S}}$, which implies $\iota(\mathbf{a} (\widehat{\partial_{\mathbf{S}} c}) \mathbf{b}) = \mathbf{a} (\partial_{\mathbf{S}} c) \otimes_{\kk_{\mathbf{S}}} \mathbf{b}+  \mathbf{a} \otimes_{\kk_{\mathbf{S}}} (\partial_{\mathbf{S}} c)  \mathbf{b}$.

\begin{lemma}\label{toward the short resolution}
The sequence 
$$0 \to {\mathcal A}_{\mathbf{S}} \otimes_{\kk_{\mathbf{S}}} {\mathcal C} \otimes_{\kk_{\mathbf{S}}} {\mathcal A}_{\mathbf{S}} \xrightarrow{\iota} {\mathcal A}_{\mathbf{S}} \otimes_{\kk_{\mathbf{S}}} {\mathcal A}_{\mathbf{S}} \xrightarrow{\mu} {\mathcal A}_{\mathbf{S}} \to 0$$ is exact.
\end{lemma}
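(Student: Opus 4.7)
The strategy is to exhibit an explicit $\F$-linear contracting homotopy, which will simultaneously give injectivity of $\iota$, identify $\ker\mu$ with $\op{im}\iota$, and confirm surjectivity of $\mu$.

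The easy observations come first. The map $\mu$ is surjective because it is split by the $\F$-linear map $\mathbf{a} \mapsto \mathbb{I}_{\mathbf{S}} \otimes_{\kk_{\mathbf{S}}} \mathbf{a}$. The composition $\mu \circ \iota$ vanishes because in characteristic $2$,
\[ \mu(\iota(\mathbf{a}\hat{c}\mathbf{b})) = \mathbf{a}c\mathbf{b} + \mathbf{a}c\mathbf{b} = 0. \]

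For the main content, I would fix a pure basis of $\mathcal{C}$ (by Lemma~\ref{pure bases exist}) so that every element of $\mathcal{A}_{\mathbf{S}}$ is a linear combination of composable words $c_1 \cdots c_n$ with $s_+(c_i)=s_-(c_{i+1})$. On these bases, define two $\F$-linear maps
\begin{align*}
h_0 &\colon \mathcal{A}_{\mathbf{S}} \to \mathcal{A}_{\mathbf{S}} \otimes_{\kk_{\mathbf{S}}} \mathcal{A}_{\mathbf{S}}, \quad \mathbf{a} \mapsto \mathbb{I}_{\mathbf{S}} \otimes_{\kk_{\mathbf{S}}} \mathbf{a}, \\
h_1 &\colon \mathcal{A}_{\mathbf{S}} \otimes_{\kk_{\mathbf{S}}} \mathcal{A}_{\mathbf{S}} \to \mathcal{A}_{\mathbf{S}} \otimes_{\kk_{\mathbf{S}}} \mathcal{C} \otimes_{\kk_{\mathbf{S}}} \mathcal{A}_{\mathbf{S}},
\end{align*}
where $h_1(c_1 \cdots c_p \otimes_{\kk_{\mathbf{S}}} c_{p+1} \cdots c_n) = \sum_{i=1}^{p} (c_1 \cdots c_{i-1})\,\widehat{c_i}\,(c_{i+1} \cdots c_n)$, taken to be $0$ when $p=0$. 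The matching idempotents along the pure basis ensure that $h_1$ is well defined on $\otimes_{\kk_{\mathbf{S}}}$.

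A direct computation, in which the two terms of $\iota$ applied to each summand of $h_1$ form a telescoping sum that collapses in characteristic $2$, yields the three splitting identities
\[ \mu \circ h_0 = \id, \qquad h_0 \circ \mu + \iota \circ h_1 = \id, \qquad h_1 \circ \iota = \id. \]
The third identity gives injectivity of $\iota$; the second combined with $\mu\circ\iota=0$ identifies $\ker\mu$ with $\op{im}\iota$; the first reconfirms surjectivity of $\mu$. Together they prove exactness.

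The main delicate point is not the calculations themselves, but checking that the homotopy is well defined modulo the $\kk_{\mathbf{S}}$ relations, and that the telescoping cancellations in the verification of $h_0\mu + \iota h_1 = \id$ precisely reduce each basis element $c_1 \cdots c_p \otimes c_{p+1}\cdots c_n$ to itself plus $\mathbb{I}_{\mathbf{S}} \otimes c_1\cdots c_n$, which is exactly the extra term produced by $h_0\mu$. Using a pure basis is what keeps the bookkeeping clean.
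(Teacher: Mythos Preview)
Your proof is correct. Note that your map $h_1$ is nothing but $\mathbf{a}\otimes_{\kk_{\mathbf{S}}}\mathbf{b}\mapsto \widehat{\mathbf{a}}\,\mathbf{b}=\Delta(\mathbf{a})\cdot\mathbf{b}$, and your second identity $h_0\mu+\iota h_1=\id$ unpacks exactly to the paper's key observation $\iota(\widehat{\mathbf{a}}\,\mathbf{b})=\mathbf{a}\otimes_{\kk_{\mathbf{S}}}\mathbf{b}+\mathbb{I}_{\mathbf{S}}\otimes_{\kk_{\mathbf{S}}}\mathbf{ab}$, which the paper also uses to show $\ker\mu\subset\op{Im}\iota$. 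So for surjectivity of $\mu$ and middle exactness your argument and the paper's are essentially the same, only packaged differently. The genuine difference is in the injectivity of $\iota$: the paper argues directly by contradiction via a word-length maximality argument on $\sum_i\mathbf{a}_i\hat{c}_i\mathbf{b}_i$, whereas you exhibit a left inverse $h_1\circ\iota=\id$. Your route is cleaner and more conceptual (it shows the whole sequence is $\F$-linearly split at once), while the paper's hands-on argument has the small virtue of not depending on characteristic~$2$ for the telescoping cancellations---though of course your homotopy works over any field with appropriate signs.
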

\begin{proof}
  It is evident that $\mu$ is surjective and that $\mu \circ \iota =0$, which implies that $\op{Im}(\iota) \subset \ker(\mu)$, so it remains to prove that $\mu$ is injective and $\ker(\mu) \subset \op{Im}(\iota)$. To prove that $\ker(\mu) \subset \op{Im}(\iota)$ we observe that $\iota(\widehat{\mathbf{a}} \mathbf{b})= \mathbf{a} \otimes_{\kk_{\mathbf{S}}} \mathbf{b} + \mathbb{I}_{\mathbf{S}} \otimes_{\kk_{\mathbf{S}}} \mathbf{ab}$ for every $\mathbf{a}, \mathbf{b} \in {\mathcal A}_{\mathbf{S}}$, and therefore for every $A \in \ker (\mu)$ there exists $B \in \op{Im}(\iota)$ such that $A+B= \mathbb{I}_{\mathbf{S}} \otimes_{\kk_{\mathbf{S}}} \mathbf{b}$ for some $\mathbf{b} \in {\mathcal A}_{\mathbf{S}}$. Moreover $\mu(\mathbb{I}_{\mathbf{S}} \otimes_{\kk_{\mathbf{S}}} \mathbf{b})= 0$, which implies $\mathbf{b}=0$, and therefore $\ker (\mu) \subset \op{Im}(\iota)$.

  Now we consider an element $A \in {\mathcal A}_{\mathbf{S}} \otimes_{\kk_{\mathbf{S}}} \mathcal{C}  \otimes_{\kk_{\mathbf{S}}} {\mathcal A}_{\mathbf{S}}$  such that $\iota(A)=0$ and write it (uniquely) as a linear combination $A=\sum_i \mathbf{a}_i \hat{c}_i \mathbf{b}_i$ where $\mathbf{a}_i$ and $\mathbf{b}_j$ are words in a pure basis of ${\mathcal C}$ and $c_i$ is an element of the same basis. Recall that $\mathcal{A}_{\mathbf{S}}$ is endowed with the word-length filtration from the tensor product. In the sum $A=\sum_i \mathbf{a}_i \hat{c}_i \mathbf{b}_i$ we consider the terms whose factors $\mathbf{a}_i$ have maximal length and index those by $j$. The terms $\mathbf{a}_jc_j \otimes_{\kk_{\mathbf{S}}} \mathbf{b}_j$ are the ones of maximal length on the left in $\iota(A)$, and therefore $\sum_j \mathbf{a}_jc_j \otimes_{\kk_{\mathbf{S}}} \mathbf{b}_j=0$ because they cannot be cancelled by any other term. Now we regroup the sum putting together the terms with the same $\mathbf{b}_j$, and get
  $$\sum_{k,l} \mathbf{a}_{k,l} c_{k,l} \otimes_{\kk_{\mathbf{S}}} \mathbf{b}_k =0.$$ 
Note that this is just a relabelling of the $\mathbf{a}_j$, $\mathbf{b}_j$ and $c_j$, but the term of this sum are the same as the terms of the previous one. Thus the sum should vanish term by term because the $\mathbf{b}_j$ are elements of a basis of ${\mathcal A}_{\mathbf{S}}$, and therefore  $\sum_l  \mathbf{a}_{k,l} c_{k,l}=0$ for every $k$.  But the  $\mathbf{a}_{k,l} c_{k,l}$ are also elements of a basis of ${\mathcal A}_{\mathbf{S}}$, and this is a contradiction. This proves that $\iota$ is injective and therefore the lemma.
\end{proof}
We define ${\mathcal S} = \op{Cone}(\iota)$ and the dg-bimodue morphism $m \colon {\mathcal S} \to {\mathcal A}_{\mathbf{S}}$
as $\mu$ on the ${\mathcal A}_{\mathbf{S}} \otimes_{\kk_{\mathbf{S}}} {\mathcal A}_{\mathbf{S}}$ summand and $0$ on the ${\mathcal A}_{\mathbf{S}} \otimes_{\kk_{\mathbf{S}}} {\mathcal C} \otimes_{\kk_{\mathbf{S}}} {\mathcal A}_{\mathbf{S}}$ summand. We will denote the differential on ${\mathcal S}_{{\mathcal A}_{\mathbf{S}}}$ by $\mathfrak{d}$.

\begin{lemma} \label{the short resolution at last}
  ${\mathcal S} \xrightarrow{m} {\mathcal A}_{\mathbf{S}}$ is a
    semi-pojective resolution of ${\mathcal A}_{\mathbf{S}}$.
\end{lemma}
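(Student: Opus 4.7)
The statement splits into two independent assertions: that $m$ is a quasi-isomorphism, and that $\mathcal{S}$ is semi-projective in the sense defined at the start of the section. I would handle them separately.

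For the quasi-isomorphism, I would appeal directly to Lemma \ref{toward the short resolution}. Since $\iota$ is a degree-wise injection of chain complexes with cokernel $\mathcal{A}_{\mathbf{S}}$ realised by $\mu$, the standard principle that the mapping cone of an injection is canonically quasi-isomorphic to its cokernel applies. Concretely, the projection $\mathcal{S} = \op{Cone}(\iota) \to \op{coker}(\iota) = \mathcal{A}_{\mathbf{S}}$ that is $\mu$ on the $\mathcal{A}_{\mathbf{S}} \otimes_{\kk_{\mathbf{S}}} \mathcal{A}_{\mathbf{S}}$ summand and $0$ on the shifted $\mathcal{A}_{\mathbf{S}} \otimes_{\kk_{\mathbf{S}}} \mathcal{C} \otimes_{\kk_{\mathbf{S}}} \mathcal{A}_{\mathbf{S}}$ summand is exactly the map $m$ from the statement. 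Its kernel is the acyclic total complex of the identity map of $\mathcal{A}_{\mathbf{S}} \otimes_{\kk_{\mathbf{S}}} \mathcal{C} \otimes_{\kk_{\mathbf{S}}} \mathcal{A}_{\mathbf{S}}$, so $m$ is a quasi-isomorphism of dg-bimodules.

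For semi-projectivity, the plan is to exhibit an exhaustive filtration of $\mathcal{S}$ by dg sub-bimodules whose associated graded pieces have the required form. The natural candidate, built from the given filtration $\{\mathcal{F}^i \mathcal{C}\}$, is
$$
\mathcal{F}^0 \mathcal{S} = \mathcal{A}_{\mathbf{S}} \otimes_{\kk_{\mathbf{S}}} \mathcal{A}_{\mathbf{S}},
\qquad
\mathcal{F}^i \mathcal{S} = \bigl(\mathcal{A}_{\mathbf{S}} \otimes_{\kk_{\mathbf{S}}} \mathcal{F}^i \mathcal{C} \otimes_{\kk_{\mathbf{S}}} \mathcal{A}_{\mathbf{S}}\bigr) \oplus \bigl(\mathcal{A}_{\mathbf{S}} \otimes_{\kk_{\mathbf{S}}} \mathcal{A}_{\mathbf{S}}\bigr) \quad (i \geq 1),
$$
which clearly exhausts $\mathcal{S}$ since $\bigcup_i \mathcal{F}^i \mathcal{C} = \mathcal{C}$. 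Stability under the cone differential reduces to two observations. First, for $c \in \mathcal{F}^i \mathcal{C}$, we have $\partial_{\mathbf{S}} c \in \mathcal{F}^{i-1} \mathcal{A}_{\mathbf{S}}$, so writing $\partial_{\mathbf{S}} c$ as a sum of monomials in letters from $\mathcal{F}^{i-1} \mathcal{C}$ and using the derivation property of $\Delta$, one sees that in every summand of $\widehat{\partial_{\mathbf{S}} c}$ the middle $\mathcal{C}$-slot is occupied by a letter from $\mathcal{F}^{i-1} \mathcal{C}$; hence the term $\mathbf{a}(\widehat{\partial_{\mathbf{S}} c})\mathbf{b}$ of $D$ drops filtration by one. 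Second, the connecting term $\iota$ sends $\mathcal{A}_{\mathbf{S}} \otimes_{\kk_{\mathbf{S}}} \mathcal{F}^i \mathcal{C} \otimes_{\kk_{\mathbf{S}}} \mathcal{A}_{\mathbf{S}}$ straight into $\mathcal{F}^0 \mathcal{S}$, so also strictly decreases filtration for $i \geq 1$. The remaining pieces of the differential (the outer $(\partial_{\mathbf{S}} \mathbf{a})\hat{c}\mathbf{b}$ and $\mathbf{a}\hat{c}(\partial_{\mathbf{S}} \mathbf{b})$ terms, and the K\"unneth differential on $\mathcal{A}_{\mathbf{S}} \otimes_{\kk_{\mathbf{S}}} \mathcal{A}_{\mathbf{S}}$) preserve the filtration. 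Consequently the associated graded is
$$
\op{gr}^0 \mathcal{S} = \mathcal{A}_{\mathbf{S}} \otimes_{\kk_{\mathbf{S}}} \kk_{\mathbf{S}} \otimes_{\kk_{\mathbf{S}}} \mathcal{A}_{\mathbf{S}},
\qquad
\op{gr}^i \mathcal{S} = \mathcal{A}_{\mathbf{S}} \otimes_{\kk_{\mathbf{S}}} \bigl(\mathcal{F}^i \mathcal{C}/\mathcal{F}^{i-1}\mathcal{C}\bigr) \otimes_{\kk_{\mathbf{S}}} \mathcal{A}_{\mathbf{S}} \quad (i \geq 1),
$$
with differential induced purely by $\partial_{\mathbf{S}}$ on the outer $\mathcal{A}_{\mathbf{S}}$-factors, as required by the definition of semi-projective.

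The only real subtlety is the bookkeeping in observation one, namely that $\Delta$ applied to an element of the subalgebra generated by $\mathcal{F}^{i-1} \mathcal{C}$ produces elements whose middle slot is populated by letters in $\mathcal{F}^{i-1} \mathcal{C}$; everything else is formal manipulation of mapping cones and associated gradeds, so I do not expect any further obstacle.
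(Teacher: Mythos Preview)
Your proof is correct and follows essentially the same approach as the paper's: both deduce the quasi-isomorphism from the short exact sequence of Lemma~\ref{toward the short resolution}, and both use the filtration of $\mathcal{C}$ to filter $\mathcal{S}$ and check the associated graded pieces have the required form. Your treatment is in fact more explicit---you spell out the cone-of-an-injection argument, include the base piece $\mathcal{A}_{\mathbf{S}}\otimes_{\kk_{\mathbf{S}}}\mathcal{A}_{\mathbf{S}}$ at filtration level~$0$, and verify in detail why $D$ and $\iota$ strictly drop filtration---whereas the paper leaves these as routine checks and additionally invokes Lemma~\ref{tutti proiettivi} to connect to Drinfeld's semi-free terminology.
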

\begin{proof}
The fact that $\mu$ induces a quasi-isomorphism follows from Lemma \ref{toward the short resolution} and a simple diagram chasing argument.

To see that ${\mathcal S}$ is semi-projective, observe that the action filtration $F_0\subset \ldots \subset F_p\subset \ldots \subset \mathcal{C}$ induces a filtration ${\mathcal F}_0 \subset \ldots \subset {\mathcal F}_p \subset \ldots \subset {\mathcal S}$ where ${\mathcal F}_p= {\mathcal A}_{\mathbf{S}} \otimes_{\kk_{\mathbf{S}}} F_p  \otimes_{\kk_{\mathbf{S}}} {\mathcal A}_{\mathbf{S}}$ and the differential of ${\mathcal S}$ preserves this filtration. The quotients ${\mathcal F}_p / {\mathcal F}_{p-1}  = {\mathcal A}_{\mathbf{S}} \otimes_{\kk_{\mathbf{S}}} F_p /F_{p-1} \otimes_{\kk_{\mathbf{S}}} {\mathcal A}_{\mathbf{S}}$ are isomorphic, as dg-bimodles, to direct summands of the free dg-bimodules  ${\mathcal A}_{\mathbf{S}} \otimes F_p /F_{p-1} \otimes {\mathcal A}_{\mathbf{S}}$  by Lemma \ref{tutti proiettivi}.
  \end{proof}

It follows then from \cite[Appendix C]{DrinfeldDG} that given two right dg-modules $V_0$ and $V_1$ over ${\mathcal A}_{\mathbf{S}}$,  we can compute the derive morphisms as $R\hom_{{\mathcal A}_{\mathbf{S}}}(V_0, V_1)= \hom_{{\mathcal A}_{\mathbf{S}}}(V_0\otimes_{{\mathcal A}_{\mathbf{S}}} \mathcal{S}, V_1)$, and analogously for left modules.

If ${\mathcal A}_{\mathbf{C}_0}$ and ${\mathcal A}_{\mathbf{C}_1}$ are differential graded algebras which are obtained from ${\mathcal A}_{\mathbf{S}}$ by expansion of idempotents, $\slashed{\varepsilon}_0 \colon {\mathcal A}_{\slashed{\mathbf{C}}_0} \to \F$ and $\slashed{\varepsilon}_1 \colon {\mathcal A}_{\slashed{\mathbf{C}}_1} \to \F$ are augmentations, and $V_0$ and $V_1$ are the induced dg-modules over ${\mathcal A}_{\mathbf{S}}$, we want to compute $R\hom_{{\mathcal A}_{\mathbf{S}}}(V_0, V_1)$ in terms of $\slashed{\varepsilon}_0$ and $\slashed{\varepsilon}_1$. To this aim, we introduce several bimodules. First, we  recall that ${\mathcal A}_{\mathbf{S}}$ is a tensor algebra over a $\kk_{\mathbf{S}}$-bimodule ${\mathcal C}$ and
${\mathcal A}^+_{\mathbf{S}}$ is a tensor algebra over the bimodule ${\mathcal C}^+ = {\mathcal C} \oplus \kk_{\mathbf{S}}$, where we called $e$ the generator of the $\kk_{\mathbf{S}}$ summand.

We define a dg-bimodue
$${\mathcal S}^+ = {\mathcal A}^+_{\mathbf{S}} \otimes_{\kk_{\mathbf{S}}} {\mathcal C}^+ \otimes_{\kk_{\mathbf{S}}}{\mathcal A}^+_{\mathbf{S}}$$
over ${\mathcal A}^+_{\mathbf{S}}$. We denote an element $\mathbf{a} \otimes_{\kk_{\mathbf{S}}} x \otimes_{\kk_{\mathbf{S}}} \mathbf{b}$ by $\mathbf{a} \hat{x} \mathbf{b}$ for brevity. The differential on ${\mathcal S}^+$ is defined by $\mathfrak{d}^+ \hat{x} = \widehat{\partial_{\mathbf{S}}^+ x}.$ This formula can be rewritten slightly more explicitly as  $\mathfrak{d}^+ \hat{x} = \widehat{\partial_{\mathbf{S}} x}+ e \hat{x} + \hat{x} e  + \hat{e} x + x \hat{e}.$

To a dg-module $(V, d)$ over ${\mathcal A}_{\mathbf{S}}$ we associate a module $V^+$ (with trivial differential) over ${\mathcal A}^+_{\mathbf{S}}$ by defining $V^+ =V$ as a $\kk_{\mathbf{S}}$-module and defining the action of ${\mathcal A}^+_{\mathbf{S}}$ by extending the action of ${\mathcal A}_{\mathbf{S}}$ by $e \cdot v = d(v)$ for all $v \in V$.
\begin{lemma}\label{+ and d}
  If $V_0$ and $V_1$ are dg-modules over ${\mathcal A}_{\mathbf{S}}$, then $$\hom_{{\mathcal A}_{\mathbf{S}}}({\mathcal S} \otimes_{{\mathcal A}_{\mathbf{S}}} V_0, V_1) \cong \hom_{{\mathcal A}^+_{\mathbf{S}}}({\mathcal S}^+ \otimes_{{\mathcal A}^+_{\mathbf{S}}} V_0^+, V_1^+)$$
  as chain complexes.
\end{lemma}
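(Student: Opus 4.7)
The plan is to produce an explicit chain isomorphism by unwinding both sides via the tensor--hom adjunction and then matching the differentials term by term. On the right-hand side, since $\mathcal{S}^+$ is free as an $\mathcal{A}^+_{\mathbf{S}}$-bimodule generated by $\mathcal{C}^+ = \mathcal{C} \oplus \kk_{\mathbf{S}}$, the standard adjunction will give
\[
\hom_{\mathcal{A}^+_{\mathbf{S}}}(\mathcal{S}^+ \otimes_{\mathcal{A}^+_{\mathbf{S}}} V_0^+, V_1^+) \cong \hom_{\kk_{\mathbf{S}}\text{-bim}}\bigl(\mathcal{C}\otimes_{\kk_{\mathbf{S}}} V_0,\, V_1\bigr) \oplus \hom_{\kk_{\mathbf{S}}}(V_0, V_1),
\]
where the second summand corresponds to the $\kk_{\mathbf{S}}$-part of $\mathcal{C}^+$. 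For the left-hand side, $\mathcal{S}=\op{Cone}(\iota)$ splits as a graded bimodule into $\mathcal{A}_{\mathbf{S}}\otimes_{\kk_{\mathbf{S}}}\mathcal{C}\otimes_{\kk_{\mathbf{S}}}\mathcal{A}_{\mathbf{S}}$ and $\mathcal{A}_{\mathbf{S}}\otimes_{\kk_{\mathbf{S}}}\mathcal{A}_{\mathbf{S}}$, and the same adjunction identifies it with the same direct sum. I will therefore define $\Phi$ to be the identity under these identifications, sending a pair $(\phi,\psi)$ on one side to the same pair on the other.

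To verify that $\Phi$ is a chain map, I will compute both differentials explicitly on such a pair. Unpacking the left-hand side differential, using the cone differential of $\mathcal{S}$ (which combines $D$ on each summand with $\iota$ as the connecting map) together with the module differentials $d_{V_0}$ and $d_{V_1}$, will yield
\begin{align*}
(\delta^L\phi)(c,v) &= d_{V_1}\phi(c,v) + \phi(\partial_{\mathbf{S}} c,v) + \phi(c,d_{V_0} v) + c\cdot\psi(v) + \psi(c\cdot v),\\
(\delta^L\psi)(v) &= d_{V_1}\psi(v) + \psi(d_{V_0} v).
\end{align*}
On the right-hand side, $V_0^+$ and $V_1^+$ have trivial differential, so the entire differential on a morphism $g$ is carried by $\mathfrak{d}^+\hat x = \widehat{\partial_{\mathbf{S}} x} + e\hat x + \hat x e + \hat e x + x\hat e$. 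Using that $e$ acts on $V_i^+$ as $d_{V_i}$ by Lemma \ref{the meaning of e}, and invoking $\mathcal{A}^+_{\mathbf{S}}$-linearity of $g$, each of the five summands of $\mathfrak{d}^+$ will account for exactly one of the five terms in $\delta^L\phi$: the term $e\hat x$ will yield $d_{V_1}\phi(c,v)$, the term $\hat x e$ will yield $\phi(c,d_{V_0}v)$, the terms $\hat e x$ and $x\hat e$ will respectively yield $\psi(c\cdot v)$ and $c\cdot\psi(v)$, thereby reproducing the $\iota$-coupling from the cone, and $\widehat{\partial_{\mathbf{S}} c}$ will yield $\phi(\partial_{\mathbf{S}} c, v)$. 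The verification for the $\psi$-component is analogous and more immediate, since only the $\hat{e}e$ and $e\hat{e}$ terms in $\mathfrak{d}^+\hat e_s$ contribute.

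The most delicate bookkeeping will be the identification of the $\widehat{\partial_{\mathbf{S}} c}$-contribution. I will expand $\widehat{\partial_{\mathbf{S}} c} = \Delta(\partial_{\mathbf{S}} c)$ as a sum over the positions at which $\Delta$ hats a single letter of each word in $\partial_{\mathbf{S}} c$, evaluate the morphism using $\mathcal{A}^+_{\mathbf{S}}$-linearity, and recognise the result as exactly how the analogous $D$-term contributes on the left-hand side, where $\Delta$ appears identically in the formula $D(\mathbf{a}\hat c\mathbf{b}) = (\partial_{\mathbf{S}}\mathbf{a})\hat c\mathbf{b}+\mathbf{a}(\widehat{\partial_{\mathbf{S}} c})\mathbf{b}+\mathbf{a}\hat c(\partial_{\mathbf{S}}\mathbf{b})$. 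Once this word-by-word identification is in place, the two differentials agree term by term and $\Phi$ is the desired isomorphism of chain complexes.
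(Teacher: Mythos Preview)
Your proposal is correct and follows essentially the same strategy as the paper's own proof: identify both sides with $\hom_{\kk_{\mathbf{S}}}(\mathcal{C}^+\otimes_{\kk_{\mathbf{S}}} V_0,V_1)$ via tensor--hom adjunction, then compare the two induced differentials term by term, using that $e$ acts on $V_i^+$ as the differential $d_{V_i}$. The only cosmetic difference is that the paper keeps $\mathcal{C}^+$ intact and works with a single $\varphi$, whereas you split $\mathcal{C}^+=\mathcal{C}\oplus\kk_{\mathbf{S}}$ and track the pair $(\phi,\psi)$ separately; your five terms in $\delta^L\phi$ are exactly the paper's five terms in $(D_{V_0,V_1}\varphi)(\hat{x}\otimes v)$, reordered. (One small notational slip: the first displayed $\hom$ should be over $\kk_{\mathbf{S}}$ as left modules, not $\kk_{\mathbf{S}}$-bimodules.)
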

\begin{proof}
  The two chain complexes are both isomorphic as vector spaces to $\hom_{\kk_{\mathbf{S}}}({\mathcal C}^+ \otimes_{\kk_{\mathbf{S}}} V_0, V_1)$.
  If we denote the two differentials by $D_{V_0, V_1}$ and $D_{V_0, V_1}^+$ respectively, then for every  $\varphi \in \hom_{\kk_{\mathbf{S}}}({\mathcal C}^+ \otimes_{\kk_{\mathbf{S}}} V_0, V_1)$, $x \in {\mathcal C}^+$ and $v \in V_0$ we have
  \begin{align*}
    (D_{V_0, V_1} \varphi)(\hat{x}\otimes_{\kk_{\mathbf{S}}} v)= & \varphi(\widehat{\partial_{\mathbf{S}}x} \otimes_{\kk_{\mathbf{S}}} v) + x \varphi(\hat{e} \otimes_{\kk_{\mathbf{S}}} v) + \varphi(\hat{e}  \otimes_{\kk_{\mathbf{S}}} xv) + \\ & \varphi(\hat{x} \otimes_{\kk_{\mathbf{S}}} d_0(v)) + d_1(\varphi( \hat{x} \otimes_{\kk_{\mathbf{S}}} v)), \\
    (D_{V_0, V_1}^+ \varphi)(\hat{x}\otimes_{\kk_{\mathbf{S}}} v)= & \varphi(\widehat{\partial_{\mathbf{S}}x} \otimes_{\kk_{\mathbf{S}}} v) + x \varphi(\hat{e} \otimes_{\kk_{\mathbf{S}}} v) + \varphi(\hat{e}  \otimes_{\kk_{\mathbf{S}}} xv) + \\ &       \varphi(\hat{x} \otimes_{\kk_{\mathbf{S}}} ev) + e \varphi( \hat{x} \otimes_{\kk_{\mathbf{S}}} v),
  \end{align*}
  where $d_0$ and $d_1$ are the differentials on $V_0$ and $V_1$ respectively. The two quantities are equal because $e$ acts as $d_0$ on $V_0^+$ and as $d_1$ on $V_1^+$. 
\end{proof}

Let ${\mathcal A}^+_{\mathbf{C}_0}$ and ${\mathcal A}_{\mathbf{C}_1}^+$ be obtained from ${\mathcal A}^+_{\mathbf{S}}$ by expansion of the idempotents. We recall that there are inclusions of algebras $T_0 \colon  {\mathcal A}^+_{\mathbf{S}} \to {\mathcal A}^+_{\mathbf{C}_0}$ and $T_1 \colon  {\mathcal A}^+_{\mathbf{S}} \to {\mathcal A}^+_{\mathbf{C}_1}$, so ${\mathcal A}^+_{\mathbf{C}_0}$ and ${\mathcal A}^+_{\mathbf{C}_1}$
also carry the structure of differential graded  ${\mathcal A}_{\mathbf{S}}^+$-bimodules. Thus we define the differential graded $({\mathcal A}^+_{\mathbf{C}_1} \mhyphen {\mathcal A}^+_{\mathbf{C}_0})$-bimodule
$${\mathcal S}^+_{\mathbf{C}_1, \mathbf{C}_0} = {\mathcal A}^+_{\mathbf{C}_1} \otimes_{{\mathcal A}^+_{\mathbf{S}}} {\mathcal S}^+ \otimes_{{\mathcal A}^+_{\mathbf{S}}} {\mathcal A}^+_{\mathbf{C}_0}$$
with the induced differential. Thus we can combine $T_0$ and $T_1$ to obtain an inclusion of bimodules  $T_{\mathbf{C}_1, \mathbf{C}_0} \colon {\mathcal S}^+  \to {\mathcal S}^+_{\mathbf{C}_1, \mathbf{C}_0}$.

Finally we consider omission of idempotents. We define the $(\kk_{\mathbf{C}_1} \mhyphen \kk_{\mathbf{C}_0})$-bimodule ${\mathcal C}^+_{\mathbf{C}_1, \mathbf{C}_0}= \kk_{\mathbf{C}_1} \otimes_{\kk_{\mathbf{S}}} {\mathcal C}^+ \otimes_{\kk_{\mathbf{S}}} \kk_{\mathbf{C}_0}$ and the  $({\mathcal A}^+_{\slashed{\mathbf{C}}_1} \mhyphen {\mathcal A}^+_{\slashed{\mathbf{C}}_0})$-bimodule
$${\mathcal S}^+_{\slashed{\mathbf{C}}_1, \slashed{\mathbf{C}}_0} = {\mathcal A}^+_{\slashed{\mathbf{C}}_1} \otimes {\mathcal C}^+_{\mathbf{C}_1, \mathbf{C}_0} \otimes {\mathcal A}^+_{\slashed{\mathbf{C}}_0}.$$
As a vector space, ${\mathcal S}^+_{\slashed{\mathbf{C}}_1, \slashed{\mathbf{C}}_0}$ is generated by  words $a_1 \cdots a_k \hat{x} b_1 \cdots b_l$ where $a_i \in {\mathcal C}^+_{\mathbf{C}_1}$, $\hat{x} \in  {\mathcal C}^+_{\mathbf{C}_1, \mathbf{C}_0}$, and $b_i \in {\mathcal C}^+_{\mathbf{C}_0}$, while ${\mathcal S}^+_{\mathbf{C}_1, \mathbf{C}_0}$ is generated by those words as above that moreover are composable, i.e.\ the right idempotent of a letter coincides with the left idempotent of the following one. Thus there is an injection $i \colon {\mathcal S}^+_{\mathbf{C}_1, \mathbf{C}_0} \to {\mathcal S}^+_{\slashed{\mathbf{C}}_1, \slashed{\mathbf{C}}_0}$ as vector spaces which allows us to define the differential $\mathfrak{d}_{\slashed{\mathbf{C}}_1, \slashed{\mathbf{C}}_0}$ on the elements of ${\mathcal C}^+_{\mathbf{C}_1, \mathbf{C}_0}$ by $\mathfrak{d}_{\slashed{\mathbf{C}}_1, \slashed{\mathbf{C}}_0}(\hat{x})= i(\mathfrak{d}_{\mathbf{C}_1, \mathbf{C}_0}(\hat{x}))$.

Now let us consider two augmentations $\slashed{\varepsilon}_i \colon {\mathcal A}^+_{\slashed{\mathbf{C}}_i} \to \F$. They induce left dg-modules $V_i$ over ${\mathcal A}_{\mathbf{S}}$ by Corollary \ref{the point of all this mess}, whose underlying $\kk_{\mathbf{S}}$-module is equal to $\kk_{\mathbf{C}_i}$, but they also induce an $({\mathcal A}^+_{\slashed{\mathbf{C}}_1} \mhyphen {\mathcal A}^+_{\slashed{\mathbf{C}}_0})$-bimodule structure on $\F$ which we denote by $\F_{\slashed{\varepsilon}_1, \slashed{\varepsilon}_0}$. The final result of the section is the following.
\begin{lemma} \label{after cast}
  There is an isomorphism of chain complexes
  $$\hom_{{\mathcal A}^+_{\slashed{\mathbf{C}}_1} \mhyphen {\mathcal A}^+_{\slashed{\mathbf{C}}_0}} ({\mathcal S}^+_{\slashed{\mathbf{C}}_1, \slashed{\mathbf{C}}_0}, \F_{\slashed{\varepsilon}_0,\slashed{\varepsilon}_1}) \cong \hom_{{\mathcal A}_{\mathbf{S}}}({\mathcal S} \otimes_{{\mathcal A}_{\mathbf{S}}} V_0, V_1).$$
\end{lemma}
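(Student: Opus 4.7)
The plan is to identify both sides of the claimed isomorphism with the common chain complex $\hom_{\kk_{\mathbf{S}}}({\mathcal C}^+ \otimes_{\kk_{\mathbf{S}}} \kk_{\mathbf{C}_0}, \kk_{\mathbf{C}_1})$ and then verify that the induced differentials coincide.

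For the right-hand side, I first invoke Lemma \ref{+ and d} to replace $\hom_{{\mathcal A}_{\mathbf{S}}}({\mathcal S} \otimes_{{\mathcal A}_{\mathbf{S}}} V_0, V_1)$ by $\hom_{{\mathcal A}^+_{\mathbf{S}}}({\mathcal S}^+ \otimes_{{\mathcal A}^+_{\mathbf{S}}} V_0^+, V_1^+)$. The tensor ${\mathcal S}^+ \otimes_{{\mathcal A}^+_{\mathbf{S}}} V_0^+$ simplifies to ${\mathcal A}^+_{\mathbf{S}} \otimes_{\kk_{\mathbf{S}}} {\mathcal C}^+ \otimes_{\kk_{\mathbf{S}}} V_0^+$, which is free as a left ${\mathcal A}^+_{\mathbf{S}}$-module on the $\kk_{\mathbf{S}}$-module ${\mathcal C}^+ \otimes_{\kk_{\mathbf{S}}} V_0^+$, so the extension-restriction adjunction reduces the hom space to $\hom_{\kk_{\mathbf{S}}}({\mathcal C}^+ \otimes_{\kk_{\mathbf{S}}} V_0^+, V_1^+) = \hom_{\kk_{\mathbf{S}}}({\mathcal C}^+ \otimes_{\kk_{\mathbf{S}}} \kk_{\mathbf{C}_0}, \kk_{\mathbf{C}_1})$, using $V_i^+ = \kk_{\mathbf{C}_i}$.

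For the left-hand side, because the ${\mathcal A}^+_{\slashed{\mathbf{C}}_i}$ are $\F$-algebras (the idempotents have been omitted), the bimodule ${\mathcal S}^+_{\slashed{\mathbf{C}}_1, \slashed{\mathbf{C}}_0}$ is bimodule-free over $\F$ on ${\mathcal C}^+_{\mathbf{C}_1, \mathbf{C}_0}$, so the hom out of it reduces to $\hom_{\F}({\mathcal C}^+_{\mathbf{C}_1, \mathbf{C}_0}, \F)$ (the twist in $\F_{\slashed{\varepsilon}_1, \slashed{\varepsilon}_0}$ affects the differential but not the underlying vector space). Expanding ${\mathcal C}^+_{\mathbf{C}_1, \mathbf{C}_0} = \kk_{\mathbf{C}_1} \otimes_{\kk_{\mathbf{S}}} {\mathcal C}^+ \otimes_{\kk_{\mathbf{S}}} \kk_{\mathbf{C}_0}$ and applying the adjunction $\hom_{\F}(\kk_{\mathbf{C}_1} \otimes_{\kk_{\mathbf{S}}} X, \F) \cong \hom_{\kk_{\mathbf{S}}}(X, \kk_{\mathbf{C}_1})$ coming from the trace pairing $\langle c, c' \rangle = \delta_{c,c'}$ on $\kk_{\mathbf{C}_1}$ yields the same target.

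The remaining work is to verify that the differentials induced on this common complex agree. On the right, Lemma \ref{+ and d} gives the five-term formula corresponding to the decomposition $\mathfrak{d}^+ \hat{x} = \widehat{\partial_{\mathbf{S}} x} + e\hat{x} + \hat{x}e + \hat{e}x + x\hat{e}$, with $e$ acting on $V_i^+$ as the differential $d_i$. On the left, $\mathfrak{d}_{\slashed{\mathbf{C}}_1, \slashed{\mathbf{C}}_0}$ applied to $c_1 \otimes x \otimes c_0 \in {\mathcal C}^+_{\mathbf{C}_1, \mathbf{C}_0}$ unpacks into five parallel terms in ${\mathcal A}^+_{\slashed{\mathbf{C}}_1} \otimes {\mathcal C}^+_{\mathbf{C}_1, \mathbf{C}_0} \otimes {\mathcal A}^+_{\slashed{\mathbf{C}}_0}$, and pairing these against $\slashed{\varepsilon}_0, \slashed{\varepsilon}_1$ recovers, via the chain Lemma \ref{from slashed to normal} $\to$ Corollary \ref{from aug to rep 2} $\to$ Lemma \ref{the meaning of e}, exactly the ${\mathcal A}^+_{\mathbf{S}}$-module actions and differentials $d_i$ appearing on the right. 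The main obstacle is precisely this bookkeeping: for each of the five terms one must track how the idempotent labels $c_1, c_0$ redistribute after expansion and omission of idempotents, and how evaluation against $\slashed{\varepsilon}_i$ translates into the corresponding ${\mathcal A}^+_{\mathbf{S}}$-action on $\kk_{\mathbf{C}_i}$. Characteristic two eliminates sign concerns, and a case split according to whether $x \in {\mathcal C}$ or $x$ is an $e$-generator makes the matching concrete and routine.
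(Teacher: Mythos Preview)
Your argument is correct, but it takes a different route from the paper's. The paper proceeds by a chain of structural chain-complex isomorphisms: it first passes from $\hom_{{\mathcal A}^+_{\slashed{\mathbf{C}}_1} \mhyphen {\mathcal A}^+_{\slashed{\mathbf{C}}_0}}({\mathcal S}^+_{\slashed{\mathbf{C}}_1,\slashed{\mathbf{C}}_0},\F_{\slashed{\varepsilon}_0,\slashed{\varepsilon}_1})$ to a hom over the \emph{extended} algebras ${\mathcal A}^{+,\mathit{ext}}_{\slashed{\mathbf{C}}_i}$ into $\kk_{\mathbf{C}_1}\otimes\kk_{\mathbf{C}_0}$ (using Schur's lemma), then restricts along the inclusions ${\mathcal A}^+_{\mathbf{C}_i}\hookrightarrow{\mathcal A}^{+,\mathit{ext}}_{\slashed{\mathbf{C}}_i}$, then uses the base-change identity ${\mathcal S}^+_{\mathbf{C}_1,\mathbf{C}_0}={\mathcal A}^+_{\mathbf{C}_1}\otimes_{{\mathcal A}^+_{\mathbf{S}}}{\mathcal S}^+\otimes_{{\mathcal A}^+_{\mathbf{S}}}{\mathcal A}^+_{\mathbf{C}_0}$ to descend to $\hom_{{\mathcal A}^+_{\mathbf{S}}\mhyphen{\mathcal A}^+_{\mathbf{S}}}({\mathcal S}^+,\kk_{\mathbf{C}_1}\otimes\kk_{\mathbf{C}_0})$, and finally applies the self-duality $V_0^+\cong(V_0^+)^*$ and Lemma~\ref{+ and d}. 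Because every step is a natural isomorphism of chain complexes, the compatibility of differentials is automatic. Your approach instead collapses both sides directly to the small model $\hom_{\kk_{\mathbf{S}}}({\mathcal C}^+\otimes_{\kk_{\mathbf{S}}}\kk_{\mathbf{C}_0},\kk_{\mathbf{C}_1})$ via freeness and the trace-pairing adjunction, and then checks the five-term differential formula by hand. What you gain is directness and the avoidance of the auxiliary extended algebras; what the paper's route buys is that the bookkeeping you describe as ``routine'' never has to be written out, since it is absorbed into the naturality of each intermediate isomorphism.
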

\begin{proof}
  The isomorphism will be defined in several step. For the first step we recall that $\kk_{\mathbf{C}_1} \otimes \kk_{\mathbf{C}_0}$ is a $(\kk_{\mathbf{C}_1}^e \mhyphen \kk_{\mathbf{C}_0}^e)$-bimodule, and therefore
  $${\mathcal S}^{+, \mathit{ext}}_{{\slashed{\mathbf{C}}_1, \slashed{\mathbf{C}}_0}}= \kk_{\mathbf{C}_1} \otimes {\mathcal S}^+_{{\slashed{\mathbf{C}}_1, \slashed{\mathbf{C}}_0}} \otimes \kk_{\mathbf{C}_0}$$
  is a differential graded $({\mathcal A}_{\slashed{\mathbf{C}}_1}^{+, \mathit{ext}} \mhyphen {\mathcal A}_{\slashed{\mathbf{C}}_0}^{+, \mathit{ext}})$-bimodule, where ${\mathcal A}_{\slashed{\mathbf{C}}_i}^{+, \mathit{ext}} = {\mathcal A}_{\slashed{\mathbf{C}}_i}^+ \otimes \kk_{\mathbf{C}_i}^e$. We observe also that
  $$\slashed{\varepsilon}_i^{ext} = \slashed{\varepsilon} \otimes \op{Id}_{\kk_{\mathbf{C}_i}} \colon {\mathcal A}_{\slashed{\mathbf{C}}_i}^{+, \mathit{ext}} \to  \kk_{\mathbf{C}_i}^e$$
  induce a structure of $({\mathcal A}^+_{\slashed{\mathbf{C}}_1} \mhyphen {\mathcal A}^+_{\slashed{\mathbf{C}}_0})$-bimodule on $\kk_{\mathbf{C}_1} \otimes \kk_{\mathbf{C}_0}$.
  
  By properties of homomorphisms and tensor products we have
  \begin{align*}
    & \hom_{{\mathcal A}^{+, ext}_{\slashed{\mathbf{C}}_1} \mhyphen {\mathcal A}^{+, ext}_{\slashed{\mathbf{C}}_0}} ({\mathcal S}^{+, ext}_{\slashed{\mathbf{C}}_1, \slashed{\mathbf{C}}_0}, \kk_{\mathbf{C}_1} \otimes \kk_{\mathbf{C}_0}) \cong \\
    & \hom_{{\mathcal A}^+_{\slashed{\mathbf{C}}_1} \mhyphen {\mathcal A}^+_{\slashed{\mathbf{C}}_0}} ({\mathcal S}^+_{\slashed{\mathbf{C}}_0, \slashed{\mathbf{C}}_1}, \F_{\slashed{\varepsilon}_0,\slashed{\varepsilon}_1}) \otimes \hom_{\kk_{\mathbf{C}_1}^e \mhyphen \kk_{\mathbf{C}_0}^e}(\kk_{\mathbf{C}_1} \otimes \kk_{\mathbf{C}_0}, \kk_{\mathbf{C}_1} \otimes \kk_{\mathbf{C}_0}).
  \end{align*}
  By Schur's lemma $\hom_{\kk_{\mathbf{C}_1}^e \mhyphen \kk_{\mathbf{C}_0}^e}(\kk_{\mathbf{C}_1} \otimes \kk_{\mathbf{C}_0}, \kk_{\mathbf{C}_1} \otimes \kk_{\mathbf{C}_0})$ is one-dimensional and generated by the identity, which implies that the map
  \begin{align*} \hom_{{\mathcal A}^+_{\slashed{\mathbf{C}}_1} \mhyphen {\mathcal A}^+_{\slashed{\mathbf{C}}_0}} ({\mathcal S}^+_{\slashed{\mathbf{C}}_1, \slashed{\mathbf{C}}_0}, \F_{\slashed{\varepsilon}_0,\slashed{\varepsilon}_1}) & \to \hom_{{\mathcal A}^{+, ext}_{\slashed{\mathbf{C}}_1} \mhyphen {\mathcal A}^{+, ext}_{\slashed{\mathbf{C}}_0}} ({\mathcal S}^{+, ext}_{\slashed{\mathbf{C}}_1, \slashed{\mathbf{C}}_0}, \kk_{\mathbf{C}_1} \otimes \kk_{\mathbf{C}_0}), \\
    \varphi & \mapsto \varphi \otimes \op{Id}_{\kk_{\mathbf{C}_1} \otimes \kk_{\mathbf{C}_0}}
  \end{align*}
  is an isomorphism. For the second step we recall that there are inclusions ${\mathcal A}^+_{{\mathbf{C}_i}} \hookrightarrow {\mathcal A}^{+, \mathit{ext}}_{\slashed{\mathbf{C}}_i}$ which induce an inclusion ${\mathcal S}^+_{\mathbf{C}_1, \mathbf{C}_0} \hookrightarrow {\mathcal S}^{+, ext}_{\slashed{\mathbf{C}}_1, \slashed{\mathbf{C}}_0}$: in fact one can check that ${\mathcal S}^+_{\mathbf{C}_1, \mathbf{C}_0} \cong {\mathcal A}^+_{\mathbf{C}_1} \otimes_{\kk_{\mathbf{C}_1}} {\mathcal C}_{\mathbf{C}_1, \mathbf{C}_0}^+ \otimes_{\kk_{\mathbf{C}_0}} {\mathcal A}^+_{\mathbf{C}_0}$ and ${\mathcal S}^{+, ext}_{\slashed{\mathbf{C}}_1, \slashed{\mathbf{C}}_0} \cong {\mathcal A}^{+, \mathit{ext}}_{\slashed{\mathbf{C}}_1} \otimes_{\kk_{\mathbf{C}_1}} {\mathcal C}_{\mathbf{C}_1, \mathbf{C}_0}^+ \otimes_{\kk_{\mathbf{C}_0}} {\mathcal A}^{+, \mathit{ext}}_{\slashed{\mathbf{C}}_0}$. Thus restriction gives an isomorphism
  $$\hom_{{\mathcal A}^{+, ext}_{\slashed{\mathbf{C}}_1} \mhyphen {\mathcal A}^{+, ext}_{\slashed{\mathbf{C}}_0}} ({\mathcal S}^{+, ext}_{\slashed{\mathbf{C}}_1, \slashed{\mathbf{C}}_0}, \kk_{\mathbf{C}_1} \otimes \kk_{\mathbf{C}_0}) \cong \hom_{{\mathcal A}^+_{\mathbf{C}_1} \mhyphen {\mathcal A}^+_{\mathbf{C}_0}} ({\mathcal S}^+_{\mathbf{C}_1, \mathbf{C}_0}, \kk_{\mathbf{C}_1} \otimes \kk_{\mathbf{C}_0}).$$

For the third step, from ${\mathcal S}^+_{\mathbf{C}_1, \mathbf{C}_0} = {\mathcal A}^+_{\mathbf{C}_1} \otimes_{{\mathcal A}^+_{\mathbf{S}}} {\mathcal S}^+  \otimes_{{\mathcal A}^+_{\mathbf{S}}} {\mathcal A}^+_{\mathbf{C}_0}$ we deduce that
  $$\hom_{{\mathcal A}^+_{\mathbf{C}_1} \mhyphen {\mathcal A}^+_{\mathbf{C}_0}} ({\mathcal S}^+_{\mathbf{C}_1, \mathbf{C}_0}, \kk_{\mathbf{C}_1} \otimes \kk_{\mathbf{C}_0}) \cong \hom_{{\mathcal A}^+_{\mathbf{S}} \mhyphen {\mathcal A}^+_{\mathbf{S}}} ({\mathcal S}^+, \kk_{\mathbf{C}_1} \otimes \kk_{\mathbf{C}_0}).$$

  Now observe that for $i=0,1$, $\kk_{\mathbf{C}_i}$ with the ${\mathcal A}^+_{\mathbf{S}}$-module structure induced by $\varepsilon_i$ is $V_i^+$, and therefore, by the isomorphism $V_0^+ \cong (V_0^+)^*$ induced by the basis of idempotents and the adjunction between tensor product and homomorphisms, we have
  $$\hom_{{\mathcal A}^+_{\mathbf{S}} \mhyphen {\mathcal A}^+_{\mathbf{S}}} ({\mathcal S}^+, \kk_{\mathbf{C}_1} \otimes \kk_{\mathbf{C}_0}) \cong \hom_{{\mathcal A}^+_{\mathbf{S}}} ({\mathcal S}^+ \otimes V_0^+, V_1^+).$$
  Finally, from Lemma \ref{+ and d} we obtain
  $$\hom_{{\mathcal A}^+_{\mathbf{S}}} ({\mathcal S}^+ \otimes V_0^+, V_1^+) \cong \hom_{{\mathcal A}_{\mathbf{S}}} ({\mathcal S} \otimes V_0, V_1).$$
\end{proof}

\section{Chekanov-Eliashberg algebras}\label{ChekAlg}
  In this section we recollect some useful facts about Chekanov-Eliashberg algebras. They where first defined combinatorially by Chekanov in \cite{Chekanov} for Legendrian knots in the standard contact $\R^3$ and later defined analytically by Ekholm, Etnyre and Sullivan in \cite{Ekholm_Contact_Homology, Ekholm_&_Orientation_Homology, Ekholm_&_Invariants_of_embeddings} for Legendrian submanifolds in contactisations, i.e.\ contact manifolds of the form $(\R \times P, dt+ \theta)$ where $(P, \theta)$ is a Liouville manifolds. 

  Let $(Y, \alpha)$ be a contact manifold and $\Lambda \subset Y$ a closed Lagrangian submanifold. We will always assume that $\Lambda$ is {\em chord generic}, which means that the Reeb chords of $\Lambda$ are not part of closed Reeb orbits, distinct cords are disjoint, and if $p$ and $q$ are the starting and end point of a Reeb chord of length $T$ and $\varphi$ is the Reeb flow, then $d_p\varphi_T(T_p\Lambda)$ intersects $T_q\Lambda$ transversely in the contact hyperplane at $q$.  This is a generic property for $\Lambda$ provided that $\dim Y \ge 3$.

 Let $\mathcal{R}_\Lambda$ be the set of Reeb chords. For $a\in \mathcal{R}_\Lambda$ we denote by $s_-(a)$ and $s_+(a)$ the connected components of $\Lambda$ in which the start and endpoint of the chord reside. 
 Let $\kk_\Lambda$ be the idempotent ring over $\pi_0(\Lambda)$ --- note the difference in notation with Section \ref{sec: idempotents and dga} --- and let $C(\Lambda)$ be the vector space over $\F$ with basis $\mathcal{R}(\Lambda)$. We endow $C(\Lambda)$ with  the structure of a $\kk_\Lambda$-bimodule by 
    $$s_-\cdot a\cdot s_+=
    \begin{cases} a \text{ if } s_\pm = s_\pm(a),  \text{ and }\\
  0 \text{ otherwise}. 
\end{cases}$$

We denote by $\mathcal{A}(\Lambda)$ --- or $\mathcal{A}_{\kk_{\Lambda}}(\Lambda)$  if we need to be explicit about the ring of coefficients --- the tensor algebra of $C(\Lambda)$  over $\kk_\Lambda$.

If the Maslov class of $\Lambda$ vanishes, then ${\mathcal A}(\Lambda)$ is graded by the Conley-Zehnder index of the chords (see \cite{LCHgeneral} for the definition); otherwise the Conley-Zehnder index only gives a relative grading in a cyclic group. Moreover, if $\Lambda$ is disconnected, the grading depends on an arbitrary extra piece of data which is called a {\em Maslov potential}.

On $\mathcal{A}(\Lambda)$ we define a differential by counting $J$-holomorphic maps in the symplectisation of $(Y, \alpha)$. More recisely, we fix a cylindrical almost complex structure on $J$ on $\R \times Y$ which is adapted to $\alpha$ and denote by ${\mathcal M}_{\Lambda}(a; b_1, \ldots, b_n)$ the moduli space of $J$-holomorphic maps from a punctured disc to $\R \times Y$ which map the boundary of the disc to $\R \times \Lambda$, are positively asymptotic to the Reeb chord $a$ and negatively asymptotic to the Reeb chords $b_1, \ldots, b_n$ (ordered in the counterclockwise direction starting from $a$).

  \begin{rem}\label{rem: how we learnt not to worry, but not yet to love interior punctures}
  If $\alpha$ has closed Reeb orbits, one has to consider also bubbling of holomorphic planes in the compactification of the moduli spaces  ${\mathcal M}_\Lambda(a, b_1, \ldots, b_l)$ and of the more general ones which will be introduced later in the paper. One can therefore take three possible approaches:
  \begin{enumerate}
  \item allow degenerations to closed Reeb orbit and, consequently, define every algebraic invariant as a module over the contact homology algebra,
  \item require that closed Reeb orbits have large index, so that low-dimensional moduli spaces cannot bubble holomorphic planes, or
  \item require that $(Y, \alpha)$ is  filled by a Liouville domain and considere {\em anchored discs}, i.e.\ discs in the cobordisms with possibly negative ends at closed Reeb orbits which are capped by holomorphic planes in the filling. See \cite{EkholmCurve} for the precise definition.
  \end{enumerate}
  The first approach is suboptimal because there are still unresolved issues about invariance of the contact homology algebra. The second approach is the simplest one, but is not always possible. On the other hand, all contact manifolds in this paper will be the boundary of a Liouville domain, so we will take the third approach: our holomorphic curves will always be anchored, even if we will make no further mention of that. The drawback of using anchored discs is that they require abstract perturbations, even if of a simpler kind than those needed for contact homology. Readers who are uncomfortable with abstract perturbations can assume that the negative end of the Liouville cobordisms introduced later in the paper is the standard contact sphere, where the second approach works.  In Section \ref{sec:reduc-cont-stopp}, we explain how to use stops and partially wrapped Fukaya categories to reduce the computation of the Floer complex of compact Lagrangians to a computation of Legendrian contact homology in symplectisation where the second approach can be applied.
\end{rem}

We denote by ${\mathcal M}_{\Lambda}^i(a; b_1, \ldots, b_n)$ the subset of ${\mathcal M}_{\Lambda}(a; b_1, \ldots, b_n)$ consisting of maps of Fredholm index $i$. For a generic choice of the almost complex structure, ${\mathcal M}^i_{\Lambda}(a; b_1, \ldots, b_n)$ is a transversely cut out manifold of dimension $i$ carrying a free (if $i>0$) and proper action of $\R$ by translations in the symplectisation direction. We also denote
$$\widetilde{\mathcal M}_{\Lambda}^i(a; b_1, \ldots, b_n) = {\mathcal M}_{\Lambda}^i(a; b_1, \ldots, b_n)/ \R.$$

We define
  $$\partial a = \sum_{b_1\ldots b_n} \# \widetilde{\mathcal M}_{\Lambda}^1(a; b_1, \ldots, b_n) s_-(a) \cdot b_n \cdots b_1\cdot s_+(a),$$
  where the sum is taken over all words on Reeb chords of $\Lambda$, including the empty word, with the convention that it corresponds to the unit. The idempotents at the beginning and end of the word are absorbed into the chords if the word is not empty, so they have a nontrivial effect only for the empty word. Note that the order of the word is opposite to the order used in the original articles. We made this choice so that, in the case of a disconnected Legendrian, $a$ and the word $b_n \ldots b_1$ have starting and end points in the same connected component when reading the word from left to right.

  Invariance of ${\mathcal A}(\Lambda)$ up to quasi-isomorphism can be proved by a cobordism argument as follows. Suppose that, for $s \in \R$, we have a path of cylindrical almost complex structures $\{J_s \}$ and a Legendrian isotopy $\{ \Lambda_s \}$ which are constant for $s \le0$ and $s \ge 1$; then we can construct an almost complex structure $J$ on $\R \times Y$ and a Lagrangian cobordism $C$ such that $J=J_0$ and $C$ is a cylinder over $\Lambda_0$ on $(- \infty, 0] \times Y$, and $J=J_1$ and $C$ is a cylinder over $\Lambda_1$ on $[1, + \infty) \times Y$. We define the moduli spaces ${\mathcal M}^0_C(a; b_1, \ldots, b_n)$
of index zero $J$-holomorphic punctured discs in $\R \times Y$ with boundary on $C$, a positive end at the Reeb chord $a$ of $\Lambda_1$ and negative ends at chords $b_1, \ldots, b_n$ of $\Lambda_0$, and the continuation maps
  $$\Phi \colon {\mathcal A}(\Lambda_1, J_1) \to {\mathcal A}(\Lambda_0, J_0),$$
  $$\Phi(a) = \sum_{b_1 \ldots b_n} \#{\mathcal M}^0_C(a; b_1, \ldots, b_n) s_-(a) \cdot b_n \cdots b_1 \cdot s_+(a).$$
  The continuation maps satisfy the usual properties:
  \begin{itemize}
  \item the constant path of almost complex structure and isotopy yield the identity map,
  \item a compactly supported deformation of $J$ and $C$ yields a dg-homotopic map, and
  \item concatenation of paths of almost complex strutures and Legendrian isotopies corresponds, up to dg-homotopy,  to composition of continuation maps.
  \end{itemize}
  Therefore we can prove that continuation maps are quasi-isomorphisms by concatenating a path of data with the opposite path and then deforming it to the identity path.

  The following lemma\footnote{We thank Tobias Ekholm for suggesting the proof of this lemma} is useful for taking limits of Chekanov-Elieashberg algebras.
  \begin{lemma}\label{if nothing happens nothing happens}
    Suppose that there exists $Q>0$ such that:
    \begin{itemize}
    \item there is a canonical bijection between Reeb chords of action less than $Q$ in  $\Lambda_s$ and $\Lambda_{s'}$ for all $s, s' \in [0,1]$, and
    \item all the moduli spaces ${\mathcal M}_{\Lambda_s}(a; b_1, \ldots, b_n)$ where $a$ is a Reeb chord of $\Lambda_s$ of action less than $Q$ and $s \in [0,1]$ are regular.
    \end{itemize}
Then the continuation map $\Phi \colon {\mathcal A}(\Lambda_1, J_1) \to {\mathcal A}(\Lambda_0, J_0)$ is homotopic to a map which restricts to the canonical bijection on chords of action less than $Q$.
\end{lemma}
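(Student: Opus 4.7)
The plan is to combine the action filtration with an SFT compactness argument applied to a subdivision of the isotopy. First, I would observe that, by exactness of the Lagrangian trace cobordism $C$ associated to $(\Lambda_s, J_s)_{s \in [0,1]}$, the energy of any $J$-holomorphic disc in $C$ with positive puncture at a chord $a$ of $\Lambda_1$ and negative punctures at chords $b_1, \ldots, b_n$ of $\Lambda_0$ equals $\mathfrak{a}(a) - \sum_i \mathfrak{a}(b_i)$ plus a defect bounded by the Hofer norm of the isotopy. Subdividing $0 = s_0 < s_1 < \cdots < s_N = 1$ finely enough renders this defect arbitrarily small on each sub-cobordism $C_i$, and together with the composition property of continuation maps up to chain homotopy this reduces the statement to showing that each $\Phi_i$ is chain-homotopic to the canonical bijection on low-action chords, under the assumption that $C_i$ is arbitrarily $C^1$-close to the trivial cylinder $\R \times \Lambda_{s_{i-1}}$.

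For such a sub-cobordism $C_i$ and a chord $a$ of $\Lambda_{s_i}$ of action less than $Q$, the action estimate forces the negative punctures of any index zero disc to have actions also less than $Q$, whence they correspond to chords of $\Lambda_{s_{i-1}}$ via the canonical bijection. Arguing by contradiction, suppose that along a sequence of progressively thinner sub-cobordisms $C^{(n)}$ there existed index zero discs other than near-trivial strips. By SFT compactness, such a sequence converges to a holomorphic building entirely contained in the symplectization $\R \times Y$ with boundary on $\R \times \Lambda_{s_{i-1}}$. Every non-constant component in a symplectization carries a free $\R$-translation action and therefore has Fredholm index at least one, so the vanishing of the total Fredholm index forces the limit building to consist of a single trivial strip over a Reeb chord of action less than $Q$.

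The regularity hypothesis now enters crucially: because every moduli space $\mathcal{M}_{\Lambda_s}(a; b_1, \ldots, b_n)$ with $\mathfrak{a}(a) < Q$ is transversely cut out along the entire isotopy, a trivial strip over a low-action chord is a regular element of its moduli space, and a standard gluing argument (the implicit function theorem applied to the linearised Cauchy--Riemann operator near the trivial strip, now viewed inside $C^{(n)}$) yields, for $n$ large, exactly one transversely cut out index zero disc in $C^{(n)}$ with asymptotics given by the canonical bijection. Assembling these counts shows that $\Phi_i$, up to a chain homotopy absorbing any contribution from discs with some puncture of action at least $Q$, restricts to the canonical bijection on chords of action less than $Q$; composing the $\Phi_i$ over $i$ produces the map homotopic to $\Phi$ claimed in the statement. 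The main technical obstacle is the careful application of SFT compactness to the degenerating family of cobordisms together with the gluing estimate identifying the trivial strip as the unique deformed curve, both of which are substantially simplified by working in a uniformly regular setting below the action threshold $Q$.
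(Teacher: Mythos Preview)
Your proposal is correct and rests on the same analytical core as the paper's proof: use SFT compactness to show that, for a sufficiently thin trace cobordism, any index zero disc with positive asymptotic of action less than $Q$ must be a small perturbation of a trivial strip, the regularity hypothesis ruling out any nontrivial limit building. The organisation differs slightly: you subdivide the isotopy into small pieces and argue that each piece gives the canonical bijection, then compose; the paper instead argues by contradiction via bisection, producing sequences $s_i < s_i'$ with $s_i, s_i' \to s_\infty$ along which the continuation map fails the conclusion, and then applies SFT compactness to the resulting sequence of shrinking cobordisms to obtain a nontrivial index zero building over $\R \times \Lambda_{s_\infty}$, contradicting regularity.

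The bisection packaging has the minor advantage that it automatically localises to a single limit point $s_\infty$, so you never need to argue separately that a \emph{finite} subdivision suffices (in your version this step is implicit and would be supplied by compactness of $[0,1]$). Conversely, your explicit invocation of the implicit function theorem to identify the unique near-trivial strip makes precise something the paper leaves tacit in the word ``nontrivial''. Your remark about ``absorbing contributions from discs with some puncture of action at least $Q$'' is unnecessary: as you yourself note, the action estimate already forces all negative punctures to have action below $Q$ when the positive one does.
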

\begin{proof}
  First note that if the continuation map ${\mathcal A}(\Lambda_1, J_1) \to {\mathcal A}(\Lambda_0, J_0)$ does not satisfy the property we want to prove, then for every $s \in [0,1]$ at least one between the continuation maps ${\mathcal A}(\Lambda_1, J_1) \to {\mathcal A}(\Lambda_{s}, J_{s})$ and ${\mathcal A}(\Lambda_{s}, J_{s}) \to {\mathcal A}(\Lambda_0, J_0)$ also does not satisfy it. Therefore, proceeding by bisection, there are sequences $s_i, s_i' \to s_\infty$ with $s_i < s_i'$ such that the continuation maps ${\mathcal A}(\Lambda_{s_i'}, J_{s_i'}) \to {\mathcal A}(\Lambda_{s_i}, J_{s_i})$ do not satisfy the property we want to prove. These continuation maps are defined from families of almost complex structures $\widetilde{J}_i$ interpolating between $J_{s_i'}$ and $J_{s_i}$ and Lagrangian cobordisms $C_i$ with positive end at $\Lambda_{s_i'}$ and negative end at $\Lambda_{s_i}$ such that $\widetilde{J}_i \to J_{s_\infty}$ and $C_i \to \R \times \Lambda_{s_\infty}$ as $i \to \infty$. Since the continuation maps ${\mathcal A}(\Lambda_{s_i'}, J_{s_i'}) \to {\mathcal A}(\Lambda_{s_i}, J_{s_i})$ is not homotopic to the identity below action $Q$, for every $i$ there is a nontrivial index zero $\widetilde{J}_i$-holomorphic curve in $\R \times Y$ with boundary on $C_i$ and positively asymptotic to some chord of action less that $Q$. Here ``nontrivial'' means that, for $i$ large enough, it is not a perturbation of a trivial strip on $\R \times \Lambda_{s_\infty}$.

  Then we apply SFT compactness and obtain in the limit a nontrivial index zero $J_{s_\infty}$-holomorphic building with boundary on $\R \times \Lambda_{s_\infty}$  and the same asymptotics. This is a contradiction because $J_{s_\infty}$ is cylindrical and regular below action $Q$, and therefore the only index zero $J_{s_\infty}$-holomorphic buildings with boundary on $\R \times \Lambda_{s_\infty}$ and positive asymptrotic to a Reeb chord of action less than $Q$ are trivial strips. 
\end{proof}

Let ${\mathcal A}$ be a semi-projective differential graded algebra generated by a set $A$. Given a map $\mathfrak{a} \colon A \to [0, + \infty)$ and $Q \ge 0$ we denote by ${\mathcal A}^Q$ the sub-algebra of $A$ generated by those $a \in A$ such that $\mathfrak{a}(a) \ge Q$. We say that $\mathfrak{a}$ is an {\em action filtration} if ${\mathcal A}^Q$ is a sub-dga of ${\mathcal A}$ for every $Q \ge 0$.
This definition is an abstraction of the action filtration on the Chekanov-Eliashberg algebra.
An {\em action-filtered} differential graded algebra is a semi-projective differential graded algebra endowed with an action filtration. The following Lemma will be used to prove that very long chords between parallel copies of a Legendrian submanifold do not contribute to the quasi-isomorphism type of the Chekanov-Eliashberg algebra as the parallel copies become  closer and closer to each other. 
\begin{lemma}\label{taking limits}
  Let $\{ {\mathcal A}_n, f_n, g_n \}_{n \in \N}$ be a sequence of action-filtered differential graded algebras and dg-morphisms $f_n \colon {\mathcal A}_n \to {\mathcal A}_{n+1}$
  and $g_n \colon {\mathcal A}_{n+1} \to {\mathcal A}_n$ such that $f_n \circ g_n$ and $g_n \circ f_n$ induce the identity in homology, and assume that for each $n \in \N$ there exists $\delta_n>0$ such that $\mathfrak{a}(f_n(a)) < \mathfrak{a}(a)+ \delta_n$ for all $a \in {\mathcal A}_{n+1}$.
  If ${\mathcal B}$ is an action-filtered differential graded algebra, $Q_n$ a sequence such that
\begin{equation}\label{Qn grows fast}
  \lim_{n \to \infty} \left ( Q_n - \sum_{i=1}^{n-1} \delta_i \right ) \to + \infty,
\end{equation}
$i_n \colon {\mathcal B}^{Q_n} \to {\mathcal A}^{Q_n}$ is a dg-isomorphism for every $n$ and the diagram

 \begin{equation}\label{diagram for the limit}
  \xymatrix{
   {\mathcal A}_0  & {\mathcal A}_1 \ar[l]^-{g_0}  & \ldots \ar[l]^-{g_{1}}  & {\mathcal A}_n \ar[l]^-{g_{n-1}}  & \ldots \ar[l]^-{g_n} \\
  {\mathcal B}^{Q_0} \ar[u]^-{i_0} \ar[r]^-{\subset} &  {\mathcal B}_1^{Q_1} \ar[r]^-{\subset} \ar[u]^{i_1} & \ldots \ar[r]^-{\subset} & {\mathcal B}_n^{Q_n} \ar[r]^-{\subset} \ar[u]^{i_n} & \ldots
 }
 \end{equation}
commutes, then there is a dg-morphism $j \colon {\mathcal B} \to {\mathcal A}_0$ which induces an isomorphism in homology.
\end{lemma}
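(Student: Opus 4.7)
First I would build $j$ by gluing pieces defined on each $\mathcal{B}^{Q_n}$. For each $n$, set
\[
  j_n := g_0 \circ g_1 \circ \cdots \circ g_{n-1} \circ i_n \colon \mathcal{B}^{Q_n} \to \mathcal{A}_0,
\]
which is a dg-morphism as a composition of dg-morphisms. Iterating the commutativity of diagram \eqref{diagram for the limit} gives $g_n \circ i_{n+1}|_{\mathcal{B}^{Q_n}} = i_n$, hence $j_{n+1}|_{\mathcal{B}^{Q_n}} = j_n$; combined with $\mathcal{B} = \bigcup_n \mathcal{B}^{Q_n}$ these glue to a well-defined dg-morphism $j \colon \mathcal{B} \to \mathcal{A}_0$. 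The substantive claim is that $H(j)$ is an isomorphism, which I would prove by establishing surjectivity and injectivity separately, with the divergence hypothesis \eqref{Qn grows fast} controlling how representatives can be moved around the diagram.

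\textbf{Surjectivity.} For a cycle $\alpha \in \mathcal{A}_0$, the pushforward $\alpha_n := f_{n-1} \circ \cdots \circ f_0(\alpha)$ is a cycle in $\mathcal{A}_n$ with $\mathfrak{a}(\alpha_n) \leq \mathfrak{a}(\alpha) + \sum_{i=0}^{n-1}\delta_i$. By \eqref{Qn grows fast}, this is at most $Q_n$ for $n$ sufficiently large, so $\alpha_n \in \mathcal{A}_n^{Q_n}$ and $\beta_n := i_n^{-1}(\alpha_n)$ is a cycle in $\mathcal{B}^{Q_n}$. Then $j(\beta_n) = (g_0 \cdots g_{n-1})(f_{n-1} \cdots f_0)(\alpha)$, and iterating the homology-level identity $H(g_k) H(f_k) = \mathrm{id}_{H(\mathcal{A}_k)}$ starting from the innermost block $g_{n-1} f_{n-1}$ collapses this class to $[\alpha]$ in $H(\mathcal{A}_0)$.

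\textbf{Injectivity and the main obstacle.} Suppose $\beta \in \mathcal{B}^{Q_n}$ is a cycle with $j(\beta) = \partial \gamma$. Applying $f_{n-1} \circ \cdots \circ f_0$ and this time using $H(f_k) H(g_k) = \mathrm{id}_{H(\mathcal{A}_{k+1})}$, an analogous iteration forces $[i_n(\beta)] = 0$ in $H(\mathcal{A}_n)$; write $i_n(\beta) = \partial \xi$. For $m$ large enough that $\mathfrak{a}(\xi) + \sum_{i=n}^{m-1}\delta_i \leq Q_m$ (possible by \eqref{Qn grows fast}), the pushforward $f_{m-1} \circ \cdots \circ f_n(\xi)$ lies in $\mathcal{A}_m^{Q_m}$ and bounds $f_{m-1} \circ \cdots \circ f_n(i_n(\beta))$ inside $\mathcal{A}_m^{Q_m}$, producing via $i_m^{-1}$ a boundary in $\mathcal{B}^{Q_m} \subset \mathcal{B}$. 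The hardest step, which I expect to be the main obstacle, is identifying this boundary with $\beta$ itself in $H(\mathcal{B})$: one must show that the two elements $i_m(\beta)$ and $f_{m-1} \cdots f_n(i_n(\beta))$ of $\mathcal{A}_m^{Q_m}$ — which agree in $H(\mathcal{A}_m)$ by the commutativity of \eqref{diagram for the limit} together with $H(f_k) = H(g_k)^{-1}$ — actually become homologous inside the sub-dga $\mathcal{A}_m^{Q_m}$. Ensuring that the chain homotopies implicit in $f_k g_k \sim \mathrm{id}$ can be absorbed into the low-action subalgebra is precisely the role of the divergence condition \eqref{Qn grows fast}, which provides enough slack in the action to push a witnessing chain into $\mathcal{A}_m^{Q_m}$ after finitely many iterations.
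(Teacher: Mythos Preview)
Your construction of $j$ and the surjectivity argument coincide with the paper's. For injectivity the paper is a little more streamlined: rather than first deducing abstractly that $[i_n(\beta)]=0$ in $H(\mathcal{A}_n)$ and then choosing some $\xi$ with $\partial\xi = i_n(\beta)$, it simply pushes the given primitive $\gamma\in\mathcal{A}_0$ forward, so that $i_n^{-1}\bigl(f_{n-1}\cdots f_0(\gamma)\bigr)\in\mathcal{B}^{Q_n}$ directly bounds $i_n^{-1}\bigl(f_{n-1}\cdots f_0(j(\beta))\bigr)$ there, and then asserts in one line that this last element and $\beta$ ``represent the same homology class.''

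You are right to single out that identification as the crux, but your proposed mechanism for resolving it does not follow from the stated hypotheses. The only action control assumed is the bound on the maps $f_n$; there is none on any chain homotopies witnessing $f_kg_k\simeq\mathrm{id}$, so the growth condition \eqref{Qn grows fast} by itself gives no way to force a primitive for $i_m(\beta)-f_{m-1}\cdots f_n(i_n(\beta))$ into $\mathcal{A}_m^{Q_m}$, and passing to still larger $m$ only reproduces the same difficulty one level up. The paper's proof is equally terse at this point; what makes the argument go through in the intended application (see Lemma~\ref{from D to Dab}) is the additional fact, supplied by Lemma~\ref{if nothing happens nothing happens}, that the continuation maps restrict to the canonical bijection on generators of action below $Q_n$ --- a rigidity stronger than what the abstract lemma records.
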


\begin{proof}
  We define the map $j \colon {\mathcal B} \to {\mathcal A}_0 $ as follows: for every $b \in {\mathcal B}$ we choose $n$ such that $b \in {\mathcal B}^{Q_n}$  and define $j(b)= g_0 \circ \ldots \circ g_{n-1} \circ i_n(b)$. The commutative diagram \eqref{diagram for the limit} shows that $j$ is a well defined dg algebra morphism.

  Next we show that $j$ is injective in homology. Suppose that for some cycle $b \in {\mathcal B}$ there is an element $a \in {\mathcal A}_0$ such that $\partial (a) = j(b)$. Then there is $\overline{Q}$ such that $a \in {\mathcal A}_0^{\overline{Q}}$. By Equation \eqref{Qn grows fast}, there is an $n$ such that $f_{n-1} \circ \ldots \circ f_1(a) \in {\mathcal A}_n^{Q_n}$, and therefore $\partial( i_n^{-1} \circ f_{n-1} \circ \ldots \circ f_1(a))= i_n^{-1} \circ f_{n-1} \circ \ldots \circ f_1 \circ j (b)$. Since $b$ and $i_n^{-1} \circ f_{n-1} \circ \ldots \circ f_1 \circ j (b)$ represent the same homology class, it follows that $b$ is a boundary, and therefore $j$ is injective in homology.

  Finally we prove that $j$ is surjective in homology. Take a cycle $a \in {\mathcal A}_0$ representing some homology class. Then there is some $n$ for which $f_{n-1} \circ \ldots \circ f_1(a) \in {\mathcal A}_n^{Q_n}$. We define $b$ = $i_n^{-1} \circ f_{n-1} \circ \ldots \circ f_1(a)$ and observe that $j(b)$ and $a$ represent the same homology class.
\end{proof}

\section{Immersed Lagrangian cobordisms and their differential graded algebras} \label{sec: immersed cobordisms}
In this section we define immersed exact Lagrangian cobordisms and associate to them differential graded algebras which generalise the Chekanov-Eliashberg algebras of Legendrian submanifolds. Representations of those algebras will act as bounding cochains for the Floer homology that will be introduced in Section \ref{sec:cthulhu-complex-with}.

  \begin{dfn}
    A {\em manifold with cylindrical ends} is a manifold $M$ with a compact, codimension zero submanifold $M^c$, a partition
    $$\partial M^c=\partial_- M^c \sqcup \partial_+ M^c$$
    and a fixed identification of $M \setminus M^c$ with
    $$(-\infty, 0) \times \partial_- M^c \sqcup (0, + \infty) \times \partial_+ M^c.$$
    The part of $M \setminus M^c$ which is identified to $(-\infty, 0) \times \partial_- M^c$ is called the {\em negative end} of $M$ and the part which is identified to $(0, + \infty) \times \partial_+ M^c$ is called the {\em positive end}.
  \end{dfn}
  We allow either $\partial_- M^c$ or $\partial_-+M^c$ (or both) to be empty. In our setting $M$ will have no boundary and $\partial_{\pm} M^c$ will be closed. Abusing the notation, we will write $\partial_\pm M$ for $\partial_{\pm} M^c$.

  \begin{dfn}
    A {\em Liouville cobordism} $(X, \theta)$ is a $2n$-dimensional manifold with cylindrical ends $X$ endowed with a one-form $\theta$ such that $d \theta$ is symplectic and $\theta$ pulls back to $e^s\alpha_+$ on  $(0, + \infty) \times \partial_+ X$ and to $e^s\alpha_-$ on $(- \infty, 0) \times \partial_- X$ for contact forms $\alpha_\pm$ on $\partial_\pm X$.
  \end{dfn}
  In this context the positive and negative end are rather called the convex and concave end, respectively.

  \begin{dfn}
    An immersed exact Lagrangian cobordism in the Liouville cobordism $(X, \theta)$ is a proper immersion $\iota \colon L \to X$ such that
    \begin{itemize}
    \item $L$ is a manifold with cylindrical ends and $\dim L =n$,
    \item the positive end of $L$ is mapped onto $(0, +\infty) \times \Lambda^+$ for a submanifold $\Lambda^+ \subset \partial_+X$ by a diffeomorphism respecting the product structure,
    \item  the negative end of $L$ is mapped onto $(-\infty, 0) \times \Lambda^-$ for a submanifold $\Lambda^- \subset \partial_-X$ by a diffeomorphism respecting the product structure, and
    \item $\iota^* \beta = dh$ for a function $ h \colon L \to \R$ which is constant on $(- \infty, 0) \times \partial_-L$ and $ (0, + \infty) \times \partial_+L$.
    \end{itemize}
  \end{dfn}
  The function $h$ is called the {\em potential} of $L$. These conditions imply that $\Lambda^{\pm}$ is a Legendrian submanifold of $(\partial_{\pm}X, \alpha_{\pm})$.  With an abuse of notation we will identify $L$ with its image and denote $\partial_{\pm}L=\Lambda^{\pm}$. We will always assume that $\Lambda^\pm$ are chord generic, $L$ has only transverse double points, and for every double point the function $h$ takes different values at the two preimages. For the definition of Floer homology it is necessary only that $h$ be constant on the negative end, but we have to require that $h$ be constant also on the positive end if we want to be sure that the concatenation of two exact Lagrangian cobordisms is still exact.

  We fix a generic almost complex structure $J$ on $X$ which is compatible with $d \theta$ and cylindrical in the ends. If $u \colon (0, +\infty) \times [0,1] \to X$ is a $J$-holomorphic map such that $u((0, +\infty) \times \{0,1 \}) \subset L$ and $\lim \limits_{s \to + \infty} u(t,s)=p$, we say that $u$ {\em approaches $p$ in the positive direction} if, for $s_0$ sufficiently large, $u$ maps $(s_0, + \infty) \times \{ 0 \}$ to the branch of $L$ near $p$ with higher potential and $(s_0, + \infty) \times \{ 1 \}$ to the branch with lower potential.
Similarly, if $u \colon (-\infty, 0) \times [0,1] \to X$ is a $J$-holomorphic map such that $u((- \infty, 0) \times \{0,1 \}) \subset L$ and $\lim \limits_{s \to - \infty} u(t,s)=p$, we say that $u$ {\em approaches $p$ in the negative direction} if, for $s_0$ sufficiently large, $u$ maps
$(- \infty, -s_0) \times \{0 \}$ to the branch of $L$ near $p$ with  higher potential and $(- \infty, -s_0) \times \{ 1 \}$ to the branch with lower potential. These conditions are borrowed from Legendrian contact homology; see e.g.\ \cite{LCHgeneral}.

If $a$ is a self-intersection point of $L$ or a Reeb chord of $\Lambda^+$ and $b_1, \ldots, b_l$ are self-intersection points of $L$ or chords of $\Lambda^-$ we denote by ${\mathcal M}_L(a, b_1, \ldots, b_l)$ the moduli space of $J$-holomorphic polygons in $X$ with boundary on $L$, a positive end at $a$, negative ends at $b_1, \ldots, b_n$, and which moreover satisfy the additional constraint that the positive end approaches $a$ in the positive direction if $a$ is a self-intersection point, and the negative ends converging to self-intersection points approach them in the negative direction. There could be also interior negative punctures which are dealt with according to Remark \ref{rem: how we learnt not to worry, but not yet to love interior punctures}.

These moduli spaces are regular for a generic $J$ because  in \cite{LCHgeneral} and \cite{LegendrianAmbient} the perturbation happens near the positive puncture and it is irrelevant if  the negative punctures are reeb chords or self-intersection points. Regularity for anchored discs was proved in \cite{EkholmCurve}. We denote by ${\mathcal M}_L^d(a, b_1, \ldots, b_l)$ the $d$-dimensional part of ${\mathcal M}_L(a, b_1, \ldots, b_l)$. If the Maslov class of $L$ vanishes (which implies in particular that $c_1(X)=0$)  we can grade Reeb chords and self-intersection points following \cite{LCHgeneral} and \cite{Seidel_Fukaya}. We will denote the degree of $a$ by $|a|$. The index formula for the Caucy-Riemann operator gives the following formulas for ${\mathcal M}_L(a, b_1, \ldots, b_l)$. The following Lemma follows from \cite[Theorem A.1]{CieliebakSwitching}
\begin{lemma}
  If $a$ is a self-intersection point, then
\begin{equation}\label{dimension formula for cobordism algebra}
  \dim {\mathcal M}_L(a, b_1, \ldots, b_l)= |a|-|b_1|- \ldots - |b_l| -1.
\end{equation}
If $a$ is a Reeb chord of $\Lambda^+$, then
\begin{equation}\label{dimension formula for cobordism maps}
  \dim {\mathcal M}_L(a, b_1, \ldots, b_l)= |a|-|b_1|- \ldots - |b_l|.
\end{equation}
\end{lemma}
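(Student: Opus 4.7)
The plan is to derive both dimension formulas directly from the Fredholm index of the linearized Cauchy--Riemann operator on the punctured disc, by invoking the index formula of \cite[Theorem A.1]{CieliebakSwitching}. That theorem computes the Fredholm index of a linearized Cauchy--Riemann operator on a Riemann surface with \emph{switching} Lagrangian boundary conditions (to account for the self-intersections of $L$) and with asymptotic conditions of mixed Reeb-chord and double-point type, expressing it as a topological term plus a sum of local contributions at each puncture.

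First, I would observe that under our standing assumptions, namely vanishing Maslov class of $L$ (and $c_1(X)=0$), the relative first Chern/Maslov term in the Cieliebak--Switching index formula vanishes, so the Fredholm index reduces to a sum of local contributions at the punctures plus a universal offset depending only on the Euler characteristic of the domain and on the type (Reeb chord vs.\ self-intersection) of the positive puncture. Next, I would match the grading conventions used in the paper against those in \cite{LCHgeneral} for Reeb chords (the Conley--Zehnder index of the Reeb-chord path, normalized so that the LCH differential is of degree $-1$) and those in \cite{Seidel_Fukaya} for transverse double points (the Maslov index of the Lagrangian path between the two branches at the intersection). These gradings $|a|$ are chosen precisely so that a positive Reeb chord puncture contributes $+|a|$ and a negative Reeb chord puncture contributes $-|b_i|$ to the local index, and analogously for self-intersection asymptotics.

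Third, I would assemble the index. Summing the contributions from the positive puncture and the $l$ negative punctures recovers the telescoping formula $|a|-|b_1|-\cdots -|b_l|$ plus a constant term that only depends on the nature of $a$. The constant is $0$ when $a$ is a Reeb chord of $\Lambda^+$, since the LCH conventions are set up so that the parametrized symplectization moduli space has dimension exactly $|a|-\sum|b_i|$; on the other hand, the Lagrangian--Floer convention for a self-intersection corresponds to a shift by one relative to the Conley--Zehnder normalization (morally, a self-intersection is a ``degenerate Reeb chord of zero action'' whose corresponding Conley--Zehnder index is $|a|+1$), which produces the additional $-1$ in the dimension formula.

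The main (essentially only) obstacle is the bookkeeping: verifying that the grading convention adopted for self-intersection points is consistent with that of \cite{Seidel_Fukaya} and differs from the Conley--Zehnder convention of Reeb chords by exactly one, so that the universal offset in Cieliebak--Switching's formula evaluates to $0$ in the Reeb-chord case and to $-1$ in the self-intersection case. Once these conventions are checked to be in alignment, both formulas are immediate specializations of \cite[Theorem A.1]{CieliebakSwitching}.
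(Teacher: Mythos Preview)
Your proposal is correct and takes essentially the same approach as the paper: the paper's proof consists of the single sentence ``follows from \cite[Theorem A.1]{CieliebakSwitching},'' which is exactly the index theorem you invoke. Your additional discussion of matching grading conventions is a reasonable elaboration of what the citation entails, but the paper itself does not spell this out.
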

We can also associate an action $\mathfrak{a}$ to self-intersections and Reeb chords as follows:
 \begin{itemize}
 \item if $a$ is a Reeb chord of $\Lambda^\pm$, then $\mathfrak{a}(a)= \int_{a} \alpha_\pm$, and
 \item if $a$ is a self-intersection point, then $\mathfrak{a}(a) =h(a^+)-h(a^-)$, where $a^+$ and $a^-$ are the preimages of $a$ such that $h(a^+)>h(a^-)$.
 \end{itemize}

 \begin{lemma}\label{d_L decreases action}
  If ${\mathcal M}_L(a, b_1, \ldots, b_l)$ is nonempty, then $\mathfrak{a}(a) > \mathfrak{a}(b_1) + \ldots + \mathfrak{a}(b_l)$.
 \end{lemma}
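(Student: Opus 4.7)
The plan is to use the standard Stokes-theorem/positivity-of-energy argument, adapted to the immersed setting via the ``approach direction'' convention at double points. Since $J$ is compatible with $d\theta$ and $u$ is $J$-holomorphic, the $d\theta$-energy satisfies
$$0 \le E(u) = \int_u u^*\, d\theta,$$
and $E(u)>0$ strictly as soon as $u$ is non-constant (which is automatic because a non-empty moduli space carries at least a positive puncture at $a$). So it suffices to identify $\int_u u^*\,d\theta$ with $\mathfrak{a}(a) - \sum \mathfrak{a}(b_i)$, up to possible interior puncture contributions, which are non-negative under the anchored-disc setup.

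By Stokes' theorem applied after excising small neighbourhoods of the punctures, $\int_u u^*d\theta$ equals the sum of (i) the contributions of the boundary arcs on $L$, and (ii) the contributions of the small arcs around each boundary puncture (plus small circles around each interior puncture). For (i), the exactness assumption $\iota^*\theta = dh$ means each arc contributes the difference of the values of $h$ at its two endpoints, lifted to $L$. For (ii), in the cylindrical end where $\theta = e^s\alpha_\pm$, a standard computation shows that an asymptotic arc around a chord $c$ contributes $+\int_c \alpha_+ = +\mathfrak{a}(c)$ at a positive puncture and $-\mathfrak{a}(c)$ at a negative puncture. The anchored-disc convention (Remark~\ref{rem: how we learnt not to worry, but not yet to love interior punctures}) ensures each interior puncture contributes a non-negative term equal to the action of a Reeb orbit capped by a holomorphic plane in the filling.

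It remains to collect the jumps in $h$ at self-intersection punctures. Here the ``approach direction'' conventions enter decisively. At a positive puncture at a self-intersection $a$, the boundary of $u$ traversed counter-clockwise from the incoming to the outgoing arc jumps from the lower-potential branch $a^-$ to the higher-potential branch $a^+$, so the net contribution of the two boundary endpoints adjacent to this puncture is $h(a^+)-h(a^-) = \mathfrak{a}(a)$. At a negative puncture at a self-intersection $b_i$, the same convention (upper/lower assignment of the boundary components) produces a jump in the opposite direction, contributing $-(h(b_i^+)-h(b_i^-)) = -\mathfrak{a}(b_i)$. The constancy of $h$ on the cylindrical ends guarantees that the chord-type punctures do not receive spurious $h$-contributions. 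Summing (i) and (ii) collapses by telescoping to
$$\int_u u^*d\theta \;=\; \mathfrak{a}(a) \;-\; \sum_{i=1}^l \mathfrak{a}(b_i) \;-\; (\text{non-negative orbit terms}),$$
which together with $E(u)>0$ yields the claimed strict inequality.

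The only real subtlety, and the place where one has to be careful, is the sign bookkeeping at self-intersection points: one must match the counter-clockwise traversal of $\partial u$ with the ``positive/negative approach direction'' convention so that the $h$-jump reproduces exactly $+\mathfrak{a}(a)$ at the positive puncture and $-\mathfrak{a}(b_i)$ at each negative puncture. Everything else is a bookkeeping exercise that has appeared many times in Legendrian contact homology (e.g.\ \cite{LCHgeneral}) and in the Lagrangian cobordism setting, so no new analytical input is required.
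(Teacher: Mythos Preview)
Your overall strategy (Stokes plus positivity of the $d\theta$-area) is the right one and is what the paper does, but there is a genuine gap in your treatment of the Reeb chord punctures. You integrate $u^*d\theta$ and then claim that ``an asymptotic arc around a chord $c$ contributes $+\int_c \alpha_+$ at a positive puncture and $-\mathfrak{a}(c)$ at a negative puncture.'' This is not correct for $\theta$: in the ends one has $\theta=e^s\alpha_\pm$, so the arc at height $s$ contributes $e^s\int_c\alpha_\pm$. At a positive chord puncture this diverges as $s\to+\infty$ (and indeed $\int u^*d\theta=+\infty$ when $a$ is a Reeb chord of $\Lambda^+$), while at a negative chord puncture it tends to $0$ rather than to $-\mathfrak{a}(b_i)$. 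So your Stokes identity, as written, either fails to make sense or gives the wrong contributions.

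The paper fixes exactly this point by replacing $\theta$ with the piecewise form $\tilde\theta$ equal to $\theta$ on $X^c$ and to $\alpha_\pm$ on the cylindrical ends. Two things make this work: first, $\tilde\theta$ and $\theta$ pull back to the same form on $L$ (both vanish on $L$ in the ends), so $\iota^*\tilde\theta=dh$ still holds and the boundary-arc/jump bookkeeping you wrote for self-intersection points is unchanged; second, $u^*d\tilde\theta\ge 0$ continues to hold because $J$ is cylindrical in the ends and hence compatible with $d\alpha_\pm$. With $\tilde\theta$ the arc around a chord really does contribute $\pm\int_c\alpha_\pm$, and Stokes yields $0<\int u^*d\tilde\theta=\mathfrak{a}(a)-\sum_i\mathfrak{a}(b_i)$ on the nose. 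Once you make this single modification, the rest of your argument (including the sign discussion at self-intersection punctures) goes through as you wrote it.
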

 \begin{proof}
   Let $\tilde \theta$ be the continuous and piecewise smooth $1$-form which coincides with $\theta$ on $X^c$, with $\alpha_-$ on $(- \infty, 0) \times \partial_- X$ and with $\alpha_+$ on $(0, + \infty) \times \partial_+ X$. Note that both $\theta$ and $\tilde \theta$ pull-back to zero on $L$ in the region where they differ, and therefore $dh= \iota^*\tilde \theta$. If $u \colon D_{l+1} \to X$ is a $J$-holomorphic map representing an element of ${\mathcal M}_L(a, b_1, \ldots, b_l)$, then $u^* d \tilde \theta \ge 0$ because $J$ is compatible with $\theta$ on $X^c$ and $J$ is cylindrical and compatible with the contact forms $\alpha_\pm$ in $X \setminus X^c$, and moreover the strict inequality holds where $\theta= \tilde \theta$. Stokes theorem can still be applied to $d \tilde \theta$ despite it being discontinuous because it can be applied separately to the pieces of $X$ on which $\tilde{\theta}$ is smooth, and therefore
   $$0 < \int_{D_{l+1}} u^*d \tilde{\theta}= \mathfrak{a}(a)- \mathfrak{a}(b_1) - \ldots - 
   \mathfrak{a}(b_l)$$
   because $h$ is constant on the ends of $L$.
 \end{proof}

 Let $\mathbb{F}$ be a field, which we will assume of characteristic two for simplicity. To an immersed exact Lagrangian cobordism $L$ we associate a differential graded algebra ${\mathcal D}_L$ over $\mathbb{F}$ called the {\em cobordism algebra}. This algebra is isomorphic to a dga defined by Asplund and Ekholm in \cite{AsplundEkholmSingular} for a Legendrian lift of $L$ and plays an analogous role to the obstruction algebra for immersed Lagrangians with no negative ends of \cite{generation} and the immersed dga for immersed cobordisms between Legendrian knots in three-dimensional jet spaces of \cite{MR4325415}. The whole discussion up to Definition \ref{def:imm-dga-map} shows that an immersed cobordism induces  what in \cite{MR4325415} is called  an {\rm immersed dga map} from the Chekanov-Eliashberg algebra of the positive end to the Chekanov-Eliashberg algebra of the negative end. It would be possible to repeat the constructions of this section over an idempotent ring, but we will leave this generalisation to the reades since we will have no use for it in the present article.

\begin{dfn}\label{cobordism algebra}
The cobordism algebra is the semi-projective differential graded algebra ${\mathcal D}_L$ generated over $\mathbb{F}$ by the chords of $\Lambda^-$ and by the self-intersection points of $L$. The differential $\partial_L$ is determined by the Leibniz rule together with the following properties:
    
       \begin{itemize}
       \item  if $a$ is a chord of $\Lambda^-$, then $\partial_L (a) = \partial a$, where $\partial a$ is the differential of $a$ in the Chekanov-Eliashberg algebra ${\mathcal A}_{\slashed{\Lambda}^-}(\Lambda^-)$, and 
       \item if $a$ is a self-intersection point, then 
         $$\partial_{L} (a) = \sum \# {\mathcal M}_L^0(a, b_1, \ldots, b_l) b_1 \ldots b_l,$$
         where $b_1, \ldots, b_l$ are self-intersection points of $L$ or chords of $\Lambda^-$. 
       \end{itemize}
     \end{dfn}
     The sum in the definition is finite, and therefore $\partial$ is well defined, by Lemma \ref{d_L decreases action} and SFT compactness because there are only finitely many generators of ${\mathcal D}_L$ below any given action. Lemma \ref{d_L decreases action} also implies that $\partial_L$ decreases the action filtration induced on ${\mathcal D}_L$ by $\mathfrak{a}$. Equation \eqref{dimension formula for cobordism algebra} implies that $\partial_L$ decreases the degree by $1$.
     
 So far we have called $\partial_L$ a differential, but we have not proved that $\partial_L^2=0$; it is time now to pay the debt.
 \begin{lemma} \label{partial_L^2=0}
   $\partial_L^2=0$.
 \end{lemma}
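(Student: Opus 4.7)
The plan is to case-split on whether $a$ is a Reeb chord of $\Lambda^-$ or a self-intersection of $L$. In the first case, by definition $\partial_L a = \partial a$ in ${\mathcal A}_{\slashed{\Lambda}^-}(\Lambda^-)$, and since $\partial_L$ is a derivation agreeing with $\partial$ on every chord generator, the two differentials coincide on the entire subalgebra ${\mathcal A}_{\slashed{\Lambda}^-}(\Lambda^-) \subset {\mathcal D}_L$; hence $\partial_L^2 a = \partial^2 a = 0$ by the standard identity $\partial^2 = 0$ in the Chekanov-Eliashberg algebra of $\Lambda^-$.

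The substantive case is when $a$ is a self-intersection. For every word $c_1\cdots c_k$ of generators of ${\mathcal D}_L$, I would show that the coefficient of $c_1\cdots c_k$ in $\partial_L^2 a$ equals the mod-two count of the boundary of the compactified one-dimensional moduli space ${\mathcal M}^1_L(a; c_1,\ldots, c_k)$. Finiteness of the sum follows from Lemma \ref{d_L decreases action} combined with the finiteness of generators below any fixed action; the conclusion $\partial_L^2 a = 0$ then reduces to the classical fact that a compact one-manifold has an even number of boundary points.

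The identification of that boundary with the coefficients of $\partial_L^2 a$ is carried out via SFT compactness together with transversality for anchored curves (see \cite{LCHgeneral, EkholmCurve}). The only two-level buildings that can appear are of two kinds. In a \emph{nodal splitting at a self-intersection} $p$, a top disc in ${\mathcal M}^0_L(a; c_1,\ldots,c_{i-1}, p, c_{j+1}, \ldots, c_k)$ is joined at $p$ to a bottom disc in ${\mathcal M}^0_L(p; c_i,\ldots, c_j)$; this is exactly the Leibniz contribution obtained by replacing $p$ by $\partial_L p$ in the corresponding term $c_1\cdots c_{i-1}\, p\, c_{j+1}\cdots c_k$ of $\partial_L a$. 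In an \emph{SFT breaking at a Reeb chord} $p$ of $\Lambda^-$, such a top disc in $X$ is paired with a bottom element of $\widetilde{\mathcal M}^0_{\Lambda^-}(p; c_i,\ldots,c_j)$ in the symplectisation $\R\times\partial_- X$, and again reproduces the Leibniz term in which $p$ is replaced by $\partial p = \partial_L p$.

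The main technical obstacle is ruling out all remaining degenerations. Sphere and disc bubbles (non-constant closed $J$-spheres and non-constant punctureless holomorphic discs on $L$) are excluded by exactness of $d\theta$ and of $\iota^*\theta$ combined with generic avoidance of zero-dimensional sphere moduli. A convex-end level, at $+\infty$ in the cobordism direction, would consist of a curve in $\R\times\partial_+X$ with boundary on $\R\times\Lambda^+$ carrying no positive puncture, since the unique positive puncture $a$ is a self-intersection and thus must remain on the bottom level; a Stokes argument in the spirit of Lemma \ref{d_L decreases action} forces such a curve to be constant. Finally, the bubbling of holomorphic planes at interior negative punctures is absorbed into the anchored disc formalism, as per Remark \ref{rem: how we learnt not to worry, but not yet to love interior punctures}. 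Once these exclusions are in place, the two types of boundary listed above exhaust the breakings of ${\mathcal M}^1_L(a; c_1,\ldots,c_k)$, and the proof concludes.
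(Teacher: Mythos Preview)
Your approach is the same as the paper's --- analyse the boundary of the one-dimensional moduli spaces --- but your enumeration of ``remaining degenerations'' misses precisely the case that the paper singles out as the heart of the argument.

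When a nodal splitting occurs at a self-intersection $p$, the two components approach $p$ from opposite sides, but the \emph{positive/negative} designation at $p$ is determined by the potential $h$, not by the tree structure of the building. So a priori nothing forces the component containing $a$ to approach $p$ in the negative direction. In the bad case, that component approaches $p$ positively (acquiring a second positive puncture), and the other component approaches $p$ negatively, leaving it with \emph{only} negative punctures --- some of the $c_i$ together with $p$ itself. This configuration is not a punctureless disc bubble, not a convex-end level, and not an interior-puncture issue, so none of your three exclusions covers it. The paper rules it out by the same Stokes/energy argument you invoke for the convex end: a sub-building all of whose external ends are negative (Reeb chords of $\Lambda^-$ or self-intersections approached in the negative direction) would have negative $d\tilde\theta$-energy, which is impossible. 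Once you add this case to your list, your proof is complete and matches the paper's.
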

 \begin{proof}
   The lemma is, as usual, proved by analysing the degenerations of one-dimensional moduli spaces. The limit configurations which do not contribute to $\partial^2$ consist of $J$-holomorphic buildings containing a component with more than one positive puncture.
 Those buildings have a sub-building whose external ends (i.e.\ ends which do not connect one component of the building to another) are negative ends to Reeb chords of $\alpha_-$ or negative end to self-intersection points which are approached in the negative direction. Such a building would have negative energy, which is a contradiction.
 \end{proof}

 \begin{dfn}\label{def:imm-dga-map}
   Let ${\mathcal A}_\F(\Lambda^+)$ be the Chekanov-Eliashberg algebra of $\Lambda^+$ over $\F$ --- here we depart from the notation of Section \ref{sec: idempotents and dga}. We define a morphism of differential graded algebras $\Phi_L \colon  {\mathcal A}_\F(\Lambda^+) \to {\mathcal D}_L$ by
 \begin{equation} \label{cobordism map}
   \Phi_L(a)= \sum_{k=0}^\infty \sum_{b_1, \ldots, b_k} \# {\mathcal M}_L^0(a; b_1, \ldots, b_k) b_1 \ldots b_k,
 \end{equation}
 where $a$ is a Reeb chord of $\Lambda^+$ and each $b_i$ can be either a double point of $L$ or a Reeb chord of $\Lambda_-$. The sum is finite by Lemma \ref{d_L decreases action} and SFT compactness, and $\Phi_L$ has degree zero by Equation \eqref{dimension formula for cobordism maps}. The proof that $\Phi_L$  is a chain map is similar to the proof of Lemma \ref{partial_L^2=0} and will be omitted.\end{dfn}

Now we describe how the cobordism algebra behaves when an immersed exact Lagrangian cobordism is split as the concatenation of two cobordisms.  Let $(X, \theta)$ be a Liouville cobordism and let $Y \subset X$ be a separating hypersurface of contact type. In a neighbourhood of $Y$ of the form $(- \epsilon, \epsilon) \times Y$, where $Y$ is identified with $\{0\}\times Y$, we can write $\theta= e^s \alpha_Y$ where $\alpha_Y$ is a contact form on $Y$.  

Let $(X^+, \theta_+)$ and $(X^-, \theta_-)$ be the Liouville completions of the connected components of $X \setminus Y$ such that $\partial_-X= \partial_-X^-$, $\partial_+X= \partial_+ X^+$ and $\partial_+ X^- = \partial_- X^+=Y$. An almost complex structure $J$ on $X$ which is compatible with $\theta$ and cyindrical on $(- \epsilon, \epsilon) \times Y$ induces almost complex structures $J^+$ on $X^+$ and $J^-$ on $X^-$ which are compatible with $\theta_+$ and $\theta_-$ respectively.

\begin{dfn}\label{dfn: nicely split}
 An immersed exact Lagrangian cobordism $L \subset X$ is {\em nicely split} by $Y$ if $L \cap ((- \epsilon, \epsilon) \times Y) = (- \epsilon, \epsilon) \times \Lambda$ for some submanifold $\Lambda \subset Y$ which is Legendrian with respect to the contact form  $\alpha_Y$ and if the potential $h \colon L \to \R$ vanishes on $(- \epsilon, \epsilon) \times \Lambda$. 
\end{dfn}
Note that the second condition does not loses generality if $\Lambda$ is connected. We denote by $L^+$ and $L^-$ the Liouville completions of the connected components of $L \setminus Y$ such that $L^- \subset X^-$ and $L^+ \subset X^+$. In particular $\partial_- L^- = \partial^- L = \Lambda^-$,  $\partial_+L^+ = \partial^+ L = \Lambda^+$ and $\partial_+L^- = \partial^- L^+ = \Lambda$.

We can stretch the Liouville form near $Y$ as follows: we fix a smooth function $\phi \colon (- \epsilon, \epsilon) \to \R$ such that
\begin{itemize}
 \item $\phi(s) = 0$ for $s \le - \frac \epsilon 2$,
 \item $\phi(s)=1$ for $s \ge \frac \epsilon 2$,
 \item $\phi'(s) \ge 0$ for all $s \in (- \epsilon, \epsilon)$
 \end{itemize}
 and extend it to a function $\phi \colon X \to \R$ which is locally constant outside $(- \epsilon, \epsilon) \times Y$. Then for $R \ge 0$ we define $\theta_R= e^{R \phi} \theta$. Note that $\theta_0= \theta$. The next lemma is straightforward.
 \begin{lemma}\label{inflating}
 For every $R \ge 0$ the form $\theta_R$ is a Liouville form, and if $J$ is an almost complex structure which is compatible with $\theta$ and cylindrical in $(- \epsilon, \epsilon) \times Y$, then $J$ is compatible with $\theta_R$ for all $R \ge 0$. If the exact immersed Lagrangian cobordism $L$ with potential functrion $h$ is nicely split by $Y$, then $L$ is also exact for the Liouville forms $\theta_R$ with potential function $e^{R \phi}h$.
\end{lemma}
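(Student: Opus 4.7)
The lemma breaks into three assertions, each of which I propose to verify by a direct computation in the neck region $(-\epsilon,\epsilon)\times Y$ and then trivially away from it. The plan is to compute $d\theta_R$ explicitly using that $\phi$ is locally constant outside the neck, so $d\theta_R = e^{R\phi} d\theta$ is just a positive rescaling of $d\theta$; all three properties then follow for free outside the neck. Inside the neck we use $\theta = e^s\alpha_Y$.

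For the symplectic assertion, in the neck I compute
\[
d\theta_R = d(e^{R\phi(s)+s}\alpha_Y) = e^{R\phi+s}\bigl((1+R\phi'(s))\,ds\wedge\alpha_Y + d\alpha_Y\bigr).
\]
Since $\phi'\ge 0$, the coefficient $1+R\phi'$ is strictly positive for every $R\ge 0$, so the top power of $d\theta_R$ is a nonzero multiple of $ds\wedge\alpha_Y\wedge(d\alpha_Y)^{n-1}$ and hence $d\theta_R$ is symplectic. On the cylindrical ends, $\phi\equiv 0$ on the negative end and $\phi\equiv 1$ on the positive end, so $\theta_R = e^s\alpha_-$ on the negative end and $\theta_R = e^{s+R}\alpha_+ = e^{s'}\alpha_+$ on the positive end after the shift $s'=s+R$, exhibiting the required Liouville cylindrical structure.

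For the $J$-compatibility, the away-from-the-neck case is immediate from $d\theta_R = e^{R\phi}d\theta$ with $\phi$ locally constant. Inside the neck, a cylindrical $J$ satisfies $J\partial_s = R_{\alpha_Y}$, preserves $\xi = \ker\alpha_Y$, and is compatible with $d\alpha_Y|_\xi$. The formula above decomposes $d\theta_R$ into the piece $e^{R\phi+s}(1+R\phi')\,ds\wedge\alpha_Y$ in the $\langle\partial_s, R_{\alpha_Y}\rangle$ plane (paired with a positive coefficient) and the piece $e^{R\phi+s}\,d\alpha_Y|_\xi$ on $\xi$ (also a positive rescaling). Both are compatible with $J$, so the whole form is.

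For exactness of $L$ with respect to $\theta_R$, the key is the hypothesis that $L$ is nicely split: in the neck $L\cap((-\epsilon,\epsilon)\times Y) = (-\epsilon,\epsilon)\times\Lambda$ with $\Lambda$ Legendrian and $h\equiv 0$ there. Hence $\iota^*\theta_R = e^{R\phi}\iota^*(e^s\alpha_Y) = 0 = d(e^{R\phi}\cdot 0) = d(e^{R\phi}h)$ on this piece. Away from the neck $\phi$ is locally constant, so $d(e^{R\phi}h) = e^{R\phi}\,dh = e^{R\phi}\iota^*\theta = \iota^*\theta_R$. The two formulas match across the boundary of the neck because both expressions vanish in a neighborhood where $h$ is constantly $0$. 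There is no real obstacle here, the only thing to watch is the interplay between the nicely split condition and the fact that $\alpha_Y$ pulls back to zero on a Legendrian; that is what allows the potential $e^{R\phi}h$, which is not smooth a priori through the neck if $h\ne 0$, to actually be smooth (even identically zero) where $\phi$ varies.
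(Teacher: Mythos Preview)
Your proof is correct; the paper itself omits any argument for this lemma, declaring it ``straightforward'' just before the statement. Your direct computation in the neck versus outside the neck is exactly the natural verification. One minor remark: the smoothness of $e^{R\phi}h$ is in fact automatic since both $\phi$ (pulled back to $L$) and $h$ are smooth; the real point, which you do capture, is the identity $d(e^{R\phi}h)=e^{R\phi}dh + Re^{R\phi}h\,d\phi = \iota^*\theta_R$, where the cross term $h\,d\phi$ vanishes because $d\phi$ is supported in the neck and $h\equiv 0$ there by the nicely-split hypothesis.
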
  
Stretching of the Liouville form will be used to constrain holomorphic curves in $X$ by energy arguments. In the next two lemmas we explore the relationship between the cobordism algebras of $L$, $L^-$ and $L^+$. 

\begin{lemma}\label{subalgebra}
  Let $J$ be a generic almost complex structure on $X$ which is compatible with $\theta$ and cylindrical on $(- \epsilon, \epsilon) \times Y$, and let $J^-$ be the almost complex structure induced on $X^-$. If an immersed exact Lagrangian cobordism $L$ is nicely split by $Y$, the cobordism algebra ${\mathcal D}_L$ is defined using $J$ and the cobordism algebra ${\mathcal D}_{L_-}$ is defined using $J^-$, then 
  ${\mathcal D}_{L_-}$ is a dg sub-algebra of ${\mathcal D}_L$. 
\end{lemma}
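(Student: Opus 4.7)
The generators of $\mathcal D_{L^-}$ are the Reeb chords of $\Lambda^-$ together with the self-intersections of $L^-$, all of which sit in the compact piece $X^-\cap X$ and are by construction generators of $\mathcal D_L$. This yields a canonical inclusion of graded algebras $\iota\colon\mathcal D_{L^-}\hookrightarrow\mathcal D_L$. The task is to prove that $\iota$ intertwines $\partial_{L^-}$ with $\partial_L$, and this reduces to showing, for every admissible tuple $(a;b_1,\dots,b_\ell)$ of generators of $\mathcal D_{L^-}$, a bijection of $0$-dimensional moduli spaces
\begin{equation*}
\mathcal M_L^0(a;b_1,\dots,b_\ell)=\mathcal M_{L^-}^0(a;b_1,\dots,b_\ell),
\end{equation*}
together with the fact that no rigid $J$-holomorphic disk in $\mathcal M_L^0(a;b_1,\dots,b_\ell)$ with positive asymptote $a\in\mathcal D_{L^-}$ can have some negative asymptote $b_i$ at a self-intersection of $L$ lying in $X^+$.

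\textbf{Confinement via the neck.} The heart of the argument is that any such disk $u$ in $X$ is entirely confined to $X^-\cap X$. I would proceed by SFT neck-stretching along $Y$ using the family $\theta_R$ provided by Lemma \ref{inflating}: the pair $(J,\theta_R)$ still makes $L$ exact with potential $e^{R\phi}h$, and $u$ stays $J$-holomorphic. By Gromov--Hofer compactness, after passing to a subsequence as $R\to\infty$, $u$ converges to a pseudo-holomorphic building composed of a bottom level in $X^-$ (with boundary on $L^-$), possibly several symplectization levels in $\mathbb R\times Y$ with boundary on $\mathbb R\times\Lambda$, and a top level in $X^+$ with boundary on $L^+$. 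All external positive punctures of the building sit at $a\in X^-\cap X$, and by hypothesis all external negative punctures $b_i$ sit in $L^-$; consequently the top level in $X^+$ has no external punctures, so each of its punctures is a negative puncture at a Reeb chord of $\Lambda\subset Y$. Stokes' theorem then gives
\begin{equation*}
0\le\int u_{\mathrm{top}}^*\, d\theta_+ = -\sum_{c} \mathfrak a(c)\le 0,
\end{equation*}
which forces the top level to carry no nonconstant components, and then inductively the intermediate neck levels are likewise trivial. The same inequality, applied to a hypothetical disk with some $b_i$ a self-intersection of $L^+$, produces a strictly negative right-hand side, ruling out such disks from the outset.

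\textbf{Bijection and regularity.} Having confined $u$ to $X^-\cap X$, the fact that $J|_{X^-\cap X}=J^-|_{X^-\cap X}$ and $L\cap X^-=L^-\cap X^-$ means that $u$ is also a $J^-$-holomorphic disk in $X^-$ with boundary on $L^-$ and the required asymptotics. Conversely, a rigid disk counted by $\partial_{L^-}(a)$ is confined to the compact piece $X^-\cap X$ by the standard cylindrical maximum principle applied in the positive end $(0,+\infty)\times Y$ of $X^-$, where $J^-$ is cylindrical and $L^-$ is the translation-invariant cylinder $(0,+\infty)\times\Lambda$; hence it is equally a $J$-holomorphic disk in $X$ with boundary on $L$. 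Genericity of $J$ (and therefore of $J^-$ on the compact piece) makes both moduli problems regular, and the two signed counts agree. Combining this with the already established vanishing of disks with $b_i\in L^+$ yields $\partial_L\circ\iota=\iota\circ\partial_{L^-}$, proving that $\mathcal D_{L^-}$ is a dg-subalgebra of $\mathcal D_L$.

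\textbf{Main obstacle.} The subtle step is the confinement: a priori $\partial_L$ applied to a generator of $\mathcal D_{L^-}$ could produce disks whose ends wander into $X^+$, making $\iota$ fail to be a chain map. Killing these requires the SFT-compactness-plus-energy argument that rules out any top-level bubble; every other aspect of the proof is essentially bookkeeping.
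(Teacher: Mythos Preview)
You correctly isolate confinement as the crux and reach for the right tool, the family $\theta_R$ from Lemma~\ref{inflating}. But the way you deploy it is confused: you note that ``$u$ stays $J$-holomorphic'' as $R$ varies, and then invoke Gromov--Hofer compactness to extract a building limit. There is no sequence of curves here; replacing $\theta$ by $\theta_R$ changes neither $J$ nor the map $u$, only the Liouville form and hence the actions. SFT compactness does not apply to a single fixed map viewed against a varying energy functional.

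The paper's argument uses $\theta_R$ far more directly. Since $a$ and the $b_i$ lie in $L^-$ where $\phi=0$, their $\theta_R$-actions equal their $\theta$-actions, and integrating $u^*d\theta_R$ over the fixed disc gives
\[
\mathfrak a(a)-\sum_i\mathfrak a(b_i)\;=\;\int_\Delta u^*\,d\theta_R
\]
for every $R$. If the image of $u$ meets the region $\{s\ge\epsilon/2\}\times Y$ where $\theta_R=e^R\theta$, the right-hand side is bounded below by $Ce^R$ for some fixed $C>0$, contradicting the constant left-hand side for $R$ large. The same inequality, applied when some $b_i$ is a double point of $L^+$, gives $\mathfrak a(a)>\mathfrak a_R(b_i)=e^R\mathfrak a(b_i)\to\infty$, which is equally impossible. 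No compactness theorem is needed. Your route could be rescued by first converting the $\theta_R$-deformation into an honest neck-stretch of $J$ via the exact symplectomorphism $(X,d\theta_R)\cong(X_R,d\theta)$ and then tracking $u$ through it to obtain a genuine sequence $u_R$, but that is substantially heavier than the two-line estimate above. Your maximum-principle argument for confining $J^-$-curves to the compact part of $X^-$ is valid; the paper simply reruns the same energy estimate there.
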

\begin{proof}
  The generators of ${\mathcal D}_{L_-}$ are a subset of the generators of ${\mathcal D}_L$, and therefore ${\mathcal D}_{L_-}$ is a sub-algebra of ${\mathcal D}_L$. Next, if $a$ is a self-intersection point of $L^-$ and the moduli space ${\mathcal M}_L(a, b_1, \ldots b_l)$ is non-empty, then $b_1, \ldots, b_l$ are either Reeb chords of $\Lambda^-$ or self-intersection points of $L^-$. In fact, if we denote by $\mathfrak{a}_R$ the action of the double points computed using the Liouville form $\theta_R$, by Lemma \ref{inflating} the inequality of Lemma \ref{d_L decreases action} holds with $\mathfrak{a}_R$ instead of $\mathfrak{a}$. Since $a$ is a self-intersection point of $L^-$ we have $\mathfrak{a}_R(a)= \mathfrak{a}(a)$, but if some $b_i$ is a self-intersection point of $L^+$, then $\mathfrak{a}_R(b_i)=e^R\mathfrak{a}(b_i)$ and therefore Lemma \ref{d_L decreases action} is violated for $R$ large enough because $\mathfrak{a}_R(b_i)>0$.

  Finally, when $a$ is a self-intersection point of $L^-$ and $b_1, \ldots, b_l$ are either Reeb chords of $\Lambda^-$ or self-intersection points of $L^-$, there is an identification between  ${\mathcal M}_L(a, b_1, \ldots b_l)$ and ${\mathcal M}_{L^-}(a, b_1, \ldots b_l)$. Suppose on the contrary that the image of $u \in {\mathcal M}_L(a, b_1, \ldots b_l)$ goes  above $\{\epsilon \} \times Y$: then
  $$\mathfrak{a}(a)- \sum \mathfrak{a}(b_i)= \mathfrak{a}_R(a)- \sum \mathfrak{a}_R(b_i) = \int u^* \theta_R \ge Ce^R$$
  for some constant $C>0$. This is a contraddiction for $R$ large enough, and therefore every $J$-holomorphic curve in ${\mathcal M}_L(a, b_1, \ldots b_l)$ is also a $J_-$-holomorphic curve in ${\mathcal M}_{L^-}(a, b_1, \ldots b_l)$. A similar argument shows that every $J_-$-holomorphic curve in ${\mathcal M}_{L^-}(a, b_1, \ldots b_l)$ is also a $J$-holomorphic curve in ${\mathcal M}_L(a, b_1, \ldots b_l)$.
\end{proof}
\begin{dfn} \label{dga morphisms induced by cobordisms}
  We define the algebra morphism $\Phi_{L^-} \colon {\mathcal D}_{L^+} \to {\mathcal D}_L$ by
  \begin{equation}\label{marselha1}
\Phi_{L^-}(a)=a
\end{equation}
if $a$ is a self-intersection point of $L^+$, and
 \begin{equation}\label{marselha2}
\Phi_{L^-}(a)=  \sum_{b_1, \ldots, b_l} \# {\mathcal M}^0_{L^-}(a; b_1, \ldots, b_l) b_1 \ldots b_l
\end{equation}
if $a$ is a Reeb chord of $\Lambda$.
\end{dfn}

\begin{lemma} \label{lemma: mercoledi}
  The map $\Phi_{L^-}$ is a dg algebra morphism.
\end{lemma}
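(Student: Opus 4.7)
The plan is to reduce the chain map property of $\Phi_{L^-}$ to verifications on the two types of generators of ${\mathcal D}_{L^+}$, namely Reeb chords of $\Lambda$ and self-intersection points of $L^+$. Well-definedness (finiteness of the sum in \eqref{marselha2} and the degree-zero property) is immediate from Lemma \ref{d_L decreases action}, SFT compactness and the index formula \eqref{dimension formula for cobordism maps}, and multiplicativity is built into the definition. So the substance is the identity $\partial_L\circ \Phi_{L^-}=\Phi_{L^-}\circ \partial_{L^+}$ on generators.

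For a Reeb chord $a$ of $\Lambda$, the argument is essentially the one sketched (and omitted) after Definition \ref{def:imm-dga-map}. I would analyse the codimension-one boundary of the one-dimensional moduli spaces ${\mathcal M}^1_{L^-}(a;b_1,\ldots,b_l)$. By SFT compactness and the action bound of Lemma \ref{d_L decreases action} applied to $L^-$, the only limit configurations are two-level buildings of two types: a symplectization disc at the top with boundary on $\R\times \Lambda$, capped below by $L^-$-discs (contributing $\Phi_{L^-}(\partial a)$, where $\partial a$ is the Chekanov–Eliashberg differential in ${\mathcal A}(\Lambda)$ and hence equals $\partial_{L^+}a$ by Definition \ref{cobordism algebra}), and buildings that break inside $X^-$ (contributing $\partial_{L^-}(\Phi_{L^-}(a))=\partial_L(\Phi_{L^-}(a))$ via the identification ${\mathcal D}_{L^-}\subset {\mathcal D}_L$ of Lemma \ref{subalgebra}). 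Matching the two types of boundary strata yields the identity on chords.

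For $a$ a self-intersection of $L^+$, where $\Phi_{L^-}(a)=a$ already lies in ${\mathcal D}_L$, I would use a neck-stretching argument along $Y$ via the family $J_R$ provided by Lemma \ref{inflating}. SFT compactness identifies the limit of ${\mathcal M}^0_L(a;c_1,\ldots,c_m)$ with a union over decompositions consisting of a top-level $J^+$-holomorphic disc in $X^+$ with boundary on $L^+$, positively asymptotic to $a$ and with negative asymptotics at self-intersections of $L^+$ and at Reeb chords $b_1,\ldots,b_l$ of $\Lambda$, together with, for each $b_i$, a bottom-level $J^-$-holomorphic disc in $X^-$ with boundary on $L^-$, positively asymptotic to $b_i$ and with negative asymptotics at generators of ${\mathcal D}_{L^-}\subset {\mathcal D}_L$. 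Combined with the standard SFT gluing theorem in the converse direction, this identifies the count $\#{\mathcal M}^0_L(a;c_1,\ldots,c_m)$ with the coefficient of $c_1\cdots c_m$ in $\Phi_{L^-}(\partial_{L^+}(a))$, using $\Phi_{L^-}(d)=d$ for self-intersections of $L^+$.

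The main obstacle is the index-zero splitting/gluing analysis in the self-intersection case: one must rule out non-trivial rigid components in the cylindrical level $\R\times Y$ (an index count together with the genericity of $J$ on the neck, since any such non-trivial cylindrical level would force the top or bottom to have strictly negative Fredholm index, contradicting transversality) and conversely show that every matched pair of top and bottom components can be glued uniquely for $R$ large. These are standard SFT ingredients, but they need to be applied in the immersed setting, using the nice splittedness of $L$ by $Y$ to guarantee that $L^\pm$ are exact immersed cobordisms to which the theory of Section \ref{sec: immersed cobordisms} applies verbatim.
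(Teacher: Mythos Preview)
Your proposal is correct and follows essentially the same approach as the paper: boundary analysis of one-dimensional moduli spaces ${\mathcal M}^1_{L^-}(a;\ldots)$ for Reeb chords of $\Lambda$, and neck-stretching along $Y$ for self-intersection points of $L^+$. One small terminological slip: Lemma \ref{inflating} concerns stretching the Liouville form $\theta_R$, not a family of almost complex structures; the neck-stretching you need is the standard SFT deformation of $J$, which the paper invokes directly without reference to that lemma.
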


\begin{figure}
  \includegraphics[scale=0.5]{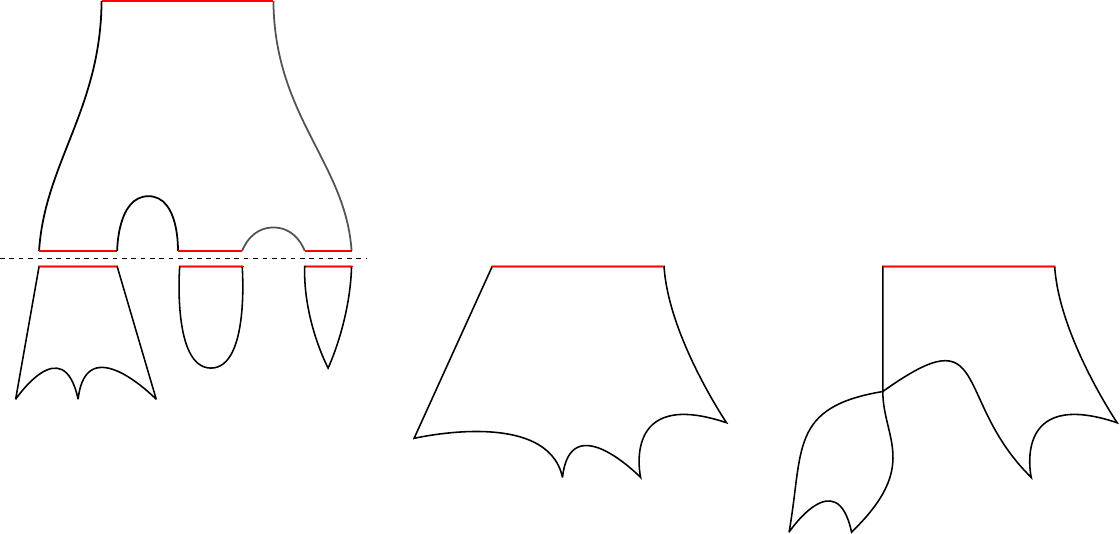}
  \caption{A one-dimensional family of $J$-holomorphic curves with one positive end at a chord of $\Lambda$ and several negative ends at self-intersection points of $L^-$ (centre)  can degenerate either as in the right or as in the left. We omit the negative asymptotics toward Reebd chords and closed orbits in all our Figures.}
  \label{fig:chainmap1}  
\end{figure}

\begin{figure}
    \label{fig:chainmap2}
    \includegraphics[scale=0.55]{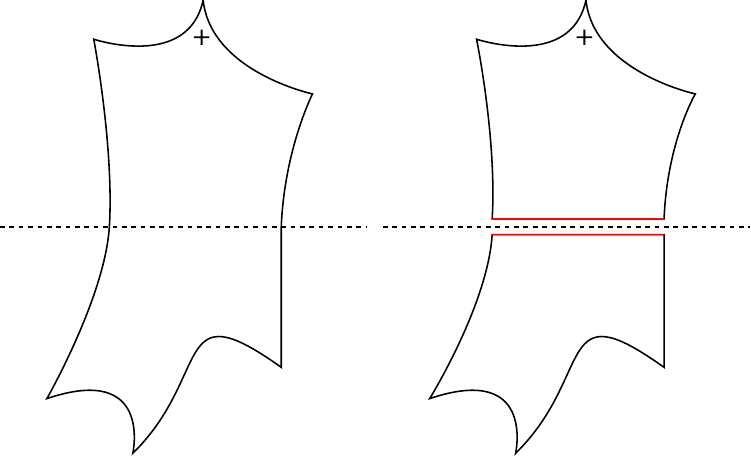}
    \caption{Rigid $J$-holomorphic curves with one positive end at $a$ and several negative ends at self-intersection points of $L$ (left) degenerate as in the left when we stretch the neck around $Y$.}
  \end{figure}

  \begin{proof}
    First we consider the case where $a$ is a Reeb chord of $\Lambda$.   
The boundary of the compactification of a one-dimensional moduli space ${\mathcal M}_{L_-}^1(a; b_1, \ldots, b_l)$ consists of two-level buildings as depicted in Figure \ref{fig:chainmap1};  namely:
\begin{itemize}
\item[(i)] a rigid holomorphic curve in $\mathcal M_{\Lambda}^1(a; b_1, \ldots, b_r)/\R$ followed by rigid holomorphic curves in ${\mathcal M}_{L^-}^0(b_i, b_{s_{i-1}+1}, \ldots, b_{s_i})$ for $1=s_0 < \ldots < s_r =l$, or
\item[(ii)] a rigid holomorphic curve in  ${\mathcal M}_{L^-}^0(a; b_1, \ldots, b_{r-1}, c, b_s, \ldots b_l)$ for $1 \le r \le s \le l+1$ (with the obvious meaning for the exceptional cases $r=1$ and $s=l+1$) followed by a rigid holomorphic curve in ${\mathcal M}_{L^-}^0(c; b_{r+1}, \ldots, b_{s-1})$ (with the obvious meaning of the exceptional case $r=s$). 
\end{itemize}
Buildings of type (i) contribute to $\Phi_{L^-} \circ \partial_{L^+}$, while buildings of type (ii) contribute to $\Phi_{L^-} \circ \partial_L$ because the double points of $L^-$ and the Reeb chords of $\Lambda^-$ generate a dg subalgebra of ${\mathcal D}_L$ by Lemma \ref{subalgebra}.

Now we consider the case where $a$ is a self-intersection point of $L^+$. If the almost complex structure on $X$ has a very long neck around $Y$, there is a bijection between
\begin{itemize}
\item rigid holomorphic polygons in $X$ with boundary on $L$, one positive end at a self-intersection point $a$ of $L^+$ and negative ends at double points of $L$ and Reeb chords of $\Lambda^-$, which contribute to $\partial_L a$, and 
\item  two-level holomorphic buildings consisting of a rigid holomorphic polygon in $X^+$ with boundary on $L^+$, a positive end at $a$ and negative ends at double points of $L^+$ and chords of $\Lambda$ followed by rigid polygons in $X^-$ with boundary on $L^-$, a positive end at a chord of $\Lambda$ and negative ends at double points of $L^-$ and Reeb chords of $\Lambda^-$, which contribute to $\Phi_{L^-}(\partial_{L^+} a)$.
\end{itemize}
\end{proof}

Though details are left for the reader, the  map $\Phi_-$ and the inclusion ${\mathcal D}_{L^-} \hookrightarrow {\mathcal D}_{L}$ are the maps used to define the composition of immersed DGA maps in \cite{MR4325415}, therefore showing that the cobordism algebra construction is functorial with respect to composition of cobordism. 
 \section{Floer homology for immersed Lagrangian cobordisms}
  \label{sec:cthulhu-complex-with}
  In this  section we introduce the main technical tool of the article; namely a Floer theory for immersed exact Lagrangian cobordisms in Liouville cobordisms which extends the theory defined in \cite{Floer_cob} in several directions. Our presentation will be rater sketchy, leaving the details to a future
work. 

 Let $L_0$ and $L_1$ be two immersed exact Legendrian cobordisms in $(X, \theta)$ such that
\begin{itemize}
\item intersection points between $L_0$ and $L_1$ are finite, transverse and distinct from the self-intersection points of $L_0$ and $L_1$, and
\item Reeb chords between $\Lambda^\pm_0$ and $\Lambda^\pm_1$ (in either direction) are nondegenerate,
\end{itemize} 
 and let ${\mathcal D}_{L_0}$ and ${\mathcal D}_{L_1}$ be their cobordism algebras. We define $\underline{\op{Cth}}(L_0, L_1)$ as the free ${\mathcal D}_{L_1} \mhyphen {\mathcal D}_{L_0}$ bimodule generated by Reeb chords from $\Lambda_1^+$ to $\Lambda_0^+$, intersection points between $L_0$ and $L_1$ and Reeb chords from $\Lambda_1^-$ to $\Lambda_0^-$. We can split it as a direct sum
  \begin{equation} \label{splitting of cth}
    \underline{\op{Cth}}(L_0, L_1) = \underline{\op{LCC}}(\Lambda^+_0, \Lambda^+_1) \oplus \underline{\op{CF}}(L_0, L_1) \oplus \underline{\op{LCC}}(\Lambda^-_0, \Lambda^-_1)
  \end{equation}  according to the nature of the generators. We define a differential
  $$\mathfrak{d} \colon  \underline{\op{Cth}}(L_0, L_1) \to  \underline{\op{Cth}}(L_0, L_1),
  \quad \mathfrak{d} = \left ( \begin{matrix} \mathfrak{d}_{++} & \mathfrak{d}_{+0} & \mathfrak{d}_{+-} \\ 0 & \mathfrak{d}_{00} & \mathfrak{d}_{0-} \\ 0 & \mathfrak{d}_{-0} & \mathfrak{d}_{--} \end{matrix} \right)$$
   satisfying the Leibniz rule by counting holomorphic curves as in \cite{Floer_cob}, or possibly anchored versions thereof. The matricial form of $\mathfrak{d}$ is written with respect to the direct sum decomposition \eqref{splitting of cth}. 

  Note that there are three main differences with the construction in \cite{Floer_cob}:
  \begin{itemize}
  \item[$(i)$] there the cobordisms are embedded, while here they are only immersed,
  \item[$(ii)$] there the differential goes from the negative end to the positive hand, while here it goes form the positive end to the negative end, and 
  \item[$(iii)$] there the pure chords at the negative ends are augmented, while here pure chords at the negative ends and self-intersection points are kept as coefficients.
  \end{itemize}

A consequence of $(ii)$ and of the dimension formulas of \cite[Section 3.2]{Floer_cob} is that $\mathfrak{d}$ has degree $-1$. Despite these differences, the proof that $\mathfrak{d}^2=0$ remains basically the same.
  We briefly describe the various components of the differential, referring to \cite{Floer_cob} for the precise definition of the moduli spaces involved. However, compared to \cite{Floer_cob}, here we number pure chords and self-intersection points in clockwise order. We also identify ordered sets of pure chords and self-intersection points with the corresponding word. 

  The map $\mathfrak{d}_{--}$ is defined as
  $$\mathfrak{d}_{--} a = \sum_{b, \bs{\eta}, \bs{\zeta}} \# \widetilde{\mathcal M}_{\Lambda_0^-, \Lambda_1^-}^1(a; \bs{\zeta}, b, \bs{\eta}) \bs{\zeta} b \bs{\eta}$$
  where $a,b$ are Reeb chords from $\Lambda_1^-$ to $\Lambda_0^-$, $\bs{\eta}= (\eta_1, \ldots, \eta_{\ell_0})$ and $\bs{\zeta} = (\zeta_1, \ldots, \zeta_{\ell_1})$ are (possibly empty) ordered sets of Reeb chords of $\Lambda^-_i$ for $i=0,1$ respectively, and the moduli spaces $\widetilde{\mathcal M}_{\Lambda_0^-, \Lambda_1^+}^1(a; \bs{\zeta}, b, \bs{\eta})$ are the zero-dimensional part of the moduli spaces defined in \cite[Section 3.1]{Floer_cob}.
Similarly the map $\mathfrak{d}_{++}$ is defined as
  $$\mathfrak{d}_{++}a = \sum_{b, \bs{\eta}, \bs{\zeta}} \# \widetilde{\mathcal M}_{\Lambda_0^+,\Lambda_1^+}^1(a; \bs{\zeta}, b, \bs{\eta})  \Phi_{L_0}(\bs{\zeta})  b \Phi_{L_1}(\bs{\eta}),$$
  where $a,b$ are Reeb chords from $\Lambda_1^+$ to $\Lambda_0^+$, $\bs{\eta}= (\eta_1, \ldots, \eta_{\ell_0})$ and $\bs{\zeta} = (\zeta_1, \ldots, \zeta_{\ell_1})$ are (possibly empty) ordered sets of Reeb chords of $\Lambda^+_i$ for $i=0,1$ respectively,
  and $\widetilde{\mathcal M}_{\Lambda_0^+, \Lambda_1^+}^1(a; \bs{\zeta}, b, \bs{\eta})$
  are the zero-dimensional part of the moduli spaces defined in \cite[Section 3.1]{Floer_cob}.

  The maps $\mathfrak{d}_{+,-}, \mathfrak{d}_{+0}, \mathfrak{d}_{00}$ and $\mathfrak{d}_{0-}$ are defined as
  $$\mathfrak{d}_{**} a =  \sum_{b, \bs{\eta}, \bs{\zeta}} \# {\mathcal M}_{L_0, L_1}^0(a; \bs{\zeta}, b, \bs{\eta}) \bs{\zeta} b \bs{\eta}$$
  where $a$ can be either a Reeb chord from $\Lambda^+_1$ to $\Lambda^+_0$ or an intersection point between $L_0$ and $L_1$, $b$ can be either an intersection point between $L_0$ and $L_1$ or a Reeb chord from $\Lambda_1^-$ to $\Lambda_0^-$, $\bs{\eta}= (\eta_1, \ldots, \eta_{\ell_0})$ and $\bs{\zeta} = (\zeta_1, \ldots, \zeta_{\ell_1})$ are (possibly empty) ordered sets of Reeb chords of $\Lambda^-_i$ and double points of $L_i$ for $i=0,1$ respectively, and the moduli spaces ${\mathcal M}_{\Lambda_0^-, \Lambda_1^+}^0(a; \bs{\zeta}, b, \bs{\eta})$ are the zero-dimensional part of the moduli spaces defined in \cite[Sections 3.2.2 -- 3.2.5]{Floer_cob}, where ``tentacles'' are allowed to converge to pure Reeb chords of $\Lambda_i^-$ and to self-intersection points of $L_i$, and when they converge to self-intersection points, they approach them in the negative direction.

  Finally the map $\mathfrak{d}_{-0}$ is defined as
  $$\mathfrak{d}_{-0}a =  \sum_{b, \bs{\eta}, \bs{\zeta}} \# {\mathcal M}_{L_0, L_1}^0(a; \bs{\zeta}, b, \bs{\eta}) \bs{\zeta} b \bs{\eta},$$
  where $a \in L_0 \cap L_2$, $b$ is a Reeb chord from $\Lambda_1^-$ to $\Lambda_0^-$, $\bs{\eta}= (\eta_1, \ldots, \eta_{\ell_0})$ and $\bs{\zeta} = (\zeta_1, \ldots, \zeta_{\ell_1})$ are (possibly empty) ordered sets of Reeb chords of $\Lambda^-_i$ and double points of $L_i$ for $i=0,1$ respectively, and the moduli spaces ${\mathcal M}_{L_0, L_1}^0(a; \bs{\zeta}, b, \bs{\eta})$ are the zero-dimensional part of the {\em Nessie} moduli spaces, which consist of two-level buildings consisting of a ``banana'' --- i.e.\ a holomorphic disc  in $(\R \times \partial_- X, d(e^s \alpha_-))$ with boundary in $\R \times (\Lambda_0^- \cup \Lambda_1^-)$ and asymptotic, as $s \to + \infty$, to $b$ and a Reeb chord $b'$ from $\Lambda^-_0$ to $\Lambda^-_1$ ---  followed by a ``neck'', i.e.\ a holomorphic curve in $X$ with bounary on $L_0 \cup L_1$ which is negatively asymptotic to $a$ and $b'$. See \cite[Sections 3.2.6 and 4.1.5]{Floer_cob}. Both the banana and the neck may have tentacles which, altogether, are asymptotic to pure cords or self-intersection points $(\eta_1, \ldots, \eta_{\ell_0})$ and $(\zeta_1, \ldots, \zeta_{\ell_1})$, and the self-intersection points are approached in the negative directrion. 
  
We define the action $\mathfrak{a}$ of the generators of the Cthulhu complex as follows:
\begin{itemize}
   \item if $a$ is a chord from $\Lambda_1^\pm$ to $\Lambda_0^\pm$, then  $\mathfrak{a}(a)=\int_a\alpha_\pm +c^\pm_1(\gamma)-c^\pm_0(\gamma),$
 where $c^\pm_i$ are the values taken by the potentials in the corresponding ends,
           \item if $a$ is an intersection point between $L_0$ and $L_1$, then $\mathfrak{a}(a)=h_1(a)-h_0(a)$.
           \end{itemize}
 The action of the vector space generators of $\underline{\op{Cth}}(L_0,L_1)$ is then defined as 
$$\mathfrak{a}(\zeta_1 \ldots \zeta_{\ell_1} a  \eta_1 \ldots \eta_{\ell_0}) = \mathfrak{a}(\zeta_1)+  \ldots + \mathfrak{a}(\zeta_{\ell_1}) + \mathfrak{a}(a) + \mathfrak{a}(\eta_1) + \ldots + \mathfrak{a}(\eta_{\ell_0}).$$
The proof of the following lemma is similar to that of lemma \ref{d_L decreases action} and therefore it will be omitted.
\begin{lemma}
  The differential $\mathfrak{d}$ preserves the filtration induced by $\mathfrak{a}$, i.e.\
  If $\bs{\zeta} b \bs{\eta}$ appear in $\mathfrak{d}(a)$, then $\mathfrak{a}(a)> \mathfrak{a}(\bs{\zeta} b \bs{\eta})$.
\end{lemma}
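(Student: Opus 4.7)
The plan is to mimic the proof of Lemma \ref{d_L decreases action} in the more elaborate setting of the Cthulhu complex. The key analytic input is positivity of area for $J$-holomorphic curves relative to a piecewise smooth $1$-form $\tilde\theta$ on $X$ that is adapted to the cylindrical ends. Specifically, I would define $\tilde\theta$ to coincide with $\theta$ on the compact piece $X^c$ and with the contact forms $\alpha_\pm$ on the cylindrical ends $(-\infty,0)\times \partial_-X$ and $(0,+\infty)\times \partial_+ X$. Because the primitive $h_i$ of $\iota_i^*\theta$ on each $L_i$ is constant in the ends, and because $\Lambda_i^\pm$ are Legendrian, one has $\iota_i^* \tilde\theta = dh_i$. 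Since $J$ is $\theta$-tame on $X^c$ and cylindrical (hence $\alpha_\pm$-tame) on the ends, $u^* d\tilde\theta \ge 0$ for every $J$-holomorphic map with boundary on $L_0 \cup L_1$, with strict inequality generically.

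Next, I would handle each of the six non-zero blocks $\mathfrak{d}_{**}$ corresponding to ordinary (non-Nessie) moduli spaces. A curve $u$ contributing to the coefficient of $\bs{\zeta} b \bs{\eta}$ in $\mathfrak{d}_{**}(a)$ is a punctured polygon with a positive puncture at $a$ and negative asymptotics at $b, \bs{\eta}, \bs{\zeta}$; I would apply Stokes' theorem to $u^*d\tilde\theta$. The boundary integral splits into
(i) contributions from the Reeb-chord ends $a, b$ and the pure-chord asymptotics inside $\bs{\zeta}, \bs{\eta}$, which produce $\int \alpha_\pm$-terms,
(ii) contributions from intersection points between $L_0$ and $L_1$ at $a$ or $b$, which produce $h_1 - h_0$-terms,
(iii) contributions from self-intersection punctures in $\bs{\zeta}, \bs{\eta}$ (or at $a$, $b$), which thanks to the convention that the curve approaches a self-intersection point ``in the negative direction'' produce $h^+ - h^-$-terms,
(iv) jumps of the potentials $h_i$ at the corners where the boundary switches between $L_0$ and $L_1$, giving the additive constants $c_1^\pm(\gamma) - c_0^\pm(\gamma)$ in the definition of $\mathfrak{a}$ for mixed Reeb chords. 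Summing all of these yields $\mathfrak{a}(a) - \mathfrak{a}(\bs{\zeta} b \bs{\eta}) = \int u^* d\tilde\theta > 0$, which is the desired inequality.

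The main obstacle will be the $\mathfrak{d}_{-0}$ block, whose moduli spaces are Nessie buildings consisting of a banana in $(\R\times \partial_- X, d(e^s\alpha_-))$ glued to a neck in $X$ along an auxiliary mixed chord $b'$. I would apply the same Stokes argument separately to each level. On the banana, positivity of area for $d(e^s\alpha_-)$ produces $\mathfrak{a}(b)+\mathfrak{a}(b') - (\text{tentacles from the banana}) \ge 0$, where the sign of $b'$ comes from the fact that both $b$ and $b'$ are asymptotic as $s\to +\infty$; on the neck, the same computation as in items (i)--(iv) gives $\mathfrak{a}(a) - \mathfrak{a}(b') - (\text{tentacles from the neck}) \ge 0$. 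Adding the two contributions the auxiliary action $\mathfrak{a}(b')$ cancels, and we obtain $\mathfrak{a}(a) > \mathfrak{a}(\bs{\zeta} b \bs{\eta})$.

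The only subtlety to be careful about is the sign accounting: the definition of $\mathfrak{a}$ for mixed chords includes the potential jumps $c_1^\pm(\gamma) - c_0^\pm(\gamma)$, and these must be matched term-by-term with the contributions coming from the boundary arcs switching between $L_0$ and $L_1$ at intersection points or along tentacles. This is the step I would check most carefully, but it is formally identical to the bookkeeping already performed in \cite{Floer_cob}, so no fundamentally new idea is required.
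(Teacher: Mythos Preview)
Your approach is exactly what the paper intends --- it omits the proof, saying only that it is ``similar to that of Lemma~\ref{d_L decreases action}'' --- and your treatment of the five non-Nessie blocks via Stokes' theorem with the piecewise form $\tilde\theta$ is correct.

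The Nessie block $\mathfrak{d}_{-0}$, however, does not go through as you describe. Even granting your two stated inequalities, their sum is
\[
\bigl[\mathfrak{a}(b)+\mathfrak{a}(b')-(\text{banana tentacles})\bigr]+\bigl[\mathfrak{a}(a)-\mathfrak{a}(b')-(\text{neck tentacles})\bigr]\ \ge\ 0,
\]
i.e.\ $\mathfrak{a}(a)+\mathfrak{a}(b)\ge(\text{all tentacles})$. The $\mathfrak{a}(b')$ does cancel, but $\mathfrak{a}(b)$ lands on the \emph{same} side as $\mathfrak{a}(a)$; you need $\mathfrak{a}(a)-\mathfrak{a}(b)\ge(\text{tentacles})$, and that is not what the sum yields. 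Moreover your neck inequality itself carries the wrong sign on $\mathfrak{a}(a)$: both mixed punctures of the neck, at $a$ and at $b'$, are \emph{negative} asymptotics (the neck has no positive puncture), and the computation in Lemma~\ref{no nessie}, where $\int_D d\theta_R=c_1^--c_0^--\mathfrak{a}_R(a)$, confirms that $\mathfrak{a}(a)$ enters with a minus sign. With the corrected neck inequality the combined estimate reads $\mathfrak{a}(b)-\mathfrak{a}(a)\ge(\text{tentacles})$, the \emph{opposite} of the assertion. This is a genuine subtlety (also glossed over by the paper's one-line dismissal): the naive Stokes-and-add argument cannot establish the stated strict action-decrease for $\mathfrak{d}_{-0}$, and the finiteness of that sum must instead come from a Hofer-energy bound on the neck, which has no positive Reeb-chord end and hence energy controlled by the data of $a$ alone.
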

As a consequence, the sums defining the various components of $\mathfrak{d}$ are finite.

  Given dg modules $(V_0, \delta_0)$ and $(V_1, \delta_1)$ over the dg algebras ${\mathcal D}_{L_0}$ and ${\mathcal D}_{L_1}$ respectively, we define 
  \begin{equation}\label{eqn: Cth with representations}
    \op{Cth}_{V_0, V_1}^*(L_0, L_1)= \hom_{{\mathcal D}_{L_1}}(\underline{\op{Cth}}(L_0,L_1) \otimes_{{\mathcal D}_{L_0}} V_0, V_1).
  \end{equation}
  This group inherit a differential $D \colon \op{Cth}_{V_0, V_1}^*(L_0, L_1) \to \op{Cth}^*_{V_0, V_1}(L_0, L_1)$. We observe that an element $\varphi \in \op{Cth}^*_{V_0, V_1}(L_0, L_1)$ is determined by its value on the elements of the form $a \otimes_{{\mathcal D}_{L_0}} v$ where $a$ is a generator of $\underline{\op{Cth}}(L_0,L_1)$ and $v \in V_0$. Then $D$ is characterised by
  \begin{equation}\label{eqn: definition of D}
    (D \varphi)(a \otimes v)= \varphi(\mathfrak{d} a \otimes_{{\mathcal D}_{L_0}} v) + \varphi (a \otimes_{{\mathcal D}_{L_0}} \delta_0 v)  + \delta_1 \varphi(a_{{\mathcal D}_{L_0}} \otimes v).
  \end{equation}
If $\varepsilon_i \colon {\mathcal D}_{L_i} \to \mathbb{F}$, for $i=0,1$, are augmentations, we denote by $\mathbb{F}_{\varepsilon_i}$ the induced ${\mathcal D}_{L_i}$-modules with underlying vector space $\mathbb{F}$ and trivial differential. Then we will often write 
$\op{Cth}_{\varepsilon_0, \varepsilon_1}^*(L_0, L_1)$ instead of  $\op{Cth}_{\mathbb{F}_{\varepsilon_0}, \mathbb{F}_{\varepsilon_1}}^*(L_0, L_1)$. These are the groups which were  defined in \cite{Floer_cob} in the case of embedded exact Lagrangian cobordisms.

  The definition of $\op{Cth}_{V_0, V_1}^*(L_0, L_1)$ is, at least partially, cohomological. We will occasionally use also the groups
 $$\op{Cth}_{V_0, V_1}(L_0, L_1) = V_1 \otimes_{{\mathcal D}_{L_1}} \underline{\op{Cth}}(L_0,L_1) \otimes_{{\mathcal D}_{L_0}} V_0$$
 arising from a completely homological construction. The two construction are distinguished in the notation by the presence, or absence, of a star.
\begin{rem}
  As for the cobordism algebra, the Cthulhu complex could also be defined over a an idempotent ring, but only the version over $\F$ will be used in this article.
   \end{rem}

\section{A relative exact triangle for the concatenation of cobordisms}\label{sec: relative exact sequence}
Here we describe a relative exact triangle for the Floer homology of  a pair of immersed Lagrangian cobordisms when they are split along a  hypersurface of contact type. Legout has constructed an analogous Mayer-Vietoris sequence in \cite{legout2020ainfinity}. We recall some notation we introduced in Section \ref{sec: immersed cobordisms}. Let $(X, \theta)$ be a Liouville cobordism and let $Y \subset X$ be a separating hypersurface of contact type. In a neighbourhood of $Y$ of the form $(- \epsilon, \epsilon) \times Y$, where $Y$ is identified with $\{0\}\times Y$, we can write $\theta= e^s \alpha_Y$ where $\alpha_Y$ is a contact form on $Y$.  Let $(X^+, \theta_+)$ and $(X^-, \theta_-)$ be the completions of the connected components of $X \setminus Y$. If $J$ is an almost complex structure on $X$ which is compatible with $\theta$ and cyindrical on $(- \epsilon, \epsilon) \times Y$, we denote by  $J^+$ and $J^-$ the induced almost complex structures on $X^+$ and $X^-$ respectively. If an immersed exact Lagrangian cobordism $L \subset X$ is  nicely split by $Y$ (see Definition \ref{dfn: nicely split}) we denote by $L^+$ and $L^-$ the Liouville completions of the connected components of $L \setminus Y$.

\begin{rem}
 Like in the two previous sections, in this one too we will work over $\F$. An extension to idempotent rings would be possible but unnecessary.
\end{rem}
Lemmas \ref{subalgebra} and \ref{lemma: mercoledi} have the following corollary.
\begin{cor}\label{induced augmentations} Let $L$ be an immersed exact Lagrangian cobordism which is nicely split by a contact type hypersurface. An augmentation $\varepsilon_L \colon {\mathcal D}_L \to \F$ induces an augmentation $\varepsilon_{L^-} \colon {\mathcal D}_{L^-} \to \mathbb{F}$ by restriction, and an augmentation $\varepsilon_{L^+} \colon {\mathcal D}_{L^+} \to \mathbb{F}$ by $$\varepsilon_{L^+} = \varepsilon_L \circ \Phi_{L^-},$$
i.e.~the pull-back under the cobordism dg-morphism.
\end{cor}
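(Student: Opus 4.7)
The corollary is essentially a direct packaging of the two main structural results established in Section \ref{sec: immersed cobordisms}, namely Lemma \ref{subalgebra} and Lemma \ref{lemma: mercoledi}. The plan is to verify that each of the two candidate maps is indeed a dg-algebra morphism from the appropriate cobordism algebra to $\F$ (equipped with the zero differential), which is exactly the definition of an augmentation.

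For $\varepsilon_{L^-}$, I would argue as follows. By Lemma \ref{subalgebra}, the cobordism algebra ${\mathcal D}_{L^-}$ sits inside ${\mathcal D}_L$ as a differential graded sub-algebra: its generators (self-intersections of $L^-$ and Reeb chords of $\Lambda^-$) form a subset of the generators of ${\mathcal D}_L$, and the differential on ${\mathcal D}_L$ restricted to these generators agrees with $\partial_{L^-}$ because every $J$-holomorphic curve contributing to $\partial_L$ on such a generator has all its negative ends also in ${\mathcal D}_{L^-}$. Restriction of a dg-algebra morphism to a dg-sub-algebra is a dg-algebra morphism, so $\varepsilon_{L^-} := \varepsilon_L|_{{\mathcal D}_{L^-}}$ is an augmentation.

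For $\varepsilon_{L^+}$, I would apply Lemma \ref{lemma: mercoledi}, which states that $\Phi_{L^-} \colon {\mathcal D}_{L^+} \to {\mathcal D}_L$ is a morphism of differential graded algebras (the proof uses both types of boundary degenerations of the one-dimensional moduli spaces ${\mathcal M}^1_{L^-}(a; b_1, \ldots, b_l)$ and the neck-stretching identification for curves with positive asymptote at a self-intersection of $L^+$). Since the composition of two dg-algebra morphisms is a dg-algebra morphism, the composite
\[
{\mathcal D}_{L^+} \xrightarrow{\Phi_{L^-}} {\mathcal D}_L \xrightarrow{\varepsilon_L} \F
\]
is a dg-algebra morphism from ${\mathcal D}_{L^+}$ to $\F$; by definition, this is an augmentation $\varepsilon_{L^+}$ of ${\mathcal D}_{L^+}$.

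There is no real obstacle here: the entire content of the corollary has been absorbed into the two preceding lemmas, whose proofs respectively rely on the action-stretching argument (ruling out curves with negative punctures at self-intersections of $L^+$ when the positive puncture is a generator of ${\mathcal D}_{L^-}$) and on the standard two-level SFT degeneration along the contact-type hypersurface $Y$. Once those are in place, the corollary is purely formal.
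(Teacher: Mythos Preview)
Your proposal is correct and matches the paper's approach exactly: the paper simply states that the corollary follows from Lemmas \ref{subalgebra} and \ref{lemma: mercoledi} without giving any further argument, and you have spelled out precisely the formal reasoning that this entails.
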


The main result of this section is the following relative exact triangle.
\begin{thm}\label{relative exact triangle}
  Let $L_0$ and $L_1$ be immersed exact lagrangian cobordisms which are nicely split by $Y$, let $\varepsilon_{L_i} \colon {\mathcal D}_{L_i} \to \mathbb{F}$, $i=0,1$, be augmentations of their cobordism algebras, and   $\varepsilon_{L_i^\pm} \colon {\mathcal D}_{L^\pm_i} \to \mathbb{F}$ the augmentations of ${\mathcal D}_{L_i^\pm}$ from Corollary \ref{induced augmentations}. If all the intersection points between $L_0^+$ and $L_1^+$ have positive action,  then there is an exact triangle
  $$\xymatrix{
    H\op{Cth}^*_{\varepsilon_{L_0^+}, \varepsilon_{L_1^+}}(L_0^+, L_1^+) \ar[r]  & H\op{Cth}^*_{\varepsilon_{L_0}, \varepsilon_{L_1}}(L_0, L_1)  \ar[d] \\
    & H\op{Cth}^*_{\varepsilon_{L_0^-}, \varepsilon_{L_1^-}}(L_0^-, L_1^-).  \ar[ul] &
    }$$
  \end{thm}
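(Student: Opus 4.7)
The plan is to stretch the neck of the almost complex structure around $Y$ via the Liouville inflation of Lemma \ref{inflating} with a large parameter $R$, and then extract the triangle from an action-filtration argument combined with SFT compactness. After inflating, the action of every generator of $\underline{\op{Cth}}(L_0, L_1)$ living in $X^+$ — the intersection points in $L_0^+ \cap L_1^+$ (positive by hypothesis) and the Reeb chords at $\Lambda_0^+, \Lambda_1^+$ — gets multiplied by $e^R$, while generators in $X^-$ keep bounded action. Writing $A$ for the span of the $X^+$-generators and $C$ for the span of the $X^-$-generators, the action-decreasing property of $\mathfrak{d}$ then makes $C$ into a subcomplex of $\underline{\op{Cth}}(L)$ with $A$ as the quotient. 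Dualizing and incorporating the augmentations $\varepsilon_{L_i}$ yields a short exact sequence of cochain complexes
$$0 \to \op{Cth}^*_A(L) \to \op{Cth}^*_{\varepsilon_{L_0}, \varepsilon_{L_1}}(L_0, L_1) \to \op{Cth}^*_C(L) \to 0.$$

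An SFT compactness argument identifies these pieces. A rigid curve counted by $\mathfrak{d}$ with both main input and main output in $A$ converges as $R \to \infty$ to a building whose top $X^+$-level is a Cthulhu-type curve for $L^+$, with its pure chord-$\Lambda$ negative ends capped off by rigid curves in $X^-$ contributing to the cobordism morphism $\Phi_{L^-}$. Since $\varepsilon_{L^+} = \varepsilon_L \circ \Phi_{L^-}$ by Corollary \ref{induced augmentations}, those $X^-$-caps consolidate to the augmentation $\varepsilon_{L^+}$ on the pure chord-$\Lambda$ words. This identifies $\op{Cth}^*_A(L)$ with the subcomplex $A^*$ inside the short exact sequence $0 \to A^* \to \op{Cth}^*(L^+) \to B^* \to 0$ coming from the triangular structure of $\mathfrak{d}^{L^+}$, where $B^*$ denotes the dual of the mixed chords from $\Lambda_1$ to $\Lambda_0$ endowed with the augmented LCC differential. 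The symmetric analysis identifies $\op{Cth}^*_C(L)$ with the quotient $C^*$ of $0 \to B^* \to \op{Cth}^*(L^-) \to C^* \to 0$, and the two copies of $B^*$ carry identical differentials because $\varepsilon_{L^+}$ and $\varepsilon_{L^-} \circ \Phi_{L^-}$ agree on pure chords of $\Lambda$.

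Building on these identifications, I would construct chain maps $\iota \colon \op{Cth}^*(L^+) \to \op{Cth}^*(L)$ and $\pi \colon \op{Cth}^*(L) \to \op{Cth}^*(L^-)$: the map $\iota$ extends the inclusion $A^* \hookrightarrow \op{Cth}^*(L)$ by a continuation-type count on $B^*$ using the neck-crossing pieces of the SFT limit, and $\pi$ extends the projection $\op{Cth}^*(L) \twoheadrightarrow C^*$ symmetrically. The chain-map property and the nullity of $\pi \circ \iota$ up to homotopy are enforced by an SFT analysis of $\mathfrak{d}_{AC}$ for $L$, which shows that the connecting morphism of the middle short exact sequence factors through $B^*$ via those of the other two. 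A final chain-level verification — observing that in the relevant mapping cone the common $B^*$ subcomplex appears with opposite signs and contributes an acyclic summand — produces the desired exact triangle
$$H\op{Cth}^*(L^+) \to H\op{Cth}^*(L) \to H\op{Cth}^*(L^-) \to H\op{Cth}^*(L^+)[1].$$
The main obstacle is the SFT compactness analysis of the previous paragraph: one must control broken holomorphic buildings with several parallel chord-$\Lambda$ breakings and ensure that word-augmentations correctly compose via $\Phi_{L^-}$ across the neck, carefully distinguishing main ends from word decorations at each level of the building.
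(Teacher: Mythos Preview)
Your geometric core---using the inflated Liouville form to separate the $X^+$- and $X^-$-generators by action, and stretching the neck to factor the Cthulhu differential through the intermediate $LCC(\Lambda_0,\Lambda_1)$---is exactly what the paper does (Lemmas~\ref{no nessie}--\ref{Cth for a split J}). The difference lies entirely in the algebraic packaging, and here your proposal becomes imprecise.

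The paper does not build chain maps $\iota$ and $\pi$ by hand. Instead, after neck-stretching, it records that the differential of $\op{Cth}^*(L_0,L_1)$ on the four-fold decomposition
\[
LCC(\Lambda_0^+,\Lambda_1^+)\oplus CF(L_0^+,L_1^+)\oplus CF(L_0^-,L_1^-)\oplus LCC(\Lambda_0^-,\Lambda_1^-)
\]
has the off-diagonal $2\times 2$ block equal to a \emph{composition} $\mu\circ\nu$, where $\nu$ maps the lower two summands into $C\coloneqq LCC(\Lambda_0,\Lambda_1)$ and $\mu$ maps $C$ into the upper two summands. It then invokes a purely algebraic fact (Lemma~\ref{boring algebraic lemma}): for chain maps $\nu\colon A\to C$ and $\mu\colon C\to B$, the cone of $\mu\circ\nu$ is quasi-isomorphic to the cone of the tautological map $\op{Cone}(\mu)\to\op{Cone}(\nu)$ given by the identity on $C$. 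Since $\op{Cone}(\mu)=\op{Cth}^*(L^+)$ and $\op{Cone}(\nu)=\op{Cth}^*(L^-)$, the triangle drops out immediately.

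Your alternative route---extending the inclusion $A^*\hookrightarrow\op{Cth}^*(L)$ over $B^*$ by a ``continuation-type count'' and then arguing that in some mapping cone ``the common $B^*$ subcomplex appears with opposite signs and contributes an acyclic summand''---is not wrong in spirit, but two things are off. First, we are in characteristic two, so the sign talk is at best metaphorical; what you need is precisely the cancellation mechanism isolated in Lemma~\ref{boring algebraic lemma} (an identity block in a filtration, not a sign cancellation). Second, and more seriously, you have not said which mapping cone you mean or why the putative map is a chain map: verifying that your $\iota$ intertwines the differentials requires the factorisation $d^L_{A^*C^*}=d^{L^+}_{A^*B^*}\circ(\iota|_{B^*})$, which is exactly the content of Lemma~\ref{Cth for a split J}, so you are re-deriving the paper's key step inside a more convoluted framework. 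Once you have that factorisation, the clean finish is to apply the abstract cone lemma rather than to juggle $\iota$, $\pi$, and a nullhomotopy of $\pi\circ\iota$ separately.
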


When the cobordisms are embedded and their negative ends are Lagrangian fillable, this triangle is precisely the triangle of \cite[Section 8.3]{CieliebakOancea}  involving a pair of filled Lagrangian cobordisms.
  
  \begin{rem}
    The condition about the action of the intersection points can be obtained by a compactly supported hamiltonian isotopy of $L_0$, and therefore is unnecessary. However, removing it would require more invariance of Cthulhu homology than we have proved so far.
  \end{rem}
  The proof of Theorem \ref{relative exact triangle} will occupy the rest of this section.
 Since there is a canonical bijection between $L_0 \cap L_1$ and $(L_0^- \cap L_1^-) \cup (L_0^+ \cap L_1^+)$, we have an identification
$$CF(L_0, L_1) = CF(L_0^+, L_1^+) \oplus CF(L_0^-, L_1^-)$$
as vector spaces, and therefore we can write
$$\op{Cth}_{\varepsilon_{L_0}, \varepsilon_{L_1}}^*(L_0, L_1)= LCC(\Lambda^+_0, \Lambda^+_1) \oplus CF(L_0^+, L_1^+) \oplus CF(L_0^-, L_1^-) \oplus LCC(\Lambda^-_0, \Lambda^-_1).$$
We define
\begin{align*}
  \op{Cth}^+_{\varepsilon_{L_0}, \varepsilon_{L_1}}(L_0, L_1) & = LCC(\Lambda^+_0, \Lambda^+_1) \oplus CF(L_0^+, L_1^+), \\ \op{Cth}^-_{\varepsilon_{L_0}, \varepsilon_{L_1}}(L_0, L_1)  & = CF(L_0^-, L_1^-) \oplus LCC(\Lambda^-_0, \Lambda^-_1).
\end{align*}
We denote by $d_{**}$ and $d^\pm_{**}$, with $* \in \{ +.-,0 \}$, the components of the differentials of $\op{Cth}_{\varepsilon_{L_0}, \varepsilon_{L_1}}^*(L_0, L_1)$ and $\op{Cth}_{\varepsilon_{L^\pm_0}, \varepsilon_{L^\pm_1}}^*(L^\pm_0, L_1^\pm)$ respectively.

For all $R \ge 0$ we define actions $\mathfrak{a}_R$ of generators of $\op{Cth}_{\varepsilon_{L_0}, \varepsilon_{L_1}}^*(L_0, L_1)$ using a stretched Liouville forms $\theta_R$ as in Section \ref{sec: immersed cobordisms}. The maps $d_{**}$ increase the action $\mathfrak{a}^R$ for all $R$ because the differential of  $\op{Cth}_{\varepsilon_{L_0}, \varepsilon_{L_1}}(L_0, L_1)$ is cohomological as in \cite{Floer_cob}.

\begin{lemma}\label{no nessie}
  If $a \in L^+_0 \cap L_1^+$, then $d_{-0}(a)=0$ and $d^+_{-0}(a)=0$
\end{lemma}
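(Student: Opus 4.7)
The plan is to reduce the first claim to the second via a neck-stretching argument along $Y$, and to establish the second claim directly via an action-energy computation that exploits the positive action hypothesis.

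For the second claim $d_{-0}^+(a)=0$, a candidate contribution comes from a Nessie curve in $X^+$ consisting of a banana in $\R\times Y$ and a neck $u^+$ in $X^+$ with boundary on $L_0^+\cup L_1^+$ and asymptotes at $a$ and at a Reeb chord $b'$ from $\Lambda_0$ to $\Lambda_1$. I would apply Stokes' theorem to $u^+$, using that the potentials $h_i^+$ vanish on $\Lambda_i\subset Y$ (by the nice splitting assumption) to evaluate the boundary integrals, and carefully account for the sign of the asymptotic contribution at the Lagrangian intersection $a$ (which one may treat as a degenerate chord of action $\mathfrak{a}(a)=h_1^+(a)-h_0^+(a)$, approached in the ``negative direction'' in the sense of Section \ref{sec: immersed cobordisms}). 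Combined with the positive action hypothesis $\mathfrak{a}(a)>0$ and the positivity of $\mathfrak{a}(b')$ (after normalising the potentials to vanish at the negative end), the action balance should force the symplectic area of $u^+$ to be non-positive, contradicting holomorphicity and thus emptying the relevant Nessie moduli space.

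For the first claim $d_{-0}(a)=0$, I would apply the inflating family $\theta_R$ from Lemma \ref{inflating} and pass to the limit $R\to\infty$. Under this deformation $J$ remains compatible with $\theta_R$ and $L$ remains exact with potential $e^{R\phi}h$, so that $\mathfrak{a}_R(a)=e^R\mathfrak{a}(a)$ grows while the chord actions at the negative end are unchanged. By SFT compactness, a hypothetical Nessie curve for $d_{-0}(a)$ has its neck component break into an upper piece $u^+$ in the completion of $X^+$ with asymptote at $a$, intermediate cylindrical levels in $\R\times Y$, and a lower piece in the completion of $X^-$ with asymptote at $b'$, in the spirit of the argument in Lemma \ref{subalgebra}. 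The upper piece $u^+$, glued with a banana configuration assembled from the intermediate cylindrical level at $\R\times Y$, contributes to $d_{-0}^+(a)$, which vanishes by the second claim. The main obstacle is the precise bookkeeping in the second claim, namely converting the positivity of $\mathfrak{a}(a)$ into a definite sign obstruction in the Stokes identity for $u^+$; a secondary technical point is verifying that the broken configurations produced by the neck-stretching indeed reassemble into Nessie configurations for the reduced problem in $X^+$, rather than yielding additional degenerations along the negative cylindrical end.
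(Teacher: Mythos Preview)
Your treatment of the second claim $d^+_{-0}(a)=0$ is correct and matches the paper: a direct Stokes computation on the neck in $X^+$ using $d\theta$, together with the vanishing of the potentials at the negative end $\Lambda_i$ and the hypothesis $\mathfrak{a}(a)>0$, forces the $d\theta$-area of the neck to be non-positive.

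For the first claim $d_{-0}(a)=0$, however, you overcomplicate and in fact conflate two distinct deformations. Inflating the Liouville form to $\theta_R$ as in Lemma~\ref{inflating} does \emph{not} change the almost complex structure $J$ or the Lagrangians $L_i$; hence the moduli spaces of Nessies are literally independent of $R$, and there is no limit to take and no SFT compactness to invoke. (This is also how Lemma~\ref{subalgebra} works: the curves do not degenerate as $R$ varies; one simply bounds their $d\theta_R$-energy for each $R$ and derives a contradiction.) Your proposed reduction to $d^+_{-0}(a)=0$ via broken buildings is therefore not well-posed, and the ``secondary technical point'' you flag about reassembly would in any case be a genuine obstacle if one tried to run an actual neck-stretching of $J$.

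The fix is immediate and uses precisely the observation you already made. Since the moduli space is independent of $R$, take any putative neck $v$ in $X$ and integrate $d\theta_R$ over it, exactly as you did for the second claim but with $\theta_R$ in place of $\theta$. Stokes gives
\[
0<\int v^*d\theta_R = c_1^- - c_0^- - \mathfrak{a}_R(a),
\]
where $c_i^-$ are the (fixed, $R$-independent) values of the potentials at the negative end. Since $\mathfrak{a}_R(a)=e^R\mathfrak{a}(a)$ with $\mathfrak{a}(a)>0$, the right-hand side becomes negative for $R$ large, a contradiction. This is the paper's argument; note that the first claim is therefore established by the same mechanism as the second, not by reducing to it.
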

\begin{proof}
  We will prove that there is no nessie between an intersection point of $L^+_0 \cap L_1^+$ and a Reeb chord from $\Lambda_1^-$ to $\Lambda_0^-$ or from $\Lambda_0$ and $\Lambda_1$. We first consider nessies in $X$. Let $v \colon D \to X$ be the neck of a Nessie connecting an intersection point $a \in L^+_0 \cap L_1^+$ and a Reeb chord $b'$ from $\Lambda_0^-$ to $\Lambda_1^+$; then
  $$0 < \int_D d \theta_R= c_1^- - c_0^- - \mathfrak{a}_R(a),$$
  where $c_0^-$ and $c_1^-$ are the value of the potentials of $L_0$ and $L_1$ at the negative end. Since $\mathfrak{a}_R(a) = e^R \mathfrak{a}$ and $ \mathfrak{a}>0$, we obtain a contradiction for $R$ large enough.

  The proof that there are no nessies with boundary on $L_0^+$ and $L_1^+$ is similar but simpler: since the potentials of $L_0^+$ and $L_1^+$ vanish at the negative end, it is enough to integrate $d\theta$. 
\end{proof}

\begin{lemma}\label{Cth^+ is a subcomplex}
 $\op{Cth}^+_{\varepsilon_{L_0}^+, \varepsilon_{L_1}^+}(L_0, L_1)$ is a subcomplex of $\op{Cth}_{\varepsilon_{L_0}, \varepsilon_{L_1}}^*(L_0, L_1)$.
\end{lemma}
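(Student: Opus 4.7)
The plan is to show that $D$ preserves $\op{Cth}^+$. Identifying $\op{Cth}^*$ with the dual of $\underline{\op{Cth}}$ via $(D\varphi)(a)=\varphi(\mathfrak{d} a)$, which is valid since the modules $\F_{\varepsilon_i}$ carry trivial differentials, the claim becomes equivalent to asserting that the homological differential $\mathfrak{d}$ preserves the complementary subspace $\underline{\op{Cth}}^- := CF(L_0^-,L_1^-)\oplus LCC(\Lambda_0^-,\Lambda_1^-)$. Concretely, I need to rule out, for every $c\in\underline{\op{Cth}}^-$, any term appearing in $\mathfrak{d}(c)$ whose main output $b$ lies in $\underline{\op{Cth}}^+:=LCC(\Lambda_0^+,\Lambda_1^+)\oplus CF(L_0^+,L_1^+)$.

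The strategy is a neck stretching along $Y$ in the same spirit as the proof of Lemma \ref{no nessie}. For any hypothetical contributing holomorphic curve $u\colon\dot D\to X$ I would apply Stokes' theorem with the stretched Liouville form $\theta_R=e^{R\phi}\theta$; by Lemma \ref{inflating} the form $\theta_R$ remains Liouville and adapted to $J$, and the potentials of the $L_i$ become $e^{R\phi}h_i$. The resulting estimate has the shape
\[
0\le \int_{\dot D}u^*d\theta_R=\mathfrak{a}_R(c)-\mathfrak{a}_R(b)-\sum_i\mathfrak{a}_R(\text{tentacle}_i),
\]
and the key observation is that $c$ and all tentacle asymptotics (which are either Reeb chords of $\Lambda_i^-$ or self-intersections of $L_i$ approached in the negative direction) are supported where $\phi\equiv 0$, so their actions are independent of $R$.

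On the other hand, $\mathfrak{a}_R(b)$ scales with $e^R$. If $b\in L_0^+\cap L_1^+$, then $\mathfrak{a}_R(b)=e^R\mathfrak{a}(b)\to+\infty$ thanks to the positive-action hypothesis of Theorem \ref{relative exact triangle}; if $b$ is a positive Reeb chord, then $\mathfrak{a}_R(b)=T+e^R(c_1^+-c_0^+)$ diverges provided that the primitives have been normalised so that $c_1^+>c_0^+$. In either case the Stokes inequality would be violated for $R$ large enough, ruling out the offending curve and showing that $\mathfrak{d}$ sends $\underline{\op{Cth}}^-$ into itself, hence that $D$ sends $\op{Cth}^+$ into itself. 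The main obstacle I anticipate is precisely the positive Reeb chord subcase: the difference $c_1^+-c_0^+$ is intrinsic once the primitives are fixed by the nicely-split condition and cannot be modified by a compactly supported Hamiltonian isotopy, so the sign $c_1^+>c_0^+$ is not automatic. I expect this to be handled either by a preliminary simultaneous adjustment of the primitives of $L_0$ and $L_1$ compatible with the remaining hypotheses of the theorem, or by a more refined SFT-style neck stretching that produces additional broken components in $X^+$ whose energies are controlled independently of the potentials at the positive end.
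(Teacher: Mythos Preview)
Your dualisation is correct and the overall strategy coincides with the paper's: both arguments boil down to the same two energy estimates. The paper works directly on the cohomological side and observes that the only components of $D$ that could carry $\op{Cth}^+$ out are the Nessie component $d_{-0}$ and the $CF^+\to CF^-$ part of $d_{00}$; it then kills the first by Lemma~\ref{no nessie} and the second by the $\theta_R$-argument you describe. After dualising, your two obligations are exactly (a) $\mathfrak{d}_{00}(CF^-)\subset CF^-$ and (b) the Nessie component of $\mathfrak{d}$ applied to $LCC^-$ lands in $CF^-$.

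There is, however, a genuine misconception in your write-up: the ``main obstacle'' you anticipate --- the case where the output $b$ is a \emph{positive} Reeb chord --- simply does not occur. Look at the shape of the matrix for $\mathfrak{d}$: the two zeros in the first column (equivalently, the fact that $\underline{\op{LCC}}(\Lambda_0^+,\Lambda_1^+)$ is a subcomplex of $\underline{\op{Cth}}$) say precisely that $\mathfrak{d}$ applied to $CF$ or to $LCC^-$ never has a component in $LCC^+$. Geometrically this is immediate: a chord of $\Lambda^+$ lives at the convex end of $X$ and can therefore only sit at a \emph{positive} puncture of a pseudoholomorphic disc, whereas outputs of $\mathfrak{d}$ sit at negative punctures. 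So the whole discussion of $c_1^+-c_0^+$, adjusting primitives, and refined SFT breaking is unnecessary --- the structural zeros already do the work.

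A second, smaller point: your single-curve Stokes inequality handles case~(a) cleanly, but for case~(b) the contributing object is a Nessie \emph{building} (banana in $\R\times\partial_-X$ glued to a neck in $X$), not a single disc, so the formula $\mathfrak{a}_R(c)-\mathfrak{a}_R(b)-\sum\mathfrak{a}_R(\text{tentacle})\ge 0$ does not apply as written. The fix is exactly what the paper does in Lemma~\ref{no nessie}: apply Stokes only to the neck component, which is a genuine curve in $X$, to get $0<\int_{\text{neck}}d\theta_R = c_1^- - c_0^- - \mathfrak{a}_R(b)$; since $\mathfrak{a}_R(b)=e^R\mathfrak{a}(b)\to+\infty$ for $b\in CF^+$, this gives the contradiction. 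Once you incorporate this and drop the spurious $LCC^+$ case, your argument is complete and essentially identical to the paper's.
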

\begin{proof}
 The component of the differential that could map an element of $\op{Cth}^+_{\varepsilon_{L_0}, \varepsilon_{L_1}}(L_0, L_1)$ out of that group are $d_{-0}$ and $d_{00}$. By Lemma \ref{no nessie} $d_{-0}(a)=0$ if $a \in L_0^+ \cap L_1^+$, so it remains to prove that $d_{00}(a) \in CF(L_0^+, L_1^+)$ whenever $a \in L_0^+ \cap L_1^+$. We recall that the input of $d_{00}$ is at the negative end and the output at the positive end as in \cite{Floer_cob}, and unlike the map $\mathfrak{d}_{00}$ because of the cohomological nature of $\op{Cth}_{\varepsilon_{L_0}, \varepsilon_{L_1}}^*(L_0, L_1)$. Therefore, if $d_{00}(a)$ does not belong to $CF(L_0^+, L_1^+)$, then there is a holomorphic map with a negative end at $a \in L_0^+ \cap L_1^+$, the positive end at some $b \in CF(L_0^-, L_1^-)$, and possibly other negative ends at pure chords of $\Lambda_0^-$ and $\Lambda_1^-$ and self-intersection points of $L_0$ and $L_1$. Then by integrating the pull-back of the form $d \theta_R$ by that holomorphic map  we obtain $\mathfrak{a}_R(a)<\mathfrak{a}_R(b)$, i.e.\ $e^R \mathfrak{a}(a)< \mathfrak{a}(b)$, for all $R \ge 0$. This is clearly a contradiction because  
 $\mathfrak{a}(a)>0$ and there are only finitely many intersection points between $L^-_0$ and $L_1^-$.
\end{proof}

\begin{lemma} \label{what is born below remains below}
  The holomorphic curves in $X$ contributing to $d_{-0}$, $d_{00}$ and $d_{0-}$ relating generators of $\op{Cth}_{\varepsilon_{L_0}, \varepsilon_{L_1}}^-(L_0, L_1)$ are in bijection with the holomorphic curves in $X^-$ contributing to $d_{-0}^-$, $d_{00}^-$ and $d_{0-}^-$.
\end{lemma}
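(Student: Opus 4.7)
The argument is a refinement of the stretching technique already used in Lemma \ref{subalgebra}. For each $R \geq 0$, we work with the stretched Liouville form $\theta_R = e^{R\phi}\theta$ from Lemma \ref{inflating}, noting that $J$ remains compatible with $\theta_R$ for every $R$. The potential of $L_i$ at any self-intersection or end lying in the region $\{\phi = 0\}$ (i.e.\ $s \leq -\epsilon/2$) is unchanged under this rescaling, while on $\{\phi = 1\}$ (i.e.\ $s \geq \epsilon/2$) the potential becomes $e^R h$; analogously for Reeb chord actions, which are unaffected at the negative end and scaled by $e^R$ at the positive end.

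The first step is to observe that if $u$ is a $J$-holomorphic curve in $X$ contributing to any of $d_{-0}$, $d_{00}$, $d_{0-}$ with both its positive and negative asymptotic generators in $\op{Cth}^-_{\varepsilon_{L_0},\varepsilon_{L_1}}(L_0,L_1)$, then the action difference
\[
\mathfrak{a}_R(a) - \mathfrak{a}_R(b) - \sum_i \mathfrak{a}_R(\eta_i) - \sum_j \mathfrak{a}_R(\zeta_j) = \int u^* d\theta_R \geq 0
\]
can be used to constrain the tentacles. The generators $a$ and $b$ satisfy $\mathfrak{a}_R = \mathfrak{a}$ for all $R$, so the left-hand side has a part bounded independently of $R$ coming from $a$ and $b$, minus a sum of tentacle actions. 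If some tentacle converges to a self-intersection of $L_i^+$, its $\theta_R$-action equals $e^R \mathfrak{a}$ and is strictly positive, forcing the inequality to fail for $R$ large. Hence all tentacles are constrained to self-intersections of $L_i^-$ or pure Reeb chords of $\Lambda_i^-$.

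For the second step, once the asymptotics are all in the negative region, the right-hand side $\int u^* d\theta_R$ is bounded above by a constant independent of $R$. On the other hand, on any subset of the image of $u$ contained in $\{\phi > 0\}$ the integrand satisfies $u^* d\theta_R \geq e^{R\phi} u^*d\theta$ modulo the extra positive term coming from $R\phi'(s) ds \wedge \alpha_Y$, which is nonnegative on $J$-holomorphic curves. In particular, if the image of $u$ meets the region $\{s \geq \epsilon/2\}$, there is a constant $C>0$ such that $\int u^* d\theta_R \geq C e^R$, contradicting the uniform upper bound for $R$ large enough. Hence $u$ is contained in $X^- \cup ((-\epsilon,\epsilon/2) \times Y) \subset X^-$, producing an element of the corresponding moduli space in $X^-$. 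Conversely, any $J^-$-holomorphic curve in $X^-$ with the stipulated asymptotics embeds into $X$ as a $J$-holomorphic curve (since $J$ is cylindrical on $(-\epsilon,\epsilon) \times Y$ and agrees with $J^-$ on $X^-$), establishing the bijection for $d_{00}$ and $d_{0-}$.

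The case of $d_{-0}$ is the same, with a minor modification: the Nessie moduli space consists of a banana in $\R \times \partial_- X$ (which lies entirely in the common symplectisation of $\partial_- X$ and is therefore identical for $X$ and $X^-$) followed by a neck in $X$. The stretching argument above applies verbatim to the neck component, forcing it to lie in $X^-$, so Nessies in $X$ are in bijection with Nessies in $X^-$. The main technical point, and the one requiring care, is the interaction of stretching with the tentacle asymptotics; once this is accounted for by the first step above, no further SFT compactness argument is needed since all bounds are derived directly from the action formula.
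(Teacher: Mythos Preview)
Your proof is correct and follows the same stretching/action argument as the paper. You give considerably more detail than the paper does: you separate out the verification that tentacles cannot land on double points of $L_i^+$ (the paper's terse ``all ends below $Y$'' implicitly covers this via the same inequality), you make the energy bound explicit, and you handle the Nessie case separately. One small slip in your converse direction: it is not literally true that ``$J$ agrees with $J^-$ on $X^-$,'' since $X^-$ has an infinite cylindrical end above $Y$ that has no counterpart in $X$. You need the same energy argument (applied inside $X^-$) to confine $J^-$-curves below $Y$ before identifying them with $J$-curves in $X$; the paper handles this symmetrically by writing ``holomorphic curves in $X$ \emph{or} $X^-$'' at the outset.
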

\begin{proof}
  Holomorphic curves in $X$ or $X^-$ which contribute to the corresponding Cthulhu differential and have all ends below $Y$ are completely contained in the connected component of $X$ or $X^-$ below $Y$ by an action argument. In fact, if there is a portion of such a curve above $Y$, then its $d \theta_R$-energy grows exponentially with $R$, while the $\mathfrak{a}_R$ action of the ends remains constant. This leads to a contradiction for $R$ sufficiently large.
\end{proof}
$\op{Cth}_{\varepsilon_{L_0}, \varepsilon_{L_1}}^-(L_0, L_1)$ can be viewed as a chain complex with differential induced by the identification with the quotient complex
$$\op{Cth}_{\varepsilon_{L_0}, \varepsilon_{L_1}}^*(L_0, L_1) / \op{Cth}_{\varepsilon_{L_0}, \varepsilon_{L_1}}^-(L_0, L_1).$$
Lemma \ref{what is born below remains below} shows that it can be identified also to a quotient complex of  $\op{Cth}^*_{\varepsilon_{L_0^-}, \varepsilon_{L_1}^-}(L_0^-, L_1^-)$ because $\varepsilon_{L_0^-}$ and  $\varepsilon_{L_1^-}$ are restrictions of $\varepsilon_{L_0}$ and  $\varepsilon_{L_1}$ respectively by lemma \ref{subalgebra}.

In the next lemma we stretch the neck to relate the components of the differential of $\op{Cth}^*_{\varepsilon_{L_0}, \varepsilon_{L_1}}(L_0, L_1)$ to the components of the differential of  $\op{Cth}^*_{\varepsilon_{L_0^\pm}, \varepsilon_{L_1^\pm}}(L_0^\pm, L_1^\pm)$.
 \begin{lemma}\label{Cth for a split J}
   The chain complex $\op{Cth}^*_{\varepsilon_{L_0}, \varepsilon_{L_1}}(L_0, L_1)$ is homotopic to the chain complex with the same underlying vector space, i.e.\
   $$LCC(\Lambda^+_0, \Lambda^+_1) \oplus CF(L_0^+, L_1^+) \oplus CF(L_0^-, L_1^-) \oplus LCC(\Lambda^-_0, \Lambda^-_1)$$
   and differential
   \begin{equation} \label{big differential}
     \left ( \begin{matrix} d^+_{++} & d^+_{+0} & d^+_{+-} \circ d^-_{+0} & d^+_{+-} \circ d^-_{+-} \\
       0 & d^+_{00} & d^+_{0-} \circ d^-_{+0} & d^+_{0-} \circ d^-_{+-} \\
       0 & 0 & d^-_{00} & d^-_{0-} \\
       0 & 0 & d^-_{-0} & d^-_{--}
       \end{matrix} \right ).
   \end{equation}
 \end{lemma}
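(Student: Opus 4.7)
Plan: The argument is a neck-stretching along $Y$. I would first introduce the stretched Liouville forms $\theta_R = e^{R\phi}\theta$ of Lemma~\ref{inflating} together with a compatible family of almost complex structures $J_R$ on $X$ obtained by inserting a cylindrical neck of length $R$ along $Y$. By the invariance of the Cthulhu complex under compactly supported changes of almost complex structure (via continuation maps, parallel to the discussion in Section~\ref{ChekAlg}), the complex $\op{Cth}^*_{\varepsilon_{L_0},\varepsilon_{L_1}}(L_0,L_1)$ computed with $J_R$ is chain homotopic to the one computed with $J$, for every $R \ge 0$. It therefore suffices to identify, for $R$ large, each block of the differential defined with $J_R$ with the corresponding entry of the matrix \eqref{big differential}.

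Next, for every moduli space ${\mathcal M}_{L_0,L_1}^0(a;\bs\zeta,b,\bs\eta)$ arising in the definition of some component $d_{**}$, I would apply SFT compactness as $R \to \infty$: any sequence of rigid $J_R$-holomorphic polygons subconverges to a broken holomorphic building with levels in $X^+$, in the symplectization $(\R\times Y,d(e^s\alpha_Y))$, and in $X^-$. A dimension count (each non-trivial level must itself be rigid), combined with the cylindricity of $J$ in the neck, forces intermediate symplectization levels to reduce to trivial strips over Reeb chords of $\Lambda$; interior closed Reeb orbit asymptotes are absorbed into the anchors by Remark~\ref{rem: how we learnt not to worry, but not yet to love interior punctures}. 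The surviving building is thus either a single rigid polygon supported in $X^+$ or in $X^-$, or a pair $(u^+,u^-)$ of rigid $J$-holomorphic polygons in $X^+$ and $X^-$, matched along a single Reeb chord of $\Lambda$. The standard SFT gluing theorem then provides the converse: for $R$ sufficiently large, every such configuration glues to a unique rigid $J_R$-holomorphic polygon in $X$, giving a bijection of contributing configurations.

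Reading off the resulting block decomposition yields the matrix \eqref{big differential}. The diagonal entries come from curves supported entirely in one side; by Lemma~\ref{subalgebra} and Corollary~\ref{induced augmentations}, the induced augmentations $\varepsilon_{L^\pm}$ absorb the self-intersection and pure-chord asymptotes correctly, so these entries coincide with $d^+_{++},\,d^+_{00},\,d^-_{00},\,d^-_{--}$ (and similarly $d^+_{+0},\,d^-_{0-}$ in the mixed-end positions inside a single side). The upper-right block of off-diagonal entries comes from genuine two-level buildings, and summing over the intermediate Reeb chords of $\Lambda$ produces precisely the four compositions of the form $d^+_{**}\circ d^-_{**}$ appearing in the statement. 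The vanishing of the entries in positions $(3,2)$ and $(4,2)$ (and a fortiori $(4,1)$) follows from Lemma~\ref{no nessie} and Lemma~\ref{Cth^+ is a subcomplex} via an action computation with $\theta_R$: since $\mathfrak{a}_R(a) = e^R\mathfrak{a}(a)$ for $a\in L_0^+\cap L_1^+$ while the action of generators in $X^-$ does not grow with $R$, the positivity hypothesis $\mathfrak{a}(a) > 0$ rules out any curve with a negative puncture at such an $a$ and a positive puncture below $Y$ once $R$ is large enough; the zeros in positions $(2,1)$ and $(3,1)$ are already built into the unstretched Cthulhu differential of \cite{Floer_cob}.

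The hard part is the SFT compactness and gluing analysis in this hybrid setting: the polygons carry simultaneously pure and mixed Reeb chord asymptotes at $\Lambda^\pm$ and at $\Lambda$, self-intersection asymptotes of the $L_i^\pm$ approached in a prescribed direction, Lagrangian intersections $L_0^\pm\cap L_1^\pm$, and anchors at closed Reeb orbits. One must verify that the broken configurations arising in the $R\to\infty$ limit are transversely cut out, rule out unwanted bubbling across $Y$ (which is ultimately controlled by the same action argument via $\theta_R$), and carry out the gluing for each of the configurations dictated by the matrix formula. These verifications run in parallel with the full proof of invariance of the immersed Cthulhu complex and, as signalled in Section~\ref{sec:cthulhu-complex-with}, their technical details are deferred to future work.
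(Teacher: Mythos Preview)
Your proposal is correct and follows essentially the same route as the paper: neck-stretch along $Y$, use the one-parameter family of almost complex structures to produce the chain homotopy, and then identify each block of the limiting differential via SFT compactness and gluing, with the zeros in the second column coming from Lemmas~\ref{no nessie} and~\ref{Cth^+ is a subcomplex} and the first column of zeros from the built-in structure of the Cthulhu differential. The only ingredient you do not name explicitly is Lemma~\ref{what is born below remains below}, which the paper invokes to pin down the bottom-right $2\times 2$ block; your phrase ``a single rigid polygon supported in $X^+$ or in $X^-$'' is exactly what that lemma guarantees.
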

 \begin{proof}
   We stretch the neck along $Y$ and analyse how the holomorphic maps contributing to the differential of $\op{Cth}^*_{\varepsilon_{L_0^\pm}, \varepsilon_{L_1^\pm}}(L_0^\pm, L_1^\pm)$ degenerate. The count of isolated holomorphic maps in the one-parameter family of stretching almosty complex structures provides a chain homotopy between the original differential of  $\op{Cth}^*_{\varepsilon_{L_0}, \varepsilon_{L_1}}(L_0, L_1)$ and the differential defined by the completely stretched almost complex structure. We will prove that the latter is described by Equation \eqref{big differential}.
   
   The first column of zeros is a consequence of the form of the Cthulhu differential. The zeros of the second column are a consequence of Lemma \ref{Cth^+ is a subcomplex}. The submatrix $\left ( \begin{matrix} d^-_{00} & d^-_{0-} \\ d^-_{-0} & d^-_{--} \end{matrix} \right )$ is a consequence of Lemma \ref{what is born below remains below}. Observe that Lemmas  \ref{Cth^+ is a subcomplex} and \ref{what is born below remains below} hold for every almost complex structure which is cylindrical near $Y$, and in particular for every almost complex structure in the chosen stretrching one-parameter family.

   To obtain the submatrix $\left ( \begin{matrix} d^+_{++} & d^+_{+0} \\ 0 & d^+_{00} \end{matrix} \right )$ we observe that by the proof of Lemma \ref{Cth^+ is a subcomplex} there is no neck of a Nessie in $X^+$ with boundary on $L_0^+$ and $L_1^+$, and therefore curves contributing to the differential of  $\op{Cth}^*_{\varepsilon_{L_0}, \varepsilon_{L_1}}(L_0, L_1)$ can only degenerate into two-level buildings, one of whose levels is a curve contributing to $d^+_{**}$, and the other one consists of curves with  pure boundary components (i.e.\ only on $L^-_0$ or $L^-_1$). These curves contribute to the dg morphisms $\Phi_{L_0^-}$ and $\Phi_{L_1^-}$ from  Lemma \ref{dga morphisms induced by cobordisms}. The proof of the submatrix
   $\left ( \begin{matrix} d^+_{+-} \circ d^-_{+0} & d^+_{+-} \circ d^-_{+-} \\ d^+_{0-} \circ d^-_{+0} & d^+_{0-} \circ d^-_{+0} \end{matrix} \right )$ is similar, with the only difference that now the bottom level of the limit building has a component with a positive end at a negative chord from $\Lambda_1$ to $\Lambda_0$ and the top level has a component with a negative end at the same chord.
\end{proof}
We will show that $\op{Cth}^*_{\varepsilon_{L_0}, \varepsilon_{L_1}}(L_0, L_1)$ is quasi-isomorphic to a mapping cone between $\op{Cth}^*_{\varepsilon_{L_0^+}, \varepsilon_{L_1^+}}(L_0^+, L_1^+)$ and $\op{Cth}^*_{\varepsilon_{L_0^-}, \varepsilon_{L_1^-}}(L_0^-, L_1^-)$ with the aid of the following algebraic lemma.
  \begin{lemma}\label{boring algebraic lemma}
 Let $A$, $B$, $C$ be chain complexes, $\nu \colon A \to C$, $\mu \colon C \to B$ chain maps and $\eta \colon \op{Cone}(\mu) \to \op{Cone}(\nu)$ the chain map define as $\op{id} \colon C \to C$ and zero on all other components. Then $\op{Cone}(\eta)$ is quasi-isomorphic to $\op{Cone}(\mu \circ \nu)$.
  \end{lemma}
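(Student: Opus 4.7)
The plan is to prove the lemma by an explicit chain homotopy equivalence, which is essentially the content of the octahedral axiom for the triangulated category of chain complexes. Since we work over a field of characteristic two, all sign issues in the cone construction disappear and the formulas become particularly clean.

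First, I would write out $\op{Cone}(\eta)$ as a graded vector space. Using the standard convention that $\op{Cone}(f\colon X \to Y) = Y \oplus X[1]$ with differential $(y,x) \mapsto (d_Y y + f(x), d_X x)$, one gets
$$\op{Cone}(\eta) = \op{Cone}(\nu) \oplus \op{Cone}(\mu)[1] = (C \oplus A[1]) \oplus (B[1] \oplus C[2]),$$
and a direct calculation, using that $\eta$ is the identity on the $C$ summands and zero on the others, yields the differential
$$d(c_2, a, b, c_1) = (dc_2 + \nu a + c_1,\ da,\ db + \mu c_1,\ dc_1).$$
At the same time $\op{Cone}(\mu \circ \nu) = B \oplus A[1]$ carries the differential $d(b, a) = (db + \mu\nu a, da)$.

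Next I would write down candidate maps in both directions. Define
$$\psi\colon \op{Cone}(\eta) \to \op{Cone}(\mu\circ\nu),\qquad \psi(c_2, a, b, c_1) = (b + \mu c_2,\ a),$$
and
$$\phi\colon \op{Cone}(\mu\circ\nu) \to \op{Cone}(\eta),\qquad \phi(b, a) = (0,\ a,\ b,\ \nu a).$$
A short computation using that $\mu$ and $\nu$ are chain maps shows that both $\psi$ and $\phi$ commute with the differentials, and that $\psi \circ \phi = \op{id}_{\op{Cone}(\mu\circ\nu)}$ on the nose.

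Finally, I would exhibit an explicit chain homotopy between $\phi \circ \psi$ and $\op{id}_{\op{Cone}(\eta)}$. The natural candidate is
$$H(c_2, a, b, c_1) = (0,\ 0,\ 0,\ c_2),$$
which intertwines the two copies of $C$ inside $\op{Cone}(\eta)$. Computing $dH + Hd$ term by term and again exploiting that we are in characteristic two to cancel doubled contributions gives exactly $(c_2, 0, \mu c_2, \nu a + c_1)$, which is precisely $\phi\circ\psi - \op{id}$. Hence $\phi$ and $\psi$ are mutually inverse chain homotopy equivalences, in particular quasi-isomorphisms, and this completes the proof.

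The only real obstacle here is bookkeeping: choosing the right formulas for $\phi$, $\psi$ and $H$ so that the $\mu c_1$, $\nu a$ and diagonal $C$-terms in the differential of $\op{Cone}(\eta)$ all cancel correctly. Since the result is a well-known instance of the octahedral axiom, no additional geometric input is needed; everything is purely homological algebra and can be verified by a direct matrix computation.
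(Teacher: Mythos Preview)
Your proof is correct. The explicit maps $\psi$, $\phi$ and the homotopy $H$ all check out (and indeed the computation works in characteristic two exactly as you claim; the doubled $\mu c_1$ in $\psi d$ and the doubled $\nu a$ in $d\phi$ and the doubled $dc_2$ in $dH+Hd$ are the only places where this matters).

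The paper takes a slightly different route. It defines essentially the same chain map $F\colon \op{Cone}(\eta)\to\op{Cone}(\mu\circ\nu)$ as your $\psi$ (with both copies of $C$ mapped to $B$ via $\mu$), but instead of producing an explicit homotopy inverse it introduces length-three filtrations on source and target: $B \subset C\oplus C\oplus B \subset A\oplus C\oplus C\oplus B$ on the source and $B=B\subset A\oplus B$ on the target. On the associated graded pieces $F$ is the identity on $B$ and $A$, while the middle piece $C\oplus C$ maps to $0$; since the differential on that piece is $\op{Cone}(\op{id}_C)$, it is acyclic, and a standard spectral-sequence comparison finishes the argument. So the paper gets only a quasi-isomorphism, whereas you obtain a genuine chain homotopy equivalence with explicit formulas. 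Your approach is more elementary and gives a stronger conclusion; the paper's filtration argument is shorter to write and avoids having to guess the homotopy.
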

  \begin{proof}
    Let $F \colon \op{Cone}(\eta) \to \op{Cone}(\mu \circ \nu)$ be the map described by the following diagram
    $$\xymatrix{
      A \ar[r]^-\nu \ar[d]_{\op{id}} & C \ar[drr]^-\mu & C \ar[l]_{\op{id}} \ar[dr]^-\mu \ar[r]^-\mu & B \ar[d]^-{\op{id}} \\
      A \ar[rrr]^-{\mu \circ \nu} & & & B.
    }$$
    It is easy to verify that $F$ is a chain map. We introduce the following length three filtrations on $\op{Cone}(\eta)$ and $\op{Cone}(\mu \circ \nu)$:
    $$F^0\op{Cone}(\eta) = B, \quad F^1\op{Cone}(\eta) = C \oplus C \oplus B, \quad
    F^2\op{Cone}(\eta) = A \oplus C \oplus C \oplus B,$$
    $$F^0\op{Cone}(\mu \circ \nu) = F^1\op{Cone}(\mu \circ \nu) = B, \quad F^2\op{Cone}(\mu \circ \nu) = A \oplus B.$$
    The map $F$ induces an isomorphism between the homologies of the associated graded complexes, and therefore it is a quasi-isomorphism.
  \end{proof}
  \begin{proof}[Proof of Theorem \ref{relative exact triangle}]
    We apply Lemma \ref{boring algebraic lemma} to
    $$A= LCC(\Lambda_0^-, \Lambda_1^-) \oplus CF(L_0^-, L_1^-)$$
    with differential $\left ( \begin{matrix} d^-_{00} & d^-_{0-} \\ d^-_{-0} & d^-_{--} \end{matrix} \right )$,
    $$B= CF(L_0^+, L_1^+) \oplus LCC(\Lambda_0^+, \Lambda_1^+)$$
    with differential  $\left ( \begin{matrix} d^+_{++} & d^+_{+0} \\ 0 & d^+_{00} \end{matrix} \right )$,
    $$C = LCC(\Lambda_0, \Lambda_1),$$
    $\mu = d^+_{0-} \oplus d^+_{+-}$ and $\nu= d^-_{+-}+d^-_{+0}$. Since
    $\op{Cone}(\mu \circ \nu)$ is the chain complex of Lemma \ref{Cth for a split J}, $\op{Cone}(\mu)= \op{Cth}^*_{\varepsilon{L_0^+}, \varepsilon_{L_1^+}}(L_0^+, L_1^+)$ and $\op{Cone}(\nu) = \op{Cth}^*_{\varepsilon_{L_0^-}, \varepsilon_{L_1^-}}(L_0^-, L_1^-)$, then 
$\op{Cth}^*_{\varepsilon_{L_0}, \varepsilon_{L_1}}(L_0, L_1)$
    is quasi-isomorphic to the cone of a chain map
    $$\op{Cth}^*_{\varepsilon_{L_0^-}, \varepsilon_{L_1^-}}(L_0^-, L_1^-) \to  \op{Cth}^*_{\varepsilon_{L_0^+}, \varepsilon_{L_1^+}}(L_0^+, L_1^+).$$
    This implies Theorem \ref{relative exact triangle}.
  \end{proof}
  
\section{The geometric construction}
\label{sec:geom-constr}
Let  $(W,d\theta)$ be a Weinstein domain which is obtained by critical Weinstein handle-attachments on a subcritical Weinstein sub-domain $(W^{sc},d\theta)$ along a Legendrian link $\mathbf{S}$ in $\partial W^{sc}$
which consists of a finite number of Legendrian spheres $S_\sigma$, $\sigma \in \pi_0(\mathbf{S})$. We define $W^{crit}=W\setminus \mathrm{int} W^{sc}$. This is a compact Weinstein cobordism from $\partial_- W^{crit} = \partial W^{sc}$ to $\partial_+ W^{crit} = \partial W$.  Each Legendrian sphere $S_\sigma$ is the boundary of a Lagrangian disc $C_\sigma$ in $W^{crit}$ : the core of the Weinstein handle attached to $S_\sigma$.

In this section we will prove that every closed Exact Lagrangian submanifold $L$ of $(W, \theta)$ can be deformed in a controlled way into an immersed exact Lagrangian $\overline{L}$ which is nicely split by $\partial W^{sc}$ into an immersed exact Lagrangian  filling (i.e.\ a cobordism with empty negative end) in $W^{sc}$ and an immersed exact Lagrangian cap (i.e.\ a cobordism with empty positive end) in $W^{crit}$. The deformation will use some contact topological notions that we are going to recall briefly.

The {\em contactisation} of $(W, \theta)$ is the contact manifold $(W \times \R, dz+\theta)$, where $z$ is the coordinate on $\R$.
If $\iota \colon L \to W$ is an  exact Lagrangian immersion with potentiual function $h$, we  define a Legendrian immersion $\iota^\dagger \colon L \to W \times \R$ by $\iota^\dagger(x)=(\iota(x), - h(x))$. Since the potential function is well defined only up to constant, the Legendrian immersion $\iota^\dagger$ is well defined only up to translations in the $\R$ direction of the contactisation. Even if this indeterminacy can be a concern if $L$ is not connected, it will have a minimal impact on our constructions. Note that $\iota^\dagger$ is an embedding if and only if for every pair of points $x, y \in L$ such that $\iota(x)=\iota(y)$ we have $h(x) \ne h(y)$. This is a generic condition in the regular homotopy class of $\iota$. In order to keep the exposition simple, we will {\em always} use $L$ to denote $\iota(L)$ and $L^\dagger$ to denote $\iota^\dagger(L)$. When $L^\dagger$ is embedded, the double points of $L$ are in bijection with the Reeb chords $L^\dagger$, and the length of a Reeb chords is the absolute value of the difference between the values of the potential function at its endpoints.
Finally, any regular exact Larangian homotopy $\iota_t \colon L \to W$ can be lifted to a regular Legendrian homotopy $\iota_t^\dagger \colon L \to W \times \R$. On the other hand every Legendrian submanifold of $\R \times W$ projects to a Lagrangian immersion in $W$ which is called the {\em Lagrangian projection} and, similarly, every Legendrian isotopy projects to a regular exact Lagrangian homotopy.

The precise statement that we will show in this section is the following.

\begin{thm}
\label{thm:maingeometric}
  For every closed exact Lagrangian submanifold $L$ of $W$ there exists an immersed exact
  Lagrangian submanifold $\overline{L}$ in $W$ with only transverse double points  and a potential function $\overline{h} \colon \overline{L} \to \R$ such that:
\begin{enumerate}
\item the Legendrian lift $\overline{L}^\dagger$ of $\overline{L}$ is embedded and
  Legendrian isotopic to the Legendrian lift $L^\dagger$ of $L$,
\item $\overline{L} \cap \partial W^{sc}$ is an embedded Legendrian link $\bs{\Lambda}$ which is Legendrian isotopic to a Legendrian link consisting of parallel copies of attaching spheres pushed off along the Reeb flow,
\item $\overline{L} \cap W^{crit}$ consists of several $C^2$-close Hamiltonian isotopic
  copies of  Lagrangian cores such that copies of different cores are disjoint and copies of the same disc pairwise intersect in a single point,
\item $\overline{L}^\dagger$ and $\bs{\Lambda}$ are chord generic, 
\item all Reeb chords   of $\overline{L}^\dagger$ corresponding to the double points of $\overline{L} \cap W^{crit}$ are longer than all Reeb chords  of $\overline{L}^\dagger$ corresponding to the double points of $\overline{L} \cap W^{sc}$,  and
\item the potential function $\overline{h}$ vanishes in a neighbourhood of $\overline{L} \cap \partial W^{sc}$.
\end{enumerate}
\end{thm}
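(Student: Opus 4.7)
The construction proceeds in three stages: first modify $L$ inside the critical handles to consist of parallel copies of cores, second install cylindrical collars across $\partial W^{sc}$, and third arrange chord genericity, the length ordering, and the boundary condition on the potential. Throughout, I will work at the level of the exact Lagrangian $L$ in $W$ and only lift to the contactisation at the end; since all deformations will be compactly supported regular exact Lagrangian homotopies, the resulting Legendrian lift will automatically be isotopic to $L^{\dagger}$.

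\textit{Stage 1: Deforming inside $W^{crit}$.} After a small Hamiltonian perturbation, which does not change the Legendrian isotopy class of the lift, one can assume $L \pitchfork D_\sigma$ for every critical handle $H_\sigma$. Each $H_\sigma$ admits a standard model in which the Liouville flow retracts the handle onto the core disc $C_\sigma$ with unstable manifold $D_\sigma$, and the zero section of a neighbourhood symplectomorphic to $T^*C_\sigma$ agrees with $C_\sigma$. Each local branch of $L \cap H_\sigma$ meets $D_\sigma$ in one transverse point and, inside such a Weinstein neighbourhood, is the graph of a closed one-form $df_j$. A compactly supported Hamiltonian isotopy straightens every branch to a small $C^2$-perturbation of $C_\sigma$. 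Arranging generically that distinct perturbed copies of the same core meet in a single transverse double point and that perturbed copies of different cores remain disjoint gives (2) and (3).

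\textit{Stage 2: Cylindrical collar and subcritical filling.} Choose the above Hamiltonians so that, at $\partial W^{sc}$, each perturbed core copy meets the hypersurface along a small Reeb push-off of $S_\sigma$, giving a chord-generic Legendrian link $\bs{\Lambda}$ of parallel copies. Glue in a cylindrical collar $(-\epsilon,\epsilon) \times \bs{\Lambda}$ across $\partial W^{sc}$ by a standard contact-collar construction, extending the isotopy of Stage 1 into $W^{sc}$ to obtain an immersed exact Lagrangian filling $\Sigma$ of $\bs{\Lambda}$ which together with the perturbed cores glues smoothly to the immersed exact Lagrangian $\overline{L}$. The entire modification is a regular exact Lagrangian homotopy of $L$ with only transverse double points; lifting to $W \times \R$ yields an embedded Legendrian $\overline{L}^{\dagger}$ isotopic to $L^{\dagger}$, establishing (1).

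\textit{Stage 3: Genericity, length ordering, and the potential.} A further generic $C^\infty$-small perturbation of the parallel push-offs and of the values of $h$ at double points achieves chord genericity of $\overline{L}^{\dagger}$ and $\bs{\Lambda}$, proving (4). For condition (6), note that on the collar $(-\epsilon,\epsilon) \times \bs{\Lambda}$ the Liouville form is $e^s \alpha$ with $\alpha|_{\bs{\Lambda}} = 0$, so $\theta|_{\overline{L}}$ vanishes identically on the collar and $d\overline{h}=0$ there; adjusting the additive constant on each connected component of $\overline{L}$ makes $\overline{h} \equiv 0$ in a neighbourhood of $\bs{\Lambda}$. For condition (5), the chord lengths are the absolute differences of the potential function at preimages of double points: those arising from parallel copies of a core inside $W^{crit}$ are controlled by the heights in the Weinstein handle (they can be amplified by rescaling the small Hamiltonian used in Stage 1 in a direction transverse to the core), while those arising from $\Sigma \subset W^{sc}$ remain of the order of the original potential of $L$. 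A suitable rescaling of the Stage 1 perturbation therefore arranges the required ordering.

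\textit{Main obstacle.} The delicate point is reconciling (5) and (6). The potential $\overline{h}$ is pinned to zero on the collar by (6), so the heights of the parallel cores in the contactisation are constrained by the geometry of the cores in $W^{crit}$, not freely choosable. Amplifying the core-side chord lengths without disturbing the collar, and without losing the control of the filling side, requires choosing the Hamiltonian in Stage 1 to have large variation in the fibre direction of the standard handle model while remaining compactly supported there. This is the step that demands the most care.
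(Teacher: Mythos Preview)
Your plan captures the overall architecture, and you have correctly isolated the tension between (5) and (6) as the crux. However, your proposed resolution of (5)---amplifying the Stage~1 Hamiltonian so as to lengthen the $W^{crit}$ chords---is in direct conflict with (3), which requires the parallel copies to remain $C^2$-close to the cores. A Hamiltonian with ``large variation in the fibre direction'' is precisely a large perturbation of the core, so you cannot have both. The paper resolves (5) in the opposite direction: it \emph{shrinks} the $W^{sc}$-side chord lengths by applying the negative Liouville flow $\psi_{-T}$ to $\overline{L}_0 \cap W^{sc}$ and filling in with a Liouville-tangent cylinder over $\bs{\Lambda}$. Since the Liouville flow rescales the primitive by $e^{-T}$, the potential on $\overline{L}_T \cap W^{sc}$ can be made arbitrarily small, while the cap $\overline{L}_T \cap W^{crit}$ is left untouched and stays $C^2$-close to the cores.

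There is also a gap in your argument for (6). You observe that $d\overline{h}=0$ on the collar and propose to adjust the additive constant. But the domain of the immersion is $L$ itself, so the number of constants available equals $|\pi_0(L)|$, whereas the collar has one component per branch crossing $\partial W^{sc}$, on each of which $\overline{h}$ may take a different value; in general these cannot all be set to zero simultaneously. The paper handles this by an explicit construction rather than an a posteriori adjustment: using the coordinates of Lemma~\ref{nice coordinates}, each copy of the core is taken to be the Lagrangian projection of the $1$-jet of a function of the form $f(s,\tilde{\mathbf q})=s\,\tilde{f}(\tilde{\mathbf q})$ near $\partial W^{sc}$ (cf.\ Lemma~\ref{cylindrical jets}), and by Remark~\ref{rem_potential} such a graph has $z$-coordinate identically zero there. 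This forces (6) on every collar component at once and is precisely what allows the $\overline{L}_T$ construction above to glue smoothly across $\partial W^{sc}$.
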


\begin{rem}
  The number of copies of the core disc $C_\sigma$ that we get in the construction is equal to the geometric intersection number of $L$ and a generic perturbation of $C_\sigma$. Unfortunately we know very little about  these numbers in general. They are bounded below by the rank of the Floer homology of $L$ with the corresponding cocore, and there must be at least one intersection point with one cocore disc, since otherwise  $L \subset W^{sc}$, but subcritical Weinstein manifolds cannot contain any closed exact Lagrangian submanifold. 
\end{rem}

Before proving Theorem \ref{thm:maingeometric}, we need to find convenient coordinates on which we will perform our construction. First we introduce an useful exact symplectomorphism between the symplectisation of a jet space and a cotangent bundle.
Let $N$ denote any closed manifold, and let $\theta_N=-\mathbf{p}d \mathbf{q}$ be the canonical Liouville form in $T^*N$. We consider the Liouville form $e^t(dz+\theta_N)$ on $\R \times T^*N \times \R$, where $t$ is the coordinate on the first $\R$-factor and $z$ is the coordinate on the second $\R$-factor.  We consider also the cotangent space $T^*(\R^+ \times N)$ with the canonical Liouville form $\theta_{\R^+ \times N}=-p_sds + \theta_N$, where $s$ is the coordinate of the $\R$-factor in $\R \times N$ and $p_s$ is its conjugate momentum. If we define the map
$$G \colon \R \times T^*N \times \R \to T^*\R^+ \times T^*N \cong T^*(\R^+ \times N)$$
by $G(t, \xi, z) = (e^t, z, e^t\xi)$, then 
  \begin{equation}\label{eqn: prurito}
    G^*(\theta_{\R^+ \times N} + d(s p_s))= G^*(s dp_s + \theta_N) = e^t(dz+\theta_N).
  \end{equation}

Since the modifications in Theorem \ref{thm:maingeometric} happen near the Lagrangian core discs independently of one another, we focus our attention to a single Lagrangian core disc $C \subset W^{crit}$. Let $S = \partial C \subset \partial_- W^{crit}$ be the corresponding attaching sphere. Since the Liouville flow $\psi_t$ preserves $C$, we can take also a larger disc $\overline{C} \supset C$ such that a collar $\overline{C}^\delta$ of $\partial \overline{C}$ is parametrised via
\begin{equation} \label{eqn: collar}
[-\delta,\delta] \times S \hookrightarrow C, \quad  (t,\widetilde{\mathbf{q}}) \mapsto \psi_t(\widetilde{\mathbf{q}}).
\end{equation}
We give also an alternative parametrisation of the same collar by
\begin{equation}\label{eqn: collar 2}
  [e^{-\delta}, e^\delta] \times S \ni (s, \widetilde{\mathbf{q}})= (e^t, \widetilde{\mathbf{q}}).
\end{equation} 

Let $D_\epsilon^*\overline{C}$ be the open disc cotangent bundle of $\overline{C}$ of radius $\epsilon$ with respect to some fixed Riemannian metric on $\overline{C}$, and let $\theta_{\overline{C}}= - \mathbf{p} d \mathbf{q}$ be the canonical Liouville form on $D_\epsilon^*\overline{C}$. The parametrisation \eqref{eqn: collar 2} induces an identification of  $D_\epsilon^*\overline{C}^\delta$ with $D_\epsilon^*([e^{- \delta},e^\delta] \times S)$ and of $\theta_{\overline{C}}|_{D_\epsilon^*\overline{C}^\delta}$ with $-p_s ds + \theta_S$, where $(s, p_s)$ are the canonical coordinates on $T^*[e^{- \delta}, e^\delta]$ and $\theta_{S}$ is the canonical Liouville form of $T^*S$.

\begin{lemma}\label{nice coordinates}
  There is a neighbourhood $V$ of $\overline{C}$ in $W$ such that
  \begin{enumerate}
  \item there is a diffeomorphism $F \colon D_\epsilon^*\overline{C} \to V$ which identifies the zero section of $D_\epsilon^*\overline{C}$ with $\overline{C}$, and 
  \item $F^*\theta = \theta_{\overline{C}}+dg$, where $g \colon D_\epsilon^* \overline{C} \to \R$ is a smooth function  such that $g = s p_s$ in $D^*_\epsilon \overline{C}^\delta$.
  \end{enumerate}
\end{lemma}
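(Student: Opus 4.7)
The plan is to combine a relative version of the Weinstein Lagrangian neighborhood theorem with the explicit symplectic identification provided by equation \eqref{eqn: prurito}, which already gives us the desired Liouville form structure in the collar of $\partial \overline{C}$.

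First I would build an explicit model near the collar region. Since $S$ is Legendrian in $\partial W^{sc}$, the Legendrian neighborhood theorem identifies a neighborhood of $S \subset \partial W^{sc}$ with a neighborhood of the zero section in $J^1(S) = T^*S \times \R_z$ equipped with the contact form $dz+\theta_S$. Passing to the symplectization of $\partial W^{sc}$, a neighborhood of $\R \times S$ in $\R \times \partial W^{sc}$ is symplectomorphic to a neighborhood of $\R \times S$ in $\R_t \times T^*S \times \R_z$ with Liouville form $e^t(dz+\theta_S)$. Using the parametrization \eqref{eqn: collar 2} of $\overline{C}^\delta$ and applying the map $G$ of equation \eqref{eqn: prurito}, we obtain a symplectomorphism from a neighborhood of $\overline{C}^\delta$ in $D_\epsilon^* \overline{C}$ to a neighborhood of $\overline{C}^\delta$ in $W$ which pulls $\theta$ back to $\theta_{\overline{C}} + d(sp_s)$. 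Denote this local symplectomorphism by $F_0$.

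Next I would invoke the relative Weinstein Lagrangian neighborhood theorem. Given the Lagrangian $\overline{C} \subset W$ and the zero section of $D_\epsilon^* \overline{C}$, together with the already-defined symplectic embedding $F_0$ on an open neighborhood of $\overline{C}^\delta$, there exists (possibly after shrinking $\epsilon$ and the collar width $\delta$) a diffeomorphism $F \colon D_\epsilon^*\overline{C} \to V$ which extends $F_0$ on a smaller collar and is a symplectomorphism. This is a standard relative Moser argument: first extend $F_0|_{\overline{C}}$ to any smooth diffeomorphism $\widetilde F$ of neighborhoods, noting that $\widetilde F^*(d\theta)$ and $d\theta_{\overline{C}}$ agree along the zero section (Lagrangian condition) and on a neighborhood of $\overline{C}^\delta$ (where $\widetilde F = F_0$); a primitive of the discrepancy can therefore be chosen to vanish on both the zero section and on a neighborhood of the collar, so the associated Moser vector field vanishes there, yielding $F$ which coincides with $F_0$ near $\overline{C}^\delta$.

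Finally I would read off the primitive $g$. Once $F$ is symplectic, $F^*\theta - \theta_{\overline{C}}$ is a closed $1$-form on $D_\epsilon^*\overline{C}$, which is contractible (as it deformation retracts onto the disc $\overline{C}$), so the form is exact, $F^*\theta - \theta_{\overline{C}} = dg$, for some $g$ unique up to an additive constant. On the collar, $F = F_0$ gives $dg = d(sp_s)$, and since $D_\epsilon^*\overline{C}^\delta$ is connected it follows that $g = sp_s + c$ there; replacing $g$ by $g-c$ yields the stated normalization. The main obstacle in this plan is the relative Moser step: one must verify that the primitive of the form discrepancy can indeed be chosen to vanish on the open set where $\widetilde F$ already equals $F_0$, which follows because the discrepancy itself vanishes on that open set and on the zero section, and a Poincar\'e lemma with parameters produces the desired primitive.
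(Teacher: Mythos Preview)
Your proposal is correct and follows essentially the same route as the paper: use the Legendrian neighbourhood theorem for $S$, propagate by the Liouville flow and apply equation \eqref{eqn: prurito} to get the model over the collar, then extend to a symplectomorphism of all of $D_\epsilon^*\overline{C}$ and use that $\overline{C}$ is a disc to write $F^*\theta-\theta_{\overline{C}}=dg$ with the collar normalisation. The paper simply asserts the extension step, whereas you spell out the relative Moser argument; this is the same proof with more detail.
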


\begin{proof}
  We use the standard Legendrian neighbourhood theorem to identify a neighbourhood of $S$ in $\partial W^{sc}$ with $D^*_\epsilon S \times [- \epsilon, \epsilon]$ and the contact form on $\partial W^{sc}$ with $dz+ \theta_S$. Flowing this neighbourhood using the Liouville flow, we  identify a neighbourhood of $S$ in $W$ with $[-\delta, \delta] \times D^*_\epsilon S \times [- \epsilon, \epsilon]$ so that the collar $\overline{C}^\delta$ corresponds to $[- \delta, \delta] \times \bs{0}_S \times \{ 0 \}$ (where $\bs{0}_S$ is the zero section of $T^*S$) and  the Liouville form $\theta$ is written as $d(e^t(dz+\theta_S))$.

  Using Equation \eqref{eqn: prurito} we can identify $[-\delta, \delta] \times D^*_\epsilon S \times [- \epsilon, \epsilon]$ with a neighbourhood of the zero section of $T^*([e^{- \delta}, e^\delta] \times S)$. The inverse of this map (restricted to its image) can be extended to a symplectic embedding
  $$F \colon D_\epsilon^*\overline{C} \to W.$$
  Since $\overline{C}$ is simply connected there is a function $g \colon \overline{C} \to \R$ such that 
$F^* \theta  = \theta_C + dg$, and by Equation \eqref{eqn: prurito} we can assume that 
$g=sp_s$ in $\overline{C}^\delta$.
  \end{proof}

  The symplectomorphism $F$ identifies exact Lagrangian immersions in $(V, \theta|_V)$ with exact Lagrangian immersions in $(D^*_\epsilon \overline{C}, \theta_{\overline{C}})$ because $F^* \theta - \theta_C$ is exact. Moreover $F$ can be lifted to a strict contactomorphism
 \begin{align}\label{eqn: contactomorphism}
   F^\dagger \colon (\R \times D^*_\epsilon \overline{C}, dz+\theta_{\overline{C}}) & \to (\R \times V, dz+\theta) \\ \nonumber (\xi, z) & \mapsto (F(\xi), z-g(\xi))
 \end{align}
 which identifies Legendrian submanifolds of $\R \times V$ with Legendrian submanifolds of $\R \times D^*_\epsilon \overline{C}$. The latter have the advantage that they can be described by their {\em front}, i.e. the image of the projection to $\overline{C} \times \R$.
  If $f \colon \overline{C} \to \R$ is a smooth function, its graph in $\overline{C} \times \R$ is the simplest example of a front. The corresponding Legendrian submanifold is the graph of  the $1$-jet
  $$\Gamma(j^1f )= \{(x, d_xf, f(x)) : x \in \overline{C} \}.$$
  The next lemma gives a condition for the Lagrangian projection of the $1$-jet of a function to be cylindrical with respect to the Liouville form $F^*\theta$ in $D_\epsilon^*\overline{C}^\delta$.
  \begin{lemma}\label{cylindrical jets}
    The Lagrangian projection of the $1$-jet of $f \colon \overline{C}^\delta \to \R$ to $D_\epsilon^*\overline{C}^\delta$, i.e. the graph $df$ seen as a map $df \colon \overline{C}^\delta \to T^*\overline{C}^\delta$,  is cylindrical for the Liouville form $F^*\theta$ if and only if
    \begin{equation}\label{tgv}
      f(s, \tilde{\mathbf{q}})= s \tilde{f}(\tilde{\mathbf{q}}) +c
    \end{equation}
    holds there for some function $\tilde{f} \colon S \to \R$ and some constant $c \in \R$.
  \end{lemma}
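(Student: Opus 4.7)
The plan is to translate the geometric condition ``cylindrical for $F^*\theta$'' into tangency of the Liouville vector field to the graph of $df$, and then solve the resulting elementary PDE for $f$.

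First I would make $F^*\theta$ explicit on the collar: from Lemma \ref{nice coordinates}, we have $F^*\theta = \theta_{\overline{C}} + dg$ with $g = sp_s$, and in the collar coordinates $\theta_{\overline{C}} = -p_s\,ds + \theta_S$, so
$$F^*\theta = -p_s\,ds + \theta_S + d(sp_s) = s\,dp_s + \theta_S.$$
Since the symplectic form decouples as $\omega = ds \wedge dp_s + d\theta_S$, solving $\iota_Z\omega = F^*\theta$ yields the Liouville vector field
$$Z = s\,\partial_s + Z_S,$$
where $Z_S = p_i\,\partial_{p_i}$ is the canonical Liouville vector field on $T^*S$. Observe in particular that $Z$ has no $\partial_{p_s}$-component; this is the key input produced by the correction term $dg$.

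The main (and essentially only) step is to express the cylindrical condition as the tangency of $Z$ to the graph $L_f = \{(s,\tilde{\mathbf{q}}, \partial_s f, \partial_{q^i} f)\}$, which is equivalent to invariance under the Liouville flow. The tangent space of $L_f$ at such a point is spanned by the vectors
$$\partial_s + (\partial_s^2 f)\,\partial_{p_s} + (\partial_s\partial_{q^i} f)\,\partial_{p_i} \quad \text{and} \quad \partial_{q^j} + (\partial_{q^j}\partial_s f)\,\partial_{p_s} + (\partial_{q^j}\partial_{q^i} f)\,\partial_{p_i},$$
while $Z$ restricted to $L_f$ equals $s\,\partial_s + (\partial_{q^i} f)\,\partial_{p_i}$. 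Writing $Z$ as a combination of the two tangent frames and matching components in $\partial_s$ and $\partial_{q^j}$ forces the coefficients to be $s$ and $0$ respectively; then matching the $\partial_{p_s}$ and $\partial_{p_i}$ components gives the two equations
$$\partial_s^2 f = 0, \qquad s\,\partial_s\partial_{q^i} f = \partial_{q^i} f \ \ (\text{for all } i).$$

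Finally I would integrate: the first equation forces $f(s,\tilde{\mathbf{q}}) = sA(\tilde{\mathbf{q}}) + B(\tilde{\mathbf{q}})$, and substituting into the second gives $\partial_{q^i} B = 0$, so $B$ is a constant $c$ and $f = s\,\tilde{f}(\tilde{\mathbf{q}}) + c$ with $\tilde{f} := A$. The converse is a direct verification that $Z$ is tangent to the graph of $d(s\tilde{f} + c)$. I do not anticipate a real obstacle: the content is a short calculation, and the only subtlety is making sure the Liouville vector field is computed for the modified form $F^*\theta$ (not for $\theta_{\overline{C}}$), since it is precisely the absence of a $\partial_{p_s}$-component in $Z$ that enforces the rigid linear-in-$s$ form of $f$.
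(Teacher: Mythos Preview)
Your argument is correct and arrives at exactly the same pair of equations as the paper, with the same integration. The only difference is the characterisation of ``cylindrical'' you start from: you check tangency of the Liouville vector field $Z=s\,\partial_s+p_i\,\partial_{p_i}$ to the graph of $df$, whereas the paper uses the dual condition $(df)^*(F^*\theta)=0$ and computes
\[
(df)^*\theta=-df+d\!\left(s\,\partial_s f\right)=\left(-\partial_{\tilde{\mathbf q}}f+s\,\partial_s\partial_{\tilde{\mathbf q}}f\right)d\tilde{\mathbf q}+s\,\partial_s^2 f\,ds.
\]
For a Lagrangian these two conditions are equivalent (since $\theta=\iota_Z\omega$ and $(TL)^\omega=TL$), so the two routes are genuinely interchangeable; the paper's is a line shorter because it avoids writing down tangent frames, while yours makes explicit the geometric reason---the missing $\partial_{p_s}$-component of $Z$---behind the constraint.
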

  \begin{proof}
 The graph of $df$ is cylindrical if and only if $(df)^*\theta=0$. From Lemma \ref{nice coordinates} we have
    $$(df)^*\theta=-df+d\left (s\frac{\partial f}{\partial s} \right )= \left(-\frac{\partial f}{\partial \tilde{\mathbf{q}}}+s\frac{\partial ^2 f}{\partial s\partial \tilde{\mathbf{q}}}\right)d \tilde{\mathbf{q}}+s\frac{\partial ^2 f}{\partial s^2}ds.$$

    Therefore the graph of $df$ is cylincrical if and only if 
    $$\begin{cases} \dfrac{\partial ^2 f}{\partial s^2} =0,  \vspace{2mm} \\
      \dfrac{\partial f}{\partial \tilde{\mathbf{q}}} = s\dfrac{\partial ^2 f}{\partial s\partial \tilde{\mathbf{q}}}.
    \end{cases}$$
    The first equation implies that $f(s, \tilde{\mathbf{q}})= s \tilde{f}(\tilde{\mathbf{q}}) + \tilde{g}(\tilde{\mathbf{q}})$ and the second one implies that $\dfrac{\partial \tilde{g}}{\partial \tilde{\mathbf{q}}}=0$.
\end{proof}
We can identify $\{s=1 \} = D_\epsilon^*\overline{C}^\delta|_{\mathbf{S}}$ with a neighbourhood of the zero section of the one-jet space $T^* \mathbf{S} \times \R$ with local coordinates $\tilde{\mathbf{q}}, \tilde{\mathbf{p}}$ on $\mathbf{S}$ and global coordinate $p_s$ on $\R$ and the restriction of $F^*\theta$ to $D_\epsilon^*\overline{C}^\delta|_{\mathbf{S}}$ with the form $dp_s - \theta_{\mathbf{s}}$, i.e.\ the canonical form on $J^1 \mathbf{S}$. Therefore, if $f \colon \overline{C}^\delta \to \R$ is of the form $f(s, \tilde{\mathbf{q}})= s \tilde{f}(\mathbf{q}) +c$ and $\Gamma(j^1f)$ is contained in $D^*_\epsilon\overline{C}^\delta$, then the intersection of the Lagrangian projection of $\Gamma(j^1f)$ with $D_\epsilon^*\overline{C}^\delta|_{\mathbf{S}} \cong J^1 \mathbf{S}$ is the Legendrian submanifold $\Gamma(j^1 \tilde{f})$. 
  \begin{rem}\label{rem_potential}
 Define $V^\delta=F(D^*_\epsilon\overline{C}^\delta)$.
From Equation \eqref{eqn: contactomorphism} it follows that the Legendrian submanifold of  $(V \times \R, dz+\theta)$ that corresponds to $j^1f$ for $f \colon \overline{C} \to \R$ as in Equation \eqref{tgv} under the contactomorphism $F^\dagger$ has $z$--coordinate constantly equal to $c$ in $V^\delta \times \R$.
  \end{rem}
  \begin{proof}[Proof of Theorem \ref{thm:maingeometric}]
    For each core disc $C_i$ we fix enlargements $\overline{C}_i$, neighbourhoods $V_i \subset W$ of $\overline{C}_i$ and symplectomorphisms $F_i \colon D^*_\epsilon \overline{C}_i \to V_i$ as in Lemma \ref{nice coordinates}. After applying the negative Liouville flow to $L$ we can assume that $L \cap W^{crit}$ is contained in the union of the neighbourhoods $V_i$. Since this is an isotopy of exact Lagrangian submanifolds, it lifts to a Legendrian isotopy of their Legendrian lifts.

    Using the identifications $F_i \colon D_\epsilon^*\overline{C}_i \to V_i$ from Lemma \ref{nice coordinates}, we represent the Legendrian lift of $L \cap V_i$ by a front in $\overline{C}_i \times \R$. By a genericity argument we can assume that this front has no singularities over the centre of $\overline{C}_i$, and therefore, over a nieghbourhood of the centre of $\overline{C}_i$, it is the union of graphs of smooth functions. We homotope those functions in a smaller neighbourhood of the centre of $\overline{C}_i$ so that they become constant with pairwise distinct values inside an even smaller neighbourhood of the centre. In case there are two lagrangian submanifolds $L$ and $K$, we perform the construction independently for each, but at this point we homotope the functions  further so that the values of the functions corresponding to $L$ are larger than the values of the functions corresponding to $K$. Moreover we peform this homotopy so that no intersection between the graphs corresponding to the same Lagrangian  is created. After this step, we keep working on each Lagrangian separately taking care not to cross the graphs corresponding to different submanifolds. This modification extends to a homotopy of fronts which lifts to a Legendrian isotopy between $L^\dagger$ and a Legendrian submanifold $\widetilde{L}^\dagger$. The projection of this Legendrian isotopy to $W$ produces a reguar exact Lagrangian homotopy from $L$ to an immersed Lagrangian $\widetilde{L}$ such that, in a neighbourhood of the cocore $D_i$ coincides with $k_i$ copies of the core $C_i$. After flowing $\widetilde{L}$ backward with the Liouville flow, we can assume that $\widetilde{L} \cap V_i$ coincides with $k_i$ copies of $\overline{C}_i$ and its Legendrian lift $\widetilde{L}^\dagger \cap V_i \times \R$, identified to a Legendrian submanifold of  $D_\epsilon^*\overline{C}_i \times \R$ via $F_i^\dagger$, coincides with the union of the $1$-jets of $k_i$ constant functions with pairwise distinct values  $c_1^i<\ldots<c_{k_i}^i$. Since the Liouville flow rescales the primitive of the symplectic form, we may further assume that $|c^i_j|<\eta$ for some arbitrarily small $\eta>0$.

    We fix radially symmetric functions $f_i \colon \overline{C}_i \to \R$ such that
    \begin{enumerate}
      \item $f_i=1$ near  $\partial \overline{C}_i$,
      \item $f_i=s$ in a neighbourhood of $\partial C_i$, and 
      \item $f_i$ has a unique critical point in $C_i$, which is a nondegenerate maximum.
      \end{enumerate}
      We define $f^i_j=c_j^if_i$; see Figure \ref{fig:profile}.
      
       \begin{figure}[htp]
        \vspace{3mm}
         \labellist
         \pinlabel $f^i_j(s)$ at 165 53
       	\pinlabel $c^i_j$ at 160 41
       	 \pinlabel $z=c^i_js$ at 170 88
         \pinlabel $1$ at 64 1
       	 \pinlabel $e$ at 154 1
       	 \pinlabel $e^{-\delta}$ at 45 1
         \pinlabel $z$ at 4 96
         \pinlabel $s$ at 167 11
         \endlabellist
        \includegraphics[scale=1.4]{profile}
        \caption{Here $s$ is an inward radial coordinate on the disc such that $s=e$ corresponds to the centre. The graph depicts the profile for a function on the disc that (1) only depends on the radial coordinate, (2) is a perturbation of the constant function $c^i_j$, (3) coincides with the linear function $c^i_j s$ near $s=1$ (i.e.~the boundary of the core), and (4) which has a unique non-degenerate critical point of maximum type inside $s \ge 1$ situated at the centre of the disc.}
        \label{fig:profile}
      \end{figure}

If $\eta$ is small enough, the $1$-jets $j^1f_j^i$ are Legendrian submanifolds of $D^*_\epsilon \overline{C}_i \times \R$ and the linear interpolation between $c_j^i$ and $f_j^i$ is a Legendrian isotopy (relative to the boundary) in  $D^*_\epsilon \overline{C}_i \times \R$. We define a Legendrian submanifold $\overline{L}_0^\dagger$  by replacing $\widetilde{L}^\dagger \cap (V_i \times \R)$ in $\widetilde{L}^\dagger$ with the image of the Legendrian submanifolds $j^1f^i_j$ under the contactomorphism $F_i^\dagger \colon D^*_\epsilon \overline{C}_i \times \R \to V_i \times \R$.

We have thus obtained a Legendrian submanifold $\overline{L}^\dagger_0$ in $W \times \R$ and, by projection, a Lagrangian immersion $\overline{L}_0$ in $W$ which satisfy conditions (1), (2) and (3), but neither $\overline{L}^\dagger_0$ nor $\bs{\Lambda}= \overline{L}_0 \cap \partial W^{sc}$ are chord generic. This however can be achieved by a small generic perturbation of $\overline{L}^\dagger_0$; near $\partial W^{sc} \times \R$ we make the perturbation by replacing $f_j^i=c_i^j s$ with $f_j^i=s \widetilde{f}^i_j$, where $\widetilde{f}^i_j \colon S_i \to \R$ is a generic function close to $c_j^i$. We still call $\overline{L}_0^\dagger$ and $\overline{L}_0$ the resulting manifolds. This perturbation ensures that the $z$--coordinate of $\overline{L}^\dagger_0$ is still $0$ near $\overline{L}^\dagger_0 \cap (\partial W^{sc} \times \R)$ (see Remark \ref{rem_potential}).

The condition (5) can finally be achieved in the following manner.
Given $T\ge0$ we define
$$\overline{L}_T= \psi_{-T}(\overline{L}_0 \cap W^{sc}) \cup \bigcup_{\tau \in [-T, 0]} 
\psi_{-\tau}(\bs{\Lambda}) \cup (\overline{L}_0 \cap W^{crit}).$$
In words, $\overline{L}_T$ is obtained by connecting the boundary of $\psi_{-T}(\overline{L}_0 \cap W^{sc})$ to the boundary of $\overline{L}_0 \cap W^{crit}$ by a cylinder which is tangent to the Liouville vector field. $\overline{L}_T$ is an immersed Lagrangian for every $T \ge 0$ because $\overline{L}_0$ is cylindrical near $\overline{L}_0 \cap \partial W^{sc}$. Moreover, the potential of $\overline{L}_0$ vanishes near $\overline{L}_0 \cap \partial W^{sc}$, and therefore the potential of $\psi_{-T}(\overline{L}_0 \cap W^{sc})$ also vanishes near the boundary. Therefore it can be extended to zero on the cylinder between $\psi_{-T}(\overline{L}_0 \cap W^{sc})$ and 
$\overline{L}_0 \cap W^{crit}$. This gives a well defined potential on $\overline{L}_T$ which becomes arbitrarily small on $\overline{L}_T \cap W^{sc}$ as $T$ becomes large. Hence for every $T \ge 0$, the Lagrangian immersions $\overline{L}_T$ satisfy the conditions (1)--(4) and for $T$  sufficiently large satisfy also the condition (5). Moreover the regular exact lagrangian homotopy $T \mapsto \overline{L}_N$ induces a Legendrian isotopy of the Legendrian lifts $\overline{L}_T^\dagger$. To finish the proof, we take $\overline{L} = \overline{L}_T$ for $T$ sufficiently large and $\overline{h}$ equal to the negative of the $z$ coordinate of the Legendrian lift $\overline{L}_T^\dagger$. 
  \end{proof}

\section{The cobordism algebra of multiple copies of the cores}\label{sec: cap algebra}

The construction of Section \ref{sec:geom-constr} motivate the study of immersed exact Lagrangian cobnordisms consisting of parallel copies of cocore discs. In this section we compute their cobordism algebras. We keep the notation we introduced in the first paragraph of that section.
  We denote by $\widehat{W}^{sc}$ the Liouville completion of $W^{sc}$ and by $\widehat{W}^{crit}$ the Liouville completion of $W^{crit}$. Thus any core disc $C_\sigma$ of the critical part is completed to a Lagrangian plane $\widehat{C}_\sigma$ with a negative end which is asymptotic to  the attaching sphere $S_\sigma$.

  To any string of integers $k_\sigma \ge 0$ for $\sigma \in \pi_0(\mathbf{S})$ we associate an immersed exact Lagrangian cobordism
  $$\widehat{\mathbf{C}} = \bigcup_{\sigma \in \pi_0(\mathbf{S})} ( \widehat{C}_\sigma^{(1)} \cup \ldots \cup C_\sigma^{(k_\sigma)})$$
  by taking $k_\sigma$ parallel copies of $\widehat{C}_\sigma$ as follows:
\begin{enumerate}
     \item Each $\widehat{C}_\sigma^{(i)}$ is a $C^2$-close Hamiltonian isotopic copy of $\widehat{C}_\sigma$,
     \item $\widehat{C}_\sigma^{(i)}$ and $\widehat{C}_{\sigma'}^{(j)}$ intersect transveresely at a single point for every $i \ne j$ if $\sigma = \sigma'$ and are disjoint if $\sigma \ne \sigma'$, 
     \item each $\widehat{C}_\sigma^{(i)}$ is cylindrical over a Legendrian submanifold $\Lambda_\sigma^{(i)}$ of $\partial W^{sc}$ which is Legendrian isotopic  to $S_\sigma$, 
 \item each $\Lambda_\sigma^{(i)}$ is a small perturbation of a push-off of $\Lambda_\sigma^{(i-1)}$ by the positive Reeb flow, and 
 \item the  Legendrian link $\bs{\Lambda} = \bigcup_{\sigma \in \pi_0(\mathbf{S})} (\Lambda_\sigma^{(1)} \cup \ldots \cup \Lambda_\sigma^{k_\sigma})$ is chord generic.
     \end{enumerate}
     Moreover, we will assume that these conditions are achieved by describing each $C_\sigma^{(i)}$ as the Lagrangian projection of the one-jet of a function  $f_{\sigma,i} \colon \widehat{C}_\sigma \to \R$ satisfying the conditions of Lemma \ref{cylindrical jets}  (i.e. $f_{\sigma,i}(s, \tilde{\mathbf{q}})= s \tilde{f}_{\sigma,i}(\widetilde{q})$ in the negative end of $\widehat{C}_\sigma$) such that,  for all $\sigma \in \pi_0(\mathbf{S})$,
     \begin{itemize}
     \item $f_{\sigma, 1} < \ldots < f_{\sigma, k}$,
     \item for all $i<j$ the difference $f_{\sigma, j}-f_{\sigma, i}$ has a unique critical point which is a nondegenerate  maximum and 
     \item the difference $\tilde{f}_{\sigma, j}- \tilde{f}_{\sigma, i}$ has two critical points, a maximum and a minimum.
     \end{itemize}
     \begin{dfn}
       If all conditions above are satisfied, we say that $\widehat{\mathbf{C}}$ is a {\em standard cap}.
     \end{dfn}
     Let $h \colon \widehat{\mathbf{C}} \to \R$ denote the $z$--coordinate of the above graphical Legendrian $\widehat{\mathbf{C}} \subset J^1\widehat{C}_\sigma,dz-\theta)$, i.e.~$h=f_{\sigma, i}$ on the $i$:th connected component $\widehat{C}_\sigma^{(i)}$. In particular, this means that $h$ is choice of potential for the Lagrangian projection of $\widehat{\mathbf{C}}$, i.e.~$dh=\theta|_{T\widehat{C}}$. Let ${\mathcal D}_{\mathbf{C}}$ be the cobordism algebra of $\widehat{\mathbf{C}}$ defined using the potential function $h$, also called the {\em cap algebra}. Since a limit argument will be necessary  to compute ${\mathcal D}_{\mathbf{C}}$ when $\mathbf{S}$ has infinitely many Reeb chords, we fix a regular homotopy $\mathbf{C}_\epsilon$ for  $\epsilon \in (0, 1]$ such that $C^{(i)}_{\sigma,\epsilon}$ is generated by the function $\epsilon f_{\sigma,i}$.
Thus $\mathbf{C}= \mathbf{C}_1$ and each branch $C^{(i)}_{\sigma,\epsilon}$ of $\mathbf{C}_\epsilon$ converges to the associated core $C_\sigma$ as $\epsilon$ goes to zero.  

The generators of ${\mathcal D}_{\mathbf{C}_\epsilon}$ are the intersection points $c_{i,j}^\sigma$ between $\widetilde{C}_{\sigma}^{(i)} \cap \widehat{C}_\sigma^{(j)}$ corresponding to the maxima of $\epsilon(f_{\sigma, j} - f_{\sigma, i})$ for all $\sigma \in \pi_0(\mathbf{S})$ and  $1 \le i < j \le k_\sigma$, and Reeb chords of
$\bs{\Lambda}_\epsilon = \partial \mathbf{C}_\epsilon$. The following lemma is proved by a standard implicit function theorem argument.
\begin{lemma}\label{short, long, and very long}
For every $Q>0$ there exists $\epsilon(Q) \in (0, 1]$ such that, for all $\epsilon \in (0, \epsilon(Q)]$  the Reeb chords of $\bs{\Lambda}_\epsilon$ are of  three types:
 \begin{itemize}
 \item ``short'' chords $e^{\sigma}_{i,j}$ ($e$-chords) and $m^{\sigma}_{i,j}$ ($m$-chords), for $\sigma\in\pi_0(\mathbf{S})$ and $1 \le i < j \le k_\sigma$, which starts  on $\Lambda_{\sigma,\epsilon}^{(i)}$ and end at $\Lambda^{(j)}_{\sigma,\epsilon}$; they correspond to the minimum and the maximum, respectively, of the function $\epsilon (f_{\sigma,j} - f_{\sigma,i})$, 
   
\item ``long'' Reeb chords $a_{i,j}$, for every Reeb chord $a$ of $\Lambda$  of action less than $Q$ and indices $1 \le i \le k_{\sigma_s(a)}$ and $1 \le j \le k_{\sigma_e(a)}$, which are close to $a$, start on $\Lambda^{(i)}_{\sigma_s(a),\epsilon}$ and end at $\Lambda^{(j)}_{\sigma_e(a),\epsilon}$; $\sigma_{s(a)}$ and $\sigma_{e(a)} \in \pi_0(\mathbf{S})$ thus denote the components of the start and endpoint of $a$, respectively; and

 \item ``very long'' chords of action larger than $Q$, over which we have no control.
\end{itemize}
\end{lemma}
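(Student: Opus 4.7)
The plan is to combine an implicit function theorem argument, using the chord genericity of $\mathbf{S}$, with a standard SFT compactness argument to rule out additional chords of bounded action. The proof breaks naturally into three steps matching the three families of chords in the statement.

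First, I would identify the \emph{short} chords. Near each attaching sphere $S_\sigma$, I would use a standard Weinstein / jet-space neighborhood $J^1 S_\sigma$ in which each parallel copy $\Lambda_{\sigma,\epsilon}^{(i)}$ coincides with $\Gamma(j^1(\epsilon \tilde f_{\sigma,i}))$ by construction. In this model, Reeb chords between $\Lambda_{\sigma,\epsilon}^{(i)}$ and $\Lambda_{\sigma,\epsilon}^{(j)}$ that are contained in the neighborhood are in bijection with critical points of the difference $\epsilon(\tilde f_{\sigma,j}-\tilde f_{\sigma,i})$ on $S_\sigma$. By hypothesis this difference has exactly two nondegenerate critical points on $S_\sigma$, a maximum and a minimum, producing precisely the chords $m^\sigma_{i,j}$ and $e^\sigma_{i,j}$. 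The actions of these chords tend to $0$ as $\epsilon\to 0$.

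Second, I would produce the \emph{long} chords. By chord genericity, there are only finitely many Reeb chords $a_1,\dots,a_N$ of $\mathbf{S}$ of action less than $Q+1$, and each such $a$ is transverse in the sense of the lemma (the linearized Reeb return map meets the contact hyperplane transversely to the tangent space of $\mathbf{S}$). Since each $\Lambda_{\sigma,\epsilon}^{(i)}$ is a $C^2$-small perturbation of $S_\sigma$ (converging to $S_\sigma$ as $\epsilon\to 0$), the implicit function theorem applied to the Reeb chord equation produces a unique Reeb chord $a_{i,j}$ from $\Lambda_{\sigma_s(a),\epsilon}^{(i)}$ to $\Lambda_{\sigma_e(a),\epsilon}^{(j)}$ close to $a$, for every ordered pair $(i,j)$, provided $\epsilon$ is sufficiently small (depending on $Q$).

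Third, and this is the main obstacle, I would show \emph{completeness}: for $\epsilon$ small, the short and long chords are the only Reeb chords of action less than $Q$. I argue by contradiction. If not, there is a sequence $\epsilon_n\to 0$ and Reeb chords $\gamma_n$ of $\bs{\Lambda}_{\epsilon_n}$ of action less than $Q$ not of the listed type. By a Reeb-flow compactness argument (pre-compactness of bounded-action Reeb trajectories with endpoints on a compact family of Legendrians) some subsequence converges either to (i) a Reeb chord of $\mathbf{S}$ of action at most $Q$, or (ii) a \emph{constant} chord at a point $p\in\mathbf{S}$. Case (i) contradicts the uniqueness in Step 2. Case (ii) forces $\gamma_n$, for $n$ large, to lie inside the jet-space neighborhood of Step 1, where its endpoints correspond to a critical point of the difference $\epsilon_n(\tilde f_{\sigma,j}-\tilde f_{\sigma,i})$; by the uniqueness of maximum and minimum, $\gamma_n$ must coincide with $m^\sigma_{i,j}$ or $e^\sigma_{i,j}$, again a contradiction. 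Putting these three steps together yields the lemma, with $\epsilon(Q)$ the minimum of the thresholds coming from Steps 2 and 3.
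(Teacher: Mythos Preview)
Your proposal is correct and is essentially what the paper has in mind: the paper's own proof is the single sentence ``proved by a standard implicit function theorem argument,'' and your three-step outline (jet-space model for the short chords, IFT near each nondegenerate chord of $\mathbf{S}$ for the long chords, and a compactness/limiting argument to rule out any further chords of action $\le Q$) is precisely the standard unpacking of that sentence. One cosmetic remark: what you call ``SFT compactness'' in the plan is really just Arzel\`a--Ascoli for Reeb trajectories with endpoints on a precompact family of Legendrians, not the pseudoholomorphic-curve compactness of BEHWZ; the argument you actually give in Step~3 uses the former, which is the right tool.
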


A grading of $L$ induces a Maslov potential on $\mathbf{C}_\epsilon$. We fix a point of choice on each component of $\bs{\Lambda}$  and then naturally get $k_\sigma$ induced points on the $k_\sigma$ parallel copies $\bs{\Lambda}_{\sigma,\epsilon}$ for every $\epsilon$ and $\sigma\in\pi_0(\mathbf{S})$. We define $\mathfrak{p}^\sigma=(\mathfrak{p}^\sigma_1, \ldots, \mathfrak{p}^\sigma_{k_\sigma}) \in \Z^{k_\sigma}$ to be the value of the Maslov potential at these $k_\sigma$ points. Note that $\mathfrak{p}^\sigma_i$ depends on the initial choice of point, while the difference $\mathfrak{p}^\sigma_i-\mathfrak{p}^\sigma_j$ does not.
\begin{lemma}\label{degrees of generators}
For $\epsilon \in (0, \epsilon(Q)]$ the degree of the generators of ${\mathcal D}_{{\mathbf C}_\epsilon}$ of action less than $Q$ are:
\begin{equation}\label{gradi}
  \begin{aligned}
  |c^\sigma_{i,j}| & =\mathfrak{p}^\sigma_i - \mathfrak{p}^\sigma_j+n-1, \\
  |m^\sigma_{i,j}|  & =\mathfrak{p}^\sigma_i - \mathfrak{p}^\sigma_j+n-2, \\
  |e^\sigma_{i,j}|  & = \mathfrak{p}^\sigma_i - \mathfrak{p}^\sigma_j -1, \\
  |a_{i,j}|  & =|a|+\mathfrak{p}^{\sigma_s(a)}_i-\mathfrak{p}^{\sigma_e(a)}_j.
  \end{aligned}
\end{equation}
The first three equations hold for $1 \le i<j \le k_\sigma$ and the fourth for $1 \le i \le k_{\sigma_s(a)}$ and $1 \le j \le k_{\sigma_e(a)}$.
\end{lemma}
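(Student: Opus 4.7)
The plan is to reduce each of the four degree formulas to the standard grading rule for Reeb chords or transverse intersections produced by a non-degenerate critical point of a small generating function difference in a jet-space neighbourhood, namely
$$\mathrm{ind}(p) - 1 + \mathfrak{p}_i - \mathfrak{p}_j,$$
where $\mathrm{ind}(p)$ is the Morse index of $p$ and $\mathfrak{p}_i$ (resp.\ $\mathfrak{p}_j$) is the Maslov potential of the branch containing the start (resp.\ end) of the generator. This is the classical Ekholm-Etnyre-Sullivan formula adapted to the parallel-copy setting. Once this master rule is in hand, everything reduces to bookkeeping.

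For the short chords $e^\sigma_{i,j}$ and $m^\sigma_{i,j}$ I would first apply a standard contact neighbourhood theorem to identify a neighbourhood of $S_\sigma$ in $\partial W^{sc}$ with an open subset of $J^1 S_\sigma$; inside this neighbourhood the Legendrians $\Lambda^{(i)}_{\sigma,\epsilon}$ and $\Lambda^{(j)}_{\sigma,\epsilon}$ appear as the one-jets of $\epsilon\tilde f_{\sigma,i}$ and $\epsilon\tilde f_{\sigma,j}$. By our standing assumptions the difference $\epsilon(\tilde f_{\sigma,j}-\tilde f_{\sigma,i})$ has precisely one minimum (giving $e^\sigma_{i,j}$, of Morse index $0$) and one maximum (giving $m^\sigma_{i,j}$, of Morse index $n-1$, since $\dim S_\sigma = n-1$); the master formula then immediately produces the first two identities.

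The self-intersections $c^\sigma_{i,j}$ are handled analogously, working in the coordinates of Lemma \ref{nice coordinates}: the two branches of $\widehat{\mathbf{C}}_\epsilon$ near the double point are the Lagrangian projections of the one-jets of $\epsilon f_{\sigma,i}$ and $\epsilon f_{\sigma,j}$ on the $n$-dimensional disc $\widehat{C}_\sigma$, and the unique critical point of the difference is a nondegenerate maximum of index $n$. Plugging this into the master formula produces the third identity. Here I should justify that the grading of a transverse Lagrangian self-intersection coming from a generating function difference obeys the same rule as a Reeb chord; this is classical and follows from the observation that the paths of Lagrangian subspaces used to compute the two relevant Maslov indices differ only by a trivial $\R$-direction corresponding to the Reeb flow, which does not contribute to the index.

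For the long chords $a_{i,j}$ the argument is different: a deformation/continuity argument. I would interpolate between $\bs{\Lambda}_\epsilon$ and a Reeb push-off of $\mathbf{S}$ along a one-parameter family of Legendrian submanifolds during which every chord of action $<Q$ varies smoothly and does not collide with another chord; this is possible by Lemma \ref{short, long, and very long}. The Conley-Zehnder index is locally constant along such a non-bifurcating family, while the Maslov potential transported from the chosen base points to the endpoints of $a_{i,j}$ picks up the advertised shift $\mathfrak{p}^{\sigma_s(a)}_i - \mathfrak{p}^{\sigma_e(a)}_j$ relative to the data of $a$ in the original link $\mathbf{S}$, producing the fourth identity. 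The main obstacle I expect to encounter is precisely this last piece of bookkeeping: verifying that parallel-transporting the Maslov potential from the authors' arbitrarily chosen base point on each copy $\bs{\Lambda}_{\sigma,\epsilon}^{(i)}$ to the feet and heads of every generator produces exactly the shifts appearing in the four identities, with no stray constants coming from the orientation of the transport paths or from the particular model used for the jet-space neighbourhood.
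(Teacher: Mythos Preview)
Your proposal is correct and follows essentially the same approach as the paper: both reduce to the jet-space grading formula $|c| = \mathfrak{p}_{\mathrm{start}} - \mathfrak{p}_{\mathrm{end}} + \operatorname{index}_p(f_{\mathrm{end}}-f_{\mathrm{start}}) - 1$ (the paper cites \cite[Lemma~3.4]{NonIsoLeg}) and then read off the Morse indices $n$, $n-1$, $0$ for $c^\sigma_{i,j}$, $m^\sigma_{i,j}$, $e^\sigma_{i,j}$ respectively. Two minor remarks: your labelling of ``first two'' and ``third'' identities is permuted relative to the lemma's ordering, and for $a_{i,j}$ the paper simply records that the Conley--Zehnder index is unchanged while the Maslov-potential shift contributes $\mathfrak{p}^{\sigma_s(a)}_i-\mathfrak{p}^{\sigma_e(a)}_j$, without spelling out the deformation argument you sketch---your version is a perfectly valid way to justify the same fact.
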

\begin{proof}
The calculation of the degrees is a straightforward application of \cite[Lemma 3.4]{NonIsoLeg} translated to the language of Maslov potentials; also see e.g.~\cite[Section 3.1]{Duality_EkholmetAl} for similar calculations.

We proceed by giving some more details. Recall that the degree of a Reeb chord $c$ of a Legendrian submanifold in a jet space is given by the formula 
$$ |c|=\mathfrak{p}_{start}-\mathfrak{p}_{end}+\operatorname{index}_p(f_{end}-f_{start})-1 $$
where $\mathfrak{p}_{start},\mathfrak{p}_{end} \in \Z$ are the values of the Maslov potentials at the start and endpoint of $c$, respectively, and $\operatorname{index}_p(f_{end}-f_{start})$ is the Morse index of the difference of the functions that define the sheets of the front projection above a neighborhood of $p$, which is the critical point that corresponds to $c$; see \cite[Lemma 3.4]{NonIsoLeg}. Recall also that the difference between the Maslov potentials is the same for all Reeb chords that start and end on some fixed components $i$ and $j$, respectively. The claimed degree computations then follow from the following facts:
\begin{itemize}
\item $c_{i,j}^\sigma$ corresponds to a local maximum for a function difference on the $n$-dimensional manifold $C_\sigma$ and its degree is equal to the degree of the corresponding chord of the Legendrian lift of $\mathbf{C}_\epsilon$;
\item $m_{i,j}^\sigma$ corresponds to a local maximum for a function difference on the $n-1$-dimensional manifold $\Lambda_\sigma$;
\item $e_{i,j}^\sigma$ corresponds to a local minimum for a function difference on the $n-1$-dimensional manifold $\Lambda_\sigma$; and
\item the degrees of $a_{i,j}$ and $a$ differ precisely by the difference of Maslov potentials.
\end{itemize}
\end{proof}

We will make extensive use of Ekholm's theory of gradient flow trees \cite{MorseFlow, Ekhoka} to compute the differential of ${\mathcal D}_{\mathbf{C}_\epsilon}$; see Appendix \ref{appendix:morse} for more details. Since gradient flow trees describe only pseudoholomorphic curves which are localised in a suitable sense, it is important to localise small energy curves. This is the goal of the next lemma.

\begin{lemma}\label{confinement}
Let $(Y, \alpha)$ be a contact manifold and $\Lambda \subset Y$ a Legendrian submanifold. We fix a compatible cylindrical almost complex structure $J$ on $\R \times Y$ and a Weinstein neighbourhood $U$ of $\Lambda$. Then for every $\eta >0$ there is a Weinstein neighbourhood $V_\eta \subset U$ of $\Lambda$ such that every $J$-holomorphic curve in $\R \times Y$ with boundary on a Legendrian submanifold of $U_\eta$ and Hofer energy less than $\eta$ is contained in $U$.     
\end{lemma}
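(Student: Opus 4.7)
The plan is to apply the monotonicity lemma for $J$-holomorphic curves with Lagrangian boundary condition, exploiting the $\R$-translation invariance of the cylindrical $J$ and of the cylindrical boundary condition $\R \times \Lambda'$ to obtain constants that are uniform along the symplectisation direction. The key observations are: (i) if a Legendrian $\Lambda'$ is contained in a sufficiently small Weinstein neighbourhood of $\Lambda$, then $\R \times \Lambda'$ stays at a definite $Y$-distance from $\R \times (Y \setminus U)$; (ii) monotonicity then forces any $J$-holomorphic curve with boundary on $\R \times \Lambda'$ that escapes $\R \times U$ to carry a definite amount of $d(e^r\alpha)$-area; (iii) Hofer energy controls the symplectic area on bounded slabs $[-R,R] \times Y$.

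First I would fix a Riemannian metric on $Y$, set $2\rho := \operatorname{dist}(\Lambda, Y\setminus U) > 0$, and extend it to a cylindrical metric on $\R \times Y$. After shrinking $U$ if necessary so that $\overline{U}$ is compact (possible since $\Lambda$ is compact), the standard monotonicity lemma (Sikorav) furnishes constants $c, r_0 > 0$ such that for every Legendrian $\Lambda'$ sufficiently $C^1$-close to $\Lambda$, every $J$-holomorphic curve $u$ with boundary on $\R \times \Lambda'$, and every $p \in u$ with $0 < r \leq \min(r_0, \operatorname{dist}(p, \R \times \Lambda'))$, one has $\operatorname{area}(u \cap B(p, r)) \geq c r^2$; cylindricity of $J$ allows $c$ and $r_0$ to be taken uniform in the $\R$-direction. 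Separately, on any bounded slab $[-R, R] \times Y$, the symplectic area $\int u^* d(e^r \alpha)$ is bounded by $M$ times the Hofer energy of $u$, for a constant $M$ depending only on $R$.

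Given $\eta > 0$, I choose $r < \min(\rho, r_0)$ so that $c r^2 > M\eta$, and pick $V_\eta \subset U$ to be a Weinstein neighbourhood of $\Lambda$ small enough that every Legendrian $\Lambda' \subset V_\eta$ is $C^1$-close to $\Lambda$ and has $Y$-distance at least $\rho$ from $Y \setminus U$. Suppose, for contradiction, that some $u$ with boundary on $\R \times \Lambda'$ and Hofer energy less than $\eta$ is not contained in $\R \times U$. Pick $q \in u$ on $\R \times \partial U$ where the image first exits $\R \times U$, and translate $u$ in the $\R$-direction (which preserves both $J$ and the cylindrical boundary condition) so that $q$ lies in a fixed bounded slab $[-R, R] \times Y$. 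Then $\operatorname{dist}(q, \R \times \Lambda') \geq \rho > r$, so monotonicity gives $\operatorname{area}(u \cap B(q, r)) \geq c r^2 > M \eta$, contradicting the bound $\operatorname{area}(u \cap ([-R,R] \times Y)) \leq M \cdot (\text{Hofer energy of } u) < M \eta$.

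The main obstacle is to arrange uniformity of the monotonicity constant $c$ and of the area-to-energy constant $M$: uniformity in the symplectisation direction follows from translation invariance of the cylindrical $J$, while uniformity across the family of Legendrians $\Lambda' \subset V_\eta$ follows from continuous dependence of the monotonicity constants on a $C^1$-neighbourhood of $\Lambda$ and can be arranged by shrinking $V_\eta$ if necessary.
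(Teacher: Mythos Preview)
Your proof is correct and follows essentially the same strategy as the paper: both invoke Sikorav's monotonicity lemma for the symplectic area and then convert the area lower bound into a Hofer-energy lower bound (the paper outsources this conversion to \cite[Lemma 5.1]{LegendrianAmbient}, while you spell it out via the slab estimate and $\R$-translation). One small simplification: since you apply monotonicity at a point $q \in \R \times \partial U$ with $B(q,r)$ disjoint from $\R \times \Lambda'$, the curve has no boundary inside that ball, so the interior monotonicity constant suffices and no $C^1$-closeness of $\Lambda'$ to $\Lambda$ is needed---the constants are automatically uniform over all Legendrians $\Lambda' \subset V_\eta$.
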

\begin{proof}
We use the monotonicity property of the symplectic area of pseudoholomorphic curves \cite[Proposition 4.3.1]{Sikorav:SomeProperties} to show that the Hofer energy of a curve that passes through $\R \times \partial U$ must satisfy an a priori bound from below for any fixed cylindrical almost complex structure. To that end we use the argument from \cite[Lemma 5.1]{LegendrianAmbient} which, in a similar setting, derives a monotonicity property for the Hofer energy from the monotonicity of the symplectic area.
\end{proof}

Since the action of short chords and intersection points goes to zero as $\epsilon$ goes to zero, from Lemma \ref{confinement} it follows that, at least for $\epsilon$ small enough, for any $\sigma\in\pi_0(\mathbf{S})$ the differential of the short chords $e_{i,j}^\sigma, m_{i,j}^\sigma$ and the intersection points $c^{\sigma}_{i,j}$ involves only short chords and intersection points labelled by the same $\sigma$. In the next two lemmas we investigate the differential of short chords and self-intersection points. 

\begin{lemma}\label{boundary of c_ij}
  If $\epsilon$ is small enough and $\dim L \ge 2$, for $\sigma \in \pi_0(\mathbf{S})$ and $1 \le i<j \le k_\sigma$  we have
  \begin{gather*}
    \partial_{\mathbf{C}_\epsilon} (c_{i,j}^\sigma)= m_{i,j}^\sigma +\sum \limits_{i<h<j}
    (c_{i, h}^\sigma e_{h,j}^\sigma + e_{i,h}^\sigma c_{h,j}^\sigma),
  \end{gather*}
\end{lemma}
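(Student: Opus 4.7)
The plan is to compute $\partial_{\mathbf{C}_\epsilon}(c^\sigma_{i,j})$ by reducing the count of rigid $J$-holomorphic discs to a count of rigid Morse flow trees, via the technique reviewed in Appendix \ref{appendix:morse}. First I would invoke Lemma \ref{confinement}, combined with the observation that every short chord $m^\sigma_{\bullet,\bullet}, e^\sigma_{\bullet,\bullet}$ and every intersection point $c^\sigma_{\bullet,\bullet}$ has action that tends to zero with $\epsilon$: this forces any rigid $J$-holomorphic disc contributing to $\partial c^\sigma_{i,j}$ to be confined to a Weinstein neighbourhood of $\widehat{C}_\sigma$, and hence to have all of its negative asymptotics among the short generators with the \emph{same} label $\sigma$. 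The flow-tree correspondence then translates the problem, for $\epsilon$ sufficiently small, into the enumeration of rigid flow trees on $\widehat{C}_\sigma$ whose edges are negative gradient trajectories of differences $f_{\sigma,\beta}-f_{\sigma,\alpha}$ and whose ends are punctures at critical points of such differences: interior critical points recover the $c^\sigma_{\bullet,\bullet}$, while boundary critical points of $\tilde f_{\sigma,\beta}-\tilde f_{\sigma,\alpha}$ recover $m^\sigma_{\bullet,\bullet}$ or $e^\sigma_{\bullet,\bullet}$ according to whether they are maxima or minima.

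Next I would classify the rigid trees by cases using the degree formulas of Lemma \ref{degrees of generators} together with the dimension formula \eqref{dimension formula for cobordism algebra}. A short arithmetic check shows that, up to permutation of the ends, there are only two possibilities: either the tree has no interior Y-vertex, in which case it is a single flow line from the unique interior maximum of $f_{\sigma,j}-f_{\sigma,i}$ to a boundary critical point of $\tilde f_{\sigma,j}-\tilde f_{\sigma,i}$ of Morse index $n-1$, forcing the output to be $m^\sigma_{i,j}$ (the single-output tree ending at $e^\sigma_{i,j}$ has positive expected dimension $n-1$ under the hypothesis $\dim L\ge 2$, hence is never rigid); or it has exactly one interior Y-vertex splitting the sheet-pair $(i,j)$ as $(i,h)(h,j)$ for some $h$ with $i<h<j$, in which case the dimension count forces precisely one branch to terminate at an interior maximum (a $c$-chord of top Morse index $n$) and the other at a boundary minimum (an $e$-chord of Morse index $0$). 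Matching the counter-clockwise ordering of the negative punctures around $c^\sigma_{i,j}$ against the two possible placements of the $Y$-vertex then accounts for the two monomials $c^\sigma_{i,h}e^\sigma_{h,j}$ and $e^\sigma_{i,h}c^\sigma_{h,j}$.

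The main obstacle is to verify that each combinatorial type identified above is realised by exactly one rigid flow tree mod $2$. For the single-edge term this is easy: along the collar the function $f_{\sigma,j}-f_{\sigma,i}$ is of the form $s(\tilde f_{\sigma,j}-\tilde f_{\sigma,i})$, so the radial direction is gradient-like and produces a unique trajectory from the interior max of $f_{\sigma,j}-f_{\sigma,i}$ to the boundary max, giving $m^\sigma_{i,j}$. For the $Y$-vertex contributions the argument is more delicate: one must locate the vertex as the point where the incoming $\nabla(f_{\sigma,j}-f_{\sigma,i})$-trajectory from $c^\sigma_{i,j}$ meets two outgoing trajectories of $\nabla(f_{\sigma,h}-f_{\sigma,i})$ and $\nabla(f_{\sigma,j}-f_{\sigma,h})$ starting at the appropriate $c$- and $e$-generators, with the three gradients balanced. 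I would exploit the very specific choice of the functions $f_{\sigma,i}$ --- totally ordered, with a single interior maximum for each difference, and with linear collar behaviour --- to reduce the existence and uniqueness of such a configuration to a transverse intersection count of explicitly described stable and unstable manifolds on $\widehat{C}_\sigma$, giving $1$ mod $2$ for each $h$ and each cyclic order. Genericity of the $f_{\sigma,i}$ within this family ensures transversality, and the confinement/action arguments of the first step rule out any additional rigid configurations (e.g.\ trees with several interior vertices or trees using the ``very long'' chords of Lemma \ref{short, long, and very long}), which completes the enumeration.
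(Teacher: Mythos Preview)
Your overall strategy matches the paper's: confine the discs to a Weinstein neighbourhood of $\widehat{C}_\sigma$ by an action argument and Lemma \ref{confinement}, invoke the flow-tree correspondence (Theorem \ref{thm:morse} and Lemma \ref{lemma:trees}), and classify the rigid trees. Your dimension count correctly isolates the possible outputs $m^\sigma_{i,j}$ and the pairs $c^\sigma_{\cdot,\cdot}e^\sigma_{\cdot,\cdot}$.

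However, your description of the trees producing the $c\,e$ terms is wrong, and this is not merely cosmetic. You describe a tree with a trivalent $Y_0$-vertex and the $c$-puncture sitting at a $1$-valent leaf. But $\widetilde{c}_{i,h}$ is the \emph{maximum} of $f_h-f_i$; the negative gradient flow of $f_h-f_i$ emanates from it, so no flow line of that difference can terminate there, and a $1$-valent negative puncture at $\widetilde{c}_{i,h}$ is impossible. If you actually carried out your ``transverse intersection count'' for such $Y$-trees you would obtain zero.

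The correct trees, as in the paper, are \emph{linear} with a $2$-valent puncture: a finite-length negative gradient flow line of $f_j-f_i$ starting at $\widetilde{c}_{i,j}$ and hitting the point $\widetilde{c}_{i,h}$ (which is a \emph{generic} point for $f_j-f_i$), where the sheet pair switches from $\{i,j\}$ to $\{h,j\}$ and a negative puncture at $c_{i,h}$ is recorded; then an infinite flow line of $f_j-f_h$ continues to $\widetilde{e}_{h,j}$. The uniqueness of this tree for each $h$ is exactly what the paper's genericity hypotheses are designed to guarantee: the flow lines of $f_j-f_i$ through the points $\widetilde{c}_{h,k}$ are distinct, the one through $\widetilde{c}_{i,h}$ avoids the single $m$-asymptotic, and it therefore flows to $\widetilde{e}_{h,j}$. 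The second monomial $e^\sigma_{i,h}c^\sigma_{h,j}$ arises symmetrically, with the $2$-valent puncture at $\widetilde{c}_{h,j}$ and the sheet pair switching to $\{i,h\}$.
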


\begin{proof}

  We start with the argument that confines the rigid pseudoholomorphic discs that contribute to $\partial_{\mathbf{C}_\epsilon} c_{i,j}^\sigma$ to a Weinstein neighbourhhood of $\widehat{C}_\sigma$. Since the intersection points $c_{i,j}^\sigma$ can be made to have arbitrarily small action compared to the length of the long chords on $\mathbf{\Lambda}_\epsilon$ by taking $\epsilon$ sufficiently small, an energy argument as in \cite[Equation (11)]{Floer_cob} implies that $\partial_{\mathbf{C}\epsilon} c_{i,j}^\sigma$ cannot contain any long chord. Lemma \ref{confinement} then confines the pseudoholomorphic discs to a Weinstein neighborhood of $\widehat{C}_\sigma$.  We can therefore drop the letter $\sigma$ temporarily in order to simplify the notation. 

  The Legendrian lift of $\widehat{\mathbf{C}}_\epsilon$ can be described as a union of $1$-jets of functions $\epsilon \widehat{f}_i$ where $  \widehat{f}_i \colon \widehat{C} \to \R$ in $J^1\widehat{C}$. See Section \ref{sec:geom-constr}. Moreover, the negative end of $\widehat{C}$ is identified with $(- \infty, 0) \times S$ by the Liouville flow, and $\widehat{f}_i(t, \widetilde{\mathbf{q}})=e^t \widetilde{f}_i(\widetilde{\mathbf{q}})$; see the proof of Theorem \ref{thm:maingeometric}. Note that we have made the change of coordinates $s=e^t$ with respect to the notation of Lemma \ref{cylindrical jets}.  For $i<j$, the double point $c_{i,j}$ corresponds to the unique non-degenerate local maximum $\widetilde{c}_{i,j}$ of the difference $\widehat{f}_j-\widehat{f}_i$. We also denote by $\widetilde{e}_{i,j}$ and $\widetilde{m}_{i,j}$ the critical point of the difference functions $\widetilde{f}_j- \widetilde{f}_j$ on $S$ corresponding to the short chords $e_{i,j}$ and $m_{i,j}$ respectively.

The pseudoholomorphic discs in $\widehat{W}^{crit}$ 
 contributing to $\partial_{\mathbf{C}\epsilon} c_{i,j}$, for a suitable almost complex structure, can be described using the theory of gradient flow trees by Ekholm \cite{MorseFlow}.
 
Theorem \ref{thm:morse} and Lemma \ref{lemma:trees} can be applied to show that 
the holomorphic discs contributing to $\partial_{\mathbf{C}_\epsilon}(c_{i,j})$ are in bijection with the so-called Long Conical (LC for short) rigid gradient flow trees for $\widehat{\mathbf{C}}$ that are contained on $\widehat{C}=\R^n$, are positively
asymptotic to $\tilde{c}_{i,j}$ and negatively asymptotic to points of type $c_{h, k}$, $m_{h, k}$ or $e_{h, k}$ and for $i \le h < k \le j$. 

  The following is satisfied for a generic choice of functions $\widehat{f}_i$ and metrics on $\widehat{C}$ when $n =\dim L \ge 2$:
  \begin{itemize}
  \item The critical points $\widetilde{c}_{j,i}$ of the differences $\widehat{f}_j-\widehat{f}_i \colon \widehat{C} \to \R$ are all pairwise different;

  \item exactly one gradient flow trajectory of $\widehat{f}_j-\widehat{f}_i$ is asymptotic to $\widetilde{m}_{i,j}$ and all other ones are asymptotic to $\widetilde{e}_{i,j}$,
  \item The unique gradient flow trajectory  of $\widehat{f}_j-\widehat{f}_i$ which is  asymptotic to $\widetilde{m}_{i,j}$ does not pass through $\widetilde{c}_{h,k}$ for $\{h,k \} \ne \{i,j \}$, and
  \item every gradient flow trajectory  of $\widehat{f}_j-\widehat{f}_i$ passes through at most one of the points $\widetilde{c}_{h,k}$ with $\{ h,k \} \ne \{ i,j \}$.
  \end{itemize}

  The rigid LC flow trees that are positively asymptotic to $\widetilde{c}_{i,j}$ can now be seen to be of two types.
  \begin{itemize}
  \item \emph{Type 1:} the unique negative gradient flow line of $f_j-f_i$ of infinite length that is asymptotic to $\widetilde{m}_{i,j}$
  \item \emph{Type 2:} a finite-length flow line of $\widehat{f}_j - \widehat{f}_i$ that flows to $\widetilde{c}_{i,h}$ (resp. $\widetilde{c}_{h,j}$) for $i<h<j$, has a negative puncture there, and then is followed by a infinite length flow line of  $\widehat{f}_j-\widehat{f}_h$ (resp. $\widehat{f}_h-\widehat{f}_i) $ that is asymptotic to $\widetilde{e}_{h,j}$ (resp. $\widetilde{e}_{i,h}$).
  \end{itemize}

  See Figure \ref{fig:flowtree} for the two LC flow-lines of Type 2 that has a positive puncture at $\widetilde{c}_{1,3}$. 
\end{proof}

\begin{figure}[htp]
        \vspace{3mm}
         \labellist
         \pinlabel $\widetilde{c}_{1,3}$ at 160 108
         \pinlabel $\widetilde{c}_{2,3}$ at 156 99
        \pinlabel $\widetilde{e}_{1,2}$ at 227 105
        \pinlabel $\widetilde{e}_{2,3}$ at 103 106
        \pinlabel $\widetilde{c}_{1,3}$ at 20 109
        \pinlabel $\widetilde{c}_{1,2}$ at 33 95
         \pinlabel $1$ at -5 13
         \pinlabel $2$ at -5 23
         \pinlabel $3$ at -5 33
         \pinlabel $1$ at -5 45
         \pinlabel $2$ at -5 62
         \pinlabel $3$ at -5 75

         \endlabellist
        \includegraphics[scale=1.4]{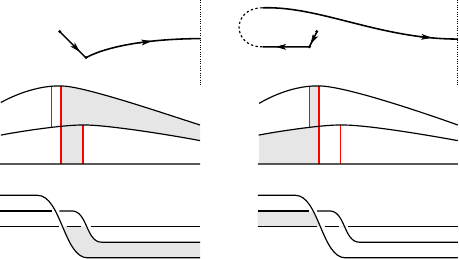}
        \caption{Two rigid gradient flow trees of ``Type 2'' that correspond to pseudoholomorphic discs in the handle with boundary on $\widehat{C}_\epsilon^{(1)} \cup \widehat{C}_\epsilon^{(2)} \cup \widehat{C}_\epsilon^{(3)}$ with a unique positive puncture at the double point $c_{1,3}$. Top: the gradient flow trees. Middle: the front projections. Bottom: the Lagrangian projections. There exists one additional pseudoholomorphic strip that is not shown in the figure; it connects $c_{1,3}$ directly to $m_{1,3}$. (Note that the Lagrangian projection depicts the case $\dim L=1$, which can be misleading.)}
        \label{fig:flowtree}
      \end{figure}

\begin{lemma} \label{boundary of m_ij}
  If $\epsilon$ is small enough, for every $\sigma \in \pi_0(\mathbf{S})$ and $1 \le i<j \le  k_\sigma$ every term appearing in $\partial_{\mathbf{C}_\epsilon}m^\sigma_{i,j}$ is a word of short chords which contains at least one $m$-chord.   
\end{lemma}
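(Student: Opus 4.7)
The plan is to adapt the Morse flow tree argument used for Lemma~\ref{boundary of c_ij} to the chord $m_{i,j}^\sigma$, and then combine it with a grading-theoretic computation to exclude pure $e$-word outputs. First, for $\epsilon$ sufficiently small, the action of $m_{i,j}^\sigma$ is arbitrarily small compared to that of any long chord of $\bs{\Lambda}_\epsilon$. Because $m_{i,j}^\sigma$ lies in the negative end of $\mathbf{C}_\epsilon$, its cobordism differential coincides with the Chekanov-Eliashberg differential in ${\mathcal A}_{\slashed{\bs{\Lambda}}_\epsilon}$; by Lemma~\ref{d_L decreases action} every output involves only short chords. By Lemma~\ref{confinement} the corresponding rigid pseudoholomorphic discs are confined to a Weinstein neighbourhood of a single attaching sphere $S_\sigma$, in which $\bs{\Lambda}_\epsilon$ is described as the union of the $1$-jets of the functions $\epsilon\tilde{f}_{\sigma,i}$. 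Theorem~\ref{thm:morse} together with Lemma~\ref{lemma:trees} then identifies these rigid discs with rigid gradient flow trees on $S_\sigma$ positively asymptotic to $\widetilde{m}_{i,j}$, with negative punctures at critical points of difference functions $\tilde{f}_{\sigma,l'} - \tilde{f}_{\sigma,l}$. By genericity each such function has only two critical points, a maximum (contributing an $m$-chord) and a minimum (contributing an $e$-chord).

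Next, a grading computation rules out pure $e$-word outputs in almost every dimension. If the output word contains $|A|$ letters of $m$-type and $|E|$ letters of $e$-type, composability forces $j_l = i_{l+1}$ for consecutive letters, with $i_1 = i$ and $j_k = j$, which gives the telescoping identity $\sum_l (\mathfrak{p}^\sigma_{i_l} - \mathfrak{p}^\sigma_{j_l}) = \mathfrak{p}^\sigma_i - \mathfrak{p}^\sigma_j$. Combined with the degrees from Lemma~\ref{degrees of generators} and the balance $|m_{i,j}^\sigma| = \sum_l |w_l| + 1$, this yields
\[
|E| = (|A| - 1)(n - 2) + 1.
\]
Setting $|A| = 0$ gives $|E| = 3 - n$: this is negative (and therefore impossible) for $n \geq 4$; it vanishes for $n = 3$, forcing an empty output word, which contradicts the fact that every rigid flow tree has at least one negative puncture (otherwise the single half-infinite edge attached to the positive puncture would not terminate); and it equals $1$ for $n = 2$.

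The case $n = 2$ is the main obstacle: here the grading permits a single output letter, and composability forces it to be $e_{i,j}^\sigma$. The plan is to enumerate the relevant trees directly. They are gradient flow lines of $\tilde{f}_{\sigma,j} - \tilde{f}_{\sigma,i}$ on $S_\sigma \cong S^1$ from the maximum $\widetilde{m}_{i,j}$ to the minimum $\widetilde{e}_{i,j}$. The complement of these two critical points in $S^1$ has exactly two connected components, each an arc and each a single flow line, so the total count is $2 \equiv 0$ and the corresponding contribution to $\partial_{\mathbf{C}_\epsilon} m_{i,j}^\sigma$ vanishes over $\F$. Combining all cases, every surviving term of $\partial_{\mathbf{C}_\epsilon} m_{i,j}^\sigma$ satisfies $|A| \geq 1$ and hence contains at least one $m$-chord.
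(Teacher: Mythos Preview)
Your argument is correct and matches the paper's strategy: action and confinement force the output to consist of short chords, a degree count rules out pure $e$-words except in low dimension, and the residual $n=2$ case is disposed of by observing that the two Morse trajectories on $S_\sigma\cong S^1$ cancel. One technical quibble: Theorem~\ref{thm:morse} as stated concerns discs in $\widehat{W}^{crit}$ with positive puncture at a \emph{double point} of $\widehat{\mathbf{C}}$, not discs in the symplectisation of $\partial W^{sc}$ with positive puncture at a Reeb chord; for the latter you want Ekholm's original flow-tree correspondence \cite{MorseFlow} applied directly to $\bs{\Lambda}_\epsilon\subset J^1S_\sigma$, which is what the paper invokes (tacitly). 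The paper also shortcuts your telescoping by computing in the intrinsic grading of $J^1S_\sigma$, where all sheets carry the same Maslov potential, so the index formula reads $\dim\mathcal{M}=n+\ell-1$ without any cancellation.
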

\begin{proof}
First, we recall that the boundary of $m^\sigma_{ij}$ in ${\mathcal D}_{\mathbf{C}_\epsilon}$ is by definition equal to the boundary of $m^\sigma_{i,j}$ in the Chekanov-Eliashberg algebra of $\bs{\Lambda}_\epsilon$.
  By energy considerations, no holomorphic curve in the symplectisation of $\partial W^{sc}$ with a positive end at $m_{i,j}^\sigma$ can have a negative end at a long or very long chord because they are longer than $m_{i,j}^\sigma$. To prove the lemma, it remains to show that there is no (or an algebraically zero number of) rigid holomorphic curve in the symplectisation of $\partial W_{sc}$ with a positive end at $m_{i,j}^\sigma$ and  negative ends only at $e$-chords.
  
Consider a fixed standard neighborhood $U$ of $\mathbf{S}$ inside $\partial W^{sc}$ which can be identified with a neighbourhood of the zero section of $J^1\mathbf{S}$. When 
$\epsilon$ goes to zero, the action of all $m$-chords also goes to zero and moreover $\bs{\Lambda}_\epsilon$ approaches $\mathbf{S}$. Then by Lemma \ref{confinement}  all pseudoholomorphic curves contributing to $\partial_{\mathbf{C}_\epsilon}m_{i,j}^\sigma$ are contained in $\R \times U$. Since every connected component of $U$ has is a standard neighbourhood of a connected component of $\mathbf{S}$, by connectedness  all pseudoholomorphic curves contributing to $\partial_{\mathbf{C}_\epsilon}m_{i,j}^\sigma$ are contained in $U_\sigma$, the connected component of $U$ containing $S_\sigma$.    
The Legendrian submanifold $\bs{\Lambda}_\epsilon$ is graded in $J^1\mathbf{S}$, and therefore  by Equation \eqref{gradi} the dimension of the moduli space ${\mathcal M}_{\bs{\Lambda}_\epsilon}(m_{i,j}^\sigma; e_{i, k_1}^\sigma, e_{k_1, k_2}^\sigma, \ldots , e^\sigma_{k_\ell, j})$ is
  $$\dim {\mathcal M}_{\bs{\Lambda}_\epsilon}(m_{i,j}^\sigma; e^\sigma_{i, k_1}, e^\sigma_{k_1, k_2}, \ldots , e^\sigma_{k_\ell, j})= |m^\sigma_{i,j}|- |e^\sigma_{i, k_1}| - \ldots - |e^\sigma_{k_\ell, j}|=n+\ell-1.$$
Then $\dim {\mathcal M}_{\bs{\Lambda}_\epsilon}(m^\sigma_{i,j}; e^\sigma_{i, k_1}, e^\sigma_{k_1, k_2}, \ldots , e^\sigma_{k_\ell, j})>1$ (and therefore the moduli space does not contribute to the boundary) unless $n=2$ and $\ell=0$. In this case, however, a simple gradient flow trees computation gives $$\# \widetilde{\mathcal M}^1_{\bs{\Lambda}_\epsilon}(m^\sigma_{i,j}, e^\sigma_{i,j})=0.$$
\end{proof}
\begin{lemma}\label{boundary of e_ij}
  If $\epsilon$ is small enough, for every $\sigma \in \pi_0(\mathbf{S})$ and $1 \le i<j \le  k_\sigma$ we have
  $$\partial_{\mathbf{C}_\epsilon}(e^\sigma_{i,j})= \sum_{i <h<j} e_{i,h}^\sigma e_{h,j}^\sigma + \mathfrak{m}$$
  where every term of $\mathfrak{m}$ is a word in short chords containing at least one $m$-chord.
\end{lemma}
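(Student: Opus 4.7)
My plan is to run essentially the same argument as in the proofs of Lemmas \ref{boundary of c_ij} and \ref{boundary of m_ij}. First, since the action of $e_{i,j}^\sigma$ tends to zero as $\epsilon \to 0$ while the long and very long chords of $\bs{\Lambda}_\epsilon$ have action bounded away from zero by Lemma \ref{short, long, and very long}, Lemma \ref{d_L decreases action} forbids any long or very long chord from appearing in $\partial_{\mathbf{C}_\epsilon}(e_{i,j}^\sigma)$ when $\epsilon$ is sufficiently small. Combined with Lemma \ref{confinement}, this means that every pseudoholomorphic curve contributing to $\partial_{\mathbf{C}_\epsilon}(e_{i,j}^\sigma)$ is contained in a standard Weinstein neighbourhood $U_\sigma$ of $S_\sigma$, identified with a neighbourhood of the zero section of $J^1 S_\sigma$, and that all its negative ends are short chords between the parallel copies $\Lambda_\sigma^{(1)}, \ldots, \Lambda_\sigma^{(k_\sigma)}$.

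Next, I isolate into $\mathfrak{m}$ all terms containing at least one $m$-chord and focus on the terms whose negative ends are $e$-chords only. Using the degrees from Lemma \ref{degrees of generators}, a rigid pseudoholomorphic curve with positive puncture at $e_{i,j}^\sigma$ and negative punctures at $e$-chords $e_{i, k_1}^\sigma, e_{k_1, k_2}^\sigma, \ldots, e_{k_\ell, j}^\sigma$ (the intermediate indices being forced to be composable by the bimodule structure) has reduced moduli space of dimension $\ell - 1$, so rigidity forces $\ell = 1$. Therefore the only possible all-$e$ terms are of the form $e_{i, h}^\sigma e_{h, j}^\sigma$ for some $i < h < j$.

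To conclude I would apply Ekholm's flow-tree correspondence (Theorem \ref{thm:morse} and Lemma \ref{lemma:trees}) to identify the remaining rigid discs in $\R \times J^1 S_\sigma$ with rigid LC gradient flow trees on $S_\sigma$ that are positively asymptotic to the minimum of $\tilde{f}_{\sigma, j}-\tilde{f}_{\sigma, i}$ corresponding to $e_{i,j}^\sigma$ and negatively asymptotic to the minima of $\tilde{f}_{\sigma, h}-\tilde{f}_{\sigma, i}$ and $\tilde{f}_{\sigma, j}-\tilde{f}_{\sigma, h}$ corresponding to $e_{i, h}^\sigma$ and $e_{h, j}^\sigma$. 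For a generic choice of the functions $\tilde{f}_{\sigma, k}$, each such tree consists of a single Y-vertex where the three negative gradient half-lines meet, and a standard transversality argument produces exactly one such tree mod $2$ for every $h$ with $i<h<j$, giving the desired coefficient $1$ in front of each $e_{i, h}^\sigma e_{h, j}^\sigma$.

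The main obstacle will be the Y-vertex count in the last step: verifying that precisely one rigid flow tree contributes to each term $e_{i,h}^\sigma e_{h,j}^\sigma$ and that no rigid flow tree with more branches or without any $m$-chord negative end slips through. This is the Morse-theoretic input behind the bar-type structure for the Chekanov--Eliashberg algebra of parallel copies of a Legendrian, and it requires a careful generic perturbation of the $\tilde{f}_{\sigma, k}$ to rule out non-transverse coincidences of gradient trajectories with the intermediate critical points.
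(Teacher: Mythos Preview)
Your action/confinement argument and your degree computation are both correct and coincide with what the paper does: only words in short chords can appear, and among words consisting purely of $e$-chords only the two-letter words $e_{i,h}^\sigma e_{h,j}^\sigma$ survive the index constraint.

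Where you diverge from the paper is in the actual count $\#\widetilde{\mathcal M}^1_{\bs{\Lambda}_\epsilon}(e_{i,j}^\sigma;e_{i,h}^\sigma,e_{h,j}^\sigma)=1$. The paper does \emph{not} appeal to gradient flow trees here; it invokes Proposition~\ref{louie2}, whose proof (via Proposition~\ref{louie}) chooses a special perturbation in which $e_{i,h}^\sigma$ and $e_{h,j}^\sigma$ sit inside the chord $e_{i,j}^\sigma$, exhibits the unique thin holomorphic triangle inside the trivial strip, checks its regularity by hand, and then uses a continuation argument along a generic path of perturbations to carry the count over to $\bs{\Lambda}_\epsilon$.

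Your proposed route has two issues. First, Theorem~\ref{thm:morse} and Lemma~\ref{lemma:trees} are stated for discs in $\widehat{W}^{crit}$ with boundary on the cobordism $\widehat{\mathbf{C}}$ and a positive puncture at a \emph{double point}; they do not apply to discs in the symplectisation $\R\times\partial W^{sc}$ with a positive Reeb-chord asymptotic, which is what contributes to $\partial_{\mathbf{C}_\epsilon}(e_{i,j}^\sigma)$. You would instead need Ekholm's original flow-tree correspondence \cite{MorseFlow} for the closed Legendrian $\bs{\Lambda}_\epsilon\subset J^1S_\sigma$, and the trees live on $S_\sigma$ (not ``LC flow trees'', which are the non-compact variant for cobordisms). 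Second, your picture of the tree---``a single $Y$-vertex where the three negative gradient half-lines meet''---is not quite right: the positive puncture sits at a \emph{minimum} of $\tilde f_{\sigma,j}-\tilde f_{\sigma,i}$, which is a sink for the negative gradient flow, so the edge incident to it and the balancing at the $Y_0$-vertex need a more careful description than a generic transversality claim. A flow-tree argument along these lines can presumably be made to work, but the paper's direct construction in Proposition~\ref{louie2} sidesteps exactly these subtleties.
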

\begin{proof}
  By an action argument we see that the differential only consists of words of short chords. In the case when all short chords are minimum-type chords $e_{kl}^\sigma$, a local degree computations shows that there must be precisely two such chords. That the number of these terms are as sought finally follows from Proposition \ref{louie2}. \end{proof}

From now on we assume that  Lemmas \ref{boundary of c_ij}, \ref{boundary of m_ij} and \ref{boundary of e_ij} hold for all $\epsilon \in (0,1]$. This can be obtained by rescaling the functions $\widehat{f}_i^\sigma$. Finally we describe the differential of long chords, but first we need to introduce some notation. Given a word $\mathbf{a}=a^1 \ldots a^n$ of composable chords of $\mathbf{S}$ and indices $i,j$ such that $1 \le i \le k_{\sigma_s(a^1)}$ and $1 \le j \le k_{\sigma_e(a^n)}$, we define
$$\mathbf{a}_{i,j} = \sum_{h_1, \ldots, h_{n-1}} a^1_{i,h_1} a^2_{h_1, h_2} \ldots a^{n-1}_{h_{n-2}, h_{n-1}} a^n_{h_{n-1}, j}$$
where the sum is over all $h_\ell$ such that $1 \le h_\ell \le k_{s_e(a^\ell)} = k_{s_s(a^{\ell+1})}$. This notation is extended by linearity in the obvious way to sums of words of chords. Also denote by $({\mathcal A}_{\slashed{\mathbf{S}}}, \partial_{\slashed{\mathbf{S}}})$ the dg algebra obtained from $({\mathcal A}_{\mathbf{S}}, \partial_{\mathbf{S}})$ by omitting the idempotent, and observe that, for any chord $a$, $\partial_{\mathbf{S}}(a)$ and $\partial_{\slashed{\mathbf{S}}}(a)$ differ only for the treatment of the constant term.

\begin{lemma}\label{boundary of a_ij}
  For every $Q>0$ there is $\epsilon'(Q) \in (0, \epsilon(Q)]$ such that, for all $\epsilon \in (0, \epsilon'(Q)]$, all chords $a$ of $\mathbf{S}$ with action less than $Q$ and all indices $i,j$ satisfying $1 \le i \le k_{\sigma_s(a)}$ and $1 \le j \le k_{\sigma_e(a)}$ we have
  $$\partial_{\mathbf{C}_\epsilon}(a_{i,j}) = (\partial_{\slashed{\mathbf{S}}})_{i,j} + \sum_{i <h < k_{\sigma_s(a)}} e_{i,h}^{\sigma_s(a)} a_{h,j} + \sum_{1<h<j} a_{i,h}e^{\sigma_e(a)}_{h,j} + \mathfrak{m}$$
  where $\mathfrak{m}$ is a sum of words containing an $m$-chord.
\end{lemma}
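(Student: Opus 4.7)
The plan is to compute $\partial_{\mathbf{C}_\epsilon}(a_{i,j})$ by a limiting argument as $\epsilon \to 0$, relying on the moduli-space bijection between discs on the parallel copies $\bs{\Lambda}_\epsilon$ and Morse-Bott configurations on $\mathbf{S}$ that is the subject of Appendix \ref{appendix: uffa}. First, I would restrict the possible negative ends: by Lemma \ref{d_L decreases action}, a rigid disc contributing to $\partial_{\mathbf{C}_\epsilon}(a_{i,j})$ has negative ends of total action less than $\mathfrak{a}(a_{i,j}) \to \mathfrak{a}(a) < Q$, and by Lemma \ref{short, long, and very long} this rules out very long chords for $\epsilon$ small enough. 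The remaining possibilities are short chords together with long chords $b_{h,k}$ arising from chords $b$ of $\mathbf{S}$ of action less than $Q$, and Lemma \ref{if nothing happens nothing happens} guarantees that the collection of such chords below action $Q$ is in canonical bijection with the generators of $\partial_{\mathbf{S}}a$ for all sufficiently small $\epsilon$.

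The second and main step would be to apply the gluing and compactness results of Appendix \ref{appendix: uffa} to identify, for $\epsilon$ small enough, a bijection between rigid discs on $\bs{\Lambda}_\epsilon$ with positive asymptotic $a_{i,j}$ and broken configurations consisting of a principal rigid disc $u$ in $\R \times \partial W^{sc}$ with boundary on $\mathbf{S}$ and asymptotics $(a; b^1, \ldots, b^n)$ contributing to $\partial_{\slashed{\mathbf{S}}}a$, together with ``connecting'' gradient flow data for the differences $\tilde{f}_k - \tilde{f}_l$ on the attaching spheres $S_\sigma$ that records the copy labels carried by each boundary arc of $u$. Rigidity together with Lemma \ref{degrees of generators} forces the flow data to be of one of three types: trivial along every boundary arc (the default copy-label assignment); a single half-infinite flow line at the positive puncture of $u$ landing at a minimum-type critical point $\tilde{e}_{i,h}^{\sigma_s(a)}$; or the symmetric picture near the boundary on the $\sigma_e(a)$-side landing at $\tilde{e}_{h,j}^{\sigma_e(a)}$. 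Summing these three families over intermediate indices $(h_1, \ldots, h_{n-1})$ yields exactly the three terms $(\partial_{\slashed{\mathbf{S}}} a)_{i,j}$, $\sum_{i<h<k_{\sigma_s(a)}}e^{\sigma_s(a)}_{i,h} a_{h,j}$ and $\sum_{1<h<j}a_{i,h}e^{\sigma_e(a)}_{h,j}$ of the statement. All remaining rigid broken configurations insert at least one flow line landing at a maximum-type critical point $\tilde{m}_{k,l}^\sigma$, and are therefore absorbed into $\mathfrak{m}$ by Lemma \ref{boundary of m_ij}.

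The hard part is the bijection invoked in the second step. A priori this would require a Morse-Bott gluing theorem for holomorphic discs with boundary on a Morse-Bott family of Legendrians, a result that is notoriously technical. Appendix \ref{appendix: uffa} is designed precisely to circumvent the full Morse-Bott gluing machinery and to establish the bijection by a direct argument, counting the relevant curves on $\bs{\Lambda}_\epsilon$ in terms of curves on $\mathbf{S}$ decorated with flow data; its results are directly applicable here by taking parallel copies two at a time along the ordering $1 < \cdots < k_\sigma$. Once that appendix is in place, the only remaining work is the combinatorial enumeration of rigid flow data sketched above, which is straightforward given the explicit local model $\tilde{f}_k - \tilde{f}_l$ and the fact that each of these difference functions is Morse with exactly one maximum and one minimum on the sphere $S_\sigma$.
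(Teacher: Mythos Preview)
Your conceptual picture—rigid discs on $\bs{\Lambda}_\epsilon$ with a long positive asymptotic corresponding, as $\epsilon\to 0$, to discs on $\mathbf{S}$ decorated with gradient-flow data—is exactly the adiabatic-limit heuristic mentioned at the start of Appendix~\ref{appendix: uffa} (Ekholm's Conjectural Lemma~4.10). But your claim that the appendix ``establishes the bijection by a direct argument'' mischaracterises what it actually does: the appendix explicitly \emph{avoids} proving any such bijection and instead works case by case with three separate techniques. The paper's proof of the lemma is accordingly a short citation of three distinct results:
\begin{itemize}
\item Proposition~\ref{huey} handles the words consisting only of long chords, giving $(\partial_{\slashed{\mathbf{S}}}a)_{i,j}$; its proof is a parametrised-moduli-space cobordism over $\epsilon\in[0,\epsilon_0]$, not a gluing of flow-decorated configurations.
\item Lemma~\ref{lemma: constraints on holomorphic curves} uses SFT compactness with moving boundary conditions (Theorem~\ref{thm: SFT compactness with moving boundary conditions}) together with an index count to show that the \emph{only} words with an $e$-chord and no $m$-chord that can occur are $e^{\sigma_s(a)}_{i,h}a_{h,j}$ and $a_{i,h}e^{\sigma_e(a)}_{h,j}$, with the long chord necessarily corresponding to $a$ itself.
\item Proposition~\ref{louie} shows each of these moduli spaces has signed count $1$, not via flow trees but by choosing a non-generic perturbation in which the shorter chords are literally nested inside $a_{i,j}$, exhibiting an explicit regular disc inside the trivial strip, and then running a continuation argument back to the generic perturbation.
\end{itemize}
So the combinatorial flow-tree enumeration you sketch is replaced in the paper by the index-based restriction of Lemma~\ref{lemma: constraints on holomorphic curves} plus the nested-chord trick of Proposition~\ref{louie}. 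If you attempted to execute your proposal as written, you would find that Appendix~\ref{appendix: uffa} does not supply the Morse--Bott bijection you invoke; you would then either have to prove that bijection yourself (the very thing the paper is designed to sidestep) or fall back on the specific case-by-case results the appendix actually contains—which is what the paper does.
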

\begin{proof}
 From Proposition \ref{huey} we obtain that, if $\epsilon$ is small enough, for every chord $a_{i,j}$ of $\mathbf{\Lambda}_\epsilon$ of action less than $Q$ the term of $\partial_{\mathbf{C}_\epsilon} a_{i,j}$ involving only long chords is
  $(\partial_{\slashed{\mathbf{S}}} a)_{i,j}$.

By Lemma \ref{lemma: constraints on holomorphic curves}, the only words containing an $e$-chord and no $m$-chord which can appear in $\partial_{\mathbf{C}_\epsilon} a_{i,j}$ are $a_{i,h}e^{\sigma_e(a)}_{h,j}$ for $h<j$ or $e_{i,h}^{\sigma_s(a)}a_{h,j}$ for $h>i$. Furthermore, by Proposition \ref{louie} we have the counts
  $$\#\widetilde{\mathcal M}_{\bs{\Lambda}_\epsilon}^1(a_{i,j};  a_{i,h}, e^{\sigma_e(a)}_{h,j}) = \# \widetilde{\mathcal M}_{\bs{\Lambda}_\epsilon}^1(a_{i,j};  e_{i,h}^{\sigma_s(a)}, a_{h,j})=1,$$
  and therefore the term of $\partial_{\mathbf{C}_\epsilon} a_{i,j}$ involving at least one $e$-chord and no $m$-chord is
$$\sum_{i <h < k_{\sigma_s(a)}} e_{i,h}^{\sigma_s(a)} a_{h,j} + \sum_{1<h<j} a_{i,h}e^{\sigma_e(a)}_{h,j}.$$
\end{proof}
Given $Q>0$, let ${\mathcal D}_{\mathbf{C}_{\epsilon'(Q)}}^Q$ be the sub-dg algebra of ${\mathcal D}_{\mathbf{C}_{\epsilon'(Q)}}$ generated by short chords and long chords $a_{i,j}$ where $a$ has action less than $Q$. If $Q_1 > Q_0$ there is an inclusion ${\mathcal D}_{\mathbf{C}_{\epsilon'(Q_0)}}^{Q_0} \hookrightarrow {\mathcal D}_{\mathbf{C}_{\epsilon'(Q_1)}}^{Q_1}$, and therefore we can define the abstract dg algebra
$${\mathcal D}_{\mathbf{C}}^{\mathit{ab}} = \varinjlim {\mathcal D}_{\mathbf{C}_{\epsilon'(Q)}}^Q = \bigcup_Q {\mathcal D}_{\mathbf{C}_{\epsilon'(Q)}}^Q.$$

Wrapping up the definition. ${\mathcal D}_{\mathbf{C}}^{\mathit{ab}}$ is generated by elements $c_{i,j}^\sigma, m^\sigma_{i,j}, e_{i,j}^\sigma$ for all $\sigma \in \pi_0(\mathbf{S})$ and  $1 \le i < j \le k_\sigma$, and elements $a_{i,j}$ for every chord $a$ of $\mathbf{S}$ and $1 \le i \le k_{\sigma_s(a)}$, $1 \le j \le k_{\sigma_e(a)}$ with differential
$\partial_{\mathbf{C}}^{\mathit{ab}}$ defined by 
\begin{align*}
  \partial_{\mathbf{C}}^{\textit{ab}}(a_{i,j}) = &  (\partial_{\slashed{\mathbf{S}}}(a))_{i,j} + \sum_{i <h < k_{\sigma_s(a)}} e_{i,h}^{\sigma_s(a)} a_{h,j} + \sum_{1<h<j} a_{i,h}e^{\sigma_e(a)}_{h,j} + \mathfrak{m}(a_{ij})\\
   \partial_{\mathbf{C}}^{\textit{ab}}(c^\sigma_{i,j}) = &  m_{i,j}^\sigma +\sum \limits_{i<h<j} (c_{i, h}^\sigma e_{h,j}^\sigma + e_{i,h}^\sigma c_{h,j}^\sigma) \\
  \partial_{\mathbf{C}}^{\textit{ab}}(m^\sigma_{i,j}) = & \mathfrak{m}(m^\sigma_{i,j}) \\
  \partial_{\mathbf{C}}^{\textit{ab}}(e^\sigma_{i,j}) = & \sum_{i <h<j} e_{i,h}^\sigma e_{h,j}^\sigma + \mathfrak{m}(e^\sigma_{i,j})
\end{align*}
where $\mathfrak{m}(a_{ij})$ is a sum of words containing at least one $m$-chord each, and $\mathfrak{m}(m^\sigma_{i,j}), \mathfrak{m}(e^\sigma_{i,j})$ are sums of words of short chords containing at least one $m$-chord each.

\begin{lemma}\label{from D to Dab}
  There is a dg algebra morphism ${\mathcal D}^{\textit{ab}}_{\mathbf{C}} \to {\mathcal D}_{\mathbf{C}}$ which induces an isomorphism on homology.
\end{lemma}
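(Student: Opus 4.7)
The proof will apply Lemma~\ref{taking limits} to the family of cobordism algebras $\{{\mathcal D}_{\mathbf{C}_{\epsilon_n}}\}_{n\in\N}$ obtained from a suitably chosen decreasing sequence $\epsilon_n \to 0^+$. Fix an increasing sequence $Q_n \to +\infty$ and set $\epsilon_n := \epsilon'(Q_n)$, so that the boundary formulas of Lemmas~\ref{boundary of c_ij}, \ref{boundary of m_ij}, \ref{boundary of e_ij} and~\ref{boundary of a_ij} all hold on the generators of action at most $Q_n$. In the notation of Lemma~\ref{taking limits}, we take ${\mathcal A}_n := {\mathcal D}_{\mathbf{C}_{\epsilon_n}}$ and ${\mathcal B} := {\mathcal D}^{\mathit{ab}}_{\mathbf{C}}$ with its natural action filtration. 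By construction ${\mathcal D}^{\mathit{ab}}_{\mathbf{C}}$ is the increasing union $\bigcup_n {\mathcal B}^{Q_n}$ where ${\mathcal B}^{Q_n} = {\mathcal D}^{Q_n}_{\mathbf{C}_{\epsilon_n}}$; the tautological inclusion $i_n \colon {\mathcal B}^{Q_n} \to {\mathcal A}_n^{Q_n}$ is a dg-isomorphism precisely because the four boundary lemmas just cited say that the differential of ${\mathcal D}_{\mathbf{C}_{\epsilon_n}}$ on these low-action generators is literally the one used to \emph{define} $\partial_{\mathbf{C}}^{\mathit{ab}}$.

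Next I would produce the continuation maps $f_n \colon {\mathcal A}_n \to {\mathcal A}_{n+1}$ and $g_n \colon {\mathcal A}_{n+1} \to {\mathcal A}_n$ from the Hamiltonian isotopy $\mathbf{C}_{\epsilon_n} \rightsquigarrow \mathbf{C}_{\epsilon_{n+1}}$ (arising from the linear interpolation of the generating functions $\epsilon f_{\sigma,i}$). The existence of such maps, the fact that $f_n\circ g_n$ and $g_n\circ f_n$ are chain-homotopic to the identity, and their functoriality with respect to concatenation, is the expected extension to the immersed cobordism algebra of the standard cobordism-trace argument for Chekanov--Eliashberg algebras recalled in Section~\ref{ChekAlg}; the relevant moduli spaces of $J$-holomorphic polygons on the trace cobordism are of exactly the same type as those used to define ${\mathcal D}_{\mathbf{C}_{\epsilon}}$ itself.

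The key additional input is the analog of Lemma~\ref{if nothing happens nothing happens} in this setting: since the whole family $\mathbf{C}_{\epsilon}$ is obtained by rescaling a fixed set of defining functions, the short chords, the intersection points and the long chords of action below $Q_n$ remain in canonical bijection along the isotopy from $\epsilon_n$ to $\epsilon_{n+1}$, and the corresponding moduli spaces of rigid $J$-holomorphic discs contributing to the differentials on this low-action part stabilize. An SFT compactness/bisection argument identical in spirit to that of Lemma~\ref{if nothing happens nothing happens} then allows us to choose $f_n$ and $g_n$ homotopic to maps that restrict to the canonical identification $i_n$ on ${\mathcal B}^{Q_n}$. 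This makes diagram~\eqref{diagram for the limit} commute, and provides the action bound $\mathfrak{a}(f_n(a)) < \mathfrak{a}(a) + \delta_n$ with $\delta_n$ arbitrarily small; by choosing the sequence $\epsilon_n$ decrease sufficiently fast we ensure \eqref{Qn grows fast}. Lemma~\ref{taking limits} now produces a dg-morphism $j\colon {\mathcal D}^{\mathit{ab}}_{\mathbf{C}} \to {\mathcal D}_{\mathbf{C}_{\epsilon_1}}$ inducing an isomorphism in homology; composing with the continuation map ${\mathcal D}_{\mathbf{C}_{\epsilon_1}} \to {\mathcal D}_{\mathbf{C}_1} = {\mathcal D}_{\mathbf{C}}$ finishes the proof.

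The main obstacle is not algebraic but analytic: the invariance of the cobordism algebra of an immersed exact Lagrangian cobordism under compactly supported Hamiltonian isotopy, together with the action-controlled form of the continuation maps, must be established. As announced in Section~\ref{sec:cthulhu-complex-with}, the full analytic details of this immersed Floer theory are being deferred to a separate work; for the present application it is enough to observe that the relevant moduli-space arguments are standard adaptations of the embedded-cobordism case treated in \cite{Floer_cob}.
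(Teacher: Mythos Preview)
Your proposal is correct and follows essentially the same strategy as the paper: set ${\mathcal A}_n={\mathcal D}_{\mathbf{C}_{\epsilon_n}}$, ${\mathcal B}={\mathcal D}^{\mathit{ab}}_{\mathbf{C}}$, invoke Lemma~\ref{if nothing happens nothing happens} to make the continuation maps restrict to the canonical identification below action $Q_n$, and then apply Lemma~\ref{taking limits} and compose with one further continuation map. Two small points of comparison: the paper obtains the continuation maps on ${\mathcal D}_{\mathbf{C}_\epsilon}$ by inducing them from the ordinary Chekanov--Eliashberg continuation maps for the Legendrian ends $\bs{\Lambda}_\epsilon$ (so the analytic ``obstacle'' you flag is sidestepped rather than confronted), and it justifies \eqref{Qn grows fast} by the clean observation that the concatenated Legendrian isotopy $\bs{\Lambda}_{\epsilon_{i+1}}\to\bs{\Lambda}_{\epsilon_i}$ has finite total length, which is a bit sharper than ``choose $\epsilon_n$ to decrease fast enough''.
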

\begin{proof}
  The continuation maps between the Chekanov-Eliashberg algebras ${\mathcal A}_{\bs{\Lambda}_\epsilon}$ and ${\mathcal A}_{\bs{\Lambda}_{\epsilon'}}$ induce continuation maps between ${\mathcal D}_{\mathbf{C}_\epsilon}$ and ${\mathcal D}_{\mathbf{C}_{\epsilon'}}$ which, by Lemma \ref{if nothing happens nothing happens}, coincide with the canonical identification between the generators on ${\mathcal D}^Q_{\mathbf{C}_\epsilon}$ and ${\mathcal C}^Q_{\mathbf{C}_{\epsilon'}}$ if $\epsilon, \epsilon' < \epsilon(Q)$. Moreover, one can find a sequence  $Q_n$ and $\epsilon_n < \epsilon(Q_n)$ such that Equation \eqref{Qn grows fast} is satisfied because the the concatenations of the Legendrian isotopies $\bs{\Lambda}_{\epsilon_{i+1}} \to \bs{\Lambda}_{\epsilon_i}$ for all $i$ has finite length. Then we can apply Lemma \ref{taking limits} to ${\mathcal A}_n= {\mathcal D}_{\mathbf{C}_{\epsilon_n}}$ and ${\mathcal B} = {\mathcal D}^{\textit{ab}}_{\mathbf{C}}$ to obtain a dg algebra morphism  ${\mathcal D}^{\textit{ab}}_{\mathbf{C}} \to {\mathcal D}_{\mathbf{C}_{\epsilon_0}}$, which we compose to the continuation map ${\mathcal D}_{\mathbf{C}_{\epsilon_0}} \to {\mathcal D}_{\mathbf{C}}$.
\end{proof}

Now we want to relate ${\mathcal D}^{\textit{ab}}_{\mathbf{C}}$ the the algebraic operations defined in Section \ref{sec: idempotents and dga}. Let ${\mathcal A}_{\mathbf{S}}$ be the Checkanov-Eliashberg algebra of $\mathbf{S}$ over the  ring $\kk_{\mathbf{S}}$ with idempotents associated to the connected components of $\mathbf{S}$, and let $\kk_{\mathbf{C}}$ be the ring with idempotents  associated to the connected components of $\bs{\Lambda}$. We define the differential graded algebra ${\mathcal A}^+_{\slashed{\mathbf{C}}}$ by applying first partial Morsification, then expansion of idempotents from $\kk_{\mathbf{S}}$ to $\kk_{\mathbf{C}}$, and finally omission of idempotents to ${\mathcal A}_{\mathbf{S}}$. Finally, since there is a natural ordering on the connected components of $\bs{\Lambda}$ corresponding to the same connected component of $\mathbf{C}$, we can form thew algebra $\vec{\mathcal A}^+_{\slashed{\mathbf{C}}}$ by killing all generators $e^\sigma_{i,j}$ with $i \ge j$ in  ${\mathcal A}^+_{\slashed{\mathbf{C}}}$, see Lemma \ref{ho finito le labelle}. If we unwrap the definition of $\vec{\mathcal A}^+_{\slashed{\mathbf{C}}}$, we see that it is the differential graded algebra generated by elements $a_{i,j}$ and $e^\sigma_{i,j}$ as for ${\mathcal D}^{\mathit{ab}}_{\mathcal C}$ with differential
\begin{align*}
  \vec{\partial}^+_{\slashed{\mathbf{C}}}(a_{i,j}) & = (\partial_{\slashed{\mathbf{S}}})_{i,j} + \sum_{i <h < k_{\sigma_s(a)}} e_{i,h}^{\sigma_s(a)} a_{h,j} + \sum_{1<h<j} a_{i,h}e^{\sigma_e(a)}_{h,j} \\
  \vec{\partial}^+_{\slashed{\mathbf{C}}}(e_{i,j}) & = \sum_{i <h<j} e_{i,h}^\sigma e_{h,j}^\sigma.
\end{align*}
Thus it is evident that there is a dg-morphism ${\mathcal D}^{\textit{ab}}_{\mathbf{C}} \to \vec{\mathcal A}^+_{\slashed{\mathbf{C}}}$ which maps all $c_{i,j}^\sigma$ and $m_{i,j}^\sigma$ to zero. However, we want to pull back augmentations from ${\mathcal D}^{\textit{ab}}_{\mathbf{C}}$ to $\vec{\mathcal A}^+_{\slashed{\mathbf{C}}}$, and therefore we need a dg-morphism in the opposite direction. The following lemma is the algebraic tool to produce such a morphism.
\begin{lemma}\label{from Chekanov}
  Let ${\mathcal A}$ be a differential graded algebra freeley generated as an algebra by elements $a_n, \ldots, a_1, a, b$ such that
\begin{itemize}
\item $\partial  b =0$,
\item $\partial  a = b$, and
\item $\partial a_i$ belong to the sub-algebra generated by $a_{i-1}, \ldots, a_1, a, b$ for every $i = 1, \ldots, n$.
\end{itemize}
  If ${\mathcal M}$ is the bilateral differential ideal generated by $a$ and $b$, then there is a dg-morphism ${\mathcal A} / {\mathcal M} \to {\mathcal A}$ inverting the projection ${\mathcal A} \to {\mathcal A} /{\mathcal M}$ to the right and inducing an isomorphism in homology.
\end{lemma}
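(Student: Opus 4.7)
The plan is to prove the lemma in two stages: first, that the projection $\pi\colon \mathcal{A} \to \mathcal{A}/\mathcal{M}$ is a quasi-isomorphism; second, that it admits a dg-algebra section $\phi\colon \mathcal{A}/\mathcal{M} \to \mathcal{A}$ constructed generator by generator. Since $\pi\phi = \mathrm{id}$, the section $\phi$ is automatically a quasi-isomorphism once $\pi$ is one.

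For the first stage I would introduce the sub-dgas $\mathcal{A}_i = \langle a, b, a_1, \ldots, a_i\rangle \subset \mathcal{A}$, which are $\partial$-stable because $\partial a_j \in \mathcal{A}_{j-1}$, and let $\mathcal{M}_i = \mathcal{A}_i \cap \mathcal{M}$. I would prove by induction on $i$ that $\pi_i\colon \mathcal{A}_i \to \mathcal{A}_i/\mathcal{M}_i$ is a quasi-isomorphism. The base case $i=0$ is the statement that $\mathcal{A}_0$ is the tensor algebra on the acyclic complex $V = \mathbb{F}\langle a, b\rangle$ with contracting homotopy $h_V(b)=a$, $h_V(a)=0$; by Künneth over a field one has $H(\mathcal{A}_0) = \mathbb{F} = H(\mathcal{A}_0/\mathcal{M}_0)$. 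For the inductive step I would filter both sides by the number of occurrences of the letter $a_i$ in a word; because $\partial a_i \in \mathcal{A}_{i-1}$ contains no $a_i$, this filtration is stable under $\partial$. Via the canonical decomposition of a word with exactly $k$ occurrences of $a_i$ as $w_0 a_i w_1 \cdots a_i w_k$ with $w_j \in \mathcal{A}_{i-1}$, one identifies the associated graded of $\mathcal{A}_i$ with $\mathcal{A}_{i-1}^{\otimes (k+1)}$ (and similarly for the quotient), equipped with the tensor-product differential, and the map induced by $\pi_i$ becomes the $(k{+}1)$-fold tensor power of $\pi_{i-1}$. By the inductive hypothesis and Künneth the latter is a quasi-isomorphism. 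An inductive five-lemma applied to the short exact sequences $0 \to F_{k-1} \to F_k \to F_k/F_{k-1} \to 0$ then gives that $\pi_i$ is a quasi-isomorphism on each filtered piece $F_k$, and since $\mathcal{A}_i = \bigcup_k F_k$ and homology commutes with filtered colimits, $\pi_i$ itself is a quasi-isomorphism.

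For the second stage, I would define $\phi(\bar{a}_i)$ inductively and then extend multiplicatively. Suppose $\phi$ has already been defined as a chain map splitting $\pi$ on the subalgebra $\mathcal{B}_{i-1} := \mathbb{F}\langle \bar{a}_1, \ldots, \bar{a}_{i-1}\rangle$. Writing $\phi(\bar{a}_i) = a_i + r_i$ for an unknown $r_i \in \mathcal{M}$, the required chain-map identity $\partial \phi(\bar{a}_i) = \phi(\bar{\partial}\bar{a}_i)$ becomes $\partial r_i = \omega_i$, where $\omega_i := \phi(\bar{\partial}\bar{a}_i) - \partial a_i$. Since $\bar{\partial}\bar{a}_i = \pi(\partial a_i)$ lies in $\mathcal{B}_{i-1}$, the element $\omega_i$ belongs to $\mathcal{A}_{i-1}$; it projects to $0$ under $\pi$ because $\pi\phi=\mathrm{id}$ on $\mathcal{B}_{i-1}$, hence $\omega_i \in \mathcal{M}_{i-1}$; and it is a $\partial$-cycle because $\phi$ is a chain map on $\mathcal{B}_{i-1}$, giving $\partial\phi(\bar{\partial}\bar{a}_i) = \phi(\bar{\partial}^2 \bar{a}_i)=0$. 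By the first stage the sub-complex $\mathcal{M}_{i-1}$ is acyclic, so the required $r_i \in \mathcal{M}_{i-1}$ exists; multiplicative extension yields the desired dg-algebra section $\phi$.

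The main obstacle is the filtered quasi-isomorphism argument in the first stage: correctly identifying the associated graded as a tensor product, applying Künneth, and bootstrapping from the graded pieces back to the whole $\mathcal{A}_i$ via the exhausting filtration $F_k$. Once the acyclicity of each $\mathcal{M}_i$ is established, the inductive lifting construction of $\phi$ is a routine application of the standard chain-level obstruction-theory argument.
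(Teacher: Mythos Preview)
Your proof is correct. The paper itself gives no details, deferring entirely to Chekanov's original arguments in \cite{Chekanov}. Those proceed by constructing an explicit tame isomorphism $\mathcal{A}\cong(\mathcal{A}/\mathcal{M})*\F\langle a,b\rangle$: one inductively replaces each generator $a_i$ by $a_i'=a_i+r_i$ with $r_i\in\mathcal{M}$ so that, in the new coordinates, $\partial a_i'$ no longer involves $a$ or $b$; the section and the quasi-isomorphism then fall out simultaneously from the splitting, since $\F\langle a,b\rangle$ is acyclic. Your stage~2 is precisely this change of generators viewed as the construction of a section, but instead of reading off the quasi-isomorphism from the explicit splitting you supply it separately in stage~1 via the word-length filtration in $a_i$ and K\"unneth. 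Chekanov's route gives the stronger conclusion of tame isomorphism together with a concrete formula for the $r_i$; your route is more homological in flavour, closer to the standard obstruction-theory argument for semi-free (cofibrant) dgas, and would transplant more readily to settings where tameness is not the relevant notion.
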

\begin{proof}
  The lemma is proved by combining an argument in \cite[Section 8.4]{Chekanov} with \cite[Lemma 2.1]{Chekanov}.
\end{proof}
We say that $a$ and $b$ are in elimination position and ${\mathcal A}/{\mathcal M}$ is obtained by eliminating $a$ and $b$ from ${\mathcal A}$.
\begin{lemma}\label{elimination of c and m}
  There is a dg morphism $\vec{\mathcal A}^+_{\slashed{\mathbf{C}}} \to {\mathcal D}^{\textit{ab}}_{\mathbf{C}}$ which is a right inverse of the projection and induces an isomorphism in homology.
\end{lemma}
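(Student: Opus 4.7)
The strategy is to apply Chekanov's elimination Lemma \ref{from Chekanov} iteratively after a preliminary change of generators that puts the cancellation pairs in the required form. Concretely, I would first set
\[
\tilde m^\sigma_{i,j} := \partial^{\textit{ab}}_{\mathbf{C}}(c^\sigma_{i,j}) = m^\sigma_{i,j} + \sum_{i<h<j}\bigl(c^\sigma_{i,h}e^\sigma_{h,j} + e^\sigma_{i,h}c^\sigma_{h,j}\bigr)
\]
for every $\sigma \in \pi_0(\mathbf{S})$ and $1 \le i < j \le k_\sigma$. Since this substitution is triangular in $j-i$, $\mathcal{D}^{\textit{ab}}_{\mathbf{C}}$ is still freely generated as a graded algebra by the $e^\sigma_{i,j}$, $a_{i,j}$, $c^\sigma_{i,j}$ and the new generators $\tilde m^\sigma_{i,j}$. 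By construction $\partial^{\textit{ab}}_{\mathbf{C}} c^\sigma_{i,j} = \tilde m^\sigma_{i,j}$ and $\partial^{\textit{ab}}_{\mathbf{C}} \tilde m^\sigma_{i,j} = 0$ by $(\partial^{\textit{ab}}_{\mathbf{C}})^2=0$. Rewriting the formulas of Lemmas \ref{boundary of m_ij}, \ref{boundary of e_ij} and \ref{boundary of a_ij} in terms of the new generators amounts to replacing each occurrence of $m^\sigma_{h,k}$ by $\tilde m^\sigma_{h,k} - \sum_{h<l<k}(c^\sigma_{h,l}e^\sigma_{l,k} + e^\sigma_{h,l}c^\sigma_{l,k})$; the essential feature is that the $c$-letters produced by this substitution have index difference strictly less than that of the $m$-generator they came from.

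Next I would iterate Lemma \ref{from Chekanov} to eliminate the pairs $(c^\sigma_{i,j}, \tilde m^\sigma_{i,j})$ in decreasing order of $j - i$. At each step I place the pair in question at the bottom of the ordering as $(a,b) = (c^\sigma_{i,j}, \tilde m^\sigma_{i,j})$; the relations $\partial c^\sigma_{i,j} = \tilde m^\sigma_{i,j}$ and $\partial \tilde m^\sigma_{i,j} = 0$ persist at every stage. The remaining triangularity hypothesis of the lemma is satisfied because the observation above shows that $c^\sigma_{i,j}$ with currently maximal $j-i$ cannot occur in the differential of any other generator: such an occurrence could only be produced by substituting some $m^\sigma_{h,k}$ with $h\le i<j\le k$, but no $m$-generator with $k-h > j-i$ remains. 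Composing the resulting quasi-isomorphic right inverses from Lemma \ref{from Chekanov} across all stages yields a single right-inverse dg-morphism from the ultimate quotient of $\mathcal{D}^{\textit{ab}}_{\mathbf{C}}$ into $\mathcal{D}^{\textit{ab}}_{\mathbf{C}}$ which induces an isomorphism on homology.

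Finally I would identify this ultimate quotient with $\vec{\mathcal A}^+_{\slashed{\mathbf{C}}}$. After eliminating all pairs, every $c^\sigma_{i,j}$ and $\tilde m^\sigma_{i,j}$, and hence every $m^\sigma_{h,k}$ via the substitution formula, becomes zero; since Lemmas \ref{boundary of m_ij}, \ref{boundary of e_ij} and \ref{boundary of a_ij} guarantee that every term in $\mathfrak{m}(\,\cdot\,)$ contains at least one $m$-chord, the $\mathfrak{m}$-contributions vanish identically in the quotient. Matching what remains against the explicit definition of $\vec{\partial}^+_{\slashed{\mathbf{C}}}$ on the surviving generators $e^\sigma_{i,j}$ (with $i<j$) and $a_{i,j}$ exhibits the quotient as $\vec{\mathcal A}^+_{\slashed{\mathbf{C}}}$. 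The main technical point I expect to require care is precisely the inductive verification that at every elimination step the triangularity of Lemma \ref{from Chekanov} is respected, which in turn reduces entirely to the ``smaller index difference'' property of the substitution established in the first paragraph.
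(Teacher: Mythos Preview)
Your overall strategy—change generators so that $\partial c^\sigma_{i,j}=\tilde m^\sigma_{i,j}$ and $\partial\tilde m^\sigma_{i,j}=0$, then iterate Lemma~\ref{from Chekanov}—is sound, and your identification of the final quotient with $\vec{\mathcal A}^+_{\slashed{\mathbf{C}}}$ is correct. There is, however, a genuine gap in your verification of the triangularity hypothesis: the claim that ``$c^\sigma_{i,j}$ with currently maximal $j-i$ cannot occur in the differential of any other generator'' is false in general. A word in $\mathfrak{m}(a_{i',j'})$ may contain long chords and can therefore pass through an $m^\sigma_{h,k}$ with $k-h$ strictly larger than your current maximum $j-i$; your substitution for that $m^\sigma_{h,k}$ produces $c$-letters of index difference up to $k-h-1\ge j-i$. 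After you kill the pairs with index difference $>j-i$, the $\tilde m^\sigma_{h,k}$-term vanishes, but those $c$-letters, including possibly $c^\sigma_{i,j}$ itself, survive in $\partial a_{i',j'}$. The claim is in any case unnecessary: after your change of generators the algebra is still semi-projective (order all $\tilde m$'s at the bottom, then all $c$'s, then the $e$'s and long chords by action), and this alone lets you place any pair $(c^\sigma_{i,j},\tilde m^\sigma_{i,j})$ at the bottom of a triangular ordering as Lemma~\ref{from Chekanov} requires. With that correction your argument is complete.

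The paper's route is shorter and avoids the change of generators entirely: it eliminates $(c^\sigma_{i,j},m^\sigma_{i,j})$ in \emph{increasing} order of $j-i$. For $j-i=1$ one has $\partial c=m$ and $\partial m=0$ directly. After quotienting, every word in $\partial m^\sigma_{i,j}$ with the next value of $j-i$ is a composable string of short chords from component $i$ to component $j$ of length at least two, hence contains some $m^\sigma_{h,k}$ with $k-h<j-i$ that has already been killed; so $\partial m^\sigma_{i,j}=0$ in the quotient and the induction continues. Your change of generators buys the freedom to eliminate in any order, at the price of the extra bookkeeping; the paper's increasing order exploits the composability constraint on short-chord words to keep the original generators in elimination position throughout.
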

\begin{proof}
For every $\sigma \in \pi_0(\mathbf{\Lambda})$ and $i = 1, \ldots, k_{\sigma}-1$ we have $\partial_{\mathbf{C}}(c_{i,i+1}^\sigma)=m_{i,i+1}^\sigma$ and 
  $\partial_{\mathbf{C}}(m_{i,i+1}^\sigma)=0$, then we can apply Lemma \ref{from Chekanov}
  and eliminate all $c_{i,j}^\sigma$ and $m_{i,j}^\sigma$. After we have eliminated them, the generators $c_{i,j}^\sigma$ and $m_{i,j}^\sigma$ with $j-i=2$ are in elimination position, and so on , so we can proceed inductively on $j-i$ untill we have eliminated all $c^\sigma_{i,j}$ and $m^\sigma_{i,j}$. The resulting algebra is isomorphic to $\vec{\mathcal A}^+_{\slashed{\mathbf{C}}}$ and therefore Lemma \ref{from Chekanov} provides a dg-morphism $\vec{\mathcal A}^+_{\slashed{\mathbf{C}}} \to {\mathcal D}^{\textit{ab}}_{\mathbf{C}}$ which induces an isomorphism in homology.
\end{proof}  
\section{The Cthulhu complex of multiple cores}\label{Cth(C,C)}
Let $\widehat{\mathbf{C}}^0$ and $\widehat{\mathbf{C}}^1$ be standard caps such that $\widehat{\mathbf{C}}^0 \cup \widehat{\mathbf{C}}^1$ is also a standard cap, and moreover the connected components of $\bs{\Lambda}^0 = \partial \mathbf{C}^0$ are further in the positive Reeb direction from the corresponding connected component of $\mathbf{S}$ than the connected components of $\bs{\Lambda}^1 = \mathbf{C}^1$.
For $d=0,1$ we write $\bs{\Lambda}_\sigma^{(d,1)}, \ldots \bs{\Lambda}_\sigma^{(d,k_\sigma^d)}$ for the connected components of $\Lambda^d$ corresponding to $S_\sigma$, and $\widehat{C}_\sigma^{(d,i)}$ for the Lagrangian plane in $\widehat{\mathbf{C}}^d$ such that $\partial \widehat{C}_\sigma^{(d,i)} = \bs{\Lambda}_\sigma^{(d,i)}$. When these properties are satisfied we say that $\mathbf{C}_0$ and $\mathbf{C}_1$ are in {\em standard position}.

The goal of this section is to compute the Cthulhu complex $\underline{\op{Cth}}(\widehat{\mathbf{C}}_0,  \widehat{\mathbf{C}}_1)$ from the Chekanov-Eliashberg algebra ${\mathcal A}_{\slashed{\mathbf{S}}}$. We recall that this complex is a free $({\mathcal D}_{\mathbf{C}^1} \mhyphen {\mathcal D}_{\mathbf{C}_0})$-bimodule generated by intersection points between $\widehat{\mathbf{C}}^0$ and $\widehat{\mathbf{C}}^1$ and Reeb chords from $\bs{\Lambda}^1$ to $\bs{\Lambda}^0$. For $\sigma \in \pi_0(\mathbf{S})$, $1 \le i \le  k_\sigma^1$ and $1 \le j \le k_\sigma^0$ we denote by $\hat{c}^\sigma_{i,j}$ the intersection point between $C_\sigma^{(1,i)}$ and $C_\sigma^{(0,j)}$.

We choose families of standard caps $\mathbf{C}_{0, \epsilon}$ and $\mathbf{C}_{1,\epsilon}$ for $\epsilon \in(0,1]$ which are in standard position for every $\epsilon$, and such that $\mathbf{C}_{d,1}= \mathbf{C}_d$ and $C^{d, i}_{\sigma, \epsilon} \to C_\sigma$ as $\epsilon \to 0$ for $d=0,1$. As in the previous section, for every $Q>0$ there exists $\epsilon(Q)$ such that the Reeb chords of $\bs{\Lambda}_{0, \epsilon} \cup \bs{\Lambda}_{1, \epsilon}$ divide into short, long and very long, with action larger than $Q$.
We denote the short and long chords of $\bs{\Lambda}_{d, \epsilon}$ by $e^\sigma_{d;i,j}$,  $m^\sigma_{d; i,j}$ and $a_{d; i,j}$, the short chords from $\bs{\Lambda}_{1, \epsilon}$ to $\bs{\Lambda}_{0, \epsilon}$ by $\hat{e}^\sigma_{j,k}$ and $\hat{m}^\sigma_{i,j}$, and the long chords from  $\bs{\Lambda}_{1, \epsilon}$ to $\bs{\Lambda}_{0, \epsilon}$ corresponding to the chord $a$ of $\mathbf{S}$ by $\hat{a}_{i,j}$.

For $\epsilon < \epsilon(Q)$ we consider the $({\mathcal D}_{{\mathcal C}_{1, \epsilon}} \mhyphen {\mathcal D}_{{\mathcal C}_{0, \epsilon}})$-bimodule $\underline{\op{Cth}}^Q(\mathbf{C}_{0, \epsilon}, \mathbf{C}_{1, \epsilon})$ generated by intersection points and chords of action less than $Q$ (i.e.\ short and long chord). 
In order to describe the differential of $\underline{\op{Cth}}^Q(\mathbf{C}_{0, \epsilon}, \mathbf{C}_{1, \epsilon})$ we introduce the following notation: given a composable word $\mathbf{a}= a^1 \ldots a^n$ of chords of $\mathbf{S}$ and indices $i,j$ such that $1 \le i \le k^1_{\sigma_s(a^1)}$ and $1 \le j \le k^1_{\sigma_s(a^1)}$, we define $\hat{\mathbf{a}}_{i,j}$ as the sum over all possible lifts of $\mathbf{a}$ to a composable word of chords of $\bs{\Lambda}_{0, \epsilon} \cup \bs{\Lambda}_{1, \epsilon}$ starting at $C^{1,i}_{\sigma_s(a^1)}$, ending at $C^{0,j}_{\sigma_e(a^n)}$ and containing only one chord from $\bs{\Lambda}_1$ to $\bs{\Lambda}_0$ (and, therefore, no chord from $\bs{\Lambda}_0$ to $\bs{\Lambda}_1$). In more explicit terms
$$\hat{\mathbf{a}}_{i,j} = \sum_{\ell = 1}^{n-1} \sum_{h_1, \ldots, h_{n-1}} a^1_{1; i,h_1} \ldots a^{\ell-1}_{1; h_{\ell-2}, h_{\ell-1}} \hat{a}^\ell_{h_{\ell-1}, h_\ell} a^{\ell+1}_{0; h_{\ell}, h_{\ell+1}} \ldots a^n_{0; h_{n-1}, j}.$$
\begin{lemma}
  For all $Q>0$ there exists $\epsilon'(Q) < \epsilon(Q)$ such that, for every $\epsilon \in (0, \epsilon'(Q)]$,  the differential of  $\underline{\op{Cth}}^Q(\mathbf{C}_{0, \epsilon}, \mathbf{C}_{1, \epsilon})$ is
  
  \begin{align*}
    \mathfrak{d}(\hat{c}^\sigma_{i,j}) = &  \hat{m}_{i,j}^\sigma +\sum \limits_{i<h<k^1_\sigma} (c_{1; i, h}^\sigma \hat{e}_{h,j}^\sigma + e_{1;i,h}^\sigma \hat{c}_{h,j}^\sigma ) + \sum \limits_{1<h<j}  (\hat{c}_{i, h}^\sigma e_{0;h,j}^\sigma   +  \hat{e}_{i,h}^\sigma c_{0;h,j}^\sigma) \\
    \mathfrak{d}(\hat{m}^\sigma_{i,j}) = & \hat{\mathfrak{m}}(\hat{m}^\sigma_{i,j}) \\
    \mathfrak{d}(\hat{e}^\sigma_{i,j})= &  \sum_{i <h<k^1_\sigma} e_{1; i,h}^\sigma \hat{e}_{h,j}^\sigma + \sum_{1 \le h \le j} \hat{e}^\sigma_{i,h} e^\sigma_{0; h,j} + \hat{\mathfrak{m}}(\hat{e}^\sigma_{i,j}) \\
    \mathfrak{d}(\hat{a}_{i,j}) = &\widehat{(\partial_{\slashed{\mathbf{S}}})}_{i,j} + \sum_{1 <h < k^1_{\sigma_s(a)}} \hat{e}_{i,h}^{\sigma_s(a)} a_{0;h,j}  + \sum_{i<h < k^1_{\sigma_s(a)}} e_{1; i,h}^{\sigma_s(a)} \hat{a}_{h,j}+ \\
    & \sum_{1<h<k^0_{\sigma_e(a)}} \hat{a}_{i,h}e^{\sigma_e(a)}_{0; h,j} + \sum_{1 \le h \le j}  a_{1; i,h}\hat{e}^{\sigma_e(a)}_{h,j} +  \hat{\mathfrak{m}}(\hat{a}_{i,j}),
  \end{align*}
  where $\hat{\mathfrak{m}}$ denotes a sum of words containing either an $m$-chord or an $\hat{m}$-chord.
\end{lemma}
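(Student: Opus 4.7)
The plan is to extend, mutatis mutandis, the arguments used in Lemmas \ref{boundary of c_ij}--\ref{boundary of a_ij} for the cap algebra $\mathcal{D}_{\mathbf{C}_\epsilon}$ to the present Cthulhu setting, the only genuine difference being that every $J$-holomorphic curve contributing to a component of $\mathfrak{d}$ has a boundary configuration that alternates exactly once between $\widehat{\mathbf{C}}_{0,\epsilon}$ and $\widehat{\mathbf{C}}_{1,\epsilon}$, so that exactly one asymptote (either the positive puncture or a single negative puncture) must be ``hatted''. First I would fix $\epsilon'(Q) \le \epsilon(Q)$ small enough that the action of every short and long generator of $\underline{\op{Cth}}^Q$ lies below the action of the smallest very long chord; together with the action estimate from Section \ref{sec:cthulhu-complex-with} this rules out any very long chord from $\mathfrak{d}$. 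Lemma \ref{confinement} then confines the discs contributing to $\mathfrak{d}\hat{c}^\sigma_{i,j}$ to a Weinstein neighbourhood of the core $\widehat{C}_\sigma$, and those contributing to $\mathfrak{d}\hat{m}^\sigma_{i,j}$ and $\mathfrak{d}\hat{e}^\sigma_{i,j}$ to a standard neighbourhood of $S_\sigma$ inside $\partial W^{sc}$.

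Inside those neighbourhoods the rigid discs correspond, via Theorem \ref{thm:morse} and Lemma \ref{lemma:trees}, to rigid generalised gradient flow trees for the functions $\widehat{f}_{d,i}^\sigma$. The enumeration of these trees is identical to the one carried out in the proofs of Lemmas \ref{boundary of c_ij}--\ref{boundary of e_ij}, with the single added bookkeeping that a ``hat'' must appear at precisely one asymptote. For $\hat{c}^\sigma_{i,j}$ this gives the trivial flow line to $\hat{m}^\sigma_{i,j}$ together with the four families of type-2 trees accounting for the sums $c_{1}\hat{e}$, $e_{1}\hat{c}$, $\hat{c}\,e_{0}$, $\hat{e}\,c_{0}$. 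For $\hat{m}^\sigma_{i,j}$ the local degree count of Lemma \ref{boundary of m_ij} forces every purely short contribution to contain an $m$- or $\hat{m}$-chord, and for $\hat{e}^\sigma_{i,j}$ the degree count combined with Proposition \ref{louie2} produces precisely the two terms $e_{1}\hat{e}$ and $\hat{e}\,e_{0}$ modulo words containing such a chord.

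For the long generators $\hat{a}_{i,j}$ the contributions split, by Lemma \ref{lemma: constraints on holomorphic curves}, into those with no short chord at all and those with a single short $e$- or $\hat{e}$-chord. The first class is computed by the Cthulhu variant of Proposition \ref{huey}: rigid discs with boundary on $\bs{\Lambda}_{0,\epsilon} \cup \bs{\Lambda}_{1,\epsilon}$ carrying one hatted asymptote correspond to rigid discs on $\mathbf{S}$ together with a choice of position at which to insert the hatted chord, and this is exactly what the symbol $\widehat{(\partial_{\slashed{\mathbf{S}}}(a))}_{i,j}$ encodes. The second class is controlled by Proposition \ref{louie}: the four allowed combinatorial shapes $\hat{e}\,a_{0}$, $e_{1}\hat{a}$, $\hat{a}\,e_{0}$, $a_{1}\hat{e}$ each occur with coefficient one. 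The main obstacle will be to make the hat-position bookkeeping in the first class rigorous, i.e.\ to verify in the Morse--Bott degeneration analysis of Appendix \ref{appendix: uffa} that each rigid disc on $\mathbf{S}$ lifts to exactly one rigid disc on the parallel copies per admissible insertion of the hat, with no hidden multiplicities. Once this lifting count is established, assembling the four contributions yields the stated formula for $\mathfrak{d}\hat{a}_{i,j}$.
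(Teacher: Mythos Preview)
Your approach is correct in spirit and would work, but it is both more laborious than the paper's and omits one step that the paper treats explicitly.

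The missing step is the vanishing of the Nessie component $\mathfrak{d}_{-0}$. Your opening claim that every contributing curve ``alternates exactly once'' between $\widehat{\mathbf{C}}_{0,\epsilon}$ and $\widehat{\mathbf{C}}_{1,\epsilon}$ presupposes that there are no Nessie configurations from an intersection point $\hat{c}^\sigma_{i,j}$ to a Reeb chord; this is not automatic from the definition of the Cthulhu differential and has to be argued. The paper dispatches it in one line: the action of the $\hat{c}^\sigma_{i,j}$ tends to zero with $\epsilon$, so \cite[Lemmas~7.2 and~7.8]{Floer_cob} give $\mathfrak{d}_{-0}=0$ for $\epsilon$ small.

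Once that is done, the paper's route is shorter than yours. Rather than re-running the flow-tree and Morse--Bott analysis with hat bookkeeping, the paper observes that, with Nessies excluded, every moduli space entering the Cthulhu differential $\mathfrak{d}$ already occurs among the moduli spaces that define the cap-algebra differential $\partial_{\mathbf{C}_{0,\epsilon}\cup\mathbf{C}_{1,\epsilon}}$. Since $\widehat{\mathbf{C}}_{0,\epsilon}\cup\widehat{\mathbf{C}}_{1,\epsilon}$ is by hypothesis itself a standard cap, Lemmas~\ref{boundary of c_ij}--\ref{boundary of a_ij} apply verbatim to the union; the hatted formulas in the statement are then just those lemmas read off on the generators that start on a component of $\bs{\Lambda}_{1,\epsilon}$ and end on one of $\bs{\Lambda}_{0,\epsilon}$. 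In particular, what you flag as the ``main obstacle'' --- that each rigid disc on $\mathbf{S}$ lifts once per admissible hat insertion --- is already contained in Proposition~\ref{huey} applied to the union link and requires no new argument.
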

\begin{proof}
  The action of the intersection points $\widehat{c}_{i,j}$ goes to zero as $\epsilon$ goes to zero, and therefore we can assume that for $\epsilon$ small enough  $\mathfrak{d}_{-0}=0$ (i.e. there are no nessies); see \cite[Lemma 7.2]{Floer_cob} and \cite[Lemma 7.8]{Floer_cob}. Then all holomorphic curves that appear in the definition of $\mathfrak{d}$ appear also in the definition of the differential of ${\mathcal D}_{\mathbf{C}_{0,\epsilon} \cup \mathbf{C}_{1,\epsilon}}$, and therefore the lemma follows from Lemmas \ref{boundary of c_ij}, \ref{boundary of m_ij}, \ref{boundary of e_ij} and \ref{boundary of a_ij}.
  \end{proof}
  We define the $({\mathcal D}_{\mathbf{C}_1}^{\textit{ab}} \mhyphen {\mathcal D}_{\mathbf{C}_0}^{\textit{ab}})$-bimodule $\underline{\op{Cth}}^{ab}(\mathbf{C}_0, \mathbf{C}_1)$ by
  $$\underline{\op{Cth}}^{ab}(\mathbf{C}_0, \mathbf{C}_1) = \varinjlim \underline{\op{Cth}}^Q(\mathbf{C}_{0, \epsilon'(Q)}, \mathbf{C}_{1, \epsilon'(Q)}).$$

  Note that the dg-morphisms ${\mathcal D}_{\mathbf{C}_i}^{\mathit{ab}} \to {\mathcal D}_{\mathbf{C}_i}$ mage $\underline{\op{Cth}}(\mathbf{C}_0, \mathbf{C}_1)$ a $({\mathcal D}_{\mathbf{C}_1}^{\mathit{ab}} \mhyphen {\mathcal D}_{\mathbf{C}_0}^{\mathit{ab}})$-bimodule.
  
  \begin{lemma}
    $\underline{Cth}^{\textit{ab}}(\mathbf{C}_0, \mathbf{C}_1)$ is quasi-isomorphic to $\underline{Cth}(\mathbf{C}_0, \mathbf{C}_1)$
  \end{lemma}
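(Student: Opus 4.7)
The plan is to mimic the proof of Lemma \ref{from D to Dab}, adapting the limit argument from dg-algebras to dg-bimodules. The setup is the following: as $\epsilon$ varies in $(0,1]$ the standard caps $\mathbf{C}_{0,\epsilon}$ and $\mathbf{C}_{1,\epsilon}$ give rise to a one parameter family of Cthulhu complexes $\underline{\op{Cth}}(\mathbf{C}_{0,\epsilon},\mathbf{C}_{1,\epsilon})$, and for $\epsilon < \epsilon'(Q)$ the sub-bimodule $\underline{\op{Cth}}^Q(\mathbf{C}_{0,\epsilon},\mathbf{C}_{1,\epsilon})$ is well-defined and its differential has the explicit form described in the previous lemma. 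We need to produce a morphism of dg-bimodules from the direct limit $\underline{\op{Cth}}^{ab}(\mathbf{C}_0,\mathbf{C}_1)$ to $\underline{\op{Cth}}(\mathbf{C}_0,\mathbf{C}_1)$ inducing an isomorphism on homology, and this should be covariant over the dg-algebra morphisms ${\mathcal D}^{ab}_{\mathbf{C}_i} \to {\mathcal D}_{\mathbf{C}_i}$ from Lemma \ref{from D to Dab}.

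The first step is to produce continuation maps for the Cthulhu complex. A compactly supported Legendrian isotopy of the caps (or, more generally, a regular homotopy of immersed exact Lagrangian cobordisms) produces a Lagrangian concordance between the caps, and counting rigid holomorphic curves with boundary on that concordance gives chain maps
\[
\Phi_{\epsilon',\epsilon} \colon \underline{\op{Cth}}(\mathbf{C}_{0,\epsilon'},\mathbf{C}_{1,\epsilon'}) \to \underline{\op{Cth}}(\mathbf{C}_{0,\epsilon},\mathbf{C}_{1,\epsilon}),
\]
which are linear over the corresponding continuation dg-algebra maps of the cobordism algebras ${\mathcal D}_{\mathbf{C}_{i,\epsilon'}}\to{\mathcal D}_{\mathbf{C}_{i,\epsilon}}$ (and a symmetric pair going in the opposite direction). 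Standard cobordism arguments as in Section \ref{ChekAlg} show that opposite compositions are chain homotopic to the identity, so the $\Phi_{\epsilon',\epsilon}$ are quasi-isomorphisms. Moreover the energy estimates coming from the standard cap construction yield a quantitative control of the action: there exists $\delta_\epsilon > 0$, going to zero as $\epsilon\to 0$, such that $\mathfrak{a}(\Phi_{\epsilon',\epsilon}(x))<\mathfrak{a}(x)+\delta_\epsilon$ for every generator $x$.

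The second step is a ``nothing happens below action $Q$'' statement for the Cthulhu complex, analogous to Lemma \ref{if nothing happens nothing happens}. For $\epsilon < \epsilon'(Q)$ the generators of action $<Q$ are in canonical bijection (short chords, intersection points, and long chords $\hat{a}_{i,j}$), and all moduli spaces involving only such generators are regular, so a neck-stretching and SFT compactness argument shows that the continuation map $\Phi_{\epsilon',\epsilon}$ is chain-homotopic to a map that restricts to the canonical bijection on generators of action $<Q$; this step uses that below the action threshold no degenerations can occur except trivial strips, exactly as in the proof of Lemma \ref{if nothing happens nothing happens}, so it is essentially formal once the Cthulhu continuation maps are in hand.

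The third step is to assemble these pieces into a limit statement. A bimodule analog of Lemma \ref{taking limits} holds: if $\{{\mathcal M}_n,f_n,g_n\}$ is a sequence of action-filtered dg-bimodules over action-filtered dg-algebras, with $f_n\circ g_n$ and $g_n\circ f_n$ homotopic to the identity, with an action-control sequence $\delta_n$, and with a compatible family of action-truncation isomorphisms $i_n\colon {\mathcal N}^{Q_n}\to {\mathcal M}_n^{Q_n}$ satisfying \eqref{Qn grows fast}, then there is a quasi-isomorphism ${\mathcal N}\to {\mathcal M}_0$. The proof is verbatim the same as that of Lemma \ref{taking limits}, just remembering that all maps are morphisms of bimodules over the corresponding morphisms of algebras. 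Choosing $\epsilon_n < \epsilon'(Q_n)$ with $Q_n$ growing faster than $\sum_{i<n}\delta_i$ (possible because the concatenation of the Legendrian isotopies $\bs{\Lambda}_{0,\epsilon_{i+1}}\cup\bs{\Lambda}_{1,\epsilon_{i+1}}\to \bs{\Lambda}_{0,\epsilon_i}\cup\bs{\Lambda}_{1,\epsilon_i}$ has finite length), applying this bimodule lemma to ${\mathcal M}_n=\underline{\op{Cth}}(\mathbf{C}_{0,\epsilon_n},\mathbf{C}_{1,\epsilon_n})$ and ${\mathcal N}=\underline{\op{Cth}}^{ab}(\mathbf{C}_0,\mathbf{C}_1)$, and then composing with the continuation map to $\underline{\op{Cth}}(\mathbf{C}_0,\mathbf{C}_1)=\underline{\op{Cth}}(\mathbf{C}_{0,1},\mathbf{C}_{1,1})$ yields the desired quasi-isomorphism.

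The main technical obstacle will be to make sure the Cthulhu continuation maps exist with the right invariance and action-control properties in the immersed setting of Section \ref{sec:cthulhu-complex-with}; once this is established, the rest is a formal repetition of the dg-algebra arguments of Section \ref{sec: cap algebra}.
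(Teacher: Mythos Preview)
Your overall strategy is correct and matches the paper's: construct continuation maps for the Cthulhu bimodules that are compatible with the dg-algebra continuation maps, verify they preserve the action-truncations, and then invoke a bimodule version of Lemma~\ref{taking limits}. The difference lies in how the continuation maps are built.

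You propose to construct the Cthulhu continuation maps directly and geometrically, by counting rigid curves on a Lagrangian concordance interpolating between the caps, and you correctly flag that establishing existence, invariance, and action-control of these maps in the immersed setting is ``the main technical obstacle''. The paper sidesteps precisely this obstacle. It observes that the differential on the intersection points $\hat c^\sigma_{i,j}$ is independent of $\epsilon$, so it is enough to produce continuation maps on the sub-bimodule $\underline{\op{LCC}}(\bs{\Lambda}_{0,\epsilon},\bs{\Lambda}_{1,\epsilon})$ generated by chords and extend by the identity on intersection points. It then derives $\underline{\op{LCC}}$ algebraically from the Chekanov--Eliashberg algebra ${\mathcal A}_\F(\bs{\Lambda}_{0,\epsilon}\cup\bs{\Lambda}_{1,\epsilon})$: one quotients by the differential ideal ${\mathcal M}_\epsilon$ generated by chords going the wrong way and by non-composable concatenations, and identifies $\underline{\op{LCC}}$ with the subspace of the quotient consisting of words starting on $\bs{\Lambda}_{1,\epsilon}$ and ending on $\bs{\Lambda}_{0,\epsilon}$. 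Since the already-constructed algebra continuation maps respect start and end components of chords, they descend to the quotient and hence give the bimodule continuation maps for free, with all the required properties inherited from the algebra case.

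Your route would work but requires building new invariance machinery; the paper's route is more economical because it reduces everything to the algebra continuation maps and Lemma~\ref{if nothing happens nothing happens} that were already established in Section~\ref{ChekAlg}.
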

  \begin{proof}
    First we need to construct continuation maps between $\underline{Cth}(\mathbf{C}_{0, \epsilon}, \mathbf{C}_{1, \epsilon})$ and $\underline{Cth}(\mathbf{C}_{0, \epsilon'}, \mathbf{C}_{1, \epsilon'})$ for $\epsilon, \epsilon' \in (0,1]$ which are compatible with the continuation maps between ${\mathcal D}_{\mathbf{C}_{i,\epsilon}}$ and ${\mathcal D}_{\mathbf{C}_{i, \epsilon'}}$, induce isomorphisms in homology, and coincide with the canonical identification on short and long chords when $\epsilon$ and $\epsilon'$ are small enough. Since the differential on the intersection points does not depend on $\epsilon$ it is enough to define the continuation maps geometrically between the submodules  $\underline{LCC}(\bs{\Lambda}_{0, \epsilon}, \bs{\Lambda}_{1, \epsilon})$ and $\underline{LCC}(\bs{\Lambda}_{0, \epsilon'}, \bs{\Lambda}_{1, \epsilon'})$ generated by the  chords and extend them to the identity on the interserction points. The submodule $\underline{LCC}(\bs{\Lambda}_{0, \epsilon}, \bs{\Lambda}_{1, \epsilon})$ (and of course the same for $\epsilon'$) can be derived from the Chekanov-Eliashberg algebra ${\mathcal A}_\F(\bs{\Lambda}_{0, \epsilon} \cup \bs{\Lambda}_{1, \epsilon})$ as follows. We define the bilateral ideal ${\mathcal M}_\epsilon  \subset {\mathcal A}_\F(\bs{\Lambda}_{0, \epsilon} \cup \bs{\Lambda}_{1, \epsilon})$ generated by
    \begin{itemize}
    \item chords from $\Lambda_{0, \epsilon}$ to $\Lambda_{1, \epsilon}$ and
    \item words $ab$ where the end  point of $a$ is $\Lambda_{0, \epsilon}$ and the starting point of $b$ is in $\Lambda_{1, \epsilon}$ or vice versa. 
    \end{itemize}
    Since the differential of every chord is a sum of composable words, ${\mathcal M}$ is a differential ideal. It is easy to see that $\underline{LCH}(\mathbf{\Lambda}_{0, \epsilon}, \mathbf{\Lambda}_{1, \epsilon})$ is isomorphic to the vector subspace of   ${\mathcal A}_\F(\bs{\Lambda}_{0, \epsilon} \cup \bs{\Lambda}_{1, \epsilon})/{\mathcal M}_\epsilon$ generated by words with starting points on $\bs{\Lambda}_{1, \epsilon}$ and end point on $\bs{\Lambda}_{0,\epsilon}$. Since the continuation maps between ${\mathcal A}_\F(\bs{\Lambda}_{0, \epsilon} \cup \bs{\Lambda}_{1, \epsilon})$ and ${\mathcal A}_\F(\bs{\Lambda}_{0, \epsilon'} \cup \bs{\Lambda}_{1, \epsilon'})$ preserve the starting and end point of every chord, they induce continuation maps between $\underline{LCC}(\bs{\Lambda}_{0, \epsilon}, \bs{\Lambda}_{1, \epsilon})$ and $\underline{LCC}(\bs{\Lambda}_{0, \epsilon'}, \bs{\Lambda}_{1, \epsilon'})$, and therefore between $\underline{\op{Cth}}(\mathbf{C}_{0, \epsilon}, \mathbf{C}_{1, \epsilon})$ and $\underline{\op{Cth}}(\mathbf{C}_{0, \epsilon'}, \mathbf{C}_{1, \epsilon'})$. The lemma is now proved by a bimodule version of Lemma \ref{taking limits}.
  \end{proof}

  Let the $\vec{\mathcal S}^+_{\slashed{\mathbf{C}}_1, \slashed{\mathbf{C}}_0}$ be the $(\vec{\mathcal A}^+_{\slashed{\mathbf{C}}_1} \mhyphen \vec{\mathcal A}^+_{\slashed{\mathbf{C}}_1})$-bimodule
  $$\vec{\mathcal S}^+_{\slashed{\mathbf{C}}_1, \slashed{\mathbf{C}}_0} =  \vec{\mathcal A}^+_{\slashed{\mathbf{C}}_1}\otimes_{{\mathcal A}^+_{\slashed{\mathbf{C}}_1}} {\mathcal S}^+_{\slashed{\mathbf{C}}_1, \slashed{\mathbf{C}}_0} \otimes_{{\mathcal A}^+_{\slashed{\mathbf{C}}_0}} {\mathcal A}^+_{\slashed{\mathbf{C}}_0}.$$
  If we unwrap the definition, we see that it is generated by elements
  \begin{itemize}
  \item   $\hat{e}^\sigma_{i,j}$ for every $\sigma \in \pi_0(\mathbf{S})$ and indices $i,j$ such that $1 \le i \le k^1_\sigma$ and $1 \le j \le k^0_\sigma$, and
  \item $\hat{a}_{i,j}$ for every chord $a$ of $\mathbf{S}$ and indices $i,j$ such that $1 \le i \le k^1_{\sigma_s(a)}$ and $1 \le j \le k^0_{\sigma_e(a)}$
\end{itemize}
and has differential
\begin{align*}
  \mathfrak{d}(\hat{e}^\sigma_{i,j})= &  \sum_{i <h<k^1_\sigma} e_{1; i,h}^\sigma \hat{e}_{h,j}^\sigma + \sum_{1 \le h \le j} \hat{e}^\sigma_{i,h} e^\sigma_{0; h,j}, \\
    \mathfrak{d}(\hat{a}_{i,j}) = &\widehat{(\partial_{\slashed{\mathbf{S}}})}_{i,j} + \sum_{1 <h < k^1_{\sigma_s(a)}} \hat{e}_{i,h}^{\sigma_s(a)} a_{0;h,j}  + \sum_{i<h < k^1_{\sigma_s(a)}} e_{1; i,h}^{\sigma_s(a)} \hat{a}_{h,j}+ \\
    & \sum_{1<h<k^0_{\sigma_e(a)}} \hat{a}_{i,h}e^{\sigma_e(a)}_{0; h,j} + \sum_{1 \le h \le j}  a_{1; i,h}\hat{e}^{\sigma_e(a)}_{h,j}.
\end{align*}
\begin{lemma}\label{elimination of chat and mhat}
  There is a chain map $\hat{p} \colon \underline{\op{Cth}}^{\textit{ab}}(\mathbf{C}_0, \mathbf{C}_1) \to \vec{\mathcal S}^+_{\slashed{\mathbf{C}}_1, \slashed{\mathbf{C}}_0}$ which induces an isomorphism in homology and makes the following diagram commutes
  $$\xymatrix{
    {\mathcal D}_{\mathbf{C}_1}^{\textit{ab}} \otimes   \underline{\op{Cth}}^{ab}(\mathbf{C}_0, \mathbf{C}_1) \otimes {\mathcal D}_{\mathbf{C}_0}^{\textit{ab}} \ar[r] \ar[d]_{p_1 \otimes \hat{p} \otimes p_0}  & \underline{\op{Cth}}^{ab}(\mathbf{C}_0, \mathbf{C}_1) \ar[d]_{\hat{p}} \\
    \vec{\mathcal A}^+_{\slashed{\mathbf{C}}_1} \otimes \vec{\mathcal S}^+_{\slashed{\mathbf{C}}_1, \slashed{\mathbf{C}}_0}\otimes \vec{\mathcal A}^+_{\slashed{\mathbf{C}}_1} \ar[r] & \vec{\mathcal S}^+_{\slashed{\mathbf{C}}_1, \slashed{\mathbf{C}}_0}
  }$$
  where the horizontal arrows are the bimodule multiplicatiosn and $p_i$ are the quotient maps.
\end{lemma}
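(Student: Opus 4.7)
The plan is to take $\hat{p}$ to be the natural quotient map killing all the generators $\hat{c}^\sigma_{i,j}$ and $\hat{m}^\sigma_{i,j}$ on the bimodule side, combined with the quotient maps $p_0, p_1$ on the coefficient algebras killing the analogous algebra generators $c^\sigma_{i,j}$ and $m^\sigma_{i,j}$. More concretely, since $\underline{\op{Cth}}^{\textit{ab}}(\mathbf{C}_0, \mathbf{C}_1)$ is freely generated as a bimodule over $({\mathcal D}^{\textit{ab}}_{\mathbf{C}_1} \mhyphen {\mathcal D}^{\textit{ab}}_{\mathbf{C}_0})$ by the elements $\hat{c}^\sigma_{i,j}, \hat{m}^\sigma_{i,j}, \hat{e}^\sigma_{i,j}, \hat{a}_{i,j}$, and $\vec{\mathcal S}^+_{\slashed{\mathbf{C}}_1, \slashed{\mathbf{C}}_0}$ is freely generated as a bimodule over $(\vec{\mathcal A}^+_{\slashed{\mathbf{C}}_1} \mhyphen \vec{\mathcal A}^+_{\slashed{\mathbf{C}}_0})$ by the subset $\hat{e}^\sigma_{i,j}, \hat{a}_{i,j}$, the map $\hat{p}$ is just the linear projection onto the span of words involving only $e, \hat{e}, a, \hat{a}$ letters. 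The compatibility diagram with the bimodule multiplications is tautological from this description.

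To verify $\hat{p}$ is a chain map, let ${\mathcal K}$ denote its kernel, spanned by words containing at least one generator of type $c, m, \hat{c}, \hat{m}$. Inspection of the formulas for $\mathfrak{d}$ in this section and for $\partial^{\textit{ab}}_{\mathbf{C}}$ in Section \ref{sec: cap algebra} shows at once that ${\mathcal K}$ is a subcomplex: every summand in $\mathfrak{d}\hat{c}, \mathfrak{d}\hat{m}, \partial^{\textit{ab}}_{\mathbf{C}}c, \partial^{\textit{ab}}_{\mathbf{C}}m$ visibly lies in ${\mathcal K}$, and for $\mathfrak{d}\hat{e}, \mathfrak{d}\hat{a}$ (and similarly $\partial^{\textit{ab}}_{\mathbf{C}} e, \partial^{\textit{ab}}_{\mathbf{C}} a$) the parts falling outside ${\mathcal K}$ are precisely the explicitly-written terms that reproduce, under the projection, the differential of $\vec{\mathcal S}^+_{\slashed{\mathbf{C}}_1, \slashed{\mathbf{C}}_0}$ as displayed in the lemma statement; the $\hat{\mathfrak{m}}$ terms contain at least one $m$ or $\hat{m}$ and therefore lie in ${\mathcal K}$.

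The main step, and the main obstacle, is to show that $\hat{p}$ induces an isomorphism on homology, equivalently that ${\mathcal K}$ is acyclic. I proceed in strict analogy with Lemma \ref{elimination of c and m}, using a bimodule version of Lemma \ref{from Chekanov}: given a dg-bimodule whose generators include an elimination pair $(\hat{x}, \hat{y})$ with $\mathfrak{d}\hat{x} = \hat{y}$, $\mathfrak{d}\hat{y} = 0$, and such that the differentials of all remaining generators lie in the sub-bimodule generated by themselves together with $\hat{x}, \hat{y}$ (and analogously on the algebra sides), quotienting out the ideal/sub-bimodule generated by this pair yields a dg-bimodule quasi-isomorphic to the original. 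The proof is a line-by-line adaptation of Chekanov's tame isomorphism argument. I then eliminate pairs in order of increasing $j-i$: the base case $j-i=1$ gives $\mathfrak{d}\hat{c}^\sigma_{i,i+1} = \hat{m}^\sigma_{i,i+1}$ and $\partial^{\textit{ab}}_{\mathbf{C}} c^\sigma_{d;i,i+1} = m^\sigma_{d;i,i+1}$ (the middle summations are vacuous), so these pairs are in elimination position; after eliminating all pairs with $j-i=1$ on both the bimodule and the coefficient algebras, the pairs with $j-i = 2$ become in elimination position, and so on inductively.

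The technical obstacle here is the bookkeeping needed to ensure that Chekanov's triangularity hypothesis is preserved at each stage of the induction, in particular to verify that the $\hat{\mathfrak{m}}$ terms (which a priori could involve generators not yet eliminated) do not obstruct the elimination order. The key observation is that any $m$ or $\hat{m}$ chord appearing in an $\hat{\mathfrak{m}}$-term attached to $\mathfrak{d}$ of a generator with index difference $j-i$ must itself have index difference at most $j-i$, because the differential preserves the grading difference between start and end components; hence it has been eliminated at the same or an earlier step, which is sufficient to run the induction. After all eliminations have been carried out, the remaining dg-bimodule is $\vec{\mathcal S}^+_{\slashed{\mathbf{C}}_1, \slashed{\mathbf{C}}_0}$ equipped precisely with the differential written down before the lemma statement, and the composed quasi-isomorphism is $\hat{p}$.
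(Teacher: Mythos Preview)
Your overall strategy matches the paper's --- define $\hat p$ by killing all $c,m,\hat c,\hat m$-type generators, check the kernel is a subcomplex, and prove acyclicity by Chekanov-style elimination of the $(c,m)$ and $(\hat c,\hat m)$ pairs --- but the specific elimination ordering you propose fails. You claim the base case $j-i=1$ gives $\mathfrak{d}\hat c^\sigma_{i,i+1}=\hat m^\sigma_{i,i+1}$ because ``the middle summations are vacuous''; this is false. In the formula for $\mathfrak{d}\hat c^\sigma_{i,j}$ the two sums range over the components of $\mathbf{C}_1$ above the $i$-th and the components of $\mathbf{C}_0$ below the $j$-th, \emph{not} over $i<h<j$: the index $i$ labels a sheet of $\bs{\Lambda}^1$ while $j$ labels one of $\bs{\Lambda}^0$, so the quantity $j-i$ carries no intrinsic meaning and does not control these ranges. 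Concretely, for $k^1_\sigma=k^0_\sigma=2$ the term $c^\sigma_{1;1,2}\,\hat e^\sigma_{2,2}$ appears in $\mathfrak d\hat c^\sigma_{1,2}$, and the pure generator $c^\sigma_{1;1,2}$ has the same ``$j-i$'' as $\hat c^\sigma_{1,2}$ itself, so it has not been eliminated at that stage.

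The paper repairs this by separating the argument into two steps: first change the coefficient algebras via an action-filtration argument (on each graded piece the induced map is $p_1\otimes\mathrm{id}\otimes p_0$, a quasi-isomorphism by Lemma~\ref{elimination of c and m}), and only then eliminate the $\hat c,\hat m$ pairs --- at which point the obstructing pure $c$'s are already gone and a $j-i$ induction really does go through, since the remaining $\hat c$'s in $\mathfrak d\hat c^\sigma_{i,j}$ are $\hat c^\sigma_{h,j}$ with $h>i$ and $\hat c^\sigma_{i,h}$ with $h<j$. Alternatively you can salvage your one-shot elimination by ordering by action, or equivalently by the \emph{total} index difference in the concatenated labelling of the sheets of $\mathbf C_1\cup\mathbf C_0$ (assigning $k^1_\sigma-i+j$ to $\hat c^\sigma_{i,j}$ and $j-i$ to $c^\sigma_{d;i,j}$): since all short generators go ``upward'' in this labelling and the differential strictly decreases this quantity, every $c,m,\hat c,\hat m$ letter appearing in $\mathfrak d\hat c^\sigma_{i,j}$ lies strictly earlier. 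Your composability ``key observation'' becomes correct once reformulated in these terms.
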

\begin{proof}
  We define $\hat{p}$ by mapping all intersection points $c^\sigma_{d; i,j}, \hat{c}^{\sigma}_{i,j}$ and all chords $m^\sigma_{d; i,j}$ and $\hat{m}^\sigma_{i,j}$ to zero; then it is clear that $\hat{p}$ is a chain map and the diagram commutes. To prove that $\hat{p}$ induces an isomorphism in homology we proceeds in two steps. First we map $c^\sigma_{d; i,j}$ and $m^\sigma_{d; i,j}$ to zero, producing a map
  $$\hat{p}' \colon \underline{\op{Cth}}^{\textit{ab}}(\mathbf{C}_0, \mathbf{C}_1) \to \vec{\mathcal A}^+_{\slashed{\mathbf{C}}_1} \otimes_{{\mathcal D}_{\mathbf{C}_1}^{\textit{ab}}} \underline{\op{Cth}}^{\textit{ab}}(\mathbf{C}_0, \mathbf{C}_1)  \otimes_{{\mathcal D}_{\mathbf{C}_0}^{\textit{ab}}} \vec{\mathcal A}^+_{\slashed{\mathbf{C}}_1}.$$
  If we filter $\underline{\op{Cth}}^{\textit{ab}}(\mathbf{C}_0, \mathbf{C}_1)$ by action, the graded complexes on the left and on the right are directed sums of copies of ${\mathcal D}^{\textit{ab}}_{\mathbf{C}_1} \otimes {\mathcal D}^{\textit{ab}}_{\mathbf{C}_0}$ and  $\vec{\mathcal A}^+_{\slashed{\mathbf{C}}_1} \otimes  \vec{\mathcal A}^+_{\slashed{\mathbf{C}}_0}$ respectively, and the map induced by $\hat{p}$ between the graded complexes is $p_1 \otimes p_1$ on each summand. Since this map induces an isomorphism in homology by Lemma \ref{elimination of c and m}, it follows that $\hat{p}'$ also induces an isomorphism in homology.

  Then we define $\hat{p}'' \colon \vec{\mathcal A}^+_{\slashed{\mathbf{C}}_1} \otimes_{{\mathcal D}_{\mathbf{C}_1}^{\textit{ab}}} \underline{\op{Cth}}^{\textit{ab}}(\mathbf{C}_0, \mathbf{C}_1)  \otimes_{{\mathcal D}_{\mathbf{C}_0}^{\textit{ab}}} \vec{\mathcal A}^+_{\slashed{\mathbf{C}}_1} \to  \vec{\mathcal S}^+_{\slashed{\mathbf{C}}_1, \slashed{\mathbf{C}}_0}$ by sending all $\hat{c}^\sigma_{i,j}$ and $\hat{m}^\sigma_{i,j}$ to zero. Then $\hat{p}''$ induces an isomorphism in homology by an inductive elimination argument for the $\hat{c}^\sigma_{i,j}$ and $\hat{m}^\sigma_{i,j}$ as in Lemma \ref{elimination of c and m}.
\end{proof}

\section{Morphisms of representations from Floer homology} 
\label{sec: morphisms of rep}
In this section we put everything together to prove Theorem \ref{thm: main}. Let $L$ be closed exact Lagrangians in $W$. From Theorem \ref{thm:maingeometric} we obtain an immersed Lagrangian submanifolds $\overline{L}$ such that the Legendrian lifts $L^\dagger$ and $\overline{L}^\dagger$ are Legendrian isotopic, and a decomposition  $\overline{L} = \Sigma \cup \mathbf{C}$ where $\Sigma_i \subset W^{sc}$ is an immersed filling of a Legendrian link $\bs{\Lambda}$ and $\mathbf{C}$ is a standard cap.

\begin{lemma}\label{Sigma induces an augmentation} $\Sigma$ induces an augmentation $\varepsilon_{\Sigma} \colon {\mathcal D}_{\mathbf{C}} \to \mathbb{F}$. 
\end{lemma}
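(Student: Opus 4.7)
The plan is to regard $\overline{L}$ as a closed immersed exact Lagrangian cobordism in $W$ with empty positive and negative ends, nicely split by $\partial W^{sc}$ into $\overline{L}^{-}=\Sigma$ and $\overline{L}^{+}=\mathbf{C}$. Corollary \ref{induced augmentations} then reduces the construction of $\varepsilon_\Sigma$ to producing an augmentation $\varepsilon_{\overline{L}}\colon{\mathcal D}_{\overline{L}}\to\F$, after which one simply sets
$$\varepsilon_\Sigma := \varepsilon_{\overline{L}}\circ\Phi_{\Sigma}\colon{\mathcal D}_{\mathbf{C}}\to\F,$$
with $\Phi_{\Sigma}\colon{\mathcal D}_{\mathbf{C}}\to{\mathcal D}_{\overline{L}}$ the dg-algebra morphism of Definition \ref{dga morphisms induced by cobordisms} attached to the negative piece $\Sigma$.

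To produce $\varepsilon_{\overline{L}}$, I would first identify ${\mathcal D}_{\overline{L}}$ with the Chekanov-Eliashberg dg-algebra ${\mathcal A}_{\F}(\overline{L}^{\dagger})$ of the embedded Legendrian lift $\overline{L}^{\dagger}\subset W\times\R$. By the standard Ekholm-Etnyre-Sullivan description of Legendrian contact homology in contactisations, the Reeb chords of $\overline{L}^{\dagger}$ biject with the self-intersection points of $\overline{L}$, and the Chekanov-Eliashberg differential counts exactly the anchored holomorphic discs in $W$ with boundary on $\overline{L}$ that appear in Definition \ref{cobordism algebra}. The potential-function convention for $\overline{L}^{\dagger}$ and the $z$-coordinate on the contactisation match the positive/negative direction convention for approach to self-intersections, yielding a canonical isomorphism ${\mathcal D}_{\overline{L}}\cong{\mathcal A}_{\F}(\overline{L}^{\dagger})$ of dg-algebras over $\F$.

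By condition (1) of Theorem \ref{thm:maingeometric}, the lift $\overline{L}^{\dagger}$ is Legendrian isotopic to $L^{\dagger}$; since $L$ is embedded, $L^{\dagger}$ has no Reeb chords at all, so ${\mathcal A}_{\F}(L^{\dagger})=\F$ with trivial differential. Legendrian isotopy invariance of the Chekanov-Eliashberg dga, recalled in Section \ref{ChekAlg}, then produces a dg-algebra quasi-isomorphism ${\mathcal A}_{\F}(\overline{L}^{\dagger})\to{\mathcal A}_{\F}(L^{\dagger})=\F$, which is by definition an augmentation. Composing with the identification of the previous paragraph yields the required $\varepsilon_{\overline{L}}$, and hence $\varepsilon_\Sigma$ by the reduction at the beginning.

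The one nontrivial point in the proof is the identification ${\mathcal D}_{\overline{L}}\cong{\mathcal A}_{\F}(\overline{L}^{\dagger})$: although it is a standard consequence of the SFT projection from the symplectisation of $W\times\R$ to $W$, some care is needed to match conventions and to handle the anchored discs of Remark \ref{rem: how we learnt not to worry, but not yet to love interior punctures}. Every other step is a direct citation of material already in place in Sections \ref{ChekAlg} and \ref{sec: immersed cobordisms}, and the chord-genericity assumptions underpinning all of this are supplied by conditions (4) and (5) of Theorem \ref{thm:maingeometric}.
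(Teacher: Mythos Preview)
Your proof is correct and follows essentially the same approach as the paper's: identify ${\mathcal D}_{\overline{L}}$ with ${\mathcal A}_{\F}(\overline{L}^\dagger)$, use the Legendrian isotopy $\overline{L}^\dagger\simeq L^\dagger$ together with the fact that $L^\dagger$ has no Reeb chords to obtain an augmentation of ${\mathcal D}_{\overline{L}}$, and then pull back along $\Phi_\Sigma$. The only cosmetic difference is that you obtain the augmentation explicitly as the continuation map ${\mathcal A}_{\F}(\overline{L}^\dagger)\to{\mathcal A}_{\F}(L^\dagger)=\F$, whereas the paper simply invokes the invariance of ``admits an augmentation'' under Legendrian isotopy.
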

See \cite{Panrutherford} for a similar result.
\begin{proof}
  Since $L^\dagger$ has no Reeb chords its Chekanov-Eliashberg algebra admits the trivial augmentation.  Let ${\mathcal D}_{\overline{L}}$ be the cobordism algebra of $\overline{L}$, which in this case coincides with the Chekanov-Eliashverg algebra over $\F$ of $\overline{L}^\dagger$.  The existence of augmentations is a Legendrian isotopy invariant, and  therefore there exists an augmentation $\varepsilon \colon {\mathcal D}_{\overline{L}} \to \mathbb{F}$. Let
  $$\Phi_{\Sigma} \colon {\mathcal D}_{\mathbf C} \to {\mathcal D}_{\overline{L}}$$
  be the dga map from Definition \ref{dga morphisms induced by cobordisms}. Then  $\varepsilon_{\Sigma}= \varepsilon \circ \Phi_\Sigma$ is an augmentation of ${\mathcal D}_{\mathbf C}$.
\end{proof}
We define an ${\mathcal A}_{\mathbf{S}}$-module $V_L$ as follows. The augmentation $\epsilon_\Sigma \colon {\mathcal D}_{\mathbf{C}} \to \F$ induces an augmentation $\varepsilon_{\Sigma} \colon \vec{\mathcal A}^+_{\slashed{\mathbf{C}}} \to \F$ by Lemma \ref{from D to Dab} and Lemma \ref{elimination of c and m}. This augmentation defined a  ${\mathcal A}_{\mathbf{S}}$-module $V_L$ by Corollary \ref{the point of all this mess 2}.
\begin{lemma}
  Let $V_L$ be the ${\mathcal A}_{\mathbf{S}}$-module associated to $L$ by the above construction. If $D_\sigma$ is the Lagrangian cocore of the Weinstein handle attached to $S_\sigma$ for $\sigma \in \pi_0(\mathbf{S})$ and $L$ intersects $D_\sigma$ transversely, then
  \begin{itemize}
 \item $\dim_{\F} (\sigma \cdot V_{L})=|L \cap D_\sigma|$, and 
\item $\chi(\sigma \cdot V_{L})=L \bullet D_\sigma$ when $L$ is endowed with an orientation.
\end{itemize}
\end{lemma}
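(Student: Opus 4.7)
My plan is to unwind the construction of $V_L$ from Section \ref{sec: cap algebra} and the preceding algebraic sections, and then match the resulting combinatorial data with geometric intersection data of $L$.

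\textbf{Step 1: the underlying $\kk_{\mathbf{S}}$-module of $V_L$.} Unwinding Corollary \ref{the point of all this mess 2} together with Corollary \ref{the point of all this mess}, Lemma \ref{the meaning of e}, and Corollary \ref{from aug to rep 2}, the underlying $\kk_{\mathbf{S}}$-module of $V_L$ is $\kk_{\mathbf{C}}$ with $\kk_{\mathbf{S}}$-action via the non-unital ring map $i^*\colon \kk_{\mathbf{S}}\to \kk_{\mathbf{C}}$ of Section \ref{sec: idempotents and dga}, where $i\colon \mathbf{C}\to \mathbf{S}$ sends each connected component of $\bs{\Lambda}=\partial\mathbf{C}$ to the attaching sphere of which it is a Reeb parallel copy. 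Therefore $\sigma\cdot V_L$ is spanned over $\F$ by $i^{-1}(\sigma)$, so $\dim_{\F}(\sigma\cdot V_L)=k_\sigma$, the number of parallel copies of $C_\sigma$ used in the standard cap $\mathbf{C}$.

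\textbf{Step 2: $k_\sigma=|L\cap D_\sigma|$.} This is read off from the proof of Theorem \ref{thm:maingeometric}. Once $L$ has been pushed back by the negative Liouville flow so that $L\cap W^{crit}$ lies in the neighbourhoods $V_\sigma\cong D_\epsilon^*\overline{C}_\sigma$ provided by Lemma \ref{nice coordinates}, the cocore $D_\sigma$ is identified with the cotangent fibre over the centre of $\overline{C}_\sigma$. The sheets of the front projection of $L^\dagger\cap V_\sigma$ over this centre are in bijection with $L\cap D_\sigma$, and the construction replaces each such sheet by one parallel copy $C_\sigma^{(i)}$ of the core. Hence $k_\sigma=|L\cap D_\sigma|$; the preliminary Liouville flow applied to $L$ does not change this count, as it is a Hamiltonian isotopy under which $D_\sigma$ is deformed through Lagrangian cocores transverse to $L$.

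\textbf{Step 3: Euler characteristic.} The grading on $V_L=\kk_{\mathbf{C}}$ is given by the values $\mathfrak{p}^\sigma_i$ of the Maslov potential at the chosen reference points on $\Lambda_\sigma^{(i)}$, inherited by Legendrian isotopy from the grading of the oriented $L$. For each $i$, let $p_i\in L\cap D_\sigma$ be the corresponding intersection point and $\varepsilon(p_i)\in\{\pm 1\}$ its orientation sign. A standard local computation in the cotangent model (the Maslov potential on a graphical Legendrian in $J^1\overline{C}_\sigma$ is determined modulo $2$ by the Morse index of the defining function, and the same index determines the sign of intersection of the graph with the cotangent fibre) yields $(-1)^{\mathfrak{p}^\sigma_i}=\varepsilon(p_i)$. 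Summing gives
\[
\chi(\sigma\cdot V_L)=\sum_{i=1}^{k_\sigma}(-1)^{\mathfrak{p}^\sigma_i}=\sum_{p\in L\cap D_\sigma}\varepsilon(p)=L\bullet D_\sigma.
\]

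\textbf{Main obstacle.} The genuine content is in Step 3: one must verify that the Maslov-potential convention used to grade $V_L$ and the orientation convention defining the algebraic intersection number $L\bullet D_\sigma$ match under the identifications of Lemma \ref{nice coordinates}. Steps 1 and 2 are essentially definition-chasing through the construction of the cap and of $V_L$, but establishing the identity $(-1)^{\mathfrak{p}^\sigma_i}=\varepsilon(p_i)$ requires careful bookkeeping of orientations, Maslov indices, and the various isotopies used to produce $\overline{L}$ in order to avoid a global sign discrepancy.
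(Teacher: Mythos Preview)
Your proof is correct and follows essentially the same approach as the paper: identify the underlying $\kk_{\mathbf{S}}$-module of $V_L$ as $\kk_{\mathbf{C}}$ so that $\dim_\F(\sigma\cdot V_L)=k_\sigma$, read off $k_\sigma=|L\cap D_\sigma|$ from the geometric construction of $\overline{L}$, and for the Euler characteristic match the parity of the grading on each basis element with the intersection sign at the corresponding point of $L\cap D_\sigma$. Your Step~3 is in fact more explicit than the paper's own treatment, which simply asserts that the parity of the degree of the basis element corresponding to $C_\sigma^{(i)}$ agrees with the sign of the corresponding intersection point without carrying out the local Maslov/orientation computation.
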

\begin{proof}
  We recall that $V_L$ has as underlying vector space the ring $\kk_{\mathbf{C}}=H^0(\mathbf{C},\kk)$ with idempotents associated to the connected components of $\mathbf{C}$, and thus of $\bs{\Lambda}$. Since the underlying $\kk_{\mathbf{C}}$-module to $V_L$ is $\kk_{\mathbf{C}}$,  $\dim_\F \sigma \cdot V_L=k_\sigma$, i.e.\ the number of components of $\bs{\Lambda}$ which are parallel to $S_\sigma$. By the construction of $\overline{L}$ in Section \ref{sec:geom-constr}, this is equal to the number of intersections between $L$ and $D_\sigma$. This proves the first part of the lemma. For the second part of the lemma, we need to show that the parity of the degree of the canonical basis element of $V_L=H^0(\mathbf{C},\kk)$ that corresponds to the component $C_{\sigma}^{(i)}$ is the same as the parity of the corresponding intersection point between $C_\sigma$ and $L$ which, in turn, is determined by the orientation of $L$. 
\end{proof}
This ends the proof of the first half of Theorem \ref{thm: main}. Now let $L_0$ and $L_1$ be two closed exact Lagrangian submanifolds in $W$ and, for $i=0,1$, let $\varepsilon_{\Sigma_i} \colon {\mathcal D}_{\mathbf{C}_i} \to \F$ be the corresponding  augmentations, and $V_{L_i}$ the induced ${\mathcal A}_{\mathbf{S}}$-modules.
\begin{lemma}\label{lemma: standard position}
  The construction of \ref{thm:maingeometric} can be performed so that the completions of $\widehat{\mathbf{C}}_0$ and $\widehat{\mathbf{C}}_1$ are in standard position   (see Section \ref{Cth(C,C)}).
\end{lemma}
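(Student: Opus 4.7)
The plan is to re-run the construction of Theorem \ref{thm:maingeometric} simultaneously for $L_0$ and $L_1$, exploiting the freedom in the choice of the small constants $c^{d,\sigma}_j$ and of the boundary functions $\widetilde{f}^{d,\sigma}_j$ to guarantee, in one stroke, both the standard-cap conditions for the union $\widehat{\mathbf{C}}_0 \cup \widehat{\mathbf{C}}_1$ and the Reeb-height ordering between $\bs{\Lambda}^0$ and $\bs{\Lambda}^1$. First I would apply the negative Liouville flow to both $L_0$ and $L_1$ at the same time, so that $L_d \cap W^{crit}$ is contained in the union of the fixed neighbourhoods $V_\sigma = F_\sigma(D^*_\epsilon \overline{C}_\sigma)$ for $d=0,1$; thanks to Lemma \ref{nice coordinates} this produces two fronts over each enlarged core $\overline{C}_\sigma$ which can be modified independently.

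Next I would perform the front homotopy of Theorem \ref{thm:maingeometric} for $L_0$ and $L_1$ together: near the centre of each $\overline{C}_\sigma$ the sheets of $L_d$ are made constant with pairwise distinct values $c^{d,\sigma}_1 < \cdots < c^{d,\sigma}_{k^d_\sigma}$, but now I impose the additional constraint
\[
c^{1,\sigma}_i \,<\, c^{0,\sigma}_j \qquad \text{for all } \sigma\in\pi_0(\mathbf{S}),\ 1\le i\le k^1_\sigma,\ 1\le j\le k^0_\sigma,
\]
which is an open condition easily arranged within the arbitrarily small interval $(-\eta,\eta)$. I then relabel the combined collection $\{c^{d,\sigma}_j\}_{d,j}$ as a single strictly increasing sequence $c^\sigma_1 < \cdots < c^\sigma_{k^0_\sigma+k^1_\sigma}$ and, as in the original proof, replace each constant by the scaled bump $c^\sigma_r f_\sigma$. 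Since every pairwise difference is then a non-zero scalar multiple of $f_\sigma$, which by construction has a unique non-degenerate maximum inside $C_\sigma$, the interior Morse-theoretic condition for a standard cap is automatic for the union.

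The only issue is the boundary: near $\partial \overline{C}_\sigma$ the scaled functions are $c^\sigma_r s$, so the differences $\widetilde{f}^\sigma_r - \widetilde{f}^\sigma_{r'}$ are still constants rather than Morse functions on $S_\sigma$. I would therefore perform the same generic boundary perturbation as in Theorem \ref{thm:maingeometric}, but now for the whole collection $\{\widetilde{f}^\sigma_r\}_{r=1}^{k^0_\sigma+k^1_\sigma}$ at once, replacing each $c^\sigma_r$ by a function $\widetilde{f}^\sigma_r \colon S_\sigma\to\R$ that is $C^0$-close to $c^\sigma_r$ and such that every difference $\widetilde{f}^\sigma_r - \widetilde{f}^\sigma_{r'}$ is Morse with a unique maximum and a unique minimum. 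Being the conjunction of finitely many open dense conditions, this is achievable by an arbitrarily small perturbation, and it preserves the strict ordering $c^{1,\sigma}_i < c^{0,\sigma}_j$ provided the perturbation is small enough; consequently the Reeb-height separation between the components of $\bs{\Lambda}^1$ and $\bs{\Lambda}^0$ is retained.

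Finally I would carry out the Liouville-stretching step $\overline{L}_T$ of Theorem \ref{thm:maingeometric} for both immersions simultaneously, using the same parameter $T$, in order to enforce condition (5) of that theorem for $\overline{L}_0 \cup \overline{L}_1$. The main technical point is really only checking that independently performing the homotopies for $L_0$ and $L_1$ remains compatible with the joint genericity needed for $\widehat{\mathbf{C}}_0\cup \widehat{\mathbf{C}}_1$ to be a standard cap; but since all the relevant conditions (distinctness of critical values, Morseness of pairwise differences, chord-genericity of $\bs{\Lambda}^0 \cup \bs{\Lambda}^1$) are open and dense in the joint space of perturbation data, no further obstruction arises.
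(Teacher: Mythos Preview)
Your proposal is correct and follows essentially the same approach as the paper; indeed, the paper's own proof is a two-line sketch (``take two suitable Hamiltonian isotopic copies $C_0$ and $C_1$ of the core, push $L_0 \cap H$ to a small neighbourhood of $C_0$ and $L_1 \cap H$ to a small neighbourhood of $C_1$, and perform the construction of Theorem~\ref{thm:maingeometric} independently in those neighbourhoods''), and your write-up spells out the details that this sketch leaves implicit---including the height separation $c^{1,\sigma}_i < c^{0,\sigma}_j$ and the joint genericity of the boundary perturbations---in a manner that also matches the remark already planted inside the proof of Theorem~\ref{thm:maingeometric} about handling two Lagrangians simultaneously.
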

\begin{proof}
  One should take two suitable Hamiltonian isotopic copies $C_0$ and $C_1$ of the core, push $L_0 \cap H$ to a small neighbourhood of $C_0$ and $L_1 \cap H$ a small neighbourhood of $C_1$, and perform the construction of \ref{thm:maingeometric} independently in those neighbourhoods.
\end{proof}
\begin{rem}
  Both the definition of standard position and Lemma \ref{thm:maingeometric} can be extended to any finite number of closed Lagrangian submanifolds.
\end{rem}

  \begin{lemma}\label{lemma: from floer to cthulhu}
    The Floer cohomology $HF(L_0, L_1)$ is isomorphic to the homology $H\op{Cth}^*_{\varepsilon_{\Sigma_0}, \varepsilon_{\Sigma_1}}(\mathbf{C}_0, \mathbf{C}_1)$.
  \end{lemma}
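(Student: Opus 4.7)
The plan is to combine a Legendrian-isotopy invariance step with the relative exact triangle of Theorem \ref{relative exact triangle}, supplemented by a subcritical-vanishing result. At a high level, I would factor the desired isomorphism as an invariance $HF(L_0,L_1)\cong HF(\overline{L}_0,\overline{L}_1)$ followed by an identification $HF(\overline{L}_0,\overline{L}_1)\cong H\op{Cth}^*_{\varepsilon_{\Sigma_0},\varepsilon_{\Sigma_1}}(\mathbf{C}_0,\mathbf{C}_1)$.

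First I would establish the invariance. By Theorem \ref{thm:maingeometric}(1), the Legendrian lifts $L_i^\dagger$ and $\overline{L}_i^\dagger$ are Legendrian isotopic in $W\times\R$, and projecting this isotopy yields a regular exact Lagrangian homotopy from $L_i$ to $\overline{L}_i$. Since $L_i^\dagger$ is embedded and carries no Reeb chords, its Chekanov--Eliashberg algebra admits the trivial augmentation; transporting it along the Legendrian isotopy yields an augmentation $\varepsilon_{\overline{L}_i}\colon{\mathcal D}_{\overline{L}_i}\to\F$. This augmentation functions as a bounding cochain making Floer cohomology of the immersed Lagrangian $\overline{L}_i$ well-defined, and invariance of Floer cohomology (with bounding cochains) under regular exact Lagrangian homotopies produces the isomorphism $HF(L_0,L_1)\cong HF(\overline{L}_0,\overline{L}_1)$.

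Next, by Lemma \ref{lemma: standard position} we arrange $\overline{L}_0\cup\overline{L}_1$ to be nicely split at $\partial W^{sc}$ into $\overline{L}_i=\Sigma_i\cup\mathbf{C}_i$ with the caps in standard position; Theorem \ref{thm:maingeometric}(5) together with a suitable choice of relative potentials ensures that all intersection points $\mathbf{C}_0\cap\mathbf{C}_1$ have positive action, verifying the hypothesis of Theorem \ref{relative exact triangle}. By Corollary \ref{induced augmentations} combined with Lemma \ref{Sigma induces an augmentation}, the augmentation induced on the cap algebra ${\mathcal D}_{\mathbf{C}_i}$ by pulling back $\varepsilon_{\overline{L}_i}$ along $\Phi_{\Sigma_i}$ coincides with $\varepsilon_{\Sigma_i}$. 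Since $\overline{L}_i$ has no Legendrian ends one has $H\op{Cth}^*(\overline{L}_0,\overline{L}_1)=HF^*(\overline{L}_0,\overline{L}_1)$, so Theorem \ref{relative exact triangle} yields
$$H\op{Cth}^*_{\varepsilon_{\Sigma_0},\varepsilon_{\Sigma_1}}(\mathbf{C}_0,\mathbf{C}_1)\longrightarrow HF(\overline{L}_0,\overline{L}_1)\longrightarrow H\op{Cth}^*_{\varepsilon_0',\varepsilon_1'}(\Sigma_0,\Sigma_1)\xrightarrow{[1]},$$
where $\varepsilon_i'$ is the restriction of $\varepsilon_{\overline{L}_i}$ to ${\mathcal D}_{\Sigma_i}$.

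The main obstacle is to show that the third term vanishes, namely $H\op{Cth}^*_{\varepsilon_0',\varepsilon_1'}(\Sigma_0,\Sigma_1)=0$. This is a subcritical-vanishing statement reflecting the triviality of the wrapped Fukaya category of the subcritical Weinstein manifold $\widehat{W}^{sc}$: since the skeleton of $\widehat{W}^{sc}$ is isotropic of dimension strictly less than $n$, any compact part of $\Sigma_0$ can be Hamiltonian-displaced off $\Sigma_1$ within $\widehat{W}^{sc}$ by a compactly supported Hamiltonian isotopy that preserves the cylindrical ends; combining this with a positive Reeb wrapping of $\bs{\Lambda}_0$ past $\bs{\Lambda}_1$ eliminates the short chord generators of $LCC(\bs{\Lambda}_0,\bs{\Lambda}_1)$, and a CDRGG-type continuation/acyclicity argument for bilinearized Cthulhu homology handles the remaining long-chord generators. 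After the displacement the Cthulhu complex is trivially acyclic, and invariance of Cthulhu homology under such deformations (the partially-wrapped framework of Appendix \ref{sec:reduc-cont-stopp} furnishes a convenient chain-level model) forces the original complex to be acyclic. Granted this vanishing, the first arrow of the exact triangle is an isomorphism, and composing with the invariance isomorphism of the first step yields the desired $HF(L_0,L_1)\cong H\op{Cth}^*_{\varepsilon_{\Sigma_0},\varepsilon_{\Sigma_1}}(\mathbf{C}_0,\mathbf{C}_1)$.
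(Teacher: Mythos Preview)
Your proof follows the same three-step outline as the paper's: invariance under the regular exact Lagrangian homotopy $L_i\leadsto\overline{L}_i$ (the paper cites \cite[Section~4.4]{generation}), the relative exact triangle of Theorem~\ref{relative exact triangle}, and vanishing of the filling term $H\op{Cth}^*_{\varepsilon_0',\varepsilon_1'}(\Sigma_0,\Sigma_1)$. The only substantive difference is in how you justify the vanishing. The paper observes that since the $\Sigma_i$ have empty negative ends, $H\op{Cth}^*(\Sigma_0,\Sigma_1)\cong HW^*((\Sigma_0,\varepsilon_0),(\Sigma_1,\varepsilon_1))$ by \cite[Appendix~B.1.1]{ekholm-lekili}, and then invokes the vanishing of wrapped Floer homology of immersed exact Lagrangians in subcritical Weinstein manifolds from \cite[Sections~6--7]{generation}. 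Your displacement-plus-wrapping sketch is the geometric mechanism underlying that vanishing, but as written it relies on an invariance of Cthulhu homology under wrapping of the Legendrian ends that is not established in this paper (and Appendix~\ref{sec:reduc-cont-stopp} concerns a different reduction, not this invariance). The paper's route is shorter and defers the analytic work to the cited references; your sketch would need to be fleshed out to a full proof of subcritical vanishing for immersed Lagrangians with bounding cochains, which is essentially what \cite{generation} does.
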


  \begin{proof}
We observe that the intersection points $\hat{c}^\sigma_{i,j}$ have positive action.  Then by Theorem \ref{relative exact triangle} there is an exact triangle
    $$\xymatrix{
      H\op{Cth}^*_{\varepsilon_{\Sigma_0}, \varepsilon_{\Sigma_1}}(\widehat{\mathbf{C}}_0, \widehat{\mathbf{C}}_1) \ar[rr] & & H\op{Cth}_{\varepsilon_0, \varepsilon_1}^*(\overline{L}_0, \overline{L}_1) \ar[dl] \\
      & H\op{Cth}_{\varepsilon_0, \varepsilon_1}^*(\Sigma_0, \Sigma_1). \ar[ul] &
    }$$
    Since $\Sigma_0$ and $\Sigma_1$ have no negative ends, we have $$H\op{Cth}_{\varepsilon_0, \varepsilon_1}^*(\Sigma_0, \Sigma_1) \cong HW^*((\Sigma_0, \varepsilon_0), (\Sigma_1, \varepsilon_1))$$
 by \cite[Appendix B.1.1]{ekholm-lekili}, and therefore $HW^*((\Sigma_0, \varepsilon_0), (\Sigma_1, \varepsilon_1))=0$ because wrapped Floer homology vanishes in subcritical Weinstein manifolds. See \cite[Section 6]{generation} for a definition of the wrapped Floer homology of immersed exact Lagrangian submanifolds and \cite[Section 7]{generation} for its vanishing in subcritical Weinstein manifolds. Thus $H\op{Cth}_{\varepsilon_0, \varepsilon_1}^*(\overline{L}_0, \overline{L}_1)$ is isomorphic to $H\op{Cth}^*_{\varepsilon_{\Sigma_0}, \varepsilon_{\Sigma_1}}(\widehat{\mathbf{C}}_0, \widehat{\mathbf{C}}_1)$.

    Next, $H\op{Cth}_{\varepsilon_0, \varepsilon_1}^*(\overline{L}_0, \overline{L}_1) \cong HF(L_0, L_1)$ because Cthulhu homology for closed Lagrangian manifolds is just Floer homology, which is invariant under regular homotopies that lift to Legendrian isotopies by \cite[Section 4.4]{generation}. 
  \end{proof}

  \begin{lemma}
    There is an isomorphism between $H\op{Cth}^*_{\varepsilon_{\Sigma_0}, \varepsilon_{\Sigma_1}}(\mathbf{C}_0, \mathbf{C}_1)$ and $H^*\op{Rhom}_{\mathcal A}(V_0, V_1)$.
  \end{lemma}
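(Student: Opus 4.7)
The plan is to produce a zig-zag of quasi-isomorphisms linking $\op{Cth}^*_{\varepsilon_{\Sigma_0}, \varepsilon_{\Sigma_1}}(\mathbf{C}_0, \mathbf{C}_1)$ to the morphism complex $\hom_{{\mathcal A}_{\mathbf{S}}}({\mathcal S} \otimes_{{\mathcal A}_{\mathbf{S}}} V_0, V_1)$, at which point Lemma~\ref{the short resolution at last} finishes the job.  First, by the standard tensor--hom adjunction and the fact that $V_i = \F_{\varepsilon_{\Sigma_i}}$ is one-dimensional with trivial differential,
\[
\op{Cth}^*_{\varepsilon_{\Sigma_0}, \varepsilon_{\Sigma_1}}(\mathbf{C}_0, \mathbf{C}_1) \cong \hom_{{\mathcal D}_{\mathbf{C}_1} \mhyphen {\mathcal D}_{\mathbf{C}_0}}\bigl(\underline{\op{Cth}}(\mathbf{C}_0, \mathbf{C}_1),\, \F_{\varepsilon_{\Sigma_1}, \varepsilon_{\Sigma_0}}\bigr).
\]

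Next, I would transport this computation to the algebras $\vec{\mathcal A}^+_{\slashed{\mathbf{C}}_i}$ in two stages.  Using the quasi-isomorphism ${\mathcal D}^{\mathit{ab}}_{\mathbf{C}_i} \to {\mathcal D}_{\mathbf{C}_i}$ of Lemma~\ref{from D to Dab} and the quasi-isomorphism $\underline{\op{Cth}}^{\mathit{ab}}(\mathbf{C}_0, \mathbf{C}_1) \simeq \underline{\op{Cth}}(\mathbf{C}_0, \mathbf{C}_1)$ established in Section~\ref{Cth(C,C)}, the above hom is quasi-isomorphic to
\[
\hom_{{\mathcal D}^{\mathit{ab}}_{\mathbf{C}_1} \mhyphen {\mathcal D}^{\mathit{ab}}_{\mathbf{C}_0}}\bigl(\underline{\op{Cth}}^{\mathit{ab}}(\mathbf{C}_0, \mathbf{C}_1),\, \F_{\varepsilon_{\Sigma_1}, \varepsilon_{\Sigma_0}}\bigr).
\]
Then Lemma~\ref{elimination of chat and mhat} provides a bimodule quasi-isomorphism $\hat{p} \colon \underline{\op{Cth}}^{\mathit{ab}}(\mathbf{C}_0, \mathbf{C}_1) \to \vec{\mathcal S}^+_{\slashed{\mathbf{C}}_1, \slashed{\mathbf{C}}_0}$, compatible with the algebra projections $p_i \colon {\mathcal D}^{\mathit{ab}}_{\mathbf{C}_i} \to \vec{\mathcal A}^+_{\slashed{\mathbf{C}}_i}$ (which are right inverses of quasi-isomorphisms by Lemma~\ref{elimination of c and m}), reducing us to
\[
\hom_{\vec{\mathcal A}^+_{\slashed{\mathbf{C}}_1} \mhyphen \vec{\mathcal A}^+_{\slashed{\mathbf{C}}_0}}\bigl(\vec{\mathcal S}^+_{\slashed{\mathbf{C}}_1, \slashed{\mathbf{C}}_0},\, \F_{\varepsilon_{\Sigma_1}, \varepsilon_{\Sigma_0}}\bigr).
\]

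Finally, since by Lemma~\ref{ho finito le labelle} the algebras $\vec{\mathcal A}^+_{\slashed{\mathbf{C}}_i}$ and the bimodule $\vec{\mathcal S}^+_{\slashed{\mathbf{C}}_1, \slashed{\mathbf{C}}_0}$ are quotients of ${\mathcal A}^+_{\slashed{\mathbf{C}}_i}$ and ${\mathcal S}^+_{\slashed{\mathbf{C}}_1, \slashed{\mathbf{C}}_0}$ by the differential ideals generated by the elements $e^\sigma_{i,j}$ with $i \ge j$, and since by construction the augmentations $\varepsilon_{\Sigma_i}$ are precisely obtained by pulling back augmentations $\slashed{\varepsilon}_i$ that factor through those quotients (this is exactly the content of Corollary~\ref{the point of all this mess 2}), the preceding hom complex is tautologically identified with
\[
\hom_{{\mathcal A}^+_{\slashed{\mathbf{C}}_1} \mhyphen {\mathcal A}^+_{\slashed{\mathbf{C}}_0}}\bigl({\mathcal S}^+_{\slashed{\mathbf{C}}_1, \slashed{\mathbf{C}}_0},\, \F_{\slashed{\varepsilon}_1, \slashed{\varepsilon}_0}\bigr).
\]
By Lemma~\ref{after cast} this last complex is isomorphic to $\hom_{{\mathcal A}_{\mathbf{S}}}({\mathcal S} \otimes_{{\mathcal A}_{\mathbf{S}}} V_0, V_1)$, which by Lemma~\ref{the short resolution at last} computes $R\hom_{{\mathcal A}_{\mathbf{S}}}(V_0, V_1)$.

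The main technical obstacle will be verifying that the successive substitutions of quasi-isomorphic bimodules into the hom functor genuinely induce quasi-isomorphisms on hom complexes.  This requires that $\underline{\op{Cth}}(\mathbf{C}_0, \mathbf{C}_1)$, $\underline{\op{Cth}}^{\mathit{ab}}(\mathbf{C}_0, \mathbf{C}_1)$ and $\vec{\mathcal S}^+_{\slashed{\mathbf{C}}_1, \slashed{\mathbf{C}}_0}$ all be semi-projective bimodules with respect to their respective algebra actions; this should follow from the action filtrations induced by the geometric actions on the generators (Reeb chords and intersection points), since each is free as a bimodule over the corresponding algebra and the filtrations have finite-rank associated graded pieces, mirroring the verification of semi-projectivity for ${\mathcal S}$ in Lemma~\ref{the short resolution at last}.
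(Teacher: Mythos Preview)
Your proposal is correct and follows essentially the same chain of quasi-isomorphisms as the paper's proof: pass from $\underline{\op{Cth}}$ to $\underline{\op{Cth}}^{\mathit{ab}}$, then to $\vec{\mathcal S}^+_{\slashed{\mathbf{C}}_1,\slashed{\mathbf{C}}_0}$ via Lemma~\ref{elimination of chat and mhat}, then to ${\mathcal S}^+_{\slashed{\mathbf{C}}_1,\slashed{\mathbf{C}}_0}$ by the base-change identification, and finally to $\hom_{{\mathcal A}_{\mathbf{S}}}({\mathcal S}\otimes_{{\mathcal A}_{\mathbf{S}}}V_0,V_1)$ via Lemma~\ref{after cast}. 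The only cosmetic differences are that you phrase everything in terms of bimodule $\hom$ rather than the one-sided $\hom(\,\cdot\,\otimes\F_{\varepsilon_0},\F_{\varepsilon_1})$ used in the paper, and that the paper inserts an explicit intermediate bimodule $\underline{\op{Cth}}^{\mathit{abc}}(\mathbf{C}_0,\mathbf{C}_1)={\mathcal D}^{\mathit{ab}}_{\mathbf{C}_1}\otimes_{{\mathcal D}_{\mathbf{C}_1}}\underline{\op{Cth}}(\mathbf{C}_0,\mathbf{C}_1)\otimes_{{\mathcal D}_{\mathbf{C}_0}}{\mathcal D}^{\mathit{ab}}_{\mathbf{C}_0}$ to make the first passage tautological and then compares it directly with $\underline{\op{Cth}}^{\mathit{ab}}$ as free bimodules, rather than appealing to a general semi-projectivity argument as in your last paragraph.
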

  \begin{proof}
    Let $\F_{\varepsilon_i}$ be the ${\mathcal D}^{\textit{ab}}_{\mathbf{C}}$-modules with underlying vector space $\F$ defined by the pull-back of $\epsilon_i$ to ${\mathcal D}_{\mathbf{C}}^{\textit{ab}}$. The first step of the proof is to show that there is an isomorphism $$H\op{Cth}^*_{\varepsilon_{\Sigma_0}, \varepsilon_{\Sigma_1}}(\mathbf{C}_0, \mathbf{C}_1) \cong H \hom_{{\mathcal D}^{\mathbf{ab}}_{\mathbf{C}_1}} (\underline{\op{Cth}}^{\textit{ab}}(\mathbf{C}_0, \mathbf{C}_1) \otimes_{{\mathcal D}^{\textit{ab}}_{\mathbf{C}_0}} \F_{\varepsilon_0}, \F_{\varepsilon_1}).$$
    We will prove it in two steps. We consider the intermediate bimodule
    $$\underline{\op{Cth}}^{\textit{abc}}(\mathbf{C}_0, \mathbf{C}_1) = {\mathcal D}^{\mathit{ab}}_{\mathbf{C}_1} \otimes_{{\mathcal D}_{\mathbf{C}_1}}\underline{\op{Cth}}(\mathbf{C}_0, \mathbf{C}_1) \otimes_{{\mathcal D}_{\mathbf{C}_0}} {\mathcal D}^{\mathit{ab}}_{\mathbf{C}_0}$$
    (the modifier $\mathit{abc}$ stands for ``abstract coefficients''). Then there is a tautological isomorphism 
    $$\op{Cth}_{\varepsilon_0, \varepsilon_1}(\mathbf{C}_0, \mathbf{C}_1) \cong
    \hom_{{\mathcal D}^{\mathit{ab}}_{\mathbf{C}_1}}(\underline{\op{Cth}}^{\textit{abc}}(\mathbf{C}_0, \mathbf{C}_1) \otimes_{{\mathcal D}^{\mathit{ab}}_{\mathbf{C}_1}} \F_{\epsilon_0}, \F_{\epsilon_1})$$
    coming from the properties of tensort product. Moreover $\underline{\op{Cth}}^{\mathit{ab}}(\mathbf{C}_0, \mathbf{C}_1)$ and $\underline{\op{Cth}}^{\mathit{abc}}(\mathbf{C}_0, \mathbf{C}_1)$ are both free bimodules and are quasi-isomorphic (see the proof of Lemma \ref{elimination of chat and mhat}), and therefore $\hom_{{\mathcal D}^{\mathit{ab}}_{\mathbf{C}_1}}(\underline{\op{Cth}}^{\textit{abc}}(\mathbf{C}_0, \mathbf{C}_1) \otimes_{{\mathcal D}^{\mathit{ab}}_{\mathbf{C}_1}} \F_{\epsilon_0}, \F_{\epsilon_1})$ is quasi-isomorphic to $\hom_{{\mathcal D}^{\mathit{ab}}_{\mathbf{C}_1}}(\underline{\op{Cth}}^{\textit{ab}}(\mathbf{C}_0, \mathbf{C}_1) \otimes_{{\mathcal D}^{\mathit{ab}}_{\mathbf{C}_1}} \F_{\epsilon_0}, \F_{\epsilon_1})$. 

    Next, using the $\vec{\mathcal A}^+_{\slashed{\mathbf{C}}_i}$-module structure on $\F_{\varepsilon_i}$ induced by the morphism $\vec{\mathcal A}^+_{\slashed{\mathbf{C}_i}} \to {\mathcal D}^{\mathit{ab}}_{\mathbf{C}_i}$ defined in Lemma \ref{elimination of c and m} we obtain an isomorphism
    $$H \hom_{{\mathcal D}^{\mathbf{ab}}_{\mathbf{C}_1}} (\underline{\op{Cth}}^{\textit{ab}}(\mathbf{C}_0, \mathbf{C}_1) \otimes_{{\mathcal D}^{\textit{ab}}_{\mathbf{C}_0}} \F_{\varepsilon_0}, \F_{\varepsilon_1}) \cong H\hom_{\vec{\mathcal A}^+_{\slashed{\mathbf{C}}_1}}(\vec{\mathcal S}^+_{\slashed{\mathbf{C}}_0, \slashed{\mathbf{C}}_1} \otimes_{\vec{\mathcal A}^+_{\slashed{\mathbf{C}}_0}} \F_{\varepsilon_0}, \F_{\varepsilon_1})$$
    by Lemma \ref{elimination of chat and mhat} (or, rather, by its proof, since the elimination of the generators $\hat{c}^\sigma_{i,j}$ and $\hat{m}^\sigma_{i,j}$ can also be performed in  $$\hom_{{\mathcal D}^{\mathbf{ab}}_{\mathbf{C}_1}} (\underline{\op{Cth}}^{\textit{ab}}(\mathbf{C}_0, \mathbf{C}_1) \otimes_{{\mathcal D}^{\textit{ab}}_{\mathbf{C}_0}} \F_{\varepsilon_0}, \F_{\varepsilon_1})).$$

  We recall that
  $$\vec{\mathcal S}^+_{\slashed{\mathbf{C}}_1, \slashed{\mathbf{C}}_0} =  \vec{\mathcal A}^+_{\slashed{\mathbf{C}}_1}\otimes_{{\mathcal A}^+_{\slashed{\mathbf{C}}_1}} {\mathcal S}^+_{\slashed{\mathbf{C}}_1, \slashed{\mathbf{C}}_0} \otimes_{{\mathcal A}^+_{\slashed{\mathbf{C}}_0}} {\mathcal A}^+_{\slashed{\mathbf{C}}_0},$$
  and therefore
  $$\hom_{\vec{\mathcal A}^+_{\slashed{\mathbf{C}}_1}}(\vec{\mathcal S}^+_{\slashed{\mathbf{C}}_0, \slashed{\mathbf{C}}_1} \otimes_{\vec{\mathcal A}^+_{\slashed{\mathbf{C}}_0}} \F_{\varepsilon_0}, \F_{\varepsilon_1}) \cong \hom_{{\mathcal A}^+_{\slashed{\mathbf{C}}_1}}({\mathcal S}^+_{\slashed{\mathbf{C}}_0, \slashed{\mathbf{C}}_1} \otimes_{{\mathcal A}^+_{\slashed{\mathbf{C}}_0}} \F_{\varepsilon_0}, \F_{\varepsilon_1})$$
  by the naturality properties of tensor product. 

  Finally, Lemma \ref{after cast} gives an isomorphism
  $$hom_{{\mathcal A}^+_{\slashed{\mathbf{C}}_1}}({\mathcal S}^+_{\slashed{\mathbf{C}}_0, \slashed{\mathbf{C}}_1} \otimes_{{\mathcal A}^+_{\slashed{\mathbf{C}}_0}} \F_{\varepsilon_0}, \F_{\varepsilon_1}) \cong \hom_{\mathcal A}({\mathcal S} \otimes_{\mathcal A} V_0, V_1).$$
 Since ${\mathcal S}$ is a semi-projective resolution of the diagonal bimodule by Lemma \ref{the short resolution at last}, we have
  $$H^*\hom_{\mathcal A}({\mathcal S} \otimes_{\mathcal A} V_0, V_1)= H^*\op{Rhom}_{\mathcal A}(V_0, V_1).$$
\end{proof}
This ends the proof of the second half of Theorem \ref{thm: main}.

\section{Proof of Corollary \ref{cor:main}}

We begin with some standard results from homological algebra. A dga $\mathcal{A}$ with a choice of unital dg-morhism  $\kk_{\mathbf{S}} \to \mathcal{A}$, where the differential of the domain is trivial, is called a $\kk_{\mathbf{S}}$-dga. A morphism $\mathcal{B} \to \mathcal{A}$ of $\kk_{\mathbf{S}}$-dgas, or $\kk_{\mathbf{S}}$-dg morphism, is a dg-morphism in the usual sense that commutes with the canonical choices of inclusions of $\kk_{\mathbf{S}}$. In the following section all dgas will be assumed to be $\kk_{\mathbf{S}}$-dgas, and all dg-morphisms will be assumed to be $\kk_{\mathbf{S}}$-dg morphisms and $\Z$-graded, unless stated otherwise.
\begin{prop}
 
  Assume that $\mathcal{A}$ is a $\Z$-graded $\kk_{\mathbf{S}}$-dga which satisfies $H_i(\mathcal{A})=0$ for all $i<0$ and $H_0(\mathcal{A})=\kk_{\mathbf{S}}$. Then there exists a semi-projective $\Z$-graded $\kk_{\mathbf{S}}$-dga $\mathcal{B}$ and a quasi-isomorphism $\Phi \colon \mathcal{B}\xrightarrow{q.is.}\mathcal{A}$ of $\kk_{\mathbf{S}}$-dgas, where all generators of $\mathcal{B}$ have strictly positive degrees, i.e.~$\mathcal{B}_i=0$ for all $i<0$, $\mathcal{B}_0=\kk_{\mathbf{S}}$, and $\partial|_{\mathcal{B}_1}=0$.
\end{prop}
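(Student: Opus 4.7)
The plan is to construct $\mathcal{B}$ by induction on degree, following the standard Sullivan-style cofibrant resolution, adapted here to the $\kk_{\mathbf{S}}$-linear setting. I will build an increasing sequence of semi-projective $\kk_{\mathbf{S}}$-dgas $\mathcal{B}^{(0)}\subset\mathcal{B}^{(1)}\subset\cdots$ with compatible dg-morphisms $\Phi^{(n)}\colon\mathcal{B}^{(n)}\to\mathcal{A}$ such that $\mathcal{B}^{(n)}$ is generated as an algebra by $\kk_{\mathbf{S}}$ together with elements of degrees $1,\ldots,n$, satisfies $\mathcal{B}^{(n)}_{<0}=0$ and $\mathcal{B}^{(n)}_0=\kk_{\mathbf{S}}$, and such that $H_i(\Phi^{(n)})$ is an isomorphism for $i<n$ and surjective for $i=n$. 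The dga $\mathcal{B}$ will be the colimit.

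For the base case take $\mathcal{B}^{(0)}:=\kk_{\mathbf{S}}$ with trivial differential and $\Phi^{(0)}$ the given unit map; the hypotheses $H_{<0}(\mathcal{A})=0$ and $H_0(\mathcal{A})=\kk_{\mathbf{S}}$ give the required properties. For the inductive step $\mathcal{B}^{(n)}\leadsto\mathcal{B}^{(n+1)}$ proceed in two substeps. First (killing) consider the $\kk_{\mathbf{S}}$-bimodule $K_n:=\ker H_n(\Phi^{(n)})$ and, using Lemma \ref{pure bases exist}, pick pure cycle representatives $\{z_\alpha\}\subset\mathcal{B}^{(n)}_n$ of a pure basis of $K_n$. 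Since each $\Phi^{(n)}(z_\alpha)$ is a boundary, choose $a_\alpha\in\mathcal{A}_{n+1}$ with $\partial a_\alpha=\Phi^{(n)}(z_\alpha)$; after applying the idempotents $s_\pm(z_\alpha)$ these may be taken pure. Adjoin degree-$(n+1)$ generators $t_\alpha$ with $s_\pm(t_\alpha):=s_\pm(z_\alpha)$, $\partial t_\alpha:=z_\alpha$ and $\Phi(t_\alpha):=a_\alpha$. Second (hitting) choose pure cycle representatives $\{c_\beta\}\subset\mathcal{A}_{n+1}$ of a pure basis of the cokernel of the resulting map on $H_{n+1}$, and adjoin further degree-$(n+1)$ generators $u_\beta$ with $\partial u_\beta:=0$ and $\Phi(u_\beta):=c_\beta$. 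The resulting $\mathcal{B}^{(n+1)}$ is the tensor algebra over $\kk_{\mathbf{S}}$ generated by the old bimodule together with the bimodule spanned by the $t_\alpha$ and $u_\beta$.

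The verifications will go as follows. For the homological properties: in degrees $i<n$ no new generators are introduced, so $H_i(\Phi^{(n+1)})=H_i(\Phi^{(n)})$ remains an isomorphism; in degree $n$ the killing substep quotients $H_n(\mathcal{B}^{(n)})$ exactly by $K_n$, turning the previously surjective $H_n(\Phi^{(n)})$ into an isomorphism, and the hitting substep (which introduces no new $n$-boundaries nor $n$-cycles) preserves this; and in degree $n+1$ surjectivity onto $H_{n+1}(\mathcal{A})$ is immediate from the hitting substep. For semi-projectivity, extend the action filtration by declaring $\mathcal{F}^{n+1}\mathcal{C}$ to contain all generators added through stage $n+1$; the relation $\partial\mathcal{F}^{n+1}\mathcal{C}\subset\mathcal{F}^n\mathcal{A}$ holds since $\partial t_\alpha=z_\alpha\in\mathcal{B}^{(n)}$ and $\partial u_\beta=0$. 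Crucially, because $H_0(\Phi^{(0)})$ is already an isomorphism, the killing substep at stage $n=0$ is empty, so the only degree-$1$ generators introduced are of hitting type and have trivial differential, yielding $\partial|_{\mathcal{B}_1}=0$.

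Setting $\mathcal{B}:=\varinjlim_n\mathcal{B}^{(n)}$ produces a semi-projective $\kk_{\mathbf{S}}$-dga with the required grading properties, and since for each fixed $i$ one has $\mathcal{B}_i=\mathcal{B}^{(n)}_i$ for all $n\geq i$ and all additions beyond stage $i+1$ preserve $H_i$, the induced $\Phi\colon\mathcal{B}\to\mathcal{A}$ is a quasi-isomorphism. The main obstacle, technically, is the bimodule bookkeeping: at each stage the new cycles and bounding chains must be chosen pure so that the newly adjoined generators form a genuine $\kk_{\mathbf{S}}$-sub-bimodule refining the semi-projective filtration. This is handled uniformly by Lemma \ref{pure bases exist} applied to $K_n$ and to the cokernel bimodule, together with the decomposition $M=\bigoplus_{s,t\in\mathbf{S}} sMt$ which is preserved by kernels, cokernels, and homology functors on $\kk_{\mathbf{S}}$-bimodule maps.
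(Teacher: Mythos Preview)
The proposal is correct and follows essentially the same inductive Sullivan-style construction as the paper: both build $\mathcal{B}$ degree by degree, alternating between adjoining generators that surject onto the target homology and generators that kill the excess kernel. Your version merely shifts the inductive hypothesis by half a step (iso below $n$ and surjective at $n$, versus the paper's iso up through $j$) and is more explicit than the paper about the $\kk_{\mathbf{S}}$-bimodule bookkeeping via pure bases.
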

\begin{proof}
  The dga $\mathcal{B}$ is constructed inductively by using the degree-filtration. We start by defining $\mathcal{B}^0=\kk_{\mathbf{S}}$ and note that there is a dga-morphism $\mathcal{B}^0 \to \mathcal{A}$ which is an isomorphism in homology for all degrees $i \le 0$.

  Now assume that we have managed to construct a dga $\mathcal{B}^j$ and a morphism $\Phi^j \colon \mathcal{B}^j \to \mathcal{A}$ which satisfies the conclusions of the lemma, except that $\Phi^j$ is only an isomorphism in the $i$-degree homology groups for $i \le j$. After the addition of a suitable number of free generators in degree $j+1$ to $\mathcal{B}^j$, yielding a new semi-projective dga $\tilde{\mathcal{B}}^j \supset \mathcal{B}^j$, a suitable lift of the dg-morphism $\Phi^j$ to $\tilde{\Phi}^j \colon \tilde{\mathcal{B}}^j \to \mathcal{A}$ can be constructed that, in addition to the above properties of $\Phi^j$, also is surjective in homology of degree $j+1$. The kernel of the map $[\tilde{\Phi}] \colon H_{j+1}(\tilde{\mathcal{B}}^j) \to H_{j+1}(\mathcal{A})$ can be represented by cycles that we can kill by adding generators in degree $j+2$ to yield an extension $\mathcal{B}^{j+1} \supset \tilde{\mathcal{B}}^j$. The morphism $\Phi^{j+1} \colon \mathcal{B}^{j+1} \to \mathcal{A}$ is constructed by extending $\tilde{\Phi}^j$ by zero on the latter generators in degree $j+2$.

  Since we can assume that $(\mathcal{B}^{j+1})_i=(\mathcal{B}^j)_i$ is satisfied for $i \le j$ in the construction, while $\Phi^{j+1}|_{(\mathcal{B}^{j+1})_i}=\Phi^j$ for $i\le j$, the sought dga and morphism $\mathcal{B} \to \mathcal{A}$ can be constructed as the limit.
\end{proof}
\begin{lemma}
  \label{lemma:cycle splitting}
    Let $\mathcal{B}$ be a $\Z$-graded $\kk_{\mathbf{S}}$-dga which satisfies $\mathcal{B}_i=0$ for all $i<0$, $\mathcal{B}_0=\kk_{\mathbf{S}}$, and $\partial|_{\mathcal{B}_1}=0$. For any finite dimensional $\Z$-graded dg $\mathcal{B}$-module $\tilde{V}$ with non-trivial homology $H(\tilde{V}) \neq 0$, there is a quasi-isomorphic module $V$ for which there is some $i_0 \in \Z$ such that
    \begin{itemize}
    \item $V_i =0$ for all $i < i_0$;
    \item $s\cdot V_{i_0} \neq 0$ for some $s \in \mathbf{S}$; and 
    \item $V_{i_0+1}$ are all cycles, i.e.~the differential satisfies $d(V_{i_0+1})=0$.
    \end{itemize}
     In particular, we have $H(V_{i_0})=V_{i_0} \neq 0$.
    \end{lemma}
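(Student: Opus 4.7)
My strategy is to construct $V$ as a minimal semi-free resolution of $\tilde{V}$ over the connective dga $\mathcal{B}$. Set $i_0 := \min\{i \in \Z : H_i(\tilde{V}) \neq 0\}$, which exists because $\tilde{V}$ is finite-dimensional and has non-trivial homology. I would then build a $\mathcal{B}$-module $V$ which is free as a graded $\kk_{\mathbf{S}}$-module, namely $V = \mathcal{B} \otimes_{\kk_{\mathbf{S}}} N$ for some graded $\kk_{\mathbf{S}}$-module $N$ concentrated in degrees $\geq i_0$, equipped with a differential satisfying the \emph{minimality} condition $d(N) \subset \mathcal{B}_{>0} \cdot V$, together with a quasi-isomorphism $\pi \colon V \to \tilde{V}$ of dg $\mathcal{B}$-modules. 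The construction proceeds inductively degree by degree, analogously to the minimal semi-free dga replacement built in the preceding proposition: at stage $j \geq i_0$ one adds generators to $N_j$ in order to surject onto $H_j(\tilde{V})$ and to kill any unwanted classes in $H_{j-1}$ produced by the previous stage, always arranging for the differential of each new generator to land in $\mathcal{B}_{>0} \cdot V^{(\mathrm{previous})}$.

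Given such a $V$, the three desired conclusions follow directly. First, since $N$ is concentrated in degrees $\geq i_0$ and $\mathcal{B}$ is non-negatively graded, $V_i = \bigoplus_{j+k=i} \mathcal{B}_j \otimes N_k = 0$ whenever $i < i_0$. Second, the requirement that $\pi$ surject onto $H_{i_0}(\tilde{V}) \neq 0$ forces $N_{i_0} \neq 0$, so $V_{i_0} = \mathcal{B}_0 \otimes N_{i_0} = N_{i_0}$ is a non-zero $\kk_{\mathbf{S}}$-module, hence some idempotent $s \in \mathbf{S}$ must act non-trivially. Third, one decomposes $V_{i_0+1} = N_{i_0+1} \oplus (\mathcal{B}_1 \cdot N_{i_0})$ and checks each summand: for $n \in N_{i_0+1}$, minimality gives $dn \in (\mathcal{B}_{>0} \cdot V)_{i_0} = \sum_{j \geq 1} \mathcal{B}_j \cdot V_{i_0-j} = 0$ since $V_{i_0-j}=0$ for $j \geq 1$; while for $b \otimes n$ with $b \in \mathcal{B}_1$ and $n \in N_{i_0}$, the Leibniz rule yields $d(b \cdot n) = (\partial b) \cdot n \pm b \cdot dn$, and both terms vanish by the hypothesis $\partial|_{\mathcal{B}_1} = 0$ and the analogous computation showing $dn = 0$ for $n \in N_{i_0}$. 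The ``in particular'' statement then follows since $V_{i_0-1}=0$ makes all of $V_{i_0}$ into cycles, while $d(V_{i_0+1})=0$ means no boundaries enter $V_{i_0}$.

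The main technical obstacle is the construction of the minimal semi-free resolution itself, which must be carried out over the idempotent ring $\kk_{\mathbf{S}}$ rather than a field. This is a standard exercise in the homological algebra of connective dg-modules, closely parallel to the construction of the dga replacement given in the preceding proposition, but some extra care is needed to choose generators and minimal lifts of differentials over $\kk_{\mathbf{S}}$ (one uses the fact that every $\kk_{\mathbf{S}}$-bimodule is projective, as recorded in Lemma \ref{tutti proiettivi}). I would stress that the hypothesis $\partial|_{\mathcal{B}_1} = 0$ is crucial in the last step of the verification: without it, an element of the form $b \otimes n \in \mathcal{B}_1 \cdot N_{i_0}$ would have differential $(\partial b) \cdot n$ landing nontrivially in $N_{i_0} = V_{i_0}$, destroying the desired conclusion $d(V_{i_0+1}) = 0$.
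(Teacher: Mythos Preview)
Your argument is correct and takes a genuinely different route from the paper. The paper does not build a resolution; instead it iteratively trims $\tilde{V}$ to a quasi-isomorphic \emph{submodule}. Concretely, letting $j_0$ be the lowest degree where $\tilde{V}$ is nonzero, the paper chooses an $\F$-complement $H \subset \tilde{V}_{j_0}$ to the image $d(\tilde{V}_{j_0+1})$ and verifies that
\[
M \;=\; H \;\oplus\; \bigl(\tilde{V}_{j_0+1} \cap \ker d\bigr) \;\oplus\; \bigoplus_{i \geq j_0+2} \tilde{V}_i
\]
is a dg $\mathcal{B}$-submodule --- this closure under the $\mathcal{B}$-action is precisely where the hypotheses $\mathcal{B}_0 = \kk_{\mathbf{S}}$ and $\partial|_{\mathcal{B}_1}=0$ enter --- and that $M \hookrightarrow \tilde{V}$ is a quasi-isomorphism. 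If $H \neq 0$ one is done; otherwise one replaces $\tilde{V}$ by $M$ and repeats, with finite-dimensionality guaranteeing termination.

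The trade-offs: the paper's trimming is entirely elementary, needs no resolution machinery, and keeps $V$ finite-dimensional, which is not demanded by the statement but is used tacitly in the subsequent Proposition~\ref{prop:trivialext}. Your minimal semi-free resolution is more conceptual and avoids the somewhat ad hoc iteration, at the price of importing the (standard) existence of minimal models over a semisimple base and producing a $V$ that is typically infinite-dimensional whenever $\mathcal{B}$ is. Both approaches invoke $\partial|_{\mathcal{B}_1}=0$ at the same essential moment: for you, to kill $d(\mathcal{B}_1 \cdot N_{i_0})$; for the paper, to check that $\mathcal{B}_1 \cdot H \subset \ker d$.
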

    \begin{proof}

      Since $H(\tilde{V}) \neq 0$ and $\tilde{V}$ is finite dimensional we can find some $\tilde{i}_0 \in \Z$ so that $\tilde{V}_i=0$ for all $i<j_0$ and $s\cdot \tilde{V}_{j_0} \neq 0$ for some $s \in \mathbf{S}$. If $d(\tilde{V}_{j_0+1}) \neq 0$ then we chose some (possibly zero dimensional) $\F$-subspace $H \subset \tilde{V}_{j_0}$ complementary to $d(\tilde{V}_{j_0+1})$, and consider the $\F$-subspace
      $$M \coloneqq H \oplus \left(\tilde{V}_{j_0+1} \cap \ker d  \right) \oplus \bigoplus_{i=j_0+2}^\infty \tilde{V}_i \subset \tilde{V}.$$
      Because of the assumptions on $\mathcal{B}$, the $\F$-subspace $M \subset \tilde{V}$ is actually a dg-submodule. Indeed, for any $x \in H$ and $b\in \mathcal{B}$, $b\cdot x \in \tilde{V}_{j_0}$ implies $b \cdot x \in \F x$ sinc $\mathcal{B}_0=\kk_{\mathbf{S}}$, while $b\cdot x \in \tilde{V}_{j_0+1}$ implies that
      $$d(b\cdot x)=\partial(b)\cdot x+b\cdot \partial x=0$$
      since $\partial|_{\mathcal{B}_1}=0$.

      One can readily check that the inclusion $M \subset \tilde{V}$ is a quasi-isomorphism of $\mathcal{B}$-modules. If $H \neq 0$ then we are done and can take $V=M$. In the case when $H=0$, we can repeat the argument with $\tilde{V}$ replaced by $M$.  Since $H(\tilde{V}) \neq 0$ and $\dim \tilde{V} < \infty$, this process must terminate.
  \end{proof}

The following result is the main algebraic mechanism behind the conclusion of Corollary \ref{cor:main}.

  \begin{prop}
    \label{prop:trivialext}
    Let $\mathcal{B}$ be a $\Z$-graded $\kk_{\mathbf{S}}$-dga which satisfies $\mathcal{B}_i=0$ for all $i<0$, $\mathcal{B}_0=\kk_{\mathbf{S}}$, and $\partial|_{\mathcal{B}_1}=0$. If $V$ is a finite dimensional $\Z$-graded $\mathcal{B}$-module for which Ext-group $H^0(\op{Rhom}_{\mathcal{B}}(V,V))=\F$, then $V$ is quasi-isomorphic to a complex supported in a single degree and $H(s\cdot V)$ are at most one-dimensional for each $s \in \mathbf{S}$.
  \end{prop}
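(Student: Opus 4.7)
The plan is to show that every bounded dg-$\mathcal{B}$-module $V$ with finite-dimensional cohomology splits formally in the derived category as $V \simeq \bigoplus_{i} H^{i}(V)[-i]$. Once this is known, the hypothesis $H^{0}(\op{Rhom}_{\mathcal{B}}(V,V)) = \F$ immediately forces the cohomology of $V$ to live in a single degree and on a single idempotent with one-dimensional image.

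The key ingredient is the vanishing $R\hom^{k}_{\mathcal{B}}(M, N) = 0$ for all $k > 0$ whenever $M$ and $N$ are $\kk_{\mathbf{S}}$-modules, viewed as $\mathcal{B}$-modules concentrated in cohomological degree zero with $\mathcal{B}_{\ge 1}$ acting trivially (this makes sense because $H^{0}(\mathcal{B}) = \kk_{\mathbf{S}}$). This vanishing is immediate from any bar-type semi-free resolution $P \to M$, whose generators all live in non-negative cohomological degrees by the non-negative grading of $\mathcal{B}$: a $\mathcal{B}$-linear map of positive degree from $P$ to $N$ would have to take generators to positive cohomological degrees of $N$, which are zero. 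I would then split $V$ inductively on the number of nonzero cohomology groups: writing $i_{0} < \ldots < i_{m}$ for these degrees, the Postnikov triangle $\tau^{<i_{m}}V \to V \to H^{i_{m}}(V)[-i_{m}] \xrightarrow{\delta} \tau^{<i_{m}}V [1]$ splits, because after using the inductive decomposition of $\tau^{<i_{m}}V \simeq \bigoplus_{j<m}H^{i_{j}}(V)[-i_{j}]$ the connecting morphism $\delta$ lies in $\bigoplus_{j<m}\op{Ext}_{\mathcal{B}}^{i_{m}-i_{j}+1}(H^{i_{m}}(V), H^{i_{j}}(V))$ and each exponent $i_{m}-i_{j}+1 \ge 2$ lies in the vanishing range from the first step.

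Under the splitting,
\[
H^{0}(\op{Rhom}_{\mathcal{B}}(V,V)) = \bigoplus_{i,j} \op{Ext}^{i-j}_{\mathcal{B}}(H^{i}(V), H^{j}(V)),
\]
whose $i>j$ summands vanish by the same vanishing, and whose diagonal $i = j$ contributes the direct summand $\bigoplus_{i}\bigoplus_{s \in \mathbf{S}} \op{End}_{\F}(s \cdot H^{i}(V))$ of total dimension $\sum_{i,s}(\dim_{\F} s\cdot H^{i}(V))^{2}$. Since by hypothesis the left-hand side is one-dimensional, there is a unique pair $(i_{0}, s_{0})$ with $s_{0}\cdot H^{i_{0}}(V) \neq 0$, and moreover $\dim_{\F} s_{0}\cdot H^{i_{0}}(V) = 1$; both conclusions of the proposition follow at once. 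The only delicate point will be the careful bookkeeping of grading shifts in the Postnikov step, but these always fall squarely within the positive-Ext vanishing range from the first step, so no genuine obstruction arises.
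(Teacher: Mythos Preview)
There is a genuine gap: your key vanishing claim $\op{Ext}^{k}_{\mathcal{B}}(M, N) = 0$ for all $k > 0$ is false, because you have the direction of connectivity backwards. In the paper's conventions the dga differential has degree $-1$ (homological grading), so the hypothesis $\mathcal{B}_i = 0$ for $i < 0$ means $\mathcal{B}$ is concentrated in non-\emph{positive} cohomological degrees. The bar resolution of a module $M$ sitting in degree $0$ therefore has its free generators in non-positive cohomological degrees, and what your argument actually gives is $\op{Ext}^{k}_{\mathcal{B}}(M,N)=0$ for $k<0$ --- the wrong sign for splitting the Postnikov tower. Concretely: with $\kk_{\mathbf{S}}=\F$ and $\mathcal{B}=\F[x]$, $|x|=1$, $\partial=0$ (so all hypotheses on $\mathcal{B}$ hold), the resolution $0\to\mathcal{B}\xrightarrow{\cdot x}\mathcal{B}\to\F\to 0$ yields $\op{Ext}^{2}_{\mathcal{B}}(\F,\F)=\F\neq 0$.

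Consequently your formality conclusion is false as well. Over the same $\mathcal{B}$, the module $V=\mathcal{B}/(x^{2})$ has $H^{0}\op{Rhom}_{\mathcal{B}}(V,V)=\F$ but is \emph{not} quasi-isomorphic to its homology $H(V)$ equipped with the trivial $\mathcal{B}_{\ge 1}$-action, since one computes directly that $H^{0}\op{Rhom}_{\mathcal{B}}(H(V),H(V))=\F^{2}$. So the Postnikov tower genuinely does not split and the ``split first, then count'' strategy cannot work. The paper's argument is organised quite differently: it never attempts to decompose $V$ formally, but instead uses Lemma~\ref{lemma:cycle splitting} to pass to a model whose lowest nonzero piece $V_{i_0}$ already consists of nontrivial homology classes, and then exploits a single explicit degree-zero endomorphism of $V$ (identity on $V_{i_0}$, zero above) together with the hypothesis $H^{0}\op{Rhom}_{\mathcal{B}}(V,V)=\F$ to conclude that this projection induces the identity on $H(V)$.
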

  \begin{proof}
    First, using Lemma \ref{lemma:cycle splitting} we can, by replacing $V$ by a quasi-isomorphic module, restrict ourselves to the case when $V_i=0$ for $i \le i_0$ while $V_{i_0} \neq 0$ and consists of cyles that inject into $H(V)$.

    It follows that there is a chain map in $\op{Hom}_{\mathcal{B}}(V,V)$ that vanishes on $V_{i}$ for all $i \ge i_0+1$ and which is the identity on $V_{i_0}$. Since $H^0(\op{Rhom}_{\mathcal{B}}(V,V))=\F$ is one-dimensional, and since the previously constructed map is non-trivial as a map in homology $H(V) \to H(V)$, the latter map is non-zero multiple of the identity $\id_{H(V)}$ in homology. This implies that the quotient $V \to V_{i_0}$ is a quasi-isomorphism.

What remains is to show that each $H(s\cdot V)$ is one-dimensional. By the previous paragraph, we can replace $V$ by a quasi-isomorphic version that is supported in a single degree. If $\dim(s \cdot V) > 1$ for some $s \in \mathbf{S}$, then one easily constructs a non-trivial endomorphism of $\op{Hom}_{\mathcal{B}}(V,V)$, which is automatically a chain map, and which is not equal to a multiple of the identity in homology. This contradicts that fact the zero:th Ext group $H^0(\op{Rhom}_{\mathcal{B}}(V,V))=\F$ is one-dimensional.
\end{proof}

We are now ready to prove Corollary \ref{cor:main}. Since $L$ is connected we have $H^0(\op{Rhom}_{\mathcal{A}}(V_L,V_L))=HF^0(L,L)=H^0(L)=\F$ by Theorem \ref{thm: main}. Using the above results, we get a dga $\mathcal{B}$ which is quasi-isomorphic to $\mathcal{A}$ via $\Phi \colon \mathcal{B} \xrightarrow{q.is} \mathcal{A}$, and which together with $V=\Phi^*V_L$ satisfies the assumptions of Proposition \ref{prop:trivialext}. Since $\op{Rhom}_{\mathcal{A}}(V_L,V_L)$ is quasi-isomorphic to $\op{Rhom}_{\mathcal{B}}(\Phi^*V_L,\Phi^*V_L)$, this concludes the proof of Corollary \ref{cor:main}.

\color{black}

\appendix
\section{An alternative approach: reducing to a contactisation}\label{sec:reduc-cont-stopp}
Several technical complications that arise in our setting --- and in particular the need for direct limits --- stem from the fact that the attaching link $\mathbf{S}$  may have (and in fact it is aspected to always have) infinite many Reeb chords. Recall that the completion $\widehat{W}^{sc}$ is exact symplectomorphic to a product Weinstein manifold $\widehat{P} \times \C$ for some (not uniquely determined) completion $\widehat{P}$ of a Weinstein domain $(P, \eta)$; see \cite{SteinWeinstein}.  Karlsson in \cite{Karlsson} proved that if $P$ itself is subcritical, i.e.~$\widehat{W}^{sc}=\widehat{Q}\times \C^2$ where $\widehat{P}=\widehat{Q}\times \C$, or equivalently, $W^{sc}$ has a handle decomposition with handles of index at most $n-2$, any Legendrian submanifold inside $\partial {W}^{sc}$ has a Chekanov--Eliashberg algebra of finite type in the following sense: for a particular choice of contact form, there entire Chekanov--Eliasherg algebra is quasi-isomorphic to a sub-dga generated by Reeb chords of small length. (There might exist arbitrarily long Reeb chords, but they can be ignored.)

If, on the other hand, $P$ is not subcritical, we can introduce a stop in $\partial W^{sc}$ disjoint from $\mathbf{S}$ so that the constructions and computations of the previous sections can be carried out in the symplectisation of a contactisation, where $\mathbf{S}$ has only finitely many chords. Thus the analysis needed can be reduced to that from \cite{LCHgeneral} and \cite{Duality_EkholmetAl} and, additionally, the differential graded algebras obtained will all be finitely generated. The price to pay will be that the algebra which will replace ${\mathcal A}_{\mathbf{S}}$ will be invariant only up to Legendrian isotopies of $\mathbf{S}$ in $\partial W^{sc}$ which do not intersect the stop.

The product decomposition $\widehat{W}^{sc}= \widehat{P} \times \C$ induces an open book decomposition of $\partial W^{sc}$ with page $(P, \eta)$ and trivial monodromy. That is, we can write
$$\partial W^{sc}= P \times S^1 \cup \partial P \times D^2$$
where the contact form on $\partial W^{sc}$ coincides with $\eta+d z$ on $P \times S^1$ and with $\eta|_{\partial P} + r^2d z$ on $\partial P \times D^2$ (here we take $z \in [- \pi, \pi]$ as coordinate in $S^1$). We denote by $B= \partial P \times \{0 \} \subset \partial P \times D^2 \subset \partial W^{sc}$ the binding.

Any Legendrian submanifold of a contact manifold endowed with a compatible open book decomposition can be made disjoint from a given page (including the binding) after a Legendrian isotopy; see Akbulut--Arikan \cite{AkbulutArikan}.In other words, we can assume that the Legendrian link $\mathbf{S}$ is contained in a subset
$$ P^o \times (-\pi+ \epsilon, \pi - \epsilon) \subset P^o \times S^1 \subset \partial W^{sc}$$
where $P^o = P \setminus \partial P$ and $\epsilon >0$ is small. Let $\mathfrak{p} =  P \times \{ \pi \}$ be a page of the open book decomposition. By \cite[Example 2.19]{GanatraPardonShende} there is a Weinstein sector $W^{sc}_{\mathfrak{p}}$ associated to the Weinstein pair $(W^{sc}, \mathfrak{p})$ whose end is modelled on the symplectisation of $(P \times [- \pi+\epsilon, \pi-\epsilon], dz+ \eta)$. Attaching critical handles along $\mathbf{S}$ produces a Weinstein sector $W_{\mathfrak{p}}$.

Let ${\mathcal A}_{\mathbf{S}}^0$ be the Chekanov-Eliashberg algebra of $\mathbf{S}$ as a Legendrian submanifold of the contactisation $P \times  [- \pi+\epsilon, \pi-\epsilon]$. With some more care we could prove that ${\mathcal A}_{\mathbf{S}}^0$ is isomorphic to the sub-dga of ${\mathcal A}_{\mathbf{S}}$ generated by Reeb chords which are disjoint from $\mathfrak{p}$. For every closed Lagrangian submanifolds $L_0$ and $L_1$, their Floer homology in $W$ and in $W_{\mathfrak{p}}$ are tautological isomorphic  because Floer homology between compact Lagrangian submanifolds is not affected by what happens near the boundary. Moreover, the constructions of the previous sections can be performed in $W_{\mathfrak{p}}$ instead of in $W$, and therefore we obtain the following result with a similar, but easier proof.
\begin{thm}\label{thm: main appendix}
  To any closed exact Lagrangian submanifold $L \subset W$  which intersects all cocores of the criitical Weinstein handles transversely we associate a differential graded ${\mathcal A}_{\mathbf{S}}^0$-module $V_L$ such that
\begin{itemize}
   \item $\dim_{\F} (\sigma \cdot V_{L})=|L \cap D_\sigma|$, and
  \item $\chi(\sigma \cdot V_{L})=L \bullet D_\sigma$ when $L$ is oriented,
  \end{itemize}
 for all $\sigma \in \pi_0(\mathbf{S})$.
 Moreover, given two closed exact Lagrangian submanifolds $L_0$ and $L_1$ as above, the isomorphism
  $$HF^*(L_0, L_1) \cong H^*\op{Rhom}_{{\mathcal A}_{\mathbf{S}}^0}(V_{L_0}, V_{L_1})$$
  holds. 
\end{thm}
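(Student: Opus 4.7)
The plan is to run the same strategy used for Theorem \ref{thm: main} but inside the Weinstein sector $W_{\mathfrak{p}}$ associated to the Weinstein pair $(W^{sc}, \mathfrak{p})$ rather than inside the closed Weinstein domain $W$. By the result of Akbulut--Arikan, after a Legendrian isotopy of $\mathbf{S}$ in $\partial W^{sc}$ we may assume $\mathbf{S} \subset P^o \times (-\pi+\epsilon, \pi-\epsilon)$, so that the dga ${\mathcal A}_{\mathbf{S}}^0$ is the Chekanov--Eliashberg algebra of $\mathbf{S}$ as a Legendrian in the contactisation $(P \times [-\pi+\epsilon, \pi-\epsilon], dz + \eta)$. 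The crucial point is that in this model $\mathbf{S}$ admits only finitely many Reeb chords, all of bounded length, so ${\mathcal A}_{\mathbf{S}}^0$ is a \emph{finitely generated} semi-projective dga. In particular, the analytic setup reduces to that of \cite{LCHgeneral} and \cite{Duality_EkholmetAl}, and the anchored-disc apparatus of Remark \ref{rem: how we learnt not to worry, but not yet to love interior punctures} can be dispensed with.

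First I would observe that, since $L$, $L_0$ and $L_1$ are compact and the Liouville completion of $W_{\mathfrak{p}}$ has cylindrical end modelled on the symplectisation of $P \times [-\pi+\epsilon,\pi-\epsilon]$, none of the Floer-theoretic invariants used in the proof are affected by replacing $W$ with $W_{\mathfrak{p}}$: the groups $HF^*(L_i,L_j)$ are unchanged, the wrapped Floer cohomology of an immersed filling in the subcritical part is still zero by the usual vanishing in subcritical sectors, and Cthulhu complexes of immersed cobordisms with cylindrical ends in $\partial W^{sc}$ are defined by the same formulas. Thus it suffices to build $V_L$ and prove the morphism identification entirely inside $W_{\mathfrak{p}}$.

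Next I would carry out the geometric construction of Section \ref{sec:geom-constr} verbatim, producing an immersed exact Lagrangian $\overline{L} = \Sigma \cup \mathbf{C}$ that is nicely split by $\partial W^{sc}$ with $\Sigma \subset W^{sc}$ an immersed exact filling of the link $\bs{\Lambda}$ of parallel pushoffs of $\mathbf{S}$, and $\mathbf{C}$ a standard cap of parallel perturbed copies of the cores. Then I would compute the cobordism algebra ${\mathcal D}_{\mathbf{C}}$ as in Section \ref{sec: cap algebra}. Here is where things genuinely simplify: because ${\mathcal A}_{\mathbf{S}}^0$ has only finitely many generators, the filtered limit construction of ${\mathcal D}_{\mathbf{C}}^{\mathit{ab}}$ collapses to a single identification (for any sufficiently small $\epsilon$, the generators of action less than the fixed finite bound already exhaust everything), and Lemma \ref{from D to Dab} becomes a direct quasi-isomorphism ${\mathcal D}_{\mathbf{C}}^{\mathit{ab}} \xrightarrow{\sim} {\mathcal D}_{\mathbf{C}}$ with no need for Lemma \ref{taking limits}. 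The elimination argument of Lemma \ref{elimination of c and m} then yields a quasi-isomorphism $\vec{\mathcal A}^+_{\slashed{\mathbf{C}}} \xrightarrow{\sim} {\mathcal D}_{\mathbf{C}}$, where $\vec{\mathcal A}^+_{\slashed{\mathbf{C}}}$ is obtained from ${\mathcal A}_{\mathbf{S}}^0$ by minimal morsification, expansion of idempotents along $\pi_0(\bs{\Lambda}) \to \pi_0(\mathbf{S})$, omission of idempotents, and quotienting by the upper-triangular ideal from Lemma \ref{ho finito le labelle}. Lemma \ref{Sigma induces an augmentation} then promotes $\Sigma$ to an augmentation of ${\mathcal D}_{\mathbf{C}}$, and Corollary \ref{the point of all this mess 2} converts it into the desired dg ${\mathcal A}_{\mathbf{S}}^0$-module $V_L$ with underlying space $\kk_{\mathbf{C}}$; the dimension and Euler characteristic formulas are immediate from the identification of connected components of $\bs{\Lambda}$ with intersections $L \cap D_\sigma$.

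Finally, for the morphism spaces I would repeat the argument of Section \ref{sec: morphisms of rep}: place $\mathbf{C}_0$ and $\mathbf{C}_1$ in standard position (Lemma \ref{lemma: standard position}), apply the relative exact triangle of Theorem \ref{relative exact triangle} to $\overline{L}_0$ and $\overline{L}_1$, use the vanishing of $HW^*((\Sigma_0, \varepsilon_0), (\Sigma_1, \varepsilon_1))$ in the subcritical part to identify $HF^*(L_0,L_1)$ with $H\op{Cth}^*_{\varepsilon_{\Sigma_0},\varepsilon_{\Sigma_1}}(\widehat{\mathbf{C}}_0,\widehat{\mathbf{C}}_1)$, and then identify the latter with $H^*\op{Rhom}_{{\mathcal A}_{\mathbf{S}}^0}(V_{L_0}, V_{L_1})$ via the finitely generated versions of Lemmas \ref{elimination of chat and mhat} and \ref{after cast}. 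The main obstacle in this whole programme is not any individual step but the bookkeeping needed to check that the stop on $P \times \{\pm(\pi-\epsilon)\}$ is harmless: one must verify that the geometric construction can be carried out entirely within a tubular neighbourhood of $\mathbf{S} \subset P^o \times (-\pi+\epsilon,\pi-\epsilon)$ (which is immediate, since the construction is local near $\mathbf{S}$ and the core discs), and that all holomorphic curves appearing in the definitions of ${\mathcal D}_{\mathbf{C}}$, $\Phi_\Sigma$, and the Cthulhu differential are likewise confined by action and monotonicity arguments to a region disjoint from the binding. Once this localisation is recorded, the finite-generation of ${\mathcal A}_{\mathbf{S}}^0$ makes the remainder of the proof strictly shorter than the proof of Theorem \ref{thm: main}.
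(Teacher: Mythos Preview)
Your proposal is correct and matches the paper's own treatment: the paper does not give an explicit proof of this theorem, stating only that ``the constructions of the previous sections can be performed in $W_{\mathfrak{p}}$ instead of in $W$, and therefore we obtain the following result with a similar, but easier proof.'' Your outline fills in precisely those details, correctly identifying the key simplification (finite generation of ${\mathcal A}_{\mathbf{S}}^0$ collapses the direct-limit apparatus of Lemmas~\ref{taking limits} and~\ref{from D to Dab}) and the localisation check needed to ensure the stop does not interfere.
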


\section{Gradient flow-trees on multiple copies of the core}
\label{appendix:morse}
Ekholm's theory of gradient flow-trees \cite{MorseFlow} is an efficient tool for finding the rigid pseudoholomorphic discs with boundary on a closed exact Lagrangian immersion that is contained inside a cotangent bundle. In the setting of immersed exact Lagrangian cobordisms of dimension two inside the symplectisation of a jet space, Ekholm--Honda--Kalman \cite{Ekhoka} adapted this technique to the count of pseudoholomorphic discs with strip-like ends as considered in the setting of SFT. Note that the symplectisation of $J^1M$ is symplectomorphic to $T^*(\R \times M)$.  The same technique also works in our setting, giving us means for computing small pseudoholomorphic discs in the cobordism $\widehat{W}^{crit}$ with boundary on multiple Hamiltonian isotopic copies of a completed core plane. Since the focus in \cite{Ekhoka} was on the case of Lagrangian surfaces in symplectisations of jet-spaces, we here give an account of their results adapted to the setting considered here.

\subsection{Gradient flows trees}
We start by recaling some general background and definitions of gradient flow trees.  The technique of gradient flow trees can be applied to (the Lagrangian projection of) a Legendrian submanifold  inside a jet-space, e.g.~$J^1\R^n$, if its front is generic and has singularities that are only cusp-edges. Here, however, we will be interested in the simpler case where Legendrian submanifold in $J^1\R^n$ will be a union of one-jets $j^1 f_i$ of globally defined smooth functions, and in particular the front will have no singularities at all  (except possibly intersections of different sheets). 

A gradient flow-tree for a Legendrian in $J^1\R^n$ is an immersed tree in the base $\R^n$ of the jet-space that satisfies particular lifting conditions with respect to the Legendrian.   More formally:
\begin{itemize}
\item \textit{Edges:} Each edge in the tree is associated to an unordered pair of sheets $\{j^1f_i,j^1f_j\}$ of the Legendrian, such that the edge is a non-constant gradient flow-line of the function differences $\pm(f_i-f_j)$, with no preferred orientation. For a fixed choice of ordering $(j^1f_i,j^1f_j)$ of the pair of sheets, the edge becomes endowed with a natural orientation as the negative gradient flow-line of $-\nabla(f_i-f_j)$, and a natural injective parametrisation by a connected, possibly infinite, open interval in $\R$ (well-defined up to translation). For any ordered pair we have a preferred lift of the edge to an oriented curve on the sheet $j^1f_i$.
\item \textit{Vertices:} There are matching conditions at the vertices of the tree that can be described as follows. Each edge corresponding to a pair $\{j^1f_i,j^1f_j\}$ gives an oriented curve in each of the two sheets $j^1f_i$ and $j^1f_j$ by the above. We require that these oriented curves close up at the vertices to form a piecewise smooth \emph{closed} oriented curve when projected to $T^*\R^n$. We say that there is a {\em puncture} at the vertex if the corresponding curve in $J^1\R^n$ has a discontinuity.
\end{itemize}
Since we consider the case when the front projection is an immersion,  all 1-valent vertices must correspond to critical points of $f_i-f_j$, i.e. punctures  of the flow-tree.   The relevance of these trees is the main result from \cite{MorseFlow} which states that there is a bijective correspondence of rigid gradient flow-trees and punctured discs in the cotangent bundle $T^*\R^n$ that have boundary on the Lagrangian projection, and which are pseudoholomorphic for a suitable choice of almost complex structure.

\subsection{Gradient flow trees for cobordisms via Morsifications}
\label{sec:morsification}
   The goal is to apply the technique of Morse flow-trees for finding the pseudoholomorphic discs with boundary on the   Lagrangian immersion $\widehat{\mathbf{C}}$ inside a small Weinstein neighbourhood of a single\footnote{In this appendix we can assume without loss of generality that all components of $\mathbf{C}$ are close to the same cores because everything happens in a small neighbourhood.} completed Lagrangian core $\widehat{C}$.   We identify the neighbourhood of $\widehat{C}$   with $T^*_{<\epsilon}\widehat{C}=T^*_{<\epsilon}\R^n$  for some suitable choice of metric.    In addition, we choose the metric so that $T^*_{<\epsilon}\R^n$ becomes endowed with a concave cylindrical end of the form $(-\infty,0] \times U$ for some neighbourhood $U \subset J^1S^{n-1}$ of the zero-section $j^10$. More precisely, we assume that the latter symplectisation is identified with subset of $T^*_{<\epsilon}\R^n$ via the canonial inclusion  
  $$(-\infty,0] \times J^1S^{n-1} \cong T^*((-\infty,0]\times S^{n-1}) \hookrightarrow T^*\R^n$$
  induced by the standard proper embedding $(-\infty,0]\times S^{n-1} \subset \R^n$. Furthermore, we assume that $\widehat{\mathbf{C}}$ is identified with the cylindrical Lagrangian $(-\infty,0] \times \boldsymbol{\Lambda} \subset (-\infty,0]\times J^1S^{n-1}$, while $\widehat{C}$ is equal to the zero-section $(-\infty,0] \times j^10$. Observe that the pseudoholomorphic discs in $T^*_{<\epsilon}\widehat{C}$ with boundary on $\widehat{\mathbf{C}}$ can have punctures at double-points of the immersion, and Reeb chord asymptotics to small Reeb chords on the Legendrian end $\boldsymbol{\Lambda}$. 

 The original version of the technique of Morse flow-trees was developed for compact Lagrangians. In order to apply this theory in this non-compact setting with cylindrical ends, we follow the same set-up as \cite{Ekhoka}. There, the counts of non-compact discs with strip-like ends were related to gradient flow-trees for an auxiliary \emph{immersed deformation} of the Lagrangian that has self-intersections that correspond to Reeb chords at the Legendrian ends.   In Section \ref{sec:morsification} below we recall this deformation, which is the \emph{Morsification procedure} from \cite[Section 2.3]{Ekhoka}. When this procedure is applied to $\widehat{\mathbf{C}}$, an immersed exact Lagrangian cobordism $\widetilde{\mathbf{C}} \subset T^*_{<\epsilon} \widehat{\mathbf{C}} \subset \widehat{W}^{crit}$ is produced, which is obtained by deforming the former in the negative end. 
 The produced cobordism was called a \emph{Morse cobordism} in the latter article; here we call $\widetilde{\mathbf{C}}$ the \emph{Morsification of $\widehat{\mathbf{C}}$}.  An important feature of the Morsification is that the set of double points that are created in the negative end are in a graded canonical bijection with the set of   \emph{short}   Reeb chords on $\boldsymbol{\Lambda} \subset J^1S^{n-1}$ that are contained in the negative end of $T^*_{<\epsilon} \widehat{\mathbf{C}}=T^*_{<\epsilon} \R^n.$ The double point corresponding to a  short   Reeb chord $\gamma$ will be denoted by $\tilde\gamma$. The construction in the present setting is carried out below, and we refer to Figure \ref{fig:morse} for an example.  In particular, we obtain a natural correspondence between the asymptotic constraints for discs with boundary on $\widehat{\mathbf{C}}$ and punctures asymptotic to only \emph{small} Reeb chords, and discs with boundary on the Morsification $\tilde{\mathbf{C}}$.

 The reason for passing to the Morsification cobordism is that we can apply the original theory of gradient flow-trees to find the rigid compact pseudoholomorphic discs with double-point asymptotics. The idea of \cite{Ekhoka} can thus be summarised as using gradient flow-trees to find the compact pseudoholomorphic discs with boundary on the Morsification $\widetilde{\mathbf{C}}$ that are rigid and, then, showing that these discs correspond to discs with boundary on the original cobordism $\widehat{\mathbf{C}}$ that are allowed to have Reeb chord asymptotics and that are rigid. Furthermore, under this identification, each asymptotic to the double point $\tilde\gamma$ of a disc with boundary on $\widetilde{\mathbf{C}}$ corresponds to a non-compact end asymptotic to the Reeb chord $\gamma$ on the corresponding disc with boundary on $\widehat{\mathbf{C}}$.

 We now proceed with the construction of the Morsification $\widetilde{\mathbf{C}}$ of $\widehat{\mathbf{C}}$ in the present setting. The first step is to choose a Legendrian lift of $\widehat{\mathbf{C}}$ to $J^1\R^n$.  Recall this lift consists of a number of sheets $j^1\hat f_i$ of one-jets of globally defined functions $\hat f_i \colon \R^n \to \R$. We make the further assumption that these lifts are chosen so that $\hat f_1< \hat f_2 < \ldots<\hat f_k$ is satisfied at the negative end, and such that these functions moreover all tend to zero as $x \in \R^n$ tends to $\infty$. In particular, the front projection of the Legendrian lift is immersed, which means that the gradient flow-trees can be applied, and that their possible vertices only are 1 and 2-valent punctures and 3-valent so-called $Y_0$-vertices; see \cite{MorseFlow} for a description.  Recall that we here only are concerned with the discs confined to a small neighbourhood of $\widehat{\mathbf{C}}$, and which are asymptotic to either double points or short Reeb chords on the Legendrian end.

 The Morsification $\widetilde{\mathbf{C}}$ can now be constructed by wrapping each sheet sufficiently by the negative Reeb flow as $t \to -\infty$. More precisely, we wrap the $i$:th sheet (which corresponds to $j^1 \hat{f}_i$) inside $(-\infty,k-i-1] \times J^1S^{n-1}$ sufficiently far, so that it wraps past precisely the sheets indexed by $j<i$. This is done by applying a symplectomorphism to the $i$:th sheet that is generated by a Hamiltonian of the form $\rho(t)$ where $\rho(t) \le 0$ is constant and negative inside $(-\infty,k-i-2/3]$, while it vanishes for $t \ge k-i-1/2$. See Figure \ref{fig:morse} for this wrapping applied to $\widehat{\mathbf{C}}=\widehat{\mathbf{C}}_0 \cup \widehat{\mathbf{C}}_1$. In the presence of pure chords of the components of the Legendrian $\boldsymbol{\Lambda} \subset J^1S^{n-1}$ the Morsification procedure is slightly more complicated; we refer to \cite{Ekhoka} for the general construction. Even if there are long pure chords on $\boldsymbol{\Lambda} \subset \partial W^{sc}$, they do not matter for the gradient flow-tree analysis here, since it takes place in a small neighbourhood of $\widehat{C}$ in which there are only short mixed chords.
 
Note that the wrapping produces new double points contained in $(-\infty,0] \times \partial W^{sc}$ that are in bijective correspondence with the Reeb chords on $\boldsymbol{\Lambda} \subset J^1S^{n-1}$. This correspondence can moreover be seen to be grading preserving; see \cite{Ekhoka} or \cite{LiftingPseudoholomorphic}. For a Reeb chord $\gamma$ on $\boldsymbol{\Lambda}$ we will denote by $\tilde{\gamma}$ the corresponding double point on $\widetilde{\mathbf{C}}$. Moreover, for every topological type ${\mathcal D}(c^+, \bs{\gamma})$ of punctured discs with boundary on $\widehat{\mathbf{C}}$, a unique boundary puncture positively asymptotic to a double point $c_+$ and boundary punctures negatively asymptotics to a word $\bs{\gamma}$ of wither double points of $\widehat{C}$ or Reeb chords of $\bs{\Lambda}$, there is a corresponding topological type ${\mathcal D}(c^+, \widetilde{\bs{\gamma}})$ of topological discs in $\widetilde{\mathbf{C}}$ with asymptotics at $c_+$ and $\widetilde{\bs{\gamma}}$.

 \begin{figure}[htp]
  \vspace{3mm}
         \labellist
         \pinlabel $1$ at -5 32
         \pinlabel $2$ at -5 25
         \pinlabel $\color{blue}1$ at -5 18
         \pinlabel $\color{blue}2$ at -5 10
         \pinlabel $1$ at -5 77
         \pinlabel $2$ at -5 86
         \pinlabel $\color{blue}2$ at -5 115
         \pinlabel $\color{blue}1$ at -5 98
         \pinlabel $\color{blue}\tilde{\mathbf{C}}_1$ at 163 102
         \pinlabel $\tilde{\mathbf{C}}_0$ at 163 80
         \pinlabel $\tilde{\mathbf{C}}_0$ at 163 35
         \pinlabel $\color{blue}\tilde{\mathbf{C}}_1$ at 163 50
         \pinlabel $t$ at 62 2
         \pinlabel $0$ at 49 -5
         \pinlabel $0$ at 110 56
         \pinlabel $t$ at 93 62
         \pinlabel $\color{red}\tilde{e}_{12}$ at 120 12
         \pinlabel $\color{red}e_{12}$ at 99 13
         \endlabellist
        \includegraphics[scale=1.75]{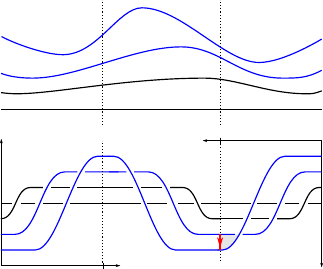}
        \vspace{5mm}
        \caption{A depiction of the Morsification $\widetilde{\mathbf{C}}_0 \cup \widetilde{\mathbf{C}}_1$ of $\widehat{\mathbf{C}}_0 \cup \widehat{\mathbf{C}}_1$, where the latter cobordism is the cylindrical extension of the cobordism shown in the middle of the picture. Top: The front projection (c.f.~Figure \ref{fig:profile}). Bottom: the Lagrangian projection in dimension one. Here we see the correspondence between the Reeb chord $e_{12}$ which is contained inside a contact slice, and the double point $\tilde{e}_{12}$. 
   In both the top and bottom picture, the middle depicts the part of the Legendrian that is contained inside the cotangent bundle of the core, while the left and right side are contained inside the cylindrical part; the latter subset is where the Morsification procedure gives rise to the double points.}
        \label{fig:morse}
      \end{figure}

 \subsection{The correspondence between flow trees and holomorphic discs}
We can now formulate the main result of this appendix, which is the following bijection between counts of holomorphic discs in the setting described above, and counts of the corresponding gradient flow trees. The result is derived from \cite[Theorem 5]{Ekhoka}, which focused on the case of two-dimensional embedded Lagrangian cobordisms.
\begin{thm}
\label{thm:morse}
Consider a topological type $\mathcal{D}=\mathcal{D}_{\widehat{\mathbf{C}}}(c_+;\boldsymbol{\gamma})$ of punctured discs in $\widehat{W}^{sc}$ with boundary on $\widehat{\mathbf{C}}$ that satisfies the following properties:
\begin{itemize}
\item 
  there is a unique positive boundary puncture at the double point $c_+$ (with respect to our choice of Legendrian lift), while the remaining boundary puncture asymptotics $\boldsymbol{\gamma}$ are allowed to be either double points of $\widehat{\mathbf{C}}$ or Reeb chords on $\boldsymbol{\Lambda}$;
\item the expected dimension of the moduli space of pseudo-holomorphic discs in $\mathcal{D}$ is zero;
 \item All Reeb chord asymptotics are short chords on $\boldsymbol{\Lambda}$; and
\item there exists no nodal disc with boundary on $\widetilde{\mathbf{C}}$  which can be smoothed to a disc in ${\mathcal D}(c_+, \widetilde{\bs{\gamma}})$  and for which all components have positive energy and expected dimensions at least $-1$.
\end{itemize}
Then, there is an equality of signed counts
$$\#\mathcal{M}_{\widehat{\mathbf{C}},J}(c_+;\boldsymbol{\gamma})=\#\mathcal{T}_{\widetilde{\mathbf{C}}}(c_+;\widetilde{\boldsymbol{\gamma}})$$
of $J$-holomorphic discs $\mathcal{M}_{\widehat{\mathbf{C}},J}(c_+;\boldsymbol{\gamma}) \subset \mathcal{D}$ of the speficied topological type, for any generic almost complex structure $J$ that is cylindrical outside of a compact subset, and rigid gradient flow trees $\mathcal{T}_{\widetilde{\mathbf{C}}}(c_+;\widetilde{\boldsymbol{\gamma}})$ on the Morsification $\widetilde{\mathbf{C}}$ with the correponsing topological type, i.e.~where the asytmptotics to a Reeb chord $\gamma$ has been replaced by an asymptotic to the corresponding double point $\tilde{\gamma}$.
\end{thm}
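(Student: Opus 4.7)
The plan is to reduce the statement to a two-step correspondence: first relate the $J$-holomorphic discs with boundary on $\widehat{\mathbf{C}}$ (with short Reeb chord asymptotics on $\boldsymbol{\Lambda}$) to $\tilde{J}$-holomorphic discs with boundary on the Morsification $\widetilde{\mathbf{C}}$ (with the corresponding double point asymptotics), and then apply Ekholm's gradient flow tree theorem \cite{MorseFlow} to the latter moduli problem. This is the strategy of \cite{Ekhoka}, and the goal is to check that it goes through in our higher dimensional setting with a non-compact completed Lagrangian core.

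For the first step, I would consider a one-parameter family of exact Lagrangian cobordisms $\widehat{\mathbf{C}}_\tau$ that interpolates between $\widehat{\mathbf{C}}_0 = \widehat{\mathbf{C}}$ and $\widehat{\mathbf{C}}_1 = \widetilde{\mathbf{C}}$, obtained by turning on the Hamiltonian isotopy generated by the wrapping functions $\rho_i(t)$ of Section \ref{sec:morsification}. The action constraint together with Lemma \ref{confinement} confines every low-energy holomorphic curve to the fixed small Weinstein neighbourhood of $\widehat{C}$, and the hypothesis on the absence of nodal limits rules out bubbling in this one-parameter family. An SFT compactness argument applied along $\tau \in [0,1]$ then identifies the $\tau=0$ and $\tau=1$ counts: Reeb chord punctures at short chords $\gamma$ on $\boldsymbol{\Lambda}$ get replaced one-for-one with boundary punctures at the double points $\tilde{\gamma}$ of $\widetilde{\mathbf{C}}$ created by the wrapping, because each such chord gets converted into precisely one transverse intersection as $\tau$ grows. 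The dimension and index bookkeeping matches by the grading-preserving bijection $\gamma \leftrightarrow \tilde{\gamma}$ recalled in Section \ref{sec:morsification}, so rigidity is preserved.

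For the second step, once we have rewritten everything on $\widetilde{\mathbf{C}}$ (which is now an immersed exact Lagrangian in $T^*_{<\epsilon}\widehat{C}=T^*_{<\epsilon}\R^n$ whose only asymptotics are double points) I would invoke the main theorem of \cite{MorseFlow} in the adapted form of \cite{Ekhoka}. The Legendrian lift of $\widetilde{\mathbf{C}}$ to $J^1\R^n$ is the graphical union $\bigcup_i j^1 \hat{f}_i$ after wrapping, so the front projection is immersed and the only possible vertices of flow trees are $1$-valent punctures and $3$-valent $Y_0$-vertices. Ekholm's theorem then provides a signed bijection between rigid $\tilde{J}$-holomorphic discs with boundary on $\widetilde{\mathbf{C}}$ of the specified topological type and rigid gradient flow trees $\mathcal{T}_{\widetilde{\mathbf{C}}}(c_+;\widetilde{\boldsymbol{\gamma}})$, for a suitable choice of almost complex structure adapted to the chosen metric on $\widehat{C}$. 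Combining this with the first step gives the claimed equality of counts.

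The main obstacle is the first step, i.e.\ running the Morsification/neck-stretching cobordism argument cleanly despite $\widehat{\mathbf{C}}$ being non-compact and the ambient $\widehat{W}^{crit}$ having closed Reeb orbits. The confinement by Lemma \ref{confinement} ensures that the relevant low-energy curves do not escape to infinity and do not see long chords, while the explicit non-nodal hypothesis in the statement is precisely what is needed to rule out gluing configurations with spurious multi-level limits in the deformation. Once compactness is controlled, transversality for the generic $J$ on $\widehat{W}^{crit}$ and for the corresponding $\tilde{J}$ on $\widetilde{\mathbf{C}}$ is standard, and the cobordism argument in the parameter $\tau$ yields a compact oriented $1$-manifold whose boundary is the difference of the two signed counts in question.
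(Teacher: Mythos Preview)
Your two-step skeleton is right and matches the paper: both invoke Ekholm's theorem \cite{MorseFlow} to equate rigid discs on the Morsification $\widetilde{\mathbf{C}}$ with rigid flow trees, and both need a separate argument linking discs on $\widetilde{\mathbf{C}}$ (double-point asymptotics) to discs on $\widehat{\mathbf{C}}$ (short-Reeb-chord asymptotics). The difference, and the gap in your proposal, is in how that link is made.

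You run a parametric family $\widehat{\mathbf{C}}_\tau$ from $\widehat{\mathbf{C}}$ to $\widetilde{\mathbf{C}}$ and claim a cobordism of moduli spaces over $\tau\in[0,1]$. But the asymptotic \emph{types} change along this family: at $\tau=0$ the negative punctures are non-compact strip-like ends at Reeb chords $\gamma$, while at $\tau=1$ they are compact punctures at double points $\tilde\gamma$. As the wrapping is turned on, each short chord shrinks and eventually disappears while the corresponding transverse intersection is born elsewhere in the cobordism; the Fredholm problems at the two ends do not sit in a common Banach manifold, so the ``compact oriented $1$-manifold whose boundary is the difference of counts'' you invoke in the last paragraph is not available as written. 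The paper avoids this by keeping $\widetilde{\mathbf{C}}$ fixed---it is still cylindrical near the hypersurface $\{t=0\}\times\partial W^{sc}$---and stretching the \emph{almost complex structure} along that hypersurface. In the SFT limit each rigid disc on $\widetilde{\mathbf{C}}$ breaks into exactly two levels: a top-level rigid disc in $\widehat{W}^{crit}$ with boundary on $\widehat{\mathbf{C}}$ and negative Reeb-chord asymptotics at the $\gamma_i$, and a bottom level consisting of one strip per puncture, in $\R\times\partial W^{sc}$ with boundary on the completion of $\widetilde{\mathbf{C}}\cap(-\infty,0]\times\partial W^{sc}$, connecting $\gamma_i$ to $\tilde\gamma_i$. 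The key input absent from your sketch is that each such bottom strip is \emph{unique} and rigid---its projection to $T^*S^{n-1}$ must be constant by an index argument, see \cite[Lemma~8.3(1)]{LiftingPseudoholomorphic}---so gluing establishes the bijection between top levels and discs on $\widetilde{\mathbf{C}}$. The non-nodal hypothesis in the fourth bullet is then used exactly where you place it in spirit, but for the deformation of almost complex structures (from the flow-tree--adapted one through the neck-stretching family to a generic cylindrical $J$), not for a deformation of Lagrangians.
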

\begin{rem}
In \cite[Section 4.3]{Ekholm_FloerlagCOnt} a conjectural extension of the theory of gradient flow-trees directly to the setting of non-compact exact Lagrangian cobordisms in the SFT-sense has been proposed, i.e.~without passing to Morse cobordisms.
\end{rem}

\begin{proof}[Proof of Theorem \ref{thm:morse}]
First, there is a bijective correspondence between the rigid gradient flow-trees in $\mathcal{T}_{\widetilde{\mathbf{C}}}(c_+;\widetilde{\boldsymbol{\gamma}})$ and the rigid  pseudoholomorphic discs in ${\mathcal D}(c_+, \widetilde{\bs{\gamma}})$ by the standard theory of gradient flow-trees \cite{MorseFlow}, \cite[Section 5]{Ekhoka} for a very particular choice of almost complex structure. Then we stretching the neck along the hypersurface
$$\{t=0\} \times \partial W^{sc} \subset (-\infty,0] \times \partial W^{sc} \subset \widehat{W},$$
near which $\widetilde{\mathbf{C}}$ is cylindrical.
At this point we make heavy use of the assuption in the last bullet point of Theorem \ref{thm:morse}, namely that the rigid discs that we count cannot degenerate to a nodal disc when the almost complex structure is deformed. This means that there is a cobordism between moduli spaces for different choices of almost complex structure, and in particular the signed counts remain invariant.

In the limit each rigid pseudoholomorphic disc with at least one puncture at a double point of $\mathbf{C} \subset \widehat{\mathbf{C}}$, and boundary on the Morsified cobordism, necessarily breaks into a pseudoholomorphic building of precisely the two following levels.

\begin{itemize}
   \item {\bf Top level:} A single rigid punctured pseudoholomorphic disc $u_{top}$ contained in $\widehat{W}^{crit}$ with boundary on $\widehat{\mathbf{C}}$ and negative Reeb chords asymptotic to the Reeb chords $\gamma_1,\ldots,\gamma_k$ on $\mathbf{\Lambda}$.
    \item {\bf Bottom level:} A number $k$ of pseudoholomorphic strips in $\R \times \partial W^{sc}$ with boundary on the completion of the exact immersed Lagrangian cylinder
    $$ \widetilde{\mathbf{C}} \cap (-\infty,0] \times \partial W^{sc},$$
where the $i$:th strip has precisely two punctures; one is asymptotic to $\gamma_i$ at the positive end, and one maps to the double poit $\tilde{\gamma}_i$.
\end{itemize}
Note that all middle symplectisation levels have to be empty because of ridigity and additivity of the index.

    Finally, we claim that the count of discs in the top level of this building, where the boundary condition is on the original cobordism $\widehat{\mathbf{C}}$, gives the cardinality of the moduli space we are interested in.
   
    This follows by gluing the broken configuration to obtain the configurations on the Morse cobordism. Note that there is a unique rigid pseudoholomorphic strip that connects $e_{i,j}$ and $\tilde{e}_{i,j}$ for a suitable cylindrical almost complex structure, as shown in e.g.~\cite[Lemma 8.3(1)]{LiftingPseudoholomorphic}. The reason is that a rigid pseudoholomorphic disc must project to a disc with boundary on the Lagrangian projection of $\boldsymbol{\Lambda}$ that is of negative index under $\R \times J^1S^{n-1} \to T^*S^{n-1}$, and must hence be constant. Here the almost complex structure must be suitably chosen, so that the latter projection becomes holomorphic.
\end{proof}

    \subsection{Counting discs on the Morsification}
    In order to find the rigid gradient flow trees, we here provide some useful restrictions on their behaviour.
\begin{lemma}
  A rigid gradient flow-tree for $\widetilde{\mathbf{C}}$ with precisely one positive puncture, that moreover is asymptotic to a double point in $\mathbf{C} \subset \widetilde{\mathbf{C}}$, satisfies the property that all of its edges contained inside $(-\infty,0]_t \times \partial W^{sc}$ have a tangent vector with a non-zero $\partial_t$-component.
\end{lemma}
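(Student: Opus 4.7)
The plan is to argue by a direct computation of the gradient in the cylindrical end, combined with a rigidity argument ruling out ``turning points'' of the $t$-coordinate along flow-tree edges.

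First, I would make the functions defining $\widetilde{\mathbf{C}}$ in the cylindrical end explicit. In $(-\infty,0]_t \times \partial W^{sc}$ the $i$-th sheet of $\widetilde{\mathbf{C}}$ is the graph (in the jet-space sense) of a function of the form $\widetilde{f}_i(t,\tilde q) = e^t\tilde{f}_i(\tilde q) + g_i(t)$, where $g_i$ is the $z$-shift generated by the wrapping Hamiltonian $\rho_i$ used in the Morsification (cf.~Section \ref{sec:morsification}); here $\tilde f_i\colon S^{n-1}\to\R$ is the restriction that determines the Legendrian end of the $i$-th sheet, cf.~Lemma \ref{cylindrical jets}. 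Taking a product metric on the base, the gradient of the relevant difference has $t$-component
\[\partial_t(\widetilde{f}_j-\widetilde{f}_i)(t,\tilde q) = e^t(\tilde f_j-\tilde f_i)(\tilde q) + (g_j-g_i)'(t).\]

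The key observation is the following second-derivative identity. Along a flow line $\gamma(s)=(t(s),\tilde q(s))$ of $-\nabla(\widetilde{f}_j-\widetilde{f}_i)$, a direct computation gives, at any point where $\dot t = -\partial_t(\widetilde f_j-\widetilde f_i)$ vanishes,
\[\ddot t(s) \;=\; e^{2t(s)}\,|\nabla_S(\tilde f_j-\tilde f_i)(\tilde q(s))|^2 \;\geq\; 0,\]
with equality only when $\tilde q(s)$ is additionally a critical point of $\tilde f_j-\tilde f_i$ on $S^{n-1}$; but then $\gamma(s)$ is a critical point of $\widetilde f_j-\widetilde f_i$, hence a Morsified Reeb chord double point, and in particular not an interior point of the edge. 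Consequently, any vanishing of the $\partial_t$-component of the tangent at an interior point of an edge in the cylindrical end is necessarily a \emph{strict local minimum} of $t$ along the edge.

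To conclude, I would combine this with the rigidity hypothesis to rule out such local minima. Recall that the positive puncture lies in $\mathbf{C}$ and hence has $t>0$, so each edge of the tree contained in the cylindrical end either terminates at a Morsified Reeb chord double point (at a specific $t$-level determined by $\rho_i,\rho_j$) or connects to a $Y_0$-vertex of the tree. A strict local minimum of $t$ along such an edge forces the edge to ``dip'' and then ascend again; translating this back via Theorem \ref{thm:morse} to the corresponding rigid pseudoholomorphic disc with boundary on $\widetilde{\mathbf{C}}$, the tangency to a $t$-slice in the interior of the disc boundary would, together with SFT compactness, allow a one-parameter deformation of the disc preserving all asymptotics (by shifting the ``dip'' slightly). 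This contradicts the assumed rigidity, so no such interior zero of $\partial_t(\widetilde f_j-\widetilde f_i)$ can exist along the edge.

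The main obstacle is making the last step precise, i.e.~converting the informal ``one-parameter deformation'' argument into a dimension count. The cleanest way to do this is to appeal to the correspondence of Theorem \ref{thm:morse} and the fact that the Hessian of $\widetilde f_j-\widetilde f_i$ at every Morsified Reeb chord double point is block-diagonal in the $(\partial_t,\nabla_S)$-splitting, so that the stable and unstable manifolds decompose into a ``vertical'' $\partial_t$-summand and a ``horizontal'' summand; a rigid edge asymptotic to such a double point is forced by the combinatorics of the tree to approach along a direction with non-trivial $\partial_t$-component, and the second-derivative identity above propagates this non-vanishing along the whole edge.
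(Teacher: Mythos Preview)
Your second-derivative identity is correct and is a genuinely useful observation: along any edge in the cylindrical end, a zero of the $\partial_t$-component (at a non-critical point) is necessarily a strict local minimum of $t$, so $\dot t$ can change sign at most once, and only from negative to positive. However, this alone does \emph{not} rule out a single such zero, and your attempt to do so via ``rigidity'' is where the argument breaks down. The rigidity of the flow tree (equivalently, of the corresponding pseudoholomorphic disc) means the moduli space is zero-dimensional; it says nothing about whether the boundary of the disc can be tangent to a $t$-slice at some interior point. There is no mechanism by which such a tangency produces a one-parameter family of discs with the same asymptotics --- the tree is a specific collection of gradient flow lines, and a local minimum of $t$ along one of them is just a geometric feature of that flow line, not a source of deformations. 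Your alternative suggestion (block-diagonal Hessian at the Morsified double points forcing the approach direction to have non-trivial $\partial_t$-component ``by combinatorics'') is also unjustified: the block-diagonality is correct, but you give no argument for why a rigid edge must approach along the $\partial_t$-summand rather than the horizontal one, and in fact the second-derivative identity does \emph{not} propagate non-vanishing of $\partial_t F$ backwards from such an endpoint.

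The paper's proof takes a completely different route: it is an inductive propagation along the tree structure. The starting observation is that near the hypersurface $\{t=0\}$ (where the tree enters the cylindrical end from the core) no wrapping has yet occurred, so $\partial_t(\widetilde f_j-\widetilde f_i)=e^t(\tilde f_j-\tilde f_i)\neq 0$ there. Every edge that crosses $\{t=0\}$ therefore has non-zero $\partial_t$-component at the crossing, and the paper then argues that this non-vanishing persists along the rest of that edge inside the cylindrical region, and passes through $2$- and $3$-valent vertices to the remaining edges. Your second-derivative identity could in principle be used to flesh out the ``persists along the edge'' step, but the essential input you are missing is the propagation from the entry hypersurface $\{t=0\}$ along the tree --- without it there is no anchor for the argument, and rigidity cannot serve as a substitute.
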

\begin{proof}
  In our setting, the only possible vertices are $1$ and $2$-valent punctures and $3$-valent vertices of type $Y_0$, in which two gradient flow-lines of $\hat f_i-\hat f_j$ and $\hat f_j- \hat f_k$ join at a flow-line of $\hat f_i-\hat f_k$.

 Near the hypersurface $\{t=0\} \subset \widehat{C}\cong \R^n$, the gradient of the differences $f_i-f_j$ all have a non-vanishing $\partial_t$-component. Every edge that passes through this hypersurface will thus have a positive $\partial_t$-component of its tangent vector there. It is clear that the same property thus holds along the interior of the entire edge intersected with $(-\infty,0] \times \partial W^{sc}$. If the edge terminates at a 1-valent vertex, the property holds for the entire edge. In the other cases, one can check the possible behaviour near the possible 2 and 3-valent vertices to show that this property must hold for all remaining edges connected to the vertex as well.
\end{proof}
As a consequence, we get the following technique for finding the gradient flow trees directly on the cobordism:
\begin{lemma}[Section 5 in \cite{Ekhoka}] 
\label{lemma:trees}
The gradient flow-trees for the Morsification $\widetilde{\mathbf{C}}$ of the types considered here are in bijective correspondence to gradient flow-trees for $\widehat{\mathbf{C}} \subset T^*_{<\epsilon}\widehat{C} \subset T^*\R^n$ with non-compact ends for a suitable choice of metric. An asymptotic constraint for an infinite edge at the orbit $\gamma$ for the flow-tree for $\widehat{\mathbf{C}}$ corresponds to a puncture at the double point $\tilde{\gamma}$ of the Morse cobordism $\widetilde{\mathbf{C}}$.
\end{lemma}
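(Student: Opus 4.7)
The plan is to make precise the correspondence suggested by the construction of the Morsification in Section \ref{sec:morsification}. Concretely, near the hypersurface $\{t=0\}\times\partial W^{sc}$ the Morsification $\widetilde{\mathbf{C}}$ agrees with $\widehat{\mathbf{C}}$, and further to the negative end it is obtained by wrapping the $i$-th sheet $j^1\hat f_i$ by a Hamiltonian depending only on $t$ whose support lies in $(-\infty,k-i-1]$. Writing out the resulting Legendrian lift, the new sheets of $\widetilde{\mathbf{C}}$ in the cylindrical end are given by $1$-jets of functions $\hat f_i(t,\widetilde{\mathbf{q}})-\int^t\rho_i(s)\,ds$, so that every non-trivial difference $\hat f_i-\hat f_j$ on $\widehat{\mathbf{C}}$ gets modified by an added function depending only on $t$, tailored so that its differential produces exactly one additional non-degenerate critical point in the cylindrical end for each short Reeb chord of $\boldsymbol{\Lambda}$; this critical point corresponds to the new double point $\tilde{\gamma}$.

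First I would choose the auxiliary metric in the cylindrical end to be of product form $dt^2+g_{S^{n-1}}$, and choose the profile functions $\rho_i$ so that the gradient of $\hat f_i-\hat f_j$ (with respect to the product metric) has, in the region where the wrappings take place, a uniformly dominant $\partial_t$-component. With this choice the previous lemma implies that every edge of a rigid tree for $\widetilde{\mathbf{C}}$ entering the cylindrical end is monotone in $t$, and thus consists of a single integral curve of $\nabla(\hat f_i-\hat f_j)$ travelling monotonically to $-\infty$ in $t$ after last passing the wrapping region. Second, I would observe that any integral curve that reaches a critical point $\tilde{\gamma}$ in the wrapping region can, before entering that region, be extended backward to an integral curve of $\nabla(\hat f_i-\hat f_j)$ that now travels all the way to $t=-\infty$, since outside the support of the wrapping Hamiltonian the two gradients agree. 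This gives the forward map: take a tree for $\widetilde{\mathbf{C}}$, locate the finitely many edges terminating at double points of the form $\tilde{\gamma}$, truncate them at the boundary of the wrapping region, and extend them as integral curves of the unperturbed $\nabla(\hat f_i-\hat f_j)$ to infinite edges of a tree for $\widehat{\mathbf{C}}$ with asymptotics at the Reeb chords $\gamma$.

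Third, I would construct the inverse map. Given a rigid tree for $\widehat{\mathbf{C}}$ with infinite edges asymptotic to short Reeb chords, each such edge is, in the cylindrical end, an integral line of $\nabla(\hat f_i-\hat f_j)$ that is forced (by the form of $\hat f_i=e^t\widetilde f_i$ and the non-degeneracy of the Reeb chords as critical points of $\widetilde f_i-\widetilde f_j$) to converge uniformly to the ray $\{\widetilde{\mathbf{q}}=\widetilde{\gamma}\}\times(-\infty,0]$. Because the wrapping Hamiltonian for $\widetilde{\mathbf{C}}$ is constructed precisely so that this ray produces the new critical point $\tilde{\gamma}$ of the perturbed difference $\hat f_i-\hat f_j$, the incoming flow line must enter the wrapping region and terminate at $\tilde{\gamma}$ after a finite additional time. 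Truncating there gives the desired edge of a tree for $\widetilde{\mathbf{C}}$, and since the vertices and the rest of the tree are contained above the wrapping region where the two Lagrangians coincide, this operation yields a well-defined tree on $\widetilde{\mathbf{C}}$.

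The main obstacle is verifying that these two maps are mutually inverse and preserve rigidity, i.e.\ that no combinatorial type is lost or doubled in the passage. The technical core is the choice of the profile $\rho_i$ and the metric, made so that on the one hand the wrapping introduces exactly one critical point of each desired index per Reeb chord, and on the other hand the unstable/stable manifolds of these critical points intersect the boundary of the wrapping region transversely and in exactly the locus predicted by the cylindrical asymptotics. Once this model computation is in place, the identification of the combinatorial types of the trees --- vertices, slopes of sheets at vertices, and asymptotic data --- is immediate; this is essentially the content of \cite[Section 5]{Ekhoka}, to which I would refer for the remaining verifications.
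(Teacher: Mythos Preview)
The paper does not actually give its own proof of this lemma: it is stated with the attribution ``Section 5 in \cite{Ekhoka}'' and is presented as a direct consequence of the preceding unnamed lemma (that every edge of a rigid tree for $\widetilde{\mathbf{C}}$ inside $(-\infty,0]_t\times\partial W^{sc}$ has non-vanishing $\partial_t$-component). So there is nothing to compare against beyond that citation and the remark that the trees in question were called ``Long Conical flow-trees'' in \cite{Ekhoka}.

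Your proposal is a reasonable fleshing-out of exactly the argument that \cite{Ekhoka} carries out, and you correctly invoke the preceding lemma at the right place (to force monotonicity in $t$ of edges in the cylindrical region, which is what makes the truncation/extension maps well defined). The forward and inverse constructions you describe are the expected ones. One small point of caution: in your inverse map you assert that the infinite edge ``must enter the wrapping region and terminate at $\tilde\gamma$ after a finite additional time.'' This is true for the specific profiles $\rho_i$ and product metric used in the Morsification, but it is not automatic from convergence to the ray alone; it relies on the model computation you allude to in your last paragraph (that the new critical point $\tilde\gamma$ has the correct index and that its stable manifold sweeps out exactly the incoming directions dictated by the cylindrical asymptotics). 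Since you explicitly defer that verification to \cite{Ekhoka}, your sketch is consistent with how the paper itself handles the result.
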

The flow-trees of the above type were called \textbf{Long Conical flow-trees (LC flow-trees for short)} in \cite{Ekhoka}.

\section{SFT-curves on cylinders over multiply-copy Legendrians} \label{appendix: uffa}
In this appendix  we relate the count of holomorphic discs with boundary on 
   a Lagrangian cylinder $\R \times \Lambda$ to the count of certain holomorphic discs with boundary on a Lagrangian cylinder $\R \times \bs{\Lambda}_\epsilon$ consisting of $k$ small perturbations of $\R \times \Lambda$. In the applications $\Lambda$ will be a connected component $S_\sigma$ of the attaching link $\mathbf{S}$ and $\bs{\Lambda}_\epsilon$ will be the link $\bs{\Lambda}$ of Theorem \ref{thm:maingeometric}. It should be possible to compute the discs of interest by an adiabatic limit argument, taking $\epsilon \to 0$, as proposed in the \cite[Conjectural Lemma 4.10]{Ekholm_FloerlagCOnt}. In this limit the finite energy SFT-curves with boundary on $\R \times \mathbf{\Lambda}_\epsilon$ should degenerate to pseudoholomorphic curves with boundary on the single copy $\R \times \Lambda$ together with possibly non-compact gradient flow-trees on the latter cylinder. On the other hand, it should be possible to glue such degenerate configurations with boundary on $\R \times \Lambda$ to actual holomorphic curves with boundary on $\R \times \bs{\Lambda}_\epsilon$. When the Lagrangian is a compact immersion instead of a cylinder, such results have been obtained in several settings, see e.g.~\cite{Duality_EkholmetAl}. Instead of proving this statement in general we will deduce some special cases of curve counts that would be a direct consequence of the more general conjectural result, if proved.

 We will need a version of SFT compactness for $J$-holomorphic curves with varying cylindrical boundary conditions. This case is not considered in the standard reference about SFT compactness \cite{Bourgeois_&_Compactness} and therefore we sketch here the proof highlighting the places where some extra care is needed. We say that a sequence of Lagrangian submanifolds $L_n$ converges to a Lagrangian submanifold $L$, and write $L_n \to L$, if there is a sequence of Hamiltonian diffeomorphisms $\varphi_n$ such that $\varphi_n(L)=L_n$ and $\varphi_n \to \op{id}$ in $C^\infty_{\op{loc}}$. Similarly, we say that a sequence of Legendrian submanifolds $\Lambda_n$ converges to a Legendrian submanifold $\Lambda$, and write $\Lambda_n \to \Lambda$, if there is a sequence of contactomorphisms $\varphi_n$ such that $\varphi_n(\Lambda)=\Lambda_n$ and $\varphi_n \to \op{id}$ in $C^\infty_{\op{loc}}$. Since a contactomorphism lifts to a Hamiltonian diffeomorphism of the symplectisation preserving the $\R$-direction, if $\Lambda_n \to \Lambda$, then $\R \times \Lambda_n \to \R \times \Lambda$ holds for the corresponding Lagrangian cylinders as well.

    Our definition of SFT convergence is very similar to the usual one from \cite[Section 8.2]{Bourgeois_&_Compactness} but not exactly equal, so we will call it {\em partial SFT convergence} to distinguish it from the usual one. The difference is that we will ignore the convergence to gradient trajectories in the case of Morse Bott degenerations of the boundary components and retain only the holomorphic part of the building. 

  Let $\Delta$ be a nodal punctured disc with the nodes removed (i.e.\ a disjoint union of punctured discs with a matching between certain punctures, which correspond to the nodes). Punctures can be either interior or on the boundary; one unmatched boundary puncture is labelled positive and all  other unmatched punctures are labelled negative. We also allow marked points, both in the interior and in the boundary, which will be used in the proof of the compactness theorem as in \cite{Bourgeois_&_Compactness}. 
    We denote by $\Delta^{(1)}, \ldots, \Delta^{(k)}$ the connected components of $\Delta$, which we will also call irreducible components. To a nodal punctured disc $\Delta$ we associate a rooted tree\footnote{The fact that $\Gamma(\Delta)$ is a a tree part of the definition} $\Gamma(\Delta)$  whose vertices are  the irreducible components of $\Delta$, the root is the irreducible component containing the positive puncture,  and the edges are the nodes. Given two irreducible components $\Delta^{(i)}$ and $\Delta^{(j)}$ we say that $\Delta^{(i)} \prec \Delta^{(j)}$ if the shortest path in $\Gamma(\Delta)$ from the root to $\Delta^{(j)}$ passes through $\Delta^{(i)}$. If $\Delta_0$ and $\Delta_1$ are punctured nodal discs, we write $\Delta_0 \subseteq \Delta_1$ if
    \begin{itemize}
    \item the irreducible components of $\Delta_0$ are irreducible components of $\Delta_1$, 
    \item $\Gamma(\Delta_0)$ is a subgraph of $\Gamma(\Delta_1)$, and
    \item the root of $\Gamma(\Delta_0)$ coincides with the root of  $\Gamma(\Delta_1)$.
    \end{itemize}
    A $J$-holomorphic building $v \colon \Delta \to \R \times M$ is a collection of finite energy  $J$-holomorphic maps $v^{(i)} \colon \Delta^{(i)} \to \R \times M$ with boundary on Legendrian cylinders over Lagrangian submanifold, a positive end asymptotic to a nondegenerate Reeb chord (or orbits) at the positive puncture, negative ends asymptotic to  nondegenerate Reeb chords (or orbits) at the negative puncturs, such that matched punctures coming from a node are asymptotic to the same limit, but one positively and the other one negatively. See \cite[Section 7]{Bourgeois_&_Compactness} for the details in the case without boundary. When we will talk about the ends of a $J$-holomorphic buildings without any other specification, we will always mean the unmatched ends of its irreducible components.

    Let $\Delta_n$ be a sequence of punctured discs and $u_n \to \R \times M$ be a sequence of finite energy $J$-holomorphic maps. We say that $u_n$ converges in the partial SFT sense to a $J$-holomorphic building $u_\infty \colon \Delta_\infty \to \R \times M$ if, after adding marked points to each $\Delta_n$ and $\Delta_\infty$, we have:
    \begin{itemize}
    \item $\Delta_n$ converge in the Deligne-Mumford topology to a nodal disc $\Delta_\infty'$ such that $\Delta_\infty \subseteq \Delta_\infty'$,
    \item if $\Delta_\infty^{(i)}$ is an irreducible component of $\Delta_\infty$, then $u_\infty^{(i)}$ is the limit of reparametrisations of  restrictions of $u_n$ according to conditions {\bf CHC1} and {\bf CHC2} of  \cite[Section 7.3]{Bourgeois_&_Compactness}, and
    \item $\sum \limits_{i=1}^k \int_{\Delta_\infty} (u_\infty^{(i)})^* d \alpha = \lim \limits_{n \to \ \infty} \int_{\Delta_n}u_n^* d \alpha$.
    \end{itemize} 
The last condition implies that the procedure for extracting the limit produces constant maps on the irresucible components of $\Delta_\infty'$ which are not irreducible components of $\Delta_\infty$.

    We start with the following local lemma, which extends (a local version of) \cite[Theorem 4.4.1]{McDuff_Salamon_Hol_Curv} to the case of varying boundary conditions.
    \begin{lemma}\label{lemma: compactness with gradient bound}
      Let $(X, \omega)$ be a symplectic manifold, $L \subset X$ a Lagrangian submanifold, $J$ and almost complex structure on $X$ compatible with $\Omega$ and $L_n \to L$ a sequence of converging Lagrangian submanifolds. 
      We denote by $\overline{\HH}$ the closed upper half plane in $\C$. If $\Omega \subset \overline{\HH}$ is an open set, $C>0$ a real number and $u_n \colon \Omega \to X$ a seqence of $J$-holomorphic maps such that $u_n(\partial \overline{\HH} \cap \Omega) \subset L_n$ and $\lVert du_n \rVert_{L^\infty(\Omega)} \le C$ for every $n$, then there is a subsequence $u_{n_k}$ which converges in $C^\infty_{\op{loc}}$ to a $J$-holomorphic map $u \colon \Omega \to X$ such that $u(\partial \overline{\HH} \cap \Omega) \subset L$.
      \end{lemma}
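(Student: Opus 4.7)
The plan is to reduce the varying-boundary-condition situation to the classical fixed-boundary case of \cite[Theorem 4.4.1]{McDuff_Salamon_Hol_Curv} by pulling back along the Hamiltonian diffeomorphisms that implement the convergence $L_n\to L$. By definition of the convergence, we may fix Hamiltonian diffeomorphisms $\varphi_n$ of $X$ with $\varphi_n(L)=L_n$ and $\varphi_n\to\op{id}$ in $C^\infty_{\op{loc}}$. Set $\tilde u_n=\varphi_n^{-1}\circ u_n$; then $\tilde u_n(\partial\overline{\HH}\cap\Omega)\subset L$ for all $n$, and $\tilde u_n$ is $J_n$-holomorphic for the almost complex structure $J_n=(\varphi_n^{-1})_*J\circ(\varphi_n^{-1})_*^{-1}=\varphi_n^*J$, which is compatible with the symplectic form $\varphi_n^*\omega$ and which converges to $J$ in $C^\infty_{\op{loc}}$.

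First I would verify the preservation of the gradient bound: since $\varphi_n\to\op{id}$ in $C^\infty_{\op{loc}}$, the derivatives of $\varphi_n^{-1}$ are uniformly bounded on any fixed relatively compact subset $K\Subset\Omega$ (say by $2$ for $n$ large). Composing with the $C^\infty_{\op{loc}}$-convergent diffeomorphism therefore yields a new uniform bound $\lVert d\tilde u_n\rVert_{L^\infty(K)}\le 2C$. Next I would apply the classical interior and boundary elliptic bootstrapping for $J_n$-holomorphic curves with \emph{fixed} totally real boundary condition $L$, with $J_n\to J$ in $C^\infty_{\op{loc}}$: this is a standard extension of \cite[Theorem 4.4.1]{McDuff_Salamon_Hol_Curv} (see e.g.\ \cite[Theorem B.4.2]{McDuff_Salamon_Hol_Curv} for the statement with varying almost complex structures), and it yields a diagonal subsequence $\tilde u_{n_k}$ converging in $C^\infty_{\op{loc}}$ on $\Omega$ to a $J$-holomorphic map $\tilde u\colon\Omega\to X$ with $\tilde u(\partial\overline{\HH}\cap\Omega)\subset L$. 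Finally, since $\varphi_{n_k}\to\op{id}$ in $C^\infty_{\op{loc}}$, the composition $u_{n_k}=\varphi_{n_k}\circ\tilde u_{n_k}$ converges in $C^\infty_{\op{loc}}$ to $u=\tilde u$, which is the desired $J$-holomorphic limit with $u(\partial\overline{\HH}\cap\Omega)\subset L$.

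The only genuinely delicate point is the boundary Schauder estimate with a variable totally real subspace; but once the boundary has been straightened to the fixed Lagrangian $L$ via the $\varphi_n$, this reduces to the classical statement and no new analytic input is needed. Apart from this, the argument is bookkeeping: verify that the reparametrisation by Hamiltonian diffeomorphisms preserves $J$-holomorphicity up to modifying $J$, that gradient bounds transform predictably, and that $C^\infty_{\op{loc}}$-convergence is preserved under pre- and post-composition with $C^\infty_{\op{loc}}$-convergent diffeomorphisms.
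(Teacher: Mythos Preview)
Your proposal is correct and follows exactly the same approach as the paper: pull back via the Hamiltonian diffeomorphisms $\varphi_n^{-1}$ to obtain $\tilde{J}_n$-holomorphic maps $\tilde{u}_n$ with fixed boundary condition $L$, then apply \cite[Theorem 4.4.1]{McDuff_Salamon_Hol_Curv}. The paper's proof is a two-line sketch of precisely this argument, and your write-up simply fills in the routine verifications (preservation of the gradient bound, $J_n\to J$, and recovering the limit of the original $u_n$).
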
 
      \begin{proof}
We define $\tilde{u}_n = \varphi_n^{-1} \circ u_n$ and $\tilde{J}_n = d \varphi_n^{-1} \circ J \circ d \varphi_n$ and apply \cite[Theorem 4.4.1]{McDuff_Salamon_Hol_Curv} to the sequence of $\tilde{J}_n$-holomorphic maps $\tilde{u}_n$.
\end{proof}
The next lemma describes the behaviour of strips with small energy and will play the role of \cite[Proposition 5.7]{Bourgeois_&_Compactness}. Note that our proof will be significantly simpler because we will assume a gradient bound which is verified in situations where we will apply the lemma, but is not assumed in \cite[Proposition 5.7]{Bourgeois_&_Compactness}.
\begin{lemma}\label{lemma: thin strips}
  Let $\Lambda_{0, n} \to \Lambda_{0, \infty}$ and $\Lambda_{1, n} \to \Lambda_{1, \infty}$ be Legendrian submanifolds such that $\Lambda_{0, \infty}$ and $\Lambda_{1, \infty}$ are either equal or disjoint, $N_n \to + \infty$ real numbers and $f_n \colon [-N_n, N_n] \times [0,1] \to \R \times M$ $J$-holomoprphic maps such that
  \begin{enumerate}
  \item $f_n ([-N_n, N_n] \times \{ 0 \}) \subset \R \times \Lambda_{0,n}$ and $f_n ([-N_n, N_n] \times \{ 1 \}) \subset \R \times \Lambda_{1,n}$,
  \item $\| f_n \| _{L^\infty([-N_n, N_n] \times [0,1])} \le C$ for some constant $C$ independent of $n$,
  \item $\lim \limits_{n \to \infty} \int_{[-N_n, N_n] \times [0,1]}f_n^*(d \alpha)=0$,
  \item $\lim \limits_{n \to \infty} \int_{\{ N_n \} \times [0,1]} f_n^*\alpha = \mathfrak{a} >0$, and
  \item all Reeb chords from $\Lambda_{0, \infty}$ to $\Lambda_{1, \infty}$ with action $\mathfrak{a}$ are nondegenerate.
  \end{enumerate}
Then for every $\varepsilon$ sufficiently small there exists $m \in \N$ such that for every $n \ge m$ the image of $f_n$ is contained in an $\varepsilon$-neighbourhood of a trivial strip over a Reeb chord of action $\mathfrak{a}$.
\end{lemma}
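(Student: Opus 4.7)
The plan is to argue by contradiction, reducing to fixed boundary conditions and then showing that the resulting sequence of strips must subconverge in $C^\infty_{\operatorname{loc}}$ to a trivial strip over a Reeb chord of action $\mathfrak{a}$.

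First, using that $\Lambda_{i,n} \to \Lambda_{i,\infty}$ in the sense of the definition above, I would pick Hamiltonian diffeomorphisms $\psi_{i,n}$ of $\R \times M$ mapping $\R \times \Lambda_{i,\infty}$ to $\R \times \Lambda_{i,n}$ with $\psi_{i,n} \to \operatorname{id}$ in $C^\infty_{\operatorname{loc}}$, and glue them into a single diffeomorphism $\psi_n$ close to the identity which realises both transitions simultaneously (possible because $\Lambda_{0,\infty}$ and $\Lambda_{1,\infty}$ are either equal or disjoint, so the two modifications can be made on disjoint neighborhoods, or extended across when the Lagrangians agree). Setting $\widetilde{f}_n = \psi_n^{-1} \circ f_n$, I obtain a sequence of $J_n$-holomorphic maps for $J_n = \psi_n^* J \to J$, with fixed boundary conditions on $\R \times \Lambda_{0,\infty}$ and $\R \times \Lambda_{1,\infty}$, preserving the $L^\infty$ bound, the vanishing of the $d\alpha$-energy, and (in the limit) the $\alpha$-period $\mathfrak{a}$ along the right edge.

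The next step is a gradient bound. Suppose that $\lVert d\widetilde{f}_n \rVert_{L^\infty}$ is unbounded; rescale at the maximum in the usual fashion to produce a non-constant $J$-holomorphic plane, half-plane, or strip with zero $d\alpha$-energy. By the standard argument (the integral of $d\alpha$ over a non-constant curve with cylindrical boundary is strictly positive), this is impossible, giving a uniform gradient bound. Applying Lemma \ref{lemma: compactness with gradient bound} on translated strips $[-N_n+s, -N_n+s+R]\times[0,1]$ and $[N_n-s-R, N_n-s]\times[0,1]$ for arbitrary $s,R$, together with a diagonal argument, extracts a subsequence $\widetilde{f}_{n_k}$ converging in $C^\infty_{\operatorname{loc}}$ on $\R \times [0,1]$ to a $J$-holomorphic strip $f_\infty \colon \R \times [0,1] \to \R \times M$ with boundary on $\R \times \Lambda_{0,\infty}$ and $\R \times \Lambda_{1,\infty}$.

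The $C^\infty_{\operatorname{loc}}$ convergence together with $\int \widetilde{f}_n^*(d\alpha) \to 0$ forces $f_\infty^*(d\alpha) \equiv 0$, so the projection of $f_\infty$ to $M$ lies in a single Reeb orbit. The fixed boundary conditions, combined with the action $\mathfrak{a}>0$ inherited in the limit via Stokes, identify $f_\infty$ with the trivial strip over a Reeb chord $\gamma$ from $\Lambda_{0,\infty}$ to $\Lambda_{1,\infty}$ of action $\mathfrak{a}$, up to translation in the $\R$-factor of the symplectisation (this translation ambiguity is harmless since we only seek $\varepsilon$-closeness to some trivial strip). Finally, to promote $C^\infty_{\operatorname{loc}}$ convergence to the claimed uniform statement, I would argue by contradiction: if for some $\varepsilon > 0$ one finds a subsequence whose images lie outside every $\varepsilon$-neighborhood of a trivial strip over a chord of action $\mathfrak{a}$, one repeats the argument to produce a convergent sub-subsequence whose limit is precisely such a trivial strip, and pulling back by $\psi_n$ contradicts the supposed defect for $n$ large. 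The main technical obstacle will be ensuring the $C^0$ closeness uniformly on the entire varying domain $[-N_n, N_n] \times [0,1]$, which is where nondegeneracy of Reeb chords of action $\mathfrak{a}$ plays its role: it guarantees that the chord obtained as a limit is isolated, so that local $C^\infty_{\operatorname{loc}}$ convergence combined with the exponential decay estimates near nondegenerate chords propagates the closeness over the entire strip as $N_n \to \infty$.
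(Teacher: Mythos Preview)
Your approach is correct in spirit but substantially more elaborate than the paper's, and one of your steps is both superfluous and shakily justified.

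First, hypothesis~(2) \emph{is} the gradient bound: the paper says explicitly just before the lemma that it ``will assume a gradient bound which is verified in situations where we will apply the lemma''. Your bubbling argument to derive it is therefore unnecessary, and your justification---``the integral of $d\alpha$ over a non-constant curve with cylindrical boundary is strictly positive''---is false as stated (trivial strips over Reeb chords have zero $d\alpha$-energy), though the conclusion can be salvaged via a Hofer-energy bound on the bubble. In any case you do not need this step.

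Second, the reduction to fixed boundary conditions via $\psi_n$ is also unnecessary: Lemma~\ref{lemma: compactness with gradient bound} is stated precisely for varying boundary conditions $L_n \to L$ and can be applied to $f_n$ directly.

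The main structural difference is in the final step. You extract a global $C^\infty_{\operatorname{loc}}$ limit on $\R\times[0,1]$ and then invoke exponential decay near nondegenerate chords to upgrade to uniform closeness on the growing domains $[-N_n,N_n]\times[0,1]$. This works but is delicate and you leave it as a sketch. The paper bypasses it entirely: assuming the conclusion fails, pick $(s_n,t_n)$ with $f_n(s_n,t_n)$ outside the union $\mathcal{U}_\varepsilon$ of $\varepsilon$-neighbourhoods of trivial strips, recenter the domain at $s_n$ and translate the target so that $f_n(s_n,t_n)\in\{0\}\times M$, then apply Lemma~\ref{lemma: compactness with gradient bound} on a \emph{fixed-width} window. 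The limit has zero $d\alpha$-energy and $\alpha$-period $\mathfrak{a}$, hence is a piece of a trivial strip over a chord of action $\mathfrak{a}$, contradicting $f_n(s_n,t_n)\notin\mathcal{U}_\varepsilon$. A connectedness argument then pins down a single chord. No exponential decay is needed; nondegeneracy enters only to ensure the chords of action $\mathfrak{a}$ are isolated, so that $\varepsilon$ can be chosen with the $\varepsilon$-neighbourhoods pairwise disjoint.

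Your route, if completed, would produce an actual global limit of the strips; the paper's argument gives less but is shorter and avoids the asymptotic analysis altogether.
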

\begin{proof}
  Let $\varepsilon >0$ be small enough so that the $\varepsilon$-neighbourhoods of the trivial strips over the Reeb chords from $\Lambda_{0, \infty}$ to $\Lambda_{1, \infty}$ with action $\mathfrak{a}$ are pairwise disjoint. It is possible to find such an $\varepsilon$ because those Reeb chords are nondegenerate and therefore isolated. We call ${\mathcal U}_\varepsilon$ the union of such neighbourhoods. Now suppose the conclusion of the lemma does not hold: this means that for every $n$ there are points $(s_n, t_n) \in [-N_n, N_n] \times [0,1]$ such that $f_n(s_n, t_n) \not \in {\mathcal U}_\varepsilon$. Up to translations in the target, we can assume that $f_n(s_n, t_n) \in \{ 0 \} \times M$. We define $c_+= \min_{n \in \N} \{ N_n-s_n, 1 \}$ and $c_- = \max_{n \in \N} \{ -N_n-s_n, -1 \}$. It is clear that $c_- \le 0 \le c_+$ and $c_+ - c_- \ge 1$. We define $J$-holomorphic maps $\tilde{f}_n \colon [c_-, c_+] \times [0,1] \to \R \times M$ by $\tilde{f}_n(s, t) = f_n(s+s_n, t)$. By Lemma \ref{lemma: compactness with gradient bound} the maps $\tilde{f}_n$ converge uniformly with all derivatives to a $J$-holomorphic map $\tilde{f}_\infty \colon [c_-, c_+] \times [0,1] \to \R \times M$ which satisfies $\int_{[c_-, c_+] \times [0,1]}\tilde{f}_\infty^*(d \alpha)=0$ and $\int_{\{ 0 \} \times [0,1]}\tilde{f}_\infty^* \alpha = \mathfrak{a}$. Then $\tilde{f}_\infty$ is a portion of a trivial strip over a Reeb chord of action $\mathfrak{a}$, which is a contradiction because, up to passing to a subsequence, $t_n \to t_\infty$ and $\tilde{f}(0, t_\infty) \not \in {\mathcal U}_\varepsilon$. Then $f_n([-N_n, N_n] \times [0,1]) \subset {\mathcal U}_\varepsilon$ for $n$ large enough. Since $f_n([-N_n, N_n] \times [0,1])$ is connected, by the choiche of $\varepsilon$ it is contained in the $\varepsilon$-neighbourhood of a Reeb chord. 
\end{proof}

\begin{thm}\label{thm: SFT compactness with moving boundary conditions}
  Let $(M, \alpha)$ be a contact manifold and let $\Lambda_{1,n}, \ldots, \Lambda_{r,n}$, for $n=1, \ldots, \infty$, be embedded closed Legendrian submanifolds such that
  \begin{itemize}
  \item[(i)] for every $n=1, \ldots, \infty$ any pair of Legendrians $\Lambda_{i,n}$ and $\Lambda_{j,n}$ are either disjoint or equal,
  \item[(ii)] $\Lambda_{i,n} \xrightarrow{n \to \infty} \Lambda_{i, \infty}$ for each $i=1, \ldots, r$, and
  \item[(iii)] there are constants $0<Q'<Q$ such that
    the Reeb chords of the Legendrian link $\Lambda_{1,\infty} \cup \ldots \cup \Lambda_{r,\infty}$ with action in $[Q', Q]$ are nondegenerate and those with action in $(0,Q')$ are either nondegenerate or Morse-Bott, in which case they  are mixed.
  \end{itemize}
  For every $n \ne \infty$ let  $a_{0,n}$ be a nondegenerate Reeb chord from $\Lambda_{1,n}$ to $\Lambda_{r,n}$ and $a_{i,n}$, for $i=1, \ldots, r-1$ a nondegenerate  Reeb chord from $\Lambda_{i,n}$ to $\Lambda_{i+1,n}$.  Assume moreover that, for every $i=0, \ldots, r-1$, $a_{i, n} \to a_{i, \infty}$ where $a_{0, \infty}$ is a nondegenerate Reeb chord from $\Lambda_{0, \infty}$ to $\Lambda_{r, \infty}$ with action in $(Q', Q)$ and $a_{i, \infty}$, for $i=1, \ldots, r-1$, is either a nondegenerate Reeb chord from $\Lambda_{i,\infty}$ to $\Lambda_{i+1, \infty}$ with action in $(Q', Q)$ or a single point, where the latter case only can happen if $\Lambda_{i, n} \ne \Lambda_{i+1, n}$ for all $n \ne \infty$ but  $\Lambda_{i,\infty}= \Lambda_{i+1, \infty}$. Then every sequence  of punctured $J$-holomorphic discs $u_n \colon \Delta_n \to \R \times M$  with boundary on $\Lambda_{0,n} \cup \ldots \cup \Lambda_{r, n}$ which are positively asymptotic to $a_{0,n}$ and negatively asymptotic to $a_{1,n}, \ldots, a_{r-1, n}$ has a subsequence which converges  in the partial SFT sense to a $J$-holomorphic building with boundary on $\Lambda_{0,\infty} \cup \ldots \cup \Lambda_{r, \infty}$, a positive asymptotic to $a_{0, \infty}$, and negatively asymptotic to those among the  $a_{1,\infty}, \ldots, a_{r-1, \infty}$ that are Reeb chords. 
\end{thm}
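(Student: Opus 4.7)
The plan is to adapt the SFT compactness argument of \cite{Bourgeois_&_Compactness} by using the convergence $\Lambda_{i,n} \to \Lambda_{i,\infty}$ via Hamiltonian isotopies converging to the identity in $C^\infty_{\mathrm{loc}}$, together with Lemma \ref{lemma: compactness with gradient bound} for the local analysis and Lemma \ref{lemma: thin strips} for the collapse of degenerate asymptotics. First I would obtain uniform Hofer energy bounds on the $u_n$: by Stokes' theorem and the positivity of $d\alpha$-area, the total Hofer energy is bounded by $\mathfrak{a}(a_{0,n})$, which converges to the finite quantity $\mathfrak{a}(a_{0,\infty})$.

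Next I would carry out the usual bubble-tree extraction. Away from finitely many concentration points one gets a uniform gradient bound, and Lemma \ref{lemma: compactness with gradient bound} provides $C^\infty_{\mathrm{loc}}$-limits with boundary on the limit Legendrian $\Lambda_{0,\infty} \cup \ldots \cup \Lambda_{r,\infty}$. At concentration points the standard rescaling argument produces bubbles, which, by the monotonicity lemma adapted via the Hamiltonian diffeomorphisms $\varphi_{i,n}\to \mathrm{id}$, carry a definite amount of $d\alpha$-area, forcing the bubble tree to be finite. Then I would add marked points to stabilise each $\Delta_n$, pass to a Deligne-Mumford limit $\Delta_\infty'$, and apply the conformal renormalisation procedure CHC1--CHC2 of \cite[Section 7.3]{Bourgeois_&_Compactness} near each puncture and node to obtain a holomorphic building on $\Delta_\infty'$ whose external positive puncture is asymptotic to $a_{0,\infty}$. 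Nondegeneracy of chords of action in $[Q',Q]$ ensures that non-collapsing negative asymptotics converge to the nondegenerate Reeb chords $a_{i,\infty}$ exactly as in the standard setting.

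The main new feature to handle, and the principal obstacle, is the partial convergence: those indices $i$ for which $\Lambda_{i,\infty}=\Lambda_{i+1,\infty}$ but $\Lambda_{i,n}\ne \Lambda_{i+1,n}$, so that $a_{i,n}\to a_{i,\infty}$ is a point of action zero on a Morse-Bott family. Here no honest Reeb chord asymptotic can be recorded in the limit, and instead the corresponding end of $u_n$ must collapse. I would isolate this collapse as follows: choose a small neighbourhood of each such asymptotic where the domain is, after conformal rescaling, a long strip $[-N_n,N_n]\times[0,1]$ of vanishing $d\alpha$-area and with boundary components on $\R\times\Lambda_{i,n}$ and $\R\times\Lambda_{i+1,n}$. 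Lemma \ref{lemma: thin strips} applied with $\mathfrak{a}=\mathfrak{a}(a_{i,n})\to 0$ (and the observation that for $n$ large enough these actions are below $Q'$) forces each such strip to become $C^0$-close to a trivial strip over a vanishingly small Reeb chord, and in the limit this strip component becomes a constant map carrying no $d\alpha$-area. I would then declare $\Delta_\infty$ to be the subtree of $\Delta_\infty'$ obtained by deleting exactly these collapsing strip components (and any constant ghost components arising from bubbling of Deligne-Mumford type); the retained components assemble into the required building.

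The energy conservation axiom of partial SFT convergence, namely
\[
\sum_i \int_{\Delta_\infty^{(i)}} (u_\infty^{(i)})^*d\alpha = \lim_{n\to\infty}\int_{\Delta_n} u_n^*d\alpha,
\]
is verified because the discarded components all have zero $d\alpha$-energy: the collapsing strips by the conclusion of Lemma \ref{lemma: thin strips}, and the Deligne-Mumford ghost components by construction. The hard part will be arranging the book-keeping so that the retained subbuilding $\Delta_\infty \subseteq \Delta_\infty'$ is still a connected rooted tree whose external ends are exactly $a_{0,\infty}$ together with the non-collapsing $a_{i,\infty}$; for this I would use that each collapsing strip occurs at a leaf-like segment of $\Gamma(\Delta_\infty')$ (since its two boundary Legendrians coincide in the limit, no further branching to distinct limits is possible through it), so that pruning it produces a valid building rather than disconnecting the tree.
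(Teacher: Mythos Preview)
Your overall outline follows the paper's four-step adaptation of \cite{Bourgeois_&_Compactness} (gradient bounds modulo bubbling, thick-part convergence, thin-part convergence, level structure), but there are two genuine gaps in the way you allocate the key lemmas, and your treatment of the collapsing ends misstates the role of Lemma~\ref{lemma: thin strips}.

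First, you invoke Lemma~\ref{lemma: thin strips} with ``$\mathfrak{a}=\mathfrak{a}(a_{i,n})\to 0$'' to handle the collapsing negative ends. This is not a valid application: hypothesis~(4) of that lemma requires a fixed limit $\mathfrak{a}>0$, and its conclusion confines the strip near a trivial cylinder over a \emph{nondegenerate} chord of that action. The lemma's actual role in the paper is in the thin-part analysis at \emph{internal} boundary nodes between two nonconstant components, replacing \cite[Proposition~5.7]{Bourgeois_&_Compactness}; there the action at the neck converges to a positive number. The collapsing external ends are instead handled in the thick-part step: the limit on those irreducible components is simply constant (zero $d\alpha$-area and zero action input), and then a Stokes argument shows that if $u_\infty^{(i)}$ is constant then so is $u_\infty^{(j)}$ for every $\Delta_\infty^{(i)}\prec\Delta_\infty^{(j)}$. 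This Stokes propagation is precisely what makes the nonconstant components form a rooted subtree $\Delta_\infty\subseteq\Delta_\infty'$; your assertion that each collapsing piece is ``leaf-like'' is the conclusion you need, not an argument for it.

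Second, and more seriously, you do not address the possibility of breaking at an internal node whose limiting action lies in $(0,Q')$, where Morse--Bott chord families are allowed by hypothesis~(iii). This is where Lemma~\ref{no Morse-Bott breaking} is essential: if a separating arc $\gamma_n$ has $\liminf\int_{\gamma_n}u_n^*\alpha<Q'$, then \emph{all} negative punctures below it are collapsing, which forces the two boundary Legendrians at the break to coincide in the limit, i.e.\ the chord at the break is pure. Since by hypothesis all Morse--Bott chords of action $<Q'$ are mixed, such a break cannot occur. Without this argument, your limit building could a~priori have an internal level asymptotic to a Morse--Bott family, and the partial SFT limit would not be well-defined in the sense required.
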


Before proving the theorem, the following remark is in place:
\begin{rem}
\begin{itemize}
\item Both Lemma \ref{lemma: compactness with gradient bound} and Theorem \ref{thm: SFT compactness with moving boundary conditions} hold if we replace the fixed almost complex structure $J$ with a sequence of cylindrical almost complex structures which converges in $C^\infty_{\op{loc}}$;
\item When also the negative asymptotic Reeb chords of the discs $u_n$ have lenghs uniformly bounded away from zero, then the SFT convergence of Theorem \ref{thm: SFT compactness with moving boundary conditions} coincides with the classical notion in \cite{Bourgeois_&_Compactness}; 
\item The theorem is suboptimal in many ways: for example its hypotheses are designed so that brakings at Morse-Bott orbits cannot happen, and the limit ignores the gradient flow trajectory that are expected to start at those among the $a_{1,\infty}, \ldots, a_{r-1, \infty}$ that are points.
\end{itemize}
\end{rem}
In the proof we will use the following lemma.
\begin{lemma}\label{no Morse-Bott breaking}
  In the hypotheses of Theorem \ref{thm: SFT compactness with moving boundary conditions}, if $\gamma_n$ is a sequence of properly  embedded arcs in $\Delta_n$ which are homotopic through properly embeded arcs, separate $\Delta_n$ in two components each containing a non-empty set of boundary punctures, and are oriented as the boundary of the connected component $\Delta_n^-$ of $\Delta_n \setminus \gamma_n$ which does not contain the positive puncture, and $\liminf \int_{\gamma_n} u_n^* \alpha < Q'$, then the punctures in $\Delta_n^-$ are all asymptotic to chords converging to points. 
\end{lemma}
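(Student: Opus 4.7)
The plan is to combine Stokes' theorem on $u_n|_{\Delta_n^-}$ with the hypothesis of Theorem \ref{thm: SFT compactness with moving boundary conditions} that every limit asymptotic chord $a_{i,\infty}$ is either a nondegenerate Reeb chord of action in $(Q',Q)$ or a single point. The assumed action bound along $\gamma_n$ will force each individual chord asymptotic in $\Delta_n^-$ to have action less than $Q'$ for large $n$, which is incompatible with convergence to a genuine chord in the limit.

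First I would apply Stokes' theorem to $u_n|_{\Delta_n^-}$. The boundary of $\Delta_n^-$ consists of the arc $\gamma_n$, arcs contained in $\partial \Delta_n$ (which are mapped to the Legendrian cylinders $\R \times \Lambda_{i,n}$, on which the pull-back of $\alpha$ vanishes), and small loops around each negative puncture of $u_n$ lying in $\Delta_n^-$. At such a puncture $p$ the integral of $u_n^*\alpha$ over the loop contributes $-\mathfrak{a}(b_n^{(p)})$, where $b_n^{(p)}$ is the Reeb chord asymptotic there, the sign reflecting the negative nature of the puncture together with the chosen orientation of $\gamma_n$ as the boundary of $\Delta_n^-$. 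Combined with the non-negativity $u_n^*(d\alpha) \ge 0$ (valid since $J$ is cylindrical and compatible), this yields
$$\sum_{p \in \Delta_n^-} \mathfrak{a}(b_n^{(p)}) = \int_{\gamma_n} u_n^* \alpha \;-\; \int_{\Delta_n^-} u_n^*(d\alpha) \;\le\; \int_{\gamma_n}u_n^*\alpha.$$

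Next I would pass to a subsequence along which $\int_{\gamma_n}u_n^*\alpha$ converges to some $L < Q'$. Then for $n$ large we have $\sum_p \mathfrak{a}(b_n^{(p)}) \le L + \varepsilon < Q'$, and since each summand is non-negative, each individual action $\mathfrak{a}(b_n^{(p)})$ is eventually less than $Q'$.

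To conclude, each puncture of $\Delta_n^-$ is a negative puncture of $u_n$, hence asymptotic to one of the chords $a_{i,n}$ for some $i \in \{1,\ldots,r-1\}$. By the hypothesis of Theorem \ref{thm: SFT compactness with moving boundary conditions}, the limit $a_{i,\infty}$ is either a nondegenerate chord of action in $(Q',Q)$ or a point. If it were a chord, $\mathfrak{a}(a_{i,n})$ would converge to $\mathfrak{a}(a_{i,\infty}) > Q'$, contradicting the uniform bound just obtained; therefore $a_{i,\infty}$ must be a point, which is exactly the assertion that $b_n^{(p)}$ converges to a point. The only subtlety in the argument is a careful bookkeeping of signs and orientations in the Stokes computation, but no deeper obstacle is expected: the essential point is that the dichotomy already built into the hypothesis of Theorem \ref{thm: SFT compactness with moving boundary conditions} leaves no room for an intermediate behaviour once the total action across $\gamma_n$ is bounded by $Q'$.
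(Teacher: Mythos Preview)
Your proof is correct and follows essentially the same approach as the paper: apply Stokes' theorem together with the non-negativity of $u_n^*d\alpha$ to bound the total action of the negative asymptotics in $\Delta_n^-$ by $\int_{\gamma_n}u_n^*\alpha$, then use the dichotomy in the hypothesis (each $a_{i,\infty}$ is either a chord of action in $(Q',Q)$ or a point) to rule out the chord case. The paper's write-up is terser but the argument is the same.
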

\begin{proof} Choose $\varepsilon$ such that $0 < \varepsilon < Q' - \liminf \limits_{n \to \infty} \int_{\gamma_n} u_n^* \alpha$. By Stokes theorem and the positivity of the $d\alpha$-area on $J$-holomorphic curves,  $\int_{\gamma_n} u_n^* \alpha$ is larger than the sum of the actions of the Reeb chords at the ends of $u_n|_{\Delta_n^-}$. If at least one of the punctures of $\Delta_n^-$ does not converge to a point, for some $n$ large enough it converges to a chord with action larger than $Q'- \frac{\varepsilon}{2}$ and this is a contradiction.
\end{proof}

\begin{proof}[Proof of Theorem \ref{thm: SFT compactness with moving boundary conditions}]
 The first step of the proof of SFT compactness is to obtain gradient bounds modulo bubbling (\cite[Section 10.2.1]{Bourgeois_&_Compactness}). Since the bubbling analysis is performed in smaller and smaller charts around the bubbling point, the fact that several Legendrian submanifolds may converge to the same one does not matter because the bubbling charts can intersect at most one boundary component. In order to obtain convergence to the bubble we use Lemma \ref{lemma: compactness with gradient bound} and to obtain convergence to a Reeb chord for boundary bubbles we use \cite{Abbas_Chord_Asy}, which can be generalised to arbitrary dimensions (c.f.~the treatment of the case of periodic orbits in \cite{Hofer_&_Symplectisation1}).  Since Morse-Bott chords with action less than $Q'$ are mixed, they do not matter here.

 The second step is the convergence of the Riemann surfaces and of the holomorphic maps on the thick parts (\cite[Section 10.2.2]{Bourgeois_&_Compactness}). The punctured discs $\Delta_n$ converge, possibly after adding extra marked points to control bubbling as in 
 \cite[Lemma 10.7]{Hofer_&_Symplectisation1}, to a nodal punctured disc $\Delta_\infty'$.

 The convergence of the holomorphic maps on the thick part is a consequence of the gradient bound obtained in Step 1 and of Lemma \ref{lemma: compactness with gradient bound}. This step gives $J$-holomorphic maps $u_\infty^{(i)} \colon \Delta_\infty^{(i)} \to \R \times M$.  Observe that some of the $u_\infty^{(i)}$ could be constant. If this happens, then the boundary conditions on that component came from distinct Legendrian submanifolds collapsing to the same in the limit. By Stokes theorem, if $u_\infty^{(i)}$ is constant, then $u_\infty^{(j)}$ is also constant for all $j$ such that $\Delta_\infty^{(i)} \prec \Delta_\infty^{(j)}$, and therefore the irreducible components of $\Delta_\infty'$ on which the limit is not constant form a nodal punctured disc $\Delta_\infty \subseteq \Delta_\infty'$. The irreducible components of $\Delta_\infty'$ on which the limit is constant will be disregarded from the analysis in the following. To that end, observe that any puncture of some limit component that arises as a node which is connected to a constant component must be a removable singularity. (There may also be additional removable singularities arising in the limit that are not nodes, but simply boundary punctures that are asymptotic to Reeb chords whose lengths become zero in the limit.)

 The third step is convergence in the thin part (\cite[Section 10.2.3]{Bourgeois_&_Compactness}). For interior nodes the argument is exactly the same the same as in \cite{Bourgeois_&_Compactness}. For boundary nodes it follows the same lines, with Lemma \ref{lemma: thin strips} replacing \cite[Proposition 5.7]{Bourgeois_&_Compactness}, but slightly more care must be taken to rule out breaking at Morse-Bott chords and in dealing with shrinking Reeb chords. Lemma \ref{no Morse-Bott breaking} implies that a breaking at a Morse-Bott chord can happen only if that chord is pure, but all Morse-Bott chords are mixed by hypothesis. Removable punctures arising from different Legendrian boundary conditions converging to the same one also do not change the proof in a significantg way, because when one arises, by Stokes theorem, the limit becomes constant in all irreducible components before that.

The fourth step is the analysis of the level structure of the limit, which is unaffected by the varying boundary conditions.
\end{proof}

Let  $\Lambda$ be a Legendrian sphere in $(Y, \alpha)$ and $\mathbf{\Lambda}_\epsilon$, for every $\epsilon>0$,  an embedded Legendrian $k$-copy link that is constructed in the same manner as described in Section \ref{sec: cap algebra}, i.e.\ inside a standard Legendrian neghbourhood $U_\Lambda$ of $\Lambda$, which is strictly contactomorphic to a neighbourhood of the $0$-section of the jet space $J^1\Lambda$, by using one-jets of functions $\epsilon  \tilde{f}_j \colon \Lambda \to \R$ for $j=1, \ldots, k$ where $0<\tilde{f}_1 < \ldots <\tilde{f}_k$ and each $\tilde{f}_j$ as well as the differences $\tilde{f}_i-\tilde{f}_j$ are Morse for $i \neq j$.  We will further assume that each $\tilde{f}_j$ is a small perturbation of the constant function $jc>0$, and will write $\tilde{f}_j=jc+\eta \tilde{h}_j$ for some small $\eta>0$. When emphasising the dependence on both $\epsilon,\eta>0$ we will write $\mathbf{\Lambda}_{\epsilon,\eta}$. We will also consider the immersed link $\bs{\Lambda}_0$ consisting of $k$ coinciding copies of $\Lambda$.

For all $Q>0$ there is $\epsilon(Q) >0$ such that, for all $0<\epsilon\le\epsilon(Q)$ and $\eta >0$ sufficiently small compared to $\epsilon$, the Reeb chords of $\mathbf{\Lambda}_{\epsilon, \eta}$  of action less than $Q$ consist of precisely of the Reeb chords $\{a_{i,j}\}$ from the $i$:th to the $j$:th component, for any $i,j \in \{1,\ldots,k\}$ and Reeb chord $a$ on $\Lambda$, (long chords) together with the chords correspondig to the critical points of $\tilde{f}_i-\tilde{f}_j$ (short chords) from the $i$:th to the $j$:th component for each $i<j$; see Lemma \ref{short, long, and very long}. For $\eta=0$ the long chords are as above, while the short chords form Morse-Bott families diffeomorphic to $\Lambda$.  Note that there are no short chords from the $j$:th to the $i$:th component when $i<j$.  We will  always assume that there is a unique minimum-type chord $e_{i,j}$ and maximum-type chord $m_{i,j}$ from $i$:th to the $j$:th sheet for any $i<j$. The short chords all have action bounded by $C\epsilon$ for $C=\max\tilde{f}_k$. The long chords $a_{ij}$ are geometrically close to $a$ and have action  that satisfies $|\mathfrak{a}(a_{i,j})-\mathfrak{a}(a)|\le 2C\epsilon$.

In the following when talking about Reeb chords on $\mathbf{\Lambda}_{\epsilon, \eta}$, or pseudoholmorphic discs with boundary on $\R \times \mathbf{\Lambda}_{\epsilon, \eta}$ with Reeb chord asymptotics, we will always implicitly assume that they all are of action less than $Q>0$, and that $0<\epsilon<\epsilon(Q)$ has been chosen.

Given a string $\tilde{a}_1, \ldots, \tilde{a}_l$ of chords of $\bs{\Lambda}_{\epsilon, \eta}$ which only Reeb chords $a_{i,j}$ and short chords corresponding to minima, we denote by $d(\tilde{a}_1, \ldots, \tilde{a}_l)$ the number of short chords in $\tilde{a}_1, \ldots, \tilde{a}_l$ and by $\Pi(\tilde{a}_1, \ldots, \tilde{a}_l)$ the string of chords of $\Lambda$ obtained by first erasing all short chords, and then replacing each remaining chord, which is long, with the corresponding chord of $\Lambda$. Finally, for the moduli spaces we will use the same notation as in Section \ref{ChekAlg} and will assume that the almost complex structure is chosen generically so that the moduli spaces are regular. With this notation set, we have the following propositions.

\begin{lemma}\label{lemma: constraints on holomorphic curves}
  If $\tilde{a}_0, \ldots, \tilde{a}_l$ is a composable string of chords of $\bs{\Lambda}_{\epsilon, \eta}$ such that all short chords are of minimum type, then for all $\epsilon$ and $\eta$ sufficiently small $\widetilde{\mathcal M}^1_{\bs{\Lambda}_{\epsilon, \eta}}(\tilde{a}_0; \tilde{a}_1, \ldots, \tilde{a}_l) = \emptyset$ unless one of the following conditions hold:
  \begin{enumerate}
  \item all chords $\tilde{a}_1, \ldots, \tilde{a}_l$ are long,
  \item $\widetilde{\mathcal M}^1_{\bs{\Lambda}_{\epsilon, \eta}}(\tilde{a}_0; \tilde{a}_1, \ldots, \tilde{a}_l) = \widetilde{\mathcal M}^1_{\bs{\Lambda}_{\epsilon, \eta}}(a_{i,j}; a_{i,h}, e_{h,j})$ or  $\widetilde{\mathcal M}^1_{\bs{\Lambda}_{\epsilon, \eta}}(\tilde{a}_0; \tilde{a}_1, \ldots, \tilde{a}_l) = \widetilde{\mathcal M}^1_{\bs{\Lambda}_{\epsilon, \eta}}(a_{i,j}; e_{i,h}, a_{h,j})$, where $h<j$ or $i<h$ respectively, and $a_{i,j}$, $a_{i,h}$ and $a_{h,j}$ are long chords corresponding to the same chord $a$ of $\Lambda$, or
  \item $\widetilde{\mathcal M}^1_{\bs{\Lambda}_{\epsilon, \eta}}(\tilde{a}_0; \tilde{a}_1, \ldots, \tilde{a}_l) = \widetilde{\mathcal M}^1_{\bs{\Lambda}_{\epsilon, \eta}}(e_{i,j}; e_{i,h}, e_{h,j})$ where $i<h<j$.
 \end{enumerate}
\end{lemma}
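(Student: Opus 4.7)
The plan is by contradiction combined with SFT compactness in the limit $\epsilon,\eta \to 0$, splitting on whether $\tilde{a}_0$ is long or short. Assume the moduli space is non-empty for some sequence $\epsilon_n,\eta_n \to 0$ with asymptotics not listed in (1)--(3), and pick $u_n$ therein.

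Suppose first that $\tilde{a}_0=a_{i,j}$ is long, and let $d \ge 1$ denote the number of short minimum chords among $\tilde{a}_1,\ldots,\tilde{a}_l$ (if $d=0$ we are already in case (1)). The hypotheses of Theorem~\ref{thm: SFT compactness with moving boundary conditions} hold: the $k$ components of $\bs{\Lambda}_{\epsilon,\eta}$ converge to $k$ coincident copies of $\Lambda$, long chord actions are uniformly bounded below, while minimum short chord actions tend to zero (and become Morse--Bott mixed chords at critical points of the $\tilde{f}_i-\tilde{f}_j$). A subsequence of $u_n$ thus converges in the partial SFT sense to a $J$-holomorphic building $u_\infty$ in $\R\times\partial W^{sc}$ with boundary on $\R\times\Lambda$, positive asymptote $a=\Pi(\tilde{a}_0)$, and negative asymptotes the long chord limits $\Pi(\tilde{a}_k)$. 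Using the degree formulas of Lemma~\ref{degrees of generators} and observing that the Maslov-potential contributions telescope along the cyclic sheet-sequence of the boundary, one computes $\operatorname{ind}(u_\infty) = |a|-\sum_{\text{long } k}|\Pi(\tilde{a}_k)| = 1-d$.

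For a generic cylindrical almost complex structure on $\R \times \partial W^{sc}$, every non-trivial pseudoholomorphic disc on $\R\times\Lambda$ has Fredholm index at least $1$, while trivial strips have index $0$, and matched ends cancel in the index sum of a building. Hence if $d \ge 2$, the total index $1-d \le -1$ is not realisable, contradicting the existence of $u_n$. If instead $d=1$, the total index equals $0$, so every level of the limit is a trivial strip; gluing trivial strips, $u_\infty$ reduces to a single trivial strip over some chord $a'$, and matching asymptotes forces $a'=a$ and identifies the unique long negative chord of $u_n$ as lying over $a$. The remaining short chord $e_{p,q}$ corresponds, in the adiabatic description of Appendix~\ref{appendix:morse}, to a gradient flow line of $\tilde{f}_p-\tilde{f}_q$ attached at a boundary point of $u_\infty$ and terminating at the unique minimum $p_{p,q}$. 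Sheet composability along the boundary of $u_n$ then permits only the two orderings listed in case (2).

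Suppose now that $\tilde{a}_0$ is short. By Lemma~\ref{d_L decreases action} (Stokes) we have $\mathfrak{a}(\tilde{a}_0) \ge \sum_k \mathfrak{a}(\tilde{a}_k)$; since $\mathfrak{a}(\tilde{a}_0)\to 0$ and long chord actions are uniformly bounded below, every $\tilde{a}_k$ must be short and hence, by hypothesis, of minimum type. All chord actions tend to zero, so the $d\alpha$-area of $u_n$ vanishes in the limit, and the count of rigid discs reduces via Appendix~\ref{appendix:morse} to a count of rigid Morse flow-trees on $\Lambda$ whose external edges end at the minima $p_{I_k,J_k}$. The flow-tree dimension formula, combined with cyclic sheet composability and the fact that each minimum has Morse index $0$, forces $l=2$ and the specific labeling $(e_{i,j};e_{i,h},e_{h,j})$ with $i<h<j$ of case (3).

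The main obstacle is the adiabatic analysis in the $d=1$ subcase: partial SFT convergence alone recovers only the trivial strip, so the short chord label $e_{p,q}$ is invisible in the limit. Recovering the flow-tree appendage, and hence the precise labels in case (2), requires coupling Theorem~\ref{thm: SFT compactness with moving boundary conditions} with the gradient-flow-tree description of Appendix~\ref{appendix:morse} suitably adapted to the symplectisation setting, crucially using that the critical point corresponding to $e_{p,q}$ is a minimum (so its basin of attraction has open dense character for a generic metric). A parallel Morse-tree analysis underlies the all-short configuration in Case B.
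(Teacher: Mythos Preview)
Your overall strategy matches the paper's: split on whether $\tilde{a}_0$ is long or short, apply the SFT compactness of Theorem~\ref{thm: SFT compactness with moving boundary conditions} in the long case, and use action in the short case. The index computation $\operatorname{ind}(u_\infty)=1-d$ and the dichotomy $d\in\{0,1\}$ are exactly what the paper does.

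You make the argument harder than necessary in two places, and the ``main obstacle'' you flag is not one. In the $d=1$ subcase, no adiabatic or flow-tree input is needed to pin down the short chord. The hypothesis hands you a \emph{fixed composable string} $\tilde{a}_0;\tilde{a}_1,\ldots,\tilde{a}_l$. Once the limit is a trivial strip over $a$ you know $l=2$ and that the unique long negative asymptotic lies over $a$; composability of the original string then forces the two patterns of Case~(2). The paper simply writes ``This implies that we are in Case~(2).'' You in fact reach the same conclusion via ``sheet composability'' --- that clause is the entire argument, and the preceding appeal to Appendix~\ref{appendix:morse} can be deleted.

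For the short $\tilde{a}_0$ case, the paper again avoids flow trees. One chooses a Maslov potential on $\bs{\Lambda}_{\epsilon,\eta}$ (using the local grading available in $J^1\Lambda$) so that every $e_{i,j}$ has degree $-1$; the index-one condition $|\tilde{a}_0|-\sum_{k=1}^l|\tilde{a}_k|=1$ then reads $-1-l(-1)=l-1=1$, i.e.\ $l=2$, and composability yields the labels $(e_{i,j};e_{i,h},e_{h,j})$. Your flow-tree route could in principle be made to work, but Appendix~\ref{appendix:morse} is written for the cobordism $\widehat{\mathbf{C}}\subset T^*\R^n$, not for the Legendrian link in the symplectisation, so invoking it here would require additional justification that the direct degree computation avoids.
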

\begin{proof}
  We prove the lemma assuming that $\Lambda$ is graded; the general case is similar but requires more {\em ad hoc} computations of the indices, which are left to the reader. We choose a Maslov potential on $\bs{\Lambda}_{\epsilon, \eta}$ such that all chords $e_{i,j}$ has degree $-1$.

  Denote $d=d(\tilde{a}_1, \ldots, \tilde{a}_l)$. First we assume that $\tilde{a}_0$ is a long chord. If $\widetilde{\mathcal M}^1_{\bs{\Lambda}_{\epsilon, \eta}}(\tilde{a}_0; \tilde{a}_1, \ldots, \tilde{a}_l) \ne \emptyset$ for $(\epsilon, \eta) \to 0$, then by Theorem \ref{thm: SFT compactness with moving boundary conditions} there are sequences $(\epsilon_n, \eta_n) \to 0$ and $u_n \in \widetilde{\mathcal M}^1_{\bs{\Lambda}_{\epsilon, \eta}}(\tilde{a}_0; \tilde{a}_1, \ldots, \tilde{a}_l)$ such that $u_n$ converge in the partial SFT sense to a holomorphic building $u_n$ with a positive end at $a_0$ and negative ends at $\Pi(\tilde{a}_1, \ldots, \tilde{a}_l)$. The index of $u_\infty$ is $\op{ind}(u_\infty) = \op{ind}(u_n)-d$ because the minimum type chords have index $-1$. Since $\op{ind}(u_n)=1$ and $\op{ind}(u_\infty) \ge 0$ because $J$ is generic, we have $d \in \{ 0, 1 \}$. If $d=0$ we are in Case (1). If $d=1$, then $\op{ind}(u_\infty)=0$, and therefore  $u_\infty$ is a trivial strip over a Reeb chord $a$. This implies that we are in Case (2).

  If $\tilde{a}_0$ is a short chord, then $\tilde{a}_1, \ldots, \tilde{a}_l$ are also short chords by action condiderations.  Since short chords of minimum-type have degree -1, the only possibility to have a curve of index one with all ends of minimum type is if $l=2$, so we are in Case (3). 
\end{proof}
The next proposition analyses Case (1) of Lemma \ref{lemma: constraints on holomorphic curves}
\begin{prop}\label{huey}
  If $\tilde{a}_0, \ldots, \tilde{a}_l$ is a composable string of long chords of $\bs{\Lambda}_{\epsilon, \eta}$ and $a_0, \ldots, a_l$ is the corresponding string of chords of $\Lambda$, then for all $\epsilon >0$ sufficiently small and $\eta >0$ sufficiently small compared to $\epsilon$, the signed count of elements of $\widetilde{\mathcal M}^1_{\bs{\Lambda}_{\epsilon, \eta}}(\tilde{a}_0; \tilde{a}_1, \ldots, \tilde{a}_l)$ and $\widetilde{\mathcal M}_\Lambda^1(a_0; a_1, \ldots, a_l)$ agree. 
\end{prop}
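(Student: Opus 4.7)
The plan is to prove the identification of moduli spaces by a compactness/gluing argument based on Theorem \ref{thm: SFT compactness with moving boundary conditions}, viewing $\bs{\Lambda}_{\epsilon,\eta}$ as a small perturbation of the immersed multi-copy link $\bs{\Lambda}_0$ consisting of $k$ coinciding copies of $\Lambda$. The key observation is that a $J$-holomorphic disc with boundary on $\R\times \bs{\Lambda}_0$ is nothing but a $J$-holomorphic disc with boundary on $\R\times \Lambda$ together with a \emph{labelling} of each of its boundary arcs by one of the $k$ copies, and that the labelling is rigidly determined by the specified asymptotics $\tilde{a}_0; \tilde{a}_1,\ldots,\tilde{a}_l$ (because the composability condition forces the arcs to cycle through the copies in a prescribed way).

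First, I would prove the ``compactness direction''. Fix sequences $(\epsilon_n, \eta_n) \to 0$ and $u_n \in \widetilde{\mathcal M}^1_{\bs{\Lambda}_{\epsilon_n, \eta_n}}(\tilde{a}_0; \tilde{a}_1, \ldots, \tilde{a}_l)$. Since all chord asymptotics are long, their actions are bounded away from $0$, and the short Morse--Bott chords on $\bs{\Lambda}_0$ are mixed (each connecting distinct copies which collapse to the same $\Lambda$). The hypotheses of Theorem \ref{thm: SFT compactness with moving boundary conditions} are thus satisfied and we can extract a subsequential partial SFT limit $u_\infty$ with boundary on $\R\times\bs{\Lambda}_0$, positively asymptotic to $a_0$ and negatively asymptotic to $a_1,\ldots,a_l$. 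By the index additivity together with the fact that each nontrivial level of a building with free $\R$-action has index $\ge 1$, and since $\op{ind}(u_n) = 1$, the limit $u_\infty$ has only one level and thus corresponds to a unique rigid element of $\widetilde{\mathcal M}^1_\Lambda(a_0; a_1, \ldots, a_l)$.

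Next, I would prove the ``gluing direction''. Given a rigid disc $v\in \widetilde{\mathcal M}^1_\Lambda(a_0;a_1,\ldots,a_l)$, the asymptotics $\tilde{a}_i$ determine a unique labelling of the boundary arcs of $v$ by the copies of $\Lambda$. Viewing the result as a disc on $\R\times \bs{\Lambda}_0$, it is transversely cut out by assumption in the sense that the linearised Cauchy--Riemann operator for the nearby Lagrangian $\R\times\bs{\Lambda}_{\epsilon,\eta}$ (with the corresponding labelling) has the same kernel and cokernel as for $v$ (both are essentially given by the linearisation for $v$, since the boundary condition deforms by $O(\epsilon+\eta)$ in $C^\infty$). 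A standard implicit function theorem argument, of the type used in \cite{LCHgeneral, LegendrianAmbient}, then produces a unique $J$-holomorphic disc in $\widetilde{\mathcal M}^1_{\bs{\Lambda}_{\epsilon,\eta}}(\tilde{a}_0; \tilde{a}_1, \ldots, \tilde{a}_l)$ close to $v$, with the same sign as $v$, for all $(\epsilon,\eta)$ sufficiently small. Combining the two directions gives the claimed bijective correspondence of signed counts.

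The main obstacle, and the most delicate technical point, is ruling out pathological contributions to the limit building in the compactness step. A priori, one must exclude degenerations involving: (i) small discs or strips bubbling off in the Morse--Bott region near short chords, which is handled by the refined thin-strip analysis of Lemma \ref{lemma: thin strips} combined with the fact that only long chords appear as asymptotics; and (ii) multi-level configurations whose total index somehow reduces to $1$, which is excluded by the index additivity but requires genericity of $J$ to ensure each level has nonnegative index. A secondary difficulty is the sign comparison, but since the linearisations for $v$ on $\Lambda$ and its perturbation on $\bs{\Lambda}_{\epsilon,\eta}$ are related by a small compact deformation of the boundary condition, the induced orientations agree by a continuity argument along the gluing parameter.
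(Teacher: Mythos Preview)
Your overall strategy---SFT compactness for the varying boundary conditions plus an implicit function theorem argument to produce the inverse correspondence---is the same as the paper's, and the compactness half of your argument is correct as stated. However, you underestimate the technical content of the gluing step, which is precisely where the paper concentrates its effort.

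The paper does not treat the IFT step as ``standard'' in the sense of \cite{LCHgeneral, LegendrianAmbient}, because those references deal with a \emph{fixed} Lagrangian boundary condition. Here the boundary condition varies with $(\epsilon,\eta)$ and degenerates to an immersed limit where several sheets coincide. To make the IFT work uniformly one must show that the whole Fredholm package---the weighted Sobolev spaces, the asymptotic weights $\delta$, the trivialisations of the pulled-back tangent bundle carrying the totally real boundary condition---can be chosen \emph{compatibly for all $\epsilon$ simultaneously}, including $\epsilon=0$. The paper does this by building a parametrised Banach manifold $\mathcal{B}^{p,\delta}_{[0,\epsilon_0]}$ over $[0,\epsilon_0]$ and proving (in the accompanying Lemma~\ref{Why should we do all the dirty work?}) that the chart transition maps are smooth; the key inputs are that the long chords remain nondegenerate in the limit (so a uniform $\delta$ exists) and that a single trivialisation over $\Delta\times[0,\epsilon_0]$ can carry all boundary conditions at once. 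Your sentence ``the boundary condition deforms by $O(\epsilon+\eta)$ in $C^\infty$'' is true but does not by itself imply closeness of the linearised operators in the relevant operator norm on the weighted spaces.

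A secondary difference: the paper organises the argument as a one-dimensional cobordism $\widetilde{\mathcal M}^1_{[0,\epsilon_0]}$ whose boundary is the disjoint union of the two moduli spaces, and further splits the deformation into two steps (first $\epsilon$ with $\eta=0$, then $\eta$ with $\epsilon$ fixed) for a technical reason---a single Riemannian metric making all $\bs{\Lambda}_{\epsilon,0}$ totally geodesic is easy to construct, while doing this for all $\bs{\Lambda}_{\epsilon,\eta}$ at once is awkward. Your one-shot compactness/gluing would also work once the Fredholm setup is in place, but the cobordism formulation makes the sign comparison automatic.
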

\begin{proof}
  We prove the proposition in two steps: first we show that
  \begin{equation} \label{first step}
    \# \widetilde{\mathcal M}^1_{\bs{\Lambda}_{\epsilon, 0}}(\tilde{a}_0; \tilde{a}_1, \ldots, \tilde{a}_l) = \# \widetilde{\mathcal M}_\Lambda^1(a_0; a_1, \ldots, a_l)
  \end{equation}
  for all $\epsilon>0$ sufficiently small, then we prove that, for every such $\epsilon$,
  \begin{equation} \label{second step} \# \widetilde{\mathcal M}^1_{\bs{\Lambda}_{\epsilon, \eta}}(\tilde{a}_0; \tilde{a}_1, \ldots, \tilde{a}_l) = \# \widetilde{\mathcal M}^1_{\bs{\Lambda}_{\epsilon, 0}}(\tilde{a}_0; \tilde{a}_1, \ldots, \tilde{a}_l)
  \end{equation}
  if $\eta>0$ is sufficiently small.

   The two steps are proved in similar ways; since the first one is the the less standard, we will focus on its proof. We denote by ${\mathcal M}^1_{[0, \epsilon_0]}(\tilde{a}_0; \tilde{a}_1, \ldots, \tilde{a}_l)$ the set of pairs $(u,\epsilon)$ such that $\epsilon \in [0, \epsilon_0]$, and $u \in {\mathcal M}_\Lambda^1(a_0; a_1, \ldots, a_l)$ if $\epsilon=0$ or $u \in {\mathcal M}^1_{\bs{\Lambda}_{\epsilon, 0}}(\tilde{a}_0; \tilde{a}_1, \ldots, \tilde{a}_l)$ if $\epsilon >0$. After dividing by $\R$-translations in the target we obtain $\widetilde{\mathcal M}^1_{[0, \epsilon_0]}(\tilde{a}_0; \tilde{a}_1, \ldots, \tilde{a}_l)$. We will prove that for $\epsilon_0$ small enough $\widetilde{\mathcal M}^1_{[0, \epsilon_0]}(\tilde{a}_0; \tilde{a}_1, \ldots, \tilde{a}_l)$ is a compact manifold with boundary
   $$ \partial \widetilde{\mathcal M}^1_{[0, \epsilon_0]}(\tilde{a}_0; \tilde{a}_1, \ldots, \tilde{a}_l) = \widetilde{\mathcal M}_\Lambda^1(a_0; a_1, \ldots, a_l) \sqcup \widetilde{\mathcal M}^1_{\bs{\Lambda}_{\epsilon_0,0}}(\tilde{a}_0; \tilde{a}_1, \ldots, \tilde{a}_l).$$
   This will prove Equation \eqref{first step}.\footnote{In fact we will prove more, namely that the projection $\widetilde{\mathcal M}^1_{[0, \epsilon_0]}(\tilde{a}_0; \tilde{a}_1, \ldots, \tilde{a}_l) \to [0, \varepsilon_0]$ has no critical points, and therefore there is a bijection between $\widetilde{\mathcal M}_\Lambda^1(a_0; a_1, \ldots, a_l)$ and $\widetilde{\mathcal M}^1_{\bs{\Lambda}_{\epsilon, 0}}(\tilde{a}_0; \tilde{a}_1, \ldots, \tilde{a}_l)$ for all $\epsilon \in [0,\epsilon_0]$.}

 First  we study the regularity of the parametrised moduli space.
The new phenomenon to address is the varying boundary conditions, and in particular the possibility that distinct  Lagrangian boundary conditions for $\epsilon >0$ become the same at $\epsilon=0$. We assume for simplicity that there are no interior punctures and that the conformal structure of the domain is fixed.  Although the actual Fredholm problem we are interested in is slightly different,  the extra difficulties which are brought in by the variation of the conformal structure on the domain are independent from those which are introduced by the varying boundary conditions. In the more general case we need to consider Teichm{\"u}ller slices for the domains; see for example \cite[Section 9h]{Seidel_Fukaya} or \cite[Lecture 7]{Wendl:SFT}.

We start by chosing a Riemannian metric on $M$ for which $\Lambda$ is totally geodesic and is invariant under the Reeb flow in a neighbourhood of $\Lambda$ which is large enough to contain  $\bs{\Lambda}_{\epsilon,0}$ for all $\epsilon \in [0,\epsilon_0]$. This implies in particular that $\bs{\Lambda}_{\epsilon, 0}$ is totally geodesic for every $\epsilon \in [0, \epsilon_0]$.\footnote{This is the reason why we prove the lemma in two steps. For the perturbation from $\bs{\Lambda}_{\epsilon, 0}$ to $\bs{\Lambda}_{\epsilon, \eta}$ one can work instead with a family of Riemannian metrics which depend on $\eta$, but trying to construct a family of Riemannian metrics for which all $\bs{\Lambda}_{\epsilon, \eta}$ are totally geodesic, including for $\epsilon=0$, seems unnecessarily complicated due to the fact that different Legendrian submanifold can go to the same as $\epsilon \to 0$.} We extend this metric on $M$ to a product metric on $\R \times M$ which is the standard metric on the $\R$ factor. The exponential map on $\R \times M$ will always be induced by such a metric.

Let $\Delta$ be the domain of the holomorphic curves. Around each puncture of $\Delta$ we choose strip-like ends with coordinates $\sigma_i, \tau_i$ such that  $(\sigma_0, \tau_0) \in (0, + \infty) \times [0,1]$ and $(\sigma_i, \tau_i) \in (- \infty, 0) \times [0,1]$ for $i=1, \ldots, l$.  From now on we write $\tilde{a}_i^\epsilon$ to  stress the dependence of $\tilde{a}_i$ on $\epsilon$. For every $\epsilon \in [0, \epsilon_0]$ we denote by $T_i(\epsilon)$ the action of the Reeb chord $\tilde{a}_i^\epsilon$ of $\bs{\Lambda}_{\epsilon,0}$; i.e.\ $\tilde{a}_i ^\epsilon \colon [0,T_i(\epsilon)] \to M$. We also denote by $\lambda_0(\epsilon)$ the absolute value of the largest negative eigenvalue of the asymptotic operator of $\tilde{a}_0^\epsilon$ and by $\lambda_i(\epsilon)$ the smallest positive eigenvalue of the asymptotic operator of $\tilde{a}_i^\epsilon$ for $i=1, \ldots, l$. Since the Reeb chords $\tilde{a}_i^\epsilon$ remain nondegenerate for all $\epsilon \in [0, \epsilon_0]$, the functions $T_i(\epsilon)$ and $\lambda_i(\epsilon)$ are uniformely bounded away from zero.
We fix $p>2$ and $\delta$ such that  $0 < \delta< \lambda_i(\epsilon)$ for all $\epsilon \in [0,\epsilon_0]$ and $i=0, \ldots, l$. Finally we denote by ${\mathcal B}_{[0, \epsilon_0]}^{p,\delta}$ the set of pairs $(\exp_{v}(\xi), \epsilon)$ such that
\begin{enumerate}
\item $\epsilon \in [0, \epsilon_0]$,
\item $v \colon \Delta \to \R \times M$ is a smooth function such that
 $v(\partial \Delta) \subset \bs{\Lambda}_{\epsilon, 0}$ and $v(\sigma_i, \tau_i, \epsilon)=(T_i(\epsilon) \sigma_i + \kappa_i, \tilde{a}_i^\epsilon(T_i(\epsilon)\tau_i))$ for all $\epsilon \in [0,\epsilon_0]$ where $\kappa_i \in \R$, and 
  \item $\xi$ is a section of $v^*(\R \times M)$ of weighted Sobolev class $W^{1,p,\delta}$ such that $\xi|_{\partial \Delta}$ takes values in $\R \times T \bs{\Lambda}_{\epsilon,0}$ (this condition makes sense because sections of Sobolev class $W^{1,p,\delta}$ are continuous).
  \end{enumerate}
  The meaning of this definition is the following: the maps $v$ are smooth maps with the same boundary conditions and the same asymptotics as the maps in ${\mathcal M}_{[0, \epsilon_0]}(\tilde{a}_0; \tilde{a}_1, \ldots, \tilde{a}_l)$ ands which coincide with a trivial strip on the strip-like ends of $\Delta$, while $\xi$ is a perturbation of $v$ which preserves the boundary conditions and the asymptotics, and decreases exponentially fast in the strip-like ends. The asymptotic estimates for holomorphic maps imply that ${\mathcal M}_{[0, \epsilon_0]}(\tilde{a}_0; \tilde{a}_1, \ldots, \tilde{a}_l) \subset {\mathcal B}_{[0, \epsilon_0]}^{p,\delta}$; see \cite[Lecture~7.2]{Wendl:SFT}.
  
  There is an obvious projection ${\mathcal B}_{[0, \epsilon_0]}^{p, \delta} \to [0, \epsilon_0]$ defined by $(u, \epsilon) \mapsto \epsilon$ and we denote by ${\mathcal B}_\epsilon^{p, \delta}$ the preimage over $\epsilon$. It is a folklore result that ${\mathcal B}_\epsilon^{p, \delta}$ is a Banach manifold, and 
 we will show that the same argument applies to ${\mathcal B}_{[0, \varepsilon]}^{p,\delta}$, which is therefore a Banach manifold with boundary. 

 We fix functions $\chi_i \colon \Delta \to [0,1]$ supported in the $i$-th strip-like end such that $\chi_0(\sigma_0)=1$ if $\sigma_0 \ge 1$, and $\chi_i(\sigma_i)=1$ if $i=1, \ldots, l$ and $\sigma_i \le -1$. Given a smooth function $\tilde{v} \colon \Delta \times [0, \epsilon_0] \to \R \times M$  such that, for every $\epsilon \in [0, \epsilon_0]$, the restriction $\tilde{v}_\epsilon = \tilde{v}|_{\Delta \times \{ \epsilon \}}$ satisfies the conditions of item (2), we can choose a trivialisation
 \begin{equation} \label{tellaro}
   \tilde{v}^* T(\R \times M) \cong \C^n \times \Delta \times [0, \epsilon_0]
 \end{equation}
 such that there is a real subbundle $F \subset \C^n \times \Delta$ with the property that $(\tilde{v}_\epsilon|_{\partial \Delta})^*T(\R \times \bs{\Lambda}_\epsilon)$ is identified with $F \times \{ \epsilon \}$ for every $\epsilon \in [0, \epsilon_0]$.  Thus we can identify the section $\xi$ in item (3) with maps in the weighted Sobolev space $W^{1,p,\delta}(\Delta, \C^n)$ with values in $F$ along $\partial \Delta$, and moreover the identification is smooth in $\epsilon$. We will denote 
the space of such maps by $W^{1,p,\delta}(\Delta, \C^n; F)$.

 We call $s$ the coordinate on $\R$. For every smooth function $\tilde{v}$ as above, we define a map  $\tilde{\psi} \colon \R^{l+1} \times W^{1,p,\delta}(\Delta, \C^n; F)  \times [0, \epsilon_0] \to {\mathcal B}^{p, \delta}_{[0, \epsilon_0]}$ by
 $$\psi(\nu_0, \ldots,  \nu_l, \xi, \epsilon) = (\exp_{\tilde{v}_\epsilon}((\nu_0 \chi_0 + \ldots + \nu_l \chi_l) \partial_s + \xi), \epsilon),$$
 which is a homeomorphism with its image when restricted to ${\mathcal U} \times [0, \epsilon_0]$ where ${\mathcal U}$ is some neighbourhood of the origin in $\R^{l+1} \times W^{1,p,\delta}(\Delta, \C^n; F)$.

To show that these maps induce the structure of a Banach manifold with boundary on ${\mathcal B}^{p, \delta}_{[0, \epsilon_0]}$ it is enough to show that their images cover the whole ${\mathcal B}^{p, \delta}_{[0, \epsilon_0]}$, and that  for any two such maps $\psi_0 \colon {\mathcal U}_0 \times [0, \epsilon_0] \to {\mathcal B}_{[0, \epsilon_0]}^{p,\delta}$ and $\psi_1 \colon {\mathcal U}_1 \times [0, \epsilon_0] \to {\mathcal B}_{[0, \epsilon_0]}^{p,\delta}$  with partially overlapping images, the composition $\psi_1^{-1} \circ \psi_0$, where defined, is a smooth map. To show that the images cover the whole ${\mathcal B}^{p, \delta}_{[0, \epsilon_0]}$ one only needs to observe that for every $\epsilon$ and every $v$ satisfying item (2) there is a map $\tilde{v}$ as above such that $\tilde{v}_\epsilon=v$.

To show that $\psi_1^{-1} \circ \psi_0$ is differentiable, we look more in details how this map behaves.  The parametrisations $\psi_j$ are centred at maps $\tilde{v}^j$ which, in the cylindrical ends, have the form $\tilde{v}^j(\sigma_i, \tau_i, \epsilon)= (T_i(\epsilon) s_i + \kappa_i^j, \tilde{a}^\epsilon_i(T_i(\epsilon) \tau_i)$. Thus $\psi_1^{-1} \circ \psi_0$ has the form
$$(\nu_1, \ldots, \nu_l, \xi, \epsilon) \mapsto (\kappa_1^0- \kappa_1^1 + \nu_1, \ldots, \kappa_l^0- \kappa_l^1 + \nu_l, G_*(\xi, \epsilon), \epsilon)$$

where $G_*$ is the operator defined by composition with a function $G \colon \Delta \times \C^n \times [0,\epsilon_0] \to \C^n$, i.e.\ for every $z \in \Delta$, 
$$G_*(\xi, \epsilon)(z)= G(z, \xi(z), \epsilon).$$
We observe that $G$ is essentially $\exp_{\tilde{v}_1}^{-1} \circ \exp_{\tilde{v}_0}$, and therefore it is smooth, and in the strip-like ends $G(\sigma_i, \tau_i, \xi, \epsilon)= \xi$. These properties will be crucial to the proof that $G_*$ is a smooth operator, that we postpone to Lemma \ref{Why should we do all the dirty work?}. 

The Cauchy-Riemann operator defines a Fredholm section of a Banach bundle ${\mathcal E}^{p, \delta}_{[0, \epsilon_0]} \to {\mathcal B}^{p, \delta}_{[0, \epsilon_0]}$. For a generic almost complex structure $J$ its linearisation is transverse to the zero section on ${\mathcal B}^{p, \delta}_0$ and therefore, since transversality is an open condition, we can assume that it is transverse to the zero section on   ${\mathcal B}^{p, \delta}_{[0, \epsilon_0]}$ up to making $\epsilon_0$ smaller. Thus for a generic $J$, possibly up to making $\epsilon_0$ smaller, $\widetilde{\mathcal M}^1_{[0, \epsilon_0]}(\tilde{a}_0; \tilde{a}_1, \ldots, \tilde{a}_l)$ is a one-dimensional manifold with boundary and the projection to $[0, \epsilon_0]$ has no critical points. It is also compact because, by Theorem \ref{thm: SFT compactness with moving boundary conditions}, every sequence in $\widetilde{\mathcal M}^1_{[0, \epsilon_0]}(\tilde{a}_0; \tilde{a}_1, \ldots, \tilde{a}_l)$ has a subsequence converging to a $J$-holomorphic building with all components having asymptotics only at long and non-degenerate Reeb chords. However, by the additivity of the index, if the building has more than one level, one must have negative expected dimension, which is not possible because $J$ is generic.
\end{proof}
\begin{rem}
The key points of the proof are the existence of a weight $\delta$ working for every $\epsilon$ and the existence of the trivialisation \eqref{tellaro}.
\end{rem}

Now we pay the anlytical debt and prove that the operator $G_*$ is smooth. We fix some notation first. We denote by ${\mathcal E}$ the union of the strip-like ends of $\Delta$ (i.e. where the coordinates $(\sigma_i, \tau_i)$ are defined). We define a function $\sigma \colon \Delta \to [0, +\infty)$ such that $\sigma = |\sigma_i|$ where $|\sigma_i| \ge 1$ and $\sigma \equiv 0$ outside of ${\mathcal E}$. We define also a measure $dx$ on $\Delta$ which restricts to $d\sigma_i d\tau_i$ on the positive ends by choosing a suitable embedding of $\Delta$ in $\C$. We recall that $W^{1,p,\delta}(\Delta, \R^m)$ is the space of functions $\xi \colon \Delta \to \R^m$ such that $e^{\delta \sigma}\xi \in W^{1,p}(\Delta, \R^m)$ and $\| \xi \|= \| \xi \|_{1,p, \delta} = \| e^{\delta \sigma} \xi \|_{1,p}$. We recall that, for $p>2$, there is a continuous embedding $W^{1,p}(\Delta, \R^m) \hookrightarrow C^0(\Delta, \R^m)$, which implies that $\xi$ is continuous and $\lim \limits_{\sigma_i \to \pm \infty} \xi(\sigma_i, \tau_i)=0$ if $\xi \in W^{1,p,\delta}$, and moreover there are inequalities
$$\| \xi \|_{C^0} \le \| e^{\delta \sigma} \xi \|_{C^0} \le C \| e^{\delta \sigma} \xi \|_{1,p} = C  \| \xi \|_{1,p, \delta}.$$
The first inequality follows from $e^{\delta \sigma} \ge 1$ and the second one from the Sobolev embedding theorem.

Given a function $G \colon \Delta \times \R^m \times [0, \epsilon_0] \to \R^k$ we will denote by $\frac{\partial G}{\partial x}$ the Jacobian matrix of $G$ with respect to the variables in $\Delta$, by $\frac{\partial G}{\partial v}$  the Jacobian matrix of $G$ with respect to the variable in $\R^m$, and by $\frac{\partial G}{\partial \epsilon}$  the derivative of $G$ with respect to the variable in $[0, \epsilon_0]$.  Finally we will denote by $| v |$ the Euclidean norm in any finte dimensional vector space.

\begin{lemma}\label{Why should we do all the dirty work?}
  Let $G \colon \Delta \times \R^m \times [0, \epsilon_0] \to \R^m$ be a smooth function such that, in the strip-like ends,  $G(x,v,\epsilon) =v$.  Then the map
  $$G_* \colon W^{1, p, \delta}(\Delta, \R^m) \times [0, \epsilon_0] \to W^{1, p, \delta}(\Delta, \R^m)$$
  defined by $G_*(\xi, \epsilon)(x) = G(x, \xi(x), \epsilon)$ is smooth.
\end{lemma}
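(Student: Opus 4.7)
The plan is to isolate the only feature of $W^{1,p,\delta}$ that makes it unusual -- the exponential weight in the ends -- by exploiting the hypothesis that $G(x,v,\epsilon)=v$ on $\mathcal{E}$. Set $H(x,v,\epsilon)=G(x,v,\epsilon)-v$; then $H$ is smooth and vanishes identically on the strip-like ends, so its support in the $x$-variable is contained in a compact subset $K\subset \Delta\setminus\mathcal{E}$. On $K$ the function $\sigma$ is bounded, so $e^{\delta\sigma}\le C_K$ there. Since $G_*(\xi,\epsilon)=\xi+H_*(\xi,\epsilon)$ and $\xi\mapsto \xi$ is plainly smooth, it suffices to prove that $H_*$ is smooth. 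Moreover, $H_*(\xi,\epsilon)$ is supported in $K$, so
\[\|H_*(\xi,\epsilon)\|_{1,p,\delta}\le C_K\,\|H_*(\xi,\epsilon)\|_{W^{1,p}(K)},\]
and on the incoming side we have $\|\xi|_K\|_{W^{1,p}(K)}\le \|\xi\|_{1,p,\delta}$. In this way the problem is reduced to smoothness of the Nemytskii-type operator
\[\tilde H_*\colon W^{1,p}(K,\R^m)\times[0,\epsilon_0]\to W^{1,p}(K,\R^m),\qquad (\zeta,\epsilon)\mapsto \bigl(x\mapsto H(x,\zeta(x),\epsilon)\bigr),\]
for $H$ smooth with compact support in $x$ contained in the fixed compact $K$.

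First I would check that $\tilde H_*$ is well-defined and continuous. Because $p>2$, we have the continuous Sobolev embedding $W^{1,p}(K,\R^m)\hookrightarrow C^0(K,\R^m)$, which gives a uniform $C^0$-bound on any bounded family of $\zeta$'s; restricted to this bounded set, $H$ and all its derivatives in $(v,\epsilon)$ are uniformly bounded on $K\times\{|v|\le R\}\times[0,\epsilon_0]$. The distributional derivative of $H(x,\zeta(x),\epsilon)$ is, by the chain rule for $W^{1,p}$-compositions with smooth functions, equal to $\frac{\partial H}{\partial x}(x,\zeta(x),\epsilon)+\frac{\partial H}{\partial v}(x,\zeta(x),\epsilon)\,d\zeta(x)$, and both terms are in $L^p(K)$. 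Continuity in $(\zeta,\epsilon)$ then follows by dominated convergence.

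Next I would exhibit the Fréchet derivative: for $(\eta,h)\in W^{1,p}(K,\R^m)\times\R$,
\begin{equation*}
D\tilde H_*(\zeta,\epsilon)(\eta,h)(x)=\tfrac{\partial H}{\partial v}(x,\zeta(x),\epsilon)\,\eta(x)+\tfrac{\partial H}{\partial \epsilon}(x,\zeta(x),\epsilon)\,h.
\end{equation*}
That this is a bounded operator $W^{1,p}(K,\R^m)\times\R\to W^{1,p}(K,\R^m)$ is essentially the same continuity check as above, applied to the smooth functions $\partial H/\partial v$ and $\partial H/\partial \epsilon$, which are themselves compactly supported in $x$. To see that it is the Fréchet derivative, I would use Taylor's theorem with integral remainder,
\[H(x,\zeta(x)+\eta(x),\epsilon+h)-H(x,\zeta(x),\epsilon)=\tfrac{\partial H}{\partial v}(x,\zeta,\epsilon)\eta+\tfrac{\partial H}{\partial \epsilon}(x,\zeta,\epsilon)h+R(x,\zeta,\eta,h,\epsilon),\]
with the remainder $R$ quadratic in $(\eta(x),h)$, and bound its $W^{1,p}$-norm in terms of the $C^0$-norm of $\eta$ times its $W^{1,p}$-norm, using the Sobolev embedding once more; this yields $\|R\|_{1,p}=o(\|(\eta,h)\|)$. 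Continuity of $(\zeta,\epsilon)\mapsto D\tilde H_*(\zeta,\epsilon)$ is yet another continuity-of-superposition statement, now with $H$ replaced by its first derivatives.

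The main obstacle is the last paragraph, smoothness to all orders. I would proceed by induction, observing that after one derivative $D\tilde H_*$ has exactly the same structure as $\tilde H_*$, but with $H$ replaced by the smooth, compactly-supported-in-$x$ functions $\partial H/\partial v$ and $\partial H/\partial\epsilon$, coupled linearly to the new directions $(\eta,h)$. The composition $\zeta\mapsto (\partial^k H/\partial v^k)(\cdot,\zeta(\cdot),\cdot)$, multiplied by a fixed multilinear form in the variations, is handled by the same Sobolev-embedding/dominated-convergence package. The only point requiring care is that the multilinear structure of $D^k\tilde H_*$ pairs $k$ variations $(\eta_1,h_1),\ldots,(\eta_k,h_k)$ via products like $\eta_1(x)\cdots\eta_k(x)$, which lie in $W^{1,p}(K)$ because $W^{1,p}(K)\cap C^0(K)$ is a Banach algebra under pointwise product; this closes the induction. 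Unwinding the reduction gives the smoothness of $G_*$ on the full weighted space, as required.
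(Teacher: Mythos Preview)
Your proposal is correct and follows essentially the same route as the paper: both exploit $G(x,v,\epsilon)=v$ on the ends to reduce all weighted estimates to unweighted ones on a compact piece of $\Delta$, identify the Fr\'echet derivative via the chain rule, control the Taylor/Hadamard remainder using the embedding $W^{1,p}\hookrightarrow C^0$ for $p>2$, and then iterate. The only difference is organisational: you make the reduction explicit at the start by writing $G_*=\mathrm{id}+H_*$ with $H$ compactly supported in $x$, whereas the paper carries the weighted norm throughout and splits each integral into its compact and end parts; your Banach-algebra remark also packages the inductive step for higher derivatives more cleanly than the paper's ``tedious'' iteration.
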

\begin{proof}
  We start by verifying that $G_*(\xi, \epsilon) \in W^{1,p,\delta}(\Delta, \R^m)$ if $\xi \in W^{1,p,\delta}(\Delta, \R^m)$. We need to verify the two inequalities
  \begin{equation}\label{sobolev 1}
    \int_{\Delta} e^{p\delta \sigma(x)}|G(x, \xi(x), \epsilon)|^p dx < + \infty,
  \end{equation}
  \begin{equation}\label{sobolev 2}
    \int_\Delta e^{p\delta \sigma(x)} \left | \frac{\partial}{\partial x}(G(x, \xi(x), \epsilon)) \right |^p dx < + \infty.
  \end{equation}
  We consider Equation \eqref{sobolev 1}. Using the properties of $G$ and $\sigma$ we have
  $$ \int_{\Delta} e^{p\delta \sigma(x)}|G(x, \xi(x), \epsilon)|^p dx = \int_{\Delta \setminus {\mathcal E}} |G(x, \xi(x), \epsilon |^p dx + \int_{\mathcal E} e^{p\delta \sigma(x)} |\xi(x)|^p dx.$$
  The first term is finite because $\Delta \setminus {\mathcal E}$ is compact and the integrand is continuous, and the secon term is bounded above by $\|\xi\|^p$,
  Now we consider Equation \eqref{sobolev 2}. We decompose the domain of integration and apply the chain rule to obtain
  \begin{align*}
 & \int_\Delta e^{p\delta \sigma(x)} \left | \frac{\partial}{\partial x}(G(x, \xi(x), \epsilon)) \right |^p dx = \\ & \int_{\Delta \setminus {\mathcal E}} \left |\frac{\partial G}{\partial x}(x, \xi(x), \epsilon) + \frac{\partial G}{\partial v}(x, \xi(x), \epsilon) \frac{\partial \xi}{\partial x} \right |^p dx +  \int_{\mathcal E}e^{p\delta \sigma(x)} \left | \frac{\partial \xi}{\partial x} \right |^p dx.
  \end{align*}
  For the first integral we have
  \begin{align*}
    & \int_{\Delta \setminus {\mathcal E}} \left |\frac{\partial G}{\partial x}(x, \xi(x), \epsilon) + \frac{\partial G}{\partial v}(x, \xi(x), \epsilon) \frac{\partial \xi}{\partial x} \right |^p dx \le \\
   & C_0 + C_1 \int_{\Delta \setminus {\mathcal E}} \left | \frac{\partial \xi}{\partial x} \right |^p dx \le C_0 + C_1 \| \xi \|
  \end{align*}
  because $\frac{\partial G}{\partial x}(x, \xi(x), \epsilon)$ and $\frac{\partial G}{\partial v}(x, \xi(x), \epsilon)$ are continuous and $\Delta \setminus {\mathcal E}$ is compact.  The first integral is bounded above by $\| \xi \|$. This ends the proof of Equation \eqref{sobolev 2}, and therefore of the claim that $G_*(\xi, \epsilon) \in W^{1,p, \delta}(\Delta, \R^m)$.

  The next step is to prove that $G_*$ is continuous. Given $v, v' \in \R^m$ and $\epsilon, \epsilon' \in [0, \epsilon_0]$, by Hadamard's lemma we can write
  $$G(x, v', \epsilon')- G(x, v, \epsilon) = R_0(x, v',v,\epsilon', \epsilon)(v'-v) + S_0(x, v',v,\epsilon', \epsilon)(\epsilon' - \epsilon)$$
  where $R_0$ and $S_0$ are smooth functions (matrix and vector valued respectively) such that $R_0$ is constantly equal to the identity and $S_0$ vanishes for $x \in {\mathcal E}$.  Since the $W^{1,p, \delta}$ norm bounds the $C^0$ norm, for every constant $c>0$ there is a constant $C>0$ such that, for all $\xi, \xi' \in W^{1,p, \delta}(\Delta, \R^m)$ with $\| \xi' \|, \| \xi \| \le c$,

 $$|R_0(x, \xi'(x), \xi(x),\epsilon', \epsilon)|<C, \quad |S_0(x, \xi'(x), \xi(x),\epsilon', \epsilon)|  < C.$$
  Then $\|G_*(\xi', \epsilon')-G_*(\xi, \epsilon) \| \le C \| \xi' - \xi \|$, and therefore $G_*$ is locally Lipschitz, and in particular continuous.

  Now we prove that $G_*$ is differentiable.  We denote by
  $$L(\xi, \epsilon) \colon W^{1,p, \delta}(\Delta, \R^m) \oplus \R \to W^{1,p, \delta}(\Delta, \R^m)$$
 the linear map given by
  $$(L(\xi, \epsilon)(\eta, \dot{\epsilon}))(x) = \frac{\partial G}{\partial v}(x, \xi(x), \epsilon)\eta(x) + \frac{\partial G}{\partial \epsilon}(x, \xi(x), \epsilon) \dot{\epsilon}.$$

  The functions $\frac{\partial G}{\partial v}(x, \xi(x), \epsilon)$ and $\frac{\partial G}{\partial \epsilon}(x, \xi(x), \epsilon)$ are uniformely bounded, and therefore there is a constant $C$ (depending on $\xi$ and $\epsilon$, but not on $\eta$ and $\dot{\epsilon}$) such that $\| L(\xi, \epsilon)(\eta, \dot{\epsilon}) \| \le C (\| \eta \| + | \dot{\epsilon}|)$, i.e. $L(\xi, \epsilon)$ is a bounded linear operator.

  To prove that $L(\xi, \epsilon)$ is the differential of $G_*$ at $(\xi, \epsilon)$ it is enough to show that for all $\xi, \xi' \in W^{1,p, \delta}(\Delta, \R^m)$ and $\epsilon, \epsilon' \in [0, \epsilon_0]$ with $\xi'$ and $\xi$ close enough there is a constant $C>0$ such that 
$$\| G_*(\xi', \epsilon') - G_*(\xi, \epsilon) - L(\xi, \epsilon)(\xi'-\xi, \epsilon' - \epsilon) \| \le C(\| \xi'-\xi\|^2+(\epsilon'-\epsilon)^2).$$
  A further application of Hadamard's lemma yields
  \begin{align*}
  &  G(x, v', \epsilon') - G(x, v, \epsilon) - \frac{\partial G}{\partial v}(x, v, \epsilon)(v' -v) -  \frac{\partial G}{\partial \epsilon}(x, v, \epsilon)(\epsilon' -\epsilon) = \\
    & R_1(x, v', v, \epsilon', \epsilon)  (v'-v)^2 + S_1(x, v', v, \epsilon', \epsilon)(\epsilon' - \epsilon)^2
  \end{align*}
  where $R_1$ and $S_1$ are smooth functions which vanish for $x \in {\mathcal E}$. Note that in this formula $R_1$ is a ``cubic matrix'' and $(v' -v)^2$ is the matrix whose $(i,j)$ entry is the product between the $i$th and the $j$th entries of $v'-v$. As before, for every $\xi$ and every $\xi'$ close enough to $\xi$ there is a constant $C$ such that
  $$|R_1(x, \xi'(x), \xi(x), \epsilon', \epsilon)| \le C, \quad |S_1(x, \xi'(x), \xi(x), \epsilon', \epsilon)| \le C.$$
  Then, for every $\xi$ and every $\xi'$ close enough to $\xi$, we have
  $$\| G_*(\xi', \epsilon') - G_*(\xi, \epsilon) - L(\xi, \epsilon)(\xi'-\xi, \epsilon' - \epsilon) \| \le C (\| (\xi' - \xi)^2 \| + (\epsilon' - \epsilon)^2).$$
  In order to estimate the right hand side, it is enough to estimate $\| (\xi' - \xi)^2\|$.  from  the continuous embedding of $W^{1,p. \delta}(\Delta, \R^m)$ into $C^0(\Delta, \R^m)$ we obtain
  $$\| (\xi' - \xi)^2\| \le C \| \xi' - \xi\|_{C^0} \| \xi'-\xi \| \le C \| \xi' - \xi\|^2$$
  where the constant changes from the first inequality to the second one. This ends the proof that $G_*$ is differentiable.

  Next we prove that $L$ is continuous as an operator valued function. We recall that the operator norm has the property that if $A$ is a linear operator such that $\|A\eta \| \le c \| \eta\|$ for every $\eta$, then $\|A \|_{op} \le c$. Thus in order to prove that $L$ is a continuous map, it is enough to show that for every $\xi$ there exists a constant $C>0$ such that for every $\xi'$ close enough to $\xi$, every $\epsilon$ and $\epsilon'$, and every  $\dot{\epsilon}$ and $\eta$ the following inequality holds:
  $$\|L(\xi', \epsilon') (\eta, \dot{\epsilon}) - L(\xi, \epsilon) (\eta, \dot{\epsilon}) \| \le C(\|\xi'-\xi\|+|\epsilon' - \epsilon|)( \|\eta\| +|\dot{\epsilon}|).$$
  The proof of this inequality is exactly the same as the proof of the inequality implying the continuity of $G_*$.
  
  In order to prove that $G_*$ is $C^\infty$ we must keep differentiating: at the $k$th step we obtain a map from $W^{1,p, \delta} \times [0, \epsilon_0]$ to the space of bounded $k$-linear operators $\underbrace{W^{1, p, \delta}(\Delta, \R^m) \times \ldots \times W^{1, p, \delta}(\Delta, \R^m)}_k \to W^{1, p, \delta}(\Delta, \R^m)$ and must prove that it is continuous and differentiable with respect to the operator norm. This is tedious, but the necessary verifications are not dissimilar from what we have done so far.   
\end{proof}

The next proposition analyses Case (2) of Lemma \ref{lemma: constraints on holomorphic curves}
\begin{prop}\label{louie}
For every $\epsilon, \eta$ sufficiently small we have
$$\# \widetilde{\mathcal M}^1_{\bs{\Lambda}}(a_{i,j}; a_{i,h}, e_{h,j}) = \# \widetilde{\mathcal M}^1_{\bs{\Lambda}}(a_{i,j}; e_{i,h}, a_{h,j})=1$$
for every $h<j$ or $i<h$, respectively, where $a_{i,j}$ and $a_{i,h}$, $a_{h,j}$ are the long chords corresponding to $a$.
\end{prop}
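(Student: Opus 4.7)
The proof follows the strategy of Proposition \ref{huey}: set up a parametric moduli space over $\epsilon \in [0,\epsilon_0]$ (with $\eta = \eta(\epsilon)$ chosen suitably small), apply the partial SFT compactness from Theorem \ref{thm: SFT compactness with moving boundary conditions}, identify the limit configuration at $\epsilon=0$, and transfer the count via a regularity/gluing argument. I treat the first moduli space; the second case is symmetric.

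The first step is to pick a sequence $u_n \in \widetilde{\mathcal{M}}^1_{\bs{\Lambda}_{\epsilon_n,\eta_n}}(a_{i,j};a_{i,h},e_{h,j})$ with $(\epsilon_n,\eta_n)\to 0$, and apply Theorem \ref{thm: SFT compactness with moving boundary conditions}: the hypotheses are satisfied because $\Lambda_{i,n},\Lambda_{h,n},\Lambda_{j,n}$ all converge to the same Legendrian $\Lambda$, the long chords $a_{i,j},a_{i,h}$ converge to the nondegenerate Reeb chord $a$, and $e_{h,j}$ converges to a point. Up to a subsequence the $u_n$ converge in the partial SFT sense to a $J$-holomorphic building $u_\infty$ in $\R\times Y$ with boundary on the single copy $\R\times\Lambda$, a positive puncture at $a$ and a negative puncture at $a$. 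The index computation in the proof of Lemma \ref{lemma: constraints on holomorphic curves} gives $\mathrm{ind}(u_\infty) = \mathrm{ind}(u_n) - 1 = 0$, so $u_\infty$ is rigid modulo $\R$; since $a$ is nondegenerate, the only candidate is the trivial strip over $a$.

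The second step identifies where the collapsed puncture lands. Going counterclockwise along $\partial \Delta$ from the positive puncture, the three boundary arcs map to sheets $i$, $h$, $j$ respectively. In the limit these all sit on $\R\times\Lambda$, and the trivial strip's two boundary components cover $\R\times\{a(0)\}$ and $\R\times\{a(T_a)\}$. The cyclic ordering of the punctures forces the arcs on sheets $h$ and $j$ to lie on the boundary covering $a(T_a)$, and the arc on sheet $i$ to lie on the one covering $a(0)$. Hence the marked point corresponding to $e_{h,j}$ must map to $a(T_a)\in\Lambda$ (for the symmetric case $(a_{i,j};e_{i,h},a_{h,j})$ the same argument places the collapsed puncture at $a(0)$).

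The third step is a parametric gluing argument in the spirit of the proof of Proposition \ref{huey}: build a Banach manifold $\mathcal{B}^{p,\delta}_{[0,\epsilon_0]}$ of pairs $(u,\epsilon)$ with an extra boundary puncture at which the Reeb-chord asymptotic collapses as $\epsilon\to 0$, and show the linearised Cauchy--Riemann section is transverse. The limit configuration—the trivial strip over the nondegenerate chord $a$—is automatically regular, and the position of the marked point is rigid (fixed to $a(T_a)$). For $\epsilon>0$, the location where $e_{h,j}$ sits on $\Lambda$ is determined by the \emph{unique} minimum of $\tilde f_j-\tilde f_h$, and this uniqueness pins down a single gluing parameter, yielding exactly one element of $\widetilde{\mathcal{M}}^1_{\bs{\Lambda}_{\epsilon,\eta}}(a_{i,j};a_{i,h},e_{h,j})$ for each small $(\epsilon,\eta)$. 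Combined with transversality of the $\epsilon=0$ limit, the parametric moduli space is a compact one-manifold with boundary equal to the two endpoints of $[0,\epsilon_0]$, proving the count is $1$.

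The main obstacle is the analytic treatment of the boundary puncture where the short chord collapses to a point: unlike in Proposition \ref{huey} the action at one asymptotic end tends to zero, so the weighted Sobolev setup needs to be modified at this puncture (either by changing the exponential weight to encode the collapse, or by viewing the collapsed puncture as an additional boundary marked point whose image is prescribed by an evaluation constraint). Uniqueness of the minimum of $\tilde f_j-\tilde f_h$ is essential both to make the evaluation constraint rigid and to exclude extra gluing parameters.
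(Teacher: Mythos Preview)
Your approach has a genuine gap, and it is precisely the one you flag at the end: the analytic treatment of the boundary puncture at which the short chord $e_{h,j}$ collapses to a point. In the setup of Proposition~\ref{huey} every asymptotic Reeb chord stays nondegenerate in the limit, which is what allows a uniform exponential weight $\delta$ to work for all $\epsilon\in[0,\epsilon_0]$. Here the action at the $e_{h,j}$-puncture tends to zero, the asymptotic operator degenerates, and there is no weight that works uniformly. Modifying the Sobolev setup at this puncture, or replacing it by an evaluation constraint as you suggest, amounts to proving a Morse--Bott gluing statement between a trivial strip over $a$ and a gradient flow line on $\Lambda$ from $a(T_a)$ to the minimum of $\tilde f_j-\tilde f_h$. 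This is exactly the adiabatic gluing that the introduction to this appendix singles out as the unproved \emph{Conjectural Lemma~4.10} of Ekholm. Your step~2 also glosses over this: the collapsed puncture lands at $a(T_a)$ on the limit strip, but the short chord $e_{h,j}$ sits at the minimum of $\tilde f_j-\tilde f_h$, which is an unrelated point of $\Lambda$; what connects them is the gradient trajectory that the partial SFT limit deliberately throws away.

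The paper sidesteps all of this by never taking $\epsilon\to 0$. Instead it chooses a \emph{different} perturbation $\widetilde{\bs{\Lambda}}_{\epsilon,\eta}$ in which the chords $a_{i,h}$ and $e_{h,j}$ are geometrically contained inside the chord $a_{i,j}$; for this perturbation the sought disc is explicit (its image lies in the trivial strip over $a_{i,j}$), is unique by action, and is regular by a direct splitting of the linearised operator. The count is then transported back to the original $\bs{\Lambda}_{\epsilon,\eta}$ by a continuation argument along a path of perturbations with $\epsilon>0$ fixed, analysing possible breakings via an $m_{h,j}$-chord along the way. This route stays entirely within the nondegenerate regime and avoids any Morse--Bott analysis.
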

  \begin{proof}
    We focus on $\widetilde{\mathcal M}^1_{\bs{\Lambda}_{\epsilon,eta}}(a_{i,j}; a_{i,h}, e_{h,j})$. Recall that $\bs{\Lambda}_{\epsilon,\eta}$ is defined by perturbing $\bs{\Lambda}_{\epsilon,0}$ by using a sequence of Morse functions $\epsilon\tilde{f}_j=\epsilon(jc+\eta\tilde{h}_j)$. We choose a different perturbation $\widetilde{\bs{\Lambda}}_{\epsilon, \eta}$ of $\bs{\Lambda}_{\epsilon, 0}$ that is induced by a sequence of Morse functions $\epsilon(jc+\eta\tilde{k}_j)$, for which the chords 
$a_{i,h}$ and  $e_{h,j}$ are contained inside the chord $a_{i,j}$. Then there is a holomorphic disc with a positive puncture at $a_{i,j}$ and negative ends at $a_{i,h}$ and $e_{h,j}$ whose image of this disc is contained inside the trivial strip over $a_{i,j}$. By action considerations this disc is the only element of $\widetilde{\mathcal M}^1_{\widetilde{\bs{\Lambda}}_{\epsilon,\eta}}(a_{i,j}; a_{i,h}, e_{h,j})$. Even if $\widetilde{\bs{\Lambda}}_{\epsilon, \eta}$ is not generic, the regularity of this disc can be checked by hand. The reason is that the linearised $\overline{\partial}$-operator at this solution splits into an operator on the normal bundle of the solution, i.e.~the contact planes, and a linear $\overline{\partial}$ operator in a one-dimensional complex situation which is obviously surjective. The claim of the regularity follows since the normal operator has index zero and empty kernel; see \cite[Lemma 8.3(1)]{LiftingPseudoholomorphic} for a similar computation. We relate $\widetilde{\mathcal M}^1_{\widetilde{\bs{\Lambda}}_{\epsilon, \eta}}(a_{i,j}; a_{i,h}, e_{h,j})$ to $\widetilde{\mathcal M}^1_{\bs{\Lambda}_{\epsilon, \eta}}(a_{i,j}; a_{i,h}, e_{h,j})$ by a continuation argument. To that end, we choose a generic path of Legendrian submanifolds starting at $\widetilde{\bs{\Lambda}}_{\epsilon,\eta}$ and ending at $\bs{\Lambda}_{\epsilon,\eta}$, which is indued by a suitable interpolation between the functions $\tilde{k}_j$ and $\tilde{h}_j$ in the above construction (which can be taken sufficiently close to a convex interpolation). First we observe that, by energy considerations, the only possible degeneration of the holomorphic curve as the boundary conditions are deformed along this path is a breaking into a building consisting in a holomorphic disc with a positive puncture asymptotic to $a_{i,j}$ and negative punctures asymptotic to $a_{i,h}$ and $m_{h,j}$ followed by a holomorphic strip positively asymptotic to $m_{h,j}$ and negatively asymptotic to $e_{h,j}$. The first holomorphic disc has index $1-\dim \Lambda$ by Equation \eqref{gradi} (recall that $\dim \Lambda = n-1$), and therefore it can appear in a one-dimensional parameter of Legendrian submanifolds only if $\dim \Lambda =1$. Hence, if $\dim \Lambda >1$ we have $\# \widetilde{\mathcal M}^1_{\widetilde{\bs{\Lambda}}_{\epsilon, \eta}}(a_{i,j}; a_{i,h}, e_{h,j}) = \# \widetilde{\mathcal M}^1_{\bs{\Lambda}_{\epsilon, \eta}}(a_{i,j}; a_{i,h}, e_{h,j})$. If $\dim \Lambda =1$, then there are two strips from $m_{h,j}$ to $e_{h,j}$, so if the moduli spaces degenerate as above, we can glue the holomorphic disc with a positive puncture asymptotic to $a_{i,j}$ and negative punctures asymptotic to $a_{i,h}$ and $m_{h,j}$ to the other strip from $m_{h,j}$ to $e_{h,j}$ and restart the moduli space. Thus, also in the case $\dim \Lambda =1$, we have $\# \widetilde{\mathcal M}^1_{\widetilde{\bs{\Lambda}}_{\epsilon, \eta}}(a_{i,j}; a_{i,h}, e_{h,j}) = \# \widetilde{\mathcal M}^1_{\bs{\Lambda}_{\epsilon, \eta}}(a_{i,j}; a_{i,h}, e_{h,j})$.
\end{proof}

  \begin{prop}\label{louie2}
    For every $\epsilon, \eta$ small enough we have
$$\# \widetilde{\mathcal M}^1_{\bs{\Lambda}_{\epsilon, \eta}}(e_{i,j}; e_{i,h}, e_{h,j}) = 1$$
for every $i<h<j$.
\end{prop}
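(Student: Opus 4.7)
\textit{Plan.} The strategy is to follow the template of Proposition \ref{louie}: we first construct an auxiliary Legendrian $\widetilde{\boldsymbol{\Lambda}}$, obtained from $\boldsymbol{\Lambda}_{\epsilon,0}$ by a specific (non-generic) Morse perturbation of the defining functions, for which the count is explicitly $1$ by an explicit model computation; we then use a continuation argument to identify this count with the one for $\boldsymbol{\Lambda}_{\epsilon,\eta}$.

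\textit{Step 1: the auxiliary perturbation.} Replace the perturbations $\tilde{h}_m$ by a sequence of Morse functions $\rho_m$ which all vanish together with their gradients at a common point $p^* \in \Lambda$, and whose Hessians are arranged so that each difference $\rho_l - \rho_k$ (for $k<l$) has positive definite Hessian at $p^*$, hence is Morse with its unique minimum there. Let $\widetilde{\boldsymbol{\Lambda}}$ denote the resulting Legendrian. All three chords $e_{i,j}$, $e_{i,h}$, $e_{h,j}$ now lie over the single Reeb orbit through $p^*$, with Reeb lengths satisfying $\mathfrak{a}(e_{i,j})=\mathfrak{a}(e_{i,h})+\mathfrak{a}(e_{h,j})$; equivalently, $e_{i,j}$ is the concatenation of $e_{i,h}$ (lower piece) and $e_{h,j}$ (upper piece) along this Reeb orbit.

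\textit{Step 2: the model disc.} There is an obvious element of $\widetilde{\mathcal M}^1_{\widetilde{\boldsymbol{\Lambda}}}(e_{i,j}; e_{i,h}, e_{h,j})$, namely a thrice-punctured disc whose image is contained in the trivial strip $\R\times e_{i,j}$, with the positive puncture covering the entire chord and the two negative punctures respectively covering the subsegments $e_{i,h}$ and $e_{h,j}$. Action considerations together with the Riemann mapping theorem show that this is the unique rigid element of the moduli space. Its regularity is verified by splitting the linearized Cauchy--Riemann operator at this curve into a tangential part (surjective by the Riemann mapping theorem) and a normal part (of index zero with trivial kernel), exactly as in the proof of Proposition \ref{louie} and \cite[Lemma 8.3(1)]{LiftingPseudoholomorphic}.

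\textit{Step 3: the continuation argument.} Interpolate $\widetilde{\boldsymbol{\Lambda}}$ to $\boldsymbol{\Lambda}_{\epsilon,\eta}$ through a generic one-parameter family of Legendrian perturbations and apply Theorem \ref{thm: SFT compactness with moving boundary conditions} to control the degenerations. Action bounds forbid any long or very long chord from appearing in a limit building, so all breakings involve only short chords. Enumerating the possibilities with the degree formulas of Lemma \ref{degrees of generators}, the only breakings compatible with $\widetilde{\mathcal M}$-rigidity of both levels are obtained by inserting an intermediate $m$-chord at one of the two negative punctures, and these exist only when $\dim \Lambda = 1$, i.e.~$n=2$. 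Hence for $n \ge 3$ no breaking can occur and the algebraic count is automatically preserved. For $n = 2$, where $\Lambda \simeq S^1$, the number of rigid strips from each $m$-chord to the corresponding $e$-chord is $2$, so every breaking contribution vanishes algebraically over $\F$ of characteristic two and the count is again preserved. Combining the three steps yields $\#\widetilde{\mathcal M}^1_{\boldsymbol{\Lambda}_{\epsilon,\eta}}(e_{i,j}; e_{i,h}, e_{h,j}) = 1$.

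The main obstacle will be the exhaustive enumeration of possible breakings in Step 3 and the verification that none of them survive algebraically. The case $n=2$ in particular relies on the parity of the $m$-to-$e$ strip count, mirroring the corresponding argument in the proof of Proposition \ref{louie}.
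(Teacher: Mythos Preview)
Your proposal is correct and follows essentially the same approach as the paper. The paper's proof is a one-line reduction to Proposition~\ref{louie}: choose a perturbation of $\bs{\Lambda}_{\epsilon,0}$ for which $e_{i,h}$ and $e_{h,j}$ are contained inside $e_{i,j}$, then repeat the argument there verbatim; your Steps~1--3 simply spell this out in detail, with the same model disc inside the trivial strip, the same regularity check via the tangential/normal splitting, and the same bifurcation analysis (including the $n=2$ case handled by the two $m$-to-$e$ strips).
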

\begin{proof}
 We choose a perturbation of $\bs{\Lambda}_{\epsilon, 0}$ such that the chords 
$e_{i,h}$ and  $e_{h,j}$ are contained inside the chord $e_{i,j}$, and from here we proceed as in the proof of Lemma \ref{louie}.
\end{proof}
 
\bibliographystyle{plain}
\bibliography{Bibliographie_en}
\end{document}